\theoremstyle{plain}
\newtheorem{theorem}{Théorème}[section]
\newtheorem{lemme}[theorem]{Lemme}
\newtheorem{proposition}[theorem]{Proposition}
\newtheorem{prop}[theorem]{Proposition}
\newtheorem{corollary}[theorem]{Corollaire}
\newtheorem{cor}[theorem]{Corollaire}
\theoremstyle{definition}
\newtheorem{definition}{Definition}
\newtheorem{conjecture}[theorem]{Conjecture}
\theoremstyle{remark}
\newtheorem{remarque}[theorem]{Remarque}
\newcommand*{\defeq}{\mathrel{\vcenter{\baselineskip0.5ex \lineskiplimit0pt
                     \hbox{\scriptsize.}\hbox{\scriptsize.}}}%
                     =}
\def\Q{{\bf Q}}
\def\Z{{\bf Z}}
\def\C{{\bf C}}
\def\N{{\bf N}}
\def\R{{\bf R}}
\def\F{{\bf F}}
\def\PP{{\bf P}}
\def\H{{H}}
\def\zp{{\Z_p}}
\def\zpe{{\mathbf{Z}_p^\times}}
\def\qpe{{\mathbf{Q}_p^\times}}
\def\cpe{{\mathbf{C}_p^\times}}
\def\cp{{\C_p}}
\def\qp{{\Q_p}}
\def\D{{\bf D}}
\def\Nrig{{\mathbf{N}_{\mathrm{rig}}}}
\def\DdR{{\D_{\mathrm{dR}}}}
\def\Dcris{{\D_{\mathrm{cris}}}}
\def\Ddif{{\D_{\mathrm{dif}}}}
\def\Ddifp{{\D_{\mathrm{dif}}^+}}
\def\Drig{{\D_{\mathrm{rig}}}}
\def\Robba{\mathscr{R}}
\def\E{\mathscr{E}}
\def\BdR{{\mathbf{B}_{\mathrm{dR}}}}
\def\Ebf{\mathbf{E}}
\def\j{j}
\def\epsilon{\varepsilon}
\def\det{\mathrm{det}}
\def\matrice#1#2#3#4{{\big(\begin{smallmatrix}#1&#2\\ #3&#4\end{smallmatrix}\big)}}
\title{$(\varphi, \Gamma)$-modules de de Rham et fonctions $L$ $p$-adiques}
\author{Joaqu\'in Rodrigues Jacinto}
\def\@tocline#1#2#3#4#5#6#7{\relax
  \ifnum #1>\c@tocdepth % then omit
  \else
    \par \addpenalty\@secpenalty\addvspace{#2}%
    \begingroup \hyphenpenalty\@M
    \@ifempty{#4}{%
      \@tempdima\csname r@tocindent\number#1\endcsname\relax
    }{%
      \@tempdima#4\relax
    }%
    \parindent\z@ \leftskip#3\relax \advance\leftskip\@tempdima\relax
    \rightskip\@pnumwidth plus4em \parfillskip-\@pnumwidth
    #5\leavevmode\hskip-\@tempdima
      \ifcase #1
       \or\or \hskip 1em \or \hskip 2em \else \hskip 3em \fi%
      #6\nobreak\relax
    \dotfill\hbox to\@pnumwidth{\@tocpagenum{#7}}\par
    \nobreak
    \endgroup
  \fi}
\begin{document}

\maketitle

%\begin{resume} résumé en français \end{resume}
%   \english
\selectlanguage{french}

\begin{abstract} On développe une variante des méthodes de Coleman et Perrin-Riou permettant, pour une représentation galoisienne de de Rham, construire des fonctions $L$ $p$-adiques à partir d'un système compatible d'éléments globaux. On obtient de la sorte des fonctions analytiques sur un ouvert de l'espace des poids contenant les caractères localement algébriques de conducteur assez grand. Appliqué au système d'Euler de Kato, cela fournit des fonctions $L$ $p$-adiques pour les courbes elliptiques à mauvaise réduction additive et, plus généralement, pour les formes modulaires supercuspidales en $p$. En dimension $2$, nous prouvons une équation fonctionnelle pour nos fonctions $L$ $p$-adiques.
\end{abstract}

\selectlanguage{english}
\begin{abstract} We develop a variant of Coleman and Perrin Riou's methods giving, for a de Rham $p$-adic Galois representation, a construction of $p$-adic  $L$ functions from a compatible system of global elements. As a result, we construct analytic functions on an open set of the $p$-adic weight space containing all locally algebraic characters of large enough conductor. Applied to Kato's Euler system, this gives $p$-adic $L$-functions for elliptic curves with additive bad reduction and, more generally, for modular forms which are supercuspidal at $p$.  In the case of dimension $2$, we prove a functional equation for our $p$-adic $L$-functions. \end{abstract}
  % \french

\selectlanguage{french}
\setcounter{tocdepth}{3}
\tableofcontents

%\newpage
\section*{Introduction}

Cette article est consacré à l'étude des fonctions $L$ $p$-adiques associées aux formes modulaires. En utilisant la théorie des $(\varphi, \Gamma)$-modules et en généralisant certains résultats de Perrin-Riou, on montre comment construire des fonctions $L$ $p$-adiques associées à une représentation $p$-adique du groupe de Galois absolu $\mathscr{G}_\qp$ de $\qp$ (possiblement à mauvaise réduction en $p$) munie d'un système compatible de classes de cohomologie. En particulier, ceci fournit des fonctions $L$ $p$-adiques pour une forme modulaire supercuspidale en $p$ tordue par des caractères suffisamment ramifiés.

Soit \[ f = \sum_{n = 1}^{+\infty} a_n q^n \in S_k(\Gamma_1(N), \omega_f) \otimes \C \] une forme primitive (i.e cuspidale, nouvelle, propre pour les opérateurs de Hecke et normalisée) de poids $k \geq 2$, niveau $N$ et caractère $\omega_f: (\Z / N\Z)^\times \to \C^\times$. Pour $\eta$ un caractère de Dirichlet, notons \[ L(f, \eta, s) = \sum_{n = 1}^{+\infty} a_n \eta(n) n^{-s} \] la fonction $L$ complexe associée à $f$ et $\eta$. La série définissant $L(f, \eta, s)$ converge pour $\mathrm{Re} \; s > 1$, admet un prolongement analytique à tout le plan complexe et elle satisfait une équation fonctionnelle reliant les valeurs $L(f, \eta, s)$ et $L(\check{f}, \eta^{-1}, k - s)$, où $\check{f} = \sum_{n \geq 1} \overline{a}_n q^n \in S_k(\Gamma_1(N), \omega_f^{-1})$ est la forme conjuguée à $f$. La théorie des symboles modulaires permet de montrer l'existence des périodes complexes $\Omega^+_f$ et $\Omega^-_f$ telles que, si $\eta$ est un caractère de Dirichlet, $j$ est un entier tel que $1 \leq j \leq k - 1$ et $\pm$ est tel que $\eta(-1) (-1)^j = \pm 1$, alors
\[ \frac{\Gamma(j)}{(2 i \pi)^j} \frac{L(f, \eta, j)}{\Omega_f^\pm} \in \overline{\Q} . \]
Ceci permet, en fixant une immersion $\overline{\Q} \subseteq \overline{\Q}_p$ et en notant $\Lambda_\infty(f, \eta, s) = \frac{\Gamma(s)}{(2 i \pi)^s} L(f, \eta, s)$, de voir ces valeurs dans le monde $p$-adique en posant \[ \iota_p(\Lambda_\infty(f, \eta, j)) = \frac{\Lambda_\infty(f, \eta, j)}{\Omega^\pm_f} \in \overline{\Q}_p. \]

Les fonctions $L$ $p$-adiques peuvent être vues naturellement comme des fonctions rigides analytiques sur l'espace des poids $p$-adiques $\mathfrak{X}$ \footnote{L'espace $\mathfrak{X}$ est un espace analytique rigide dont les $L$-points, pour $L$ une extension finie de $\qp$, sont donnés par $\mathfrak{X}(L) = \mathrm{Hom}_{\mathrm{cont}}(\zpe, L)$. Il est une union de boules ouvertes et donc quasi-Stein. Par un théorème d'Amice, les distributions sur $\zpe$ correspondent aux fonctions (rigides) analytiques sur $\mathfrak{X}$.}. Si $\eta: \zpe \to \cpe$ est un caractère d'ordre fini, une telle fonction $L_p \in \mathcal{O}(\mathfrak{X})$ est déterminée par ses valeurs sur les caractères de la forme $x \mapsto \eta(x)x^k$, $k \in \Z$. En particulier, si l'on pose, pour $s \in \zp$ et $x \in \zpe$, $\langle x \rangle^s = \exp(s \log x)$, une fonction analytique sur l'espace des poids donne naissance à une famille de fonctions $L_p(\eta, s)$ d'une variable $p$-adique $s \in \zp$, pour $\eta$ parcourant les caractères d'ordre fini.

Fixons un isomorphisme $\overline{\mathbf{Q}}_p \cong \C$ et notons \[ X^2 - a_p X + \omega_f(p) p^{k - 1} \] le polynôme de Hecke en $p$ de la forme $f$ et $\alpha, \beta \in \overline{\Q}_p$ ses racines. On dit que $f$ est de pente finie si au moins une de ces racines est non nulle. Remarquons simplement que, si $p \nmid N$, alors $f$ est de pente finie, et que, si $p$ divise $N$, alors $f$ est de pente finie si et seulement si $a_p \neq 0$. Une des premières constructions de la fonction $L$ $p$-adique de $f$ dépend du choix d'une racine non nulle, disons $\alpha$, du polynôme de Hecke en $p$ de la forme $f$.

\begin{theorem} [\cite{Manin}, \cite{AmiceVelu}, \cite{Visik}, \cite{MTT}, \cite{P-S}, \cite{Bellaiche}, \cite{Delbourgo}] \label{MTT} Soient $f$ de pente finie et $\alpha$ comme ci-dessus. Il existe une (unique si $v_p(\alpha) < k - 1$) fonction $L_{p, \alpha}(f) \in \mathcal{O}(\mathfrak{X})$, d'ordre $v_p(\alpha)$, telle que, si $\eta \colon \zpe \to \C_p^\times$ est un caractère de Dirichlet de conducteur $p^n$ et $j$ un entier tel que $ 0 \leq j \leq	 k - 2$, on a
\[ L_{p, \alpha}(f)(\eta \chi^j) = e_{p, \alpha}(f, \eta, j) \frac{p^{n (j + 1)}}{G(\eta^{-1})} \cdot \iota_p(\Lambda_\infty(f, \eta^{-1}, j + 1)) \]
où $G(\eta^{-1}) = \sum_{a = 0}^{p^n - 1} \eta^{-1}(a) \exp({\frac{2 i \pi a}{p^n}}) \in \overline{\Q}_p$ dénote la somme de Gauss du caractère $\eta^{-1} $ et le facteur $e_{p, \alpha}(f, \eta, j)$ est défini par la formule \[ e_{p, \alpha}(f, \eta, j) = \left\{ 
\begin{array}{l l}
  \alpha^{-n}  & \quad \text{si $n > 0$}\\
  (1 - \alpha^{-1} \omega_f(p) p^{k - 2 - j})(1 - \alpha^{-1} p^j) & \quad \text{si $n = 0$.} \\ \end{array} \right. \]
\end{theorem}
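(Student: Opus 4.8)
The plan is to realise $L_{p,\alpha}(f)$ as the analytic function on $\mathfrak{X}$ attached, via Amice's transform, to an \emph{admissible distribution} on $\zpe$ built from the $p$-stabilised modular symbol of $f$. First I would replace $f$ by its $\alpha$-stabilisation $f_\alpha(z) = f(z) - \beta f(pz)$, an eigenvector for $U_p$ with eigenvalue $\alpha$ (of level $Np$ if $p \nmid N$, and $f$ itself of level $N$ with $\beta = 0$ if $p \mid N$). Attached to $f$ there is the classical $\pm$ modular symbol $\phi_f^\pm$ with values in the dual of $\mathrm{Sym}^{k-2}$, normalised by the periods $\Omega_f^\pm$ so that its values on the path $\{0\}-\{\infty\}$ twisted by a Dirichlet character recover the algebraic parts $\iota_p(\Lambda_\infty(f,\eta^{-1},j+1))$. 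This is the Birch/Manin formula relating special values of $L(f,\eta,s)$ to modular symbols, and it is where the Gauss sum $G(\eta^{-1})$ and the power $p^{n(j+1)}$ enter, through the additive-character expansion $\eta^{-1}(a) = G(\eta^{-1})^{-1}\sum_c \eta(c)\,\exp(2i\pi ac/p^n)$.

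Next, using the $U_p$-eigenproperty, I would define a distribution $\mu_{f,\alpha}$ on $\zpe$ by prescribing its moments on the open compact sets $a + p^n\zp$: up to the factor $\alpha^{-n}$ and the period, $\int_{a+p^n\zp} x^j\,d\mu_{f,\alpha}$ is the value of $\phi_{f_\alpha}^\pm$ on the corresponding path. The compatibility of these moments as $n$ varies is precisely the relation $U_p = \alpha$, and the resulting functional is not bounded but \emph{$h$-admissible} with $h = v_p(\alpha)$, in the sense that $\left|\int_{a+p^n\zp}(x-a)^j\,d\mu_{f,\alpha}\right| \le C\,p^{n(v_p(\alpha)-j)}$ for $0 \le j \le v_p(\alpha)$. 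I would then invoke Amice's theorem, identifying distributions on $\zpe$ with $\mathcal{O}(\mathfrak{X})$, to produce $L_{p,\alpha}(f) \in \mathcal{O}(\mathfrak{X})$ of order $v_p(\alpha)$; evaluating at a locally algebraic character $\eta\chi^j$ returns, by construction, the prescribed moment, hence the interpolation formula, with $e_{p,\alpha}(f,\eta,j)$ being the factor $\alpha^{-n}$ for $n>0$ and, for $n=0$, the two Euler factors that arise from undoing the $\beta$-part of the stabilisation.

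The main obstacle is the admissibility estimate in the positive-slope case $v_p(\alpha) > 0$: unlike the ordinary case the moments do not assemble into a bounded measure, and one must verify the exact order-$v_p(\alpha)$ growth bound on the partial sums, controlling the contribution of the top-degree part of $\mathrm{Sym}^{k-2}$ by the size of the $U_p$-eigenvalue. This is the heart of the Amice, V\'elu and Vi\v{s}ik construction and the point where the hypothesis of finite slope is essential.

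Finally, uniqueness when $v_p(\alpha) < k-1$ follows from overdetermination. The interpolation points $j = 0, \dots, k-2$ (together with the varying finite-order twists $\eta$) pin down the values of $L_{p,\alpha}(f)$ at $k-1$ families of weights; an analytic function of order $v_p(\alpha) < k-1$ on the quasi-Stein union of open disks $\mathfrak{X}$ cannot meet all these constraints unless it is identically zero, by the Weierstrass-type zero count bounding the number of zeros of a function of given order. Hence the admissible distribution, and with it $L_{p,\alpha}(f)$, is uniquely determined.
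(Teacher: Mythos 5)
Theorem \ref{MTT} is not proved in the paper: it is quoted as classical background, the seven citations in its header standing in for a proof, and the article's own contribution is a different construction altogether (Kato's Euler system fed into an extension of the Perrin--Riou logarithm, which recovers $L_{p,\alpha}(f) = \langle \mathrm{Log}_V(\mathbf{z}_{\mathrm{Kato}}), e_\alpha\rangle$ when $p\nmid N$). So your proposal can only be measured against the cited literature, and there it is faithful: $p$-stabilisation to a $U_p$-eigenform, the Birch--Manin formula converting twisted special values into modular-symbol values (which is indeed where $G(\eta^{-1})$ and $p^{n(j+1)}$ enter), the distribution on $\zpe$ whose distribution relations on the sets $a+p^n\zp$ encode $U_p f_\alpha = \alpha f_\alpha$, the admissibility estimate of order $h=v_p(\alpha)$, the Amice--V\'elu--Vi\v{s}ik passage to a function in $\mathcal{O}(\mathfrak{X})$ of that order, the two Euler factors at $n=0$ from undoing the stabilisation, and uniqueness for $v_p(\alpha)<k-1$ by the zero-counting argument. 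This is precisely the route of \cite{AmiceVelu}, \cite{Visik}, \cite{MTT}, so as far as the non-critical case goes your outline is correct and is "the" proof.

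The genuine gap is the critical-slope case. The statement asserts existence for \emph{every} nonzero root $\alpha$, hence also when $v_p(\alpha)=k-1$ (which can occur when $p\nmid N$), and only restricts \emph{uniqueness} to $v_p(\alpha)<k-1$. Your construction cannot produce the function in that boundary case: the classical modular symbol of $f$ furnishes moments only in degrees $0\le j\le k-2$, and the Amice--V\'elu--Vi\v{s}ik reconstruction of a distribution of order $h$ from its moments requires (and is uniquely determined by) the moments in degrees up to $\lfloor h\rfloor$; for $h=k-1$ the available data is one degree short, so neither existence nor uniqueness follows by this method --- this is exactly why \cite{P-S} and \cite{Bellaiche} appear among the citations. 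In those works $L_{p,\alpha}(f)$ is obtained as the Mellin transform of a distribution-valued (overconvergent) modular symbol lifting the classical one, a genuinely different input which your sketch does not contain. Restricted to $v_p(\alpha)<k-1$ your argument is complete; to prove the theorem as stated you must add that construction at critical slope.
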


La construction peut être adaptée (\cite{Delbourgo}) à la situation où $f$ est de pente infinie mais $f \otimes \xi$ est de pente finie pour un certain caractère $\xi$. Si $f \otimes \xi$ n'est pas de pente finie pour aucun caractère $\xi$, on dit que la forme $f$ est \textit{supercuspidale}. Cette condition correspond (\cite[Prop. 2.8]{L-W}) à ce que la représentation lisse de $\mathrm{GL}_2(\qp)$ associée à $f$ soit supercuspidale.

%\begin{theorem}
%Il existe une unique distribution $\mu_{f, \alpha} \in \mathscr{D}(\qp, \overline{\mathbf{Q}})$ d'ordre $v_p(\alpha)$ telle que, quel que soit $\phi \in \mathrm{LC}(\zp, \overline{\mathbf{Q}})$ et $1 \leq j \leq k - 1$, on a
%\[ \int_\zp \phi(x) x^{j-1} \mu_{f, \alpha} = L^{\mathrm{alg}}(f_\alpha, \hat{\phi}, j), \] où $\hat{\phi}$ dénote la transformée de Fourier de $\phi$. De plus, $\mu_{f, \alpha}$ satisfait l'équation $\varphi(\mu_{f, \alpha}) = \alpha^{-1} \mu_{f, \alpha}$.
%\end{theorem}
%
%On définit la fonction $L$ $p$-adique associée à $f$ et $\alpha$ par la formule $$ L_{p, \alpha}(f, \xi) = \int_{\zpe} \xi x^{-1} \mu_{f, \alpha}. $$ La définition étant justifiée par
%
%\begin{proposition}
%Soit $1 \leq j \leq k - 1$ et $\eta$ un caracère de Dirichlet de conducteur $p^n$. On a 
%\[ L_{p, \alpha}(f, \eta \chi^j) =  \left\{ 
%\begin{array}{l l}
%  (1 - \frac{p^{j-1}}{\alpha}) \cdot (1 - \frac{\beta}{p^j}) \cdot L^{\mathrm{alg}}(f, \eta^{-1}, j) & \quad \text{si $n > 0$}\\
%  \frac{p^{nj}}{\alpha^n \mathrm{G}(\eta^{-1})} \cdot L^{\mathrm{alg}}(f, \eta^{-1}, j) & \quad \text{si $n = 0$}\\ \end{array} \right. \]%
%\end{proposition}	

\subsection{Le système d'Euler de Kato} 

Il existe une construction alternative (\cite{Kato}, \cite{ColmezBourbaki}) de la fonction $L$ $p$-adique, qui est moins élémentaire mais a pourtant l'avantage de permettre de relier les valeurs en certains points entiers de la fonction $L$ $p$-adique à des quantités de nature cohomologique, ce qui permet, par exemple, de démontrer des instances de la conjecture de Birch et Swinnerton-Dyer $p$-adique et la conjecture principale d'Iwasawa pour les représentations galoisiennes attachées aux formes modulaires. Nous rappelons dans ce qui suit la construction de Kato de la fonction $L$ $p$-adique dans le cas où $p$ est un nombre premier ne divisant pas le niveau $N$ de $f$ \footnote{cf. \cite{ColmezBourbaki} pour les modifications nécessaires dans le cas semi-stable et \cite{Delbourgo} pour le cas où la représentation dévient cristalline sur une extension abélienne de $\qp$.}.

Soit $L$ une extension finie de $\qp$ contenant les coefficients de Fourier de la forme $f$ et notons $\mathscr{O}_L$ l'anneau des entiers de $L$. Soit $V(f)$ la $L$-représentation du groupe de Galois absolu $\mathscr{G}_\mathbf{Q}$ de $\mathbf{Q}$ associée à $f$ par Shimura-Deligne: elle est de dimension $2$, non ramifiée en dehors $Np$, de Rham en $p$ et caractérisée par le fait que, pour tout $\ell \nmid Np$, alors $$ \mathrm{det}(1 - \mathrm{Frob}_\ell^{-1}X | V(f)^{I_\ell}) = 1 - a_\ell X + \ell^{k-1} \omega_f(\ell) X^2, $$ où $\mathrm{Frob}_\ell$ désigne le Frobenius arithmétique en $\ell$ et $I_\ell \subseteq \mathscr{G}_{\Q_\ell} \subseteq \mathscr{G}_{\Q}$ dénote le groupe d'inertie du groupe de Galois absolu $\mathscr{G}_{\Q_\ell} = \mathrm{Gal}(\overline{\Q}_\ell / \Q_\ell)$ de $\Q_\ell$ vu dedans $\mathscr{G}_\Q$ comme un groupe de décomposition. Notons aussi par $V(f)$ la restriction de $V(f)$ au groupe $\mathscr{G}_\qp$, qui est une représentation de dimension $2$, de Rham (cristalline si $p \nmid N$) à poids de Hodge-Tate $0$ et $1 - k$.

Soient $F_\infty = \cup_n \qp(\mu_{p^n})$ l'extension cyclotomique de $\qp$, $\Gamma_n = \mathrm{Gal}(F_\infty / \qp(\mu_{p^n})) \subseteq \Gamma = \mathrm{Gal}(F_\infty / \qp)$ et $\chi: \Gamma \xrightarrow{\sim} \zpe$ le caractère cyclotomique. La construction de Kato repose sur la construction d'un système d'Euler \footnote{Les éléments zêta de Kato dépendent d'un certain nombre de choix que l'on ignore dans cette introduction afin de simplifier l'exposition.} $\mathbf{z}_{\rm Kato}$ attaché à $f$ dans la représentation $V = V(f)^*(1) = V(\check{f})(k)$ (qui est de Rham à poids de Hodge-Tate $1$ et $k$ en $p$) et dont les niveaux en les différentes puissances de $p$ fournissent un élément, aussi noté $\mathbf{z}_{\mathrm{Kato}}$, de la cohomologie d'Iwasawa de la représentation $V$ définie par
\[ H_{\rm Iw}^1(\qp, V) = \varprojlim_n H^1(\qp(\mu_{p^n}), T) \otimes \qp \cong H^1(\mathscr{G}_{\qp}, \Lambda \otimes V), \]
où $T$ dénote n'importe quel $\mathscr{O}_L$-réseau de $V$ stable par $\mathscr{G}_\qp$, $\Lambda = \zp[[\Gamma]]$ dénote l'algèbre d'Iwasawa de $\Gamma$ et où la limite projective est prise par rapport aux applications de corestriction (le dernier isomorphisme étant une conséquence du lemme de Shapiro).

Un théorème fondamental de Kato (\cite[Thm. 12.5]{Kato}) montre que l'élément $\mathbf{z}_{\rm Kato}$ est intimement lié aux valeurs spéciales de la fonction $L$ complexe de la forme $f$. Si $p \nmid N$ (i.e si $V(f)$ est cristalline), ce théorème permet à Kato d'appliquer la machine à fonctions $L$ $p$-adiques de Perrin-Riou (\cite{PerrinRiou1}, \cite{PerrinRiou2}, \cite{Kato})
\[ \mathrm{Log}_{V} \colon H^1_{\rm Iw}(\qp, V) \otimes_{\Lambda(\Gamma)} \mathscr{D}(\Gamma) \to \mathcal{O}(\mathfrak{X}) \otimes \Dcris(V), \]
où $\mathscr{D}(\Gamma)$ dénote l'algèbre de distributions sur $\Gamma$ \footnote{D'après un théorème d'Amice, $\mathscr{D}(\Gamma)$ et $\mathscr{O}(\mathfrak{X})$ sont isomorphes, or on garde la notation $\mathscr{D}(\Gamma)$, qui est plus souvent utilisée.}, interpolant $p$-adiquement les applications exponentielles et (d'après un théorème de Colmez, Benois et Kato-Kurihara-Tsuji) exponentielles duales de Bloch-Kato pour des différentes tordues de la représentation en question, pour obtenir une nouvelle construction de la fonction $L$ $p$-adique de $f$.

\begin{theorem} [ {\cite[Thm. 16.6]{Kato}} ] Soit $e_\alpha \in \Dcris(V(f)) = \Dcris(V^*(1))$ un vecteur propre du Frobenius cristallin de valeur propre $\alpha$. On a alors $$ L_{p,\alpha}(f) = \langle \mathrm{Log}_V(\mathbf{z}_{\mathrm{Kato}}), e_{\alpha} \rangle, $$ où $\langle \, , \, \rangle : \Dcris(V) \times \Dcris(V^*(1)) \to L$ dénote l'accouplement de Poincaré.
\end{theorem}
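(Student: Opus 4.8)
The plan is to prove the identity by evaluating both members on a Zariski-dense set of points of the weight space $\mathfrak{X}$ and invoking the rigidity of analytic functions. Both sides lie in $\mathcal{O}(\mathfrak{X})$: the left-hand side by Theorem~\ref{MTT}, and the right-hand side because $\mathrm{Log}_V(\mathbf{z}_{\mathrm{Kato}})$ is, by construction of the Perrin--Riou map, an element of $\mathcal{O}(\mathfrak{X}) \otimes \Dcris(V)$ whose pairing against the fixed vector $e_\alpha \in \Dcris(V^*(1))$ returns a rigid analytic function. Since the characters $\eta\chi^j$, with $\eta$ of finite order and $j \in \Z$, are dense in $\mathfrak{X}$, and two elements of $\mathcal{O}(\mathfrak{X})$ of bounded order agreeing on such a set are equal, it suffices to check that $\langle \mathrm{Log}_V(\mathbf{z}_{\mathrm{Kato}}), e_\alpha\rangle$ takes at $\eta\chi^j$ exactly the value prescribed by Theorem~\ref{MTT}. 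When $v_p(\alpha) < k-1$ this forces the equality by the uniqueness clause of that theorem; in general one checks that the right-hand side has order $v_p(\alpha)$ (the $\alpha$-projection introducing the factor $\alpha^{-n}$), so that it is at least an admissible choice of $L_{p,\alpha}(f)$.

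First I would make explicit the value at $\eta\chi^j$ using the defining interpolation property of the big logarithm. By the very construction recalled in the introduction, $\mathrm{Log}_V$ interpolates the Bloch--Kato exponential and dual-exponential maps for the twists $V(j)$; concretely, evaluating at $\eta\chi^j$ for $0 \le j \le k-2$ produces the Bloch--Kato dual exponential $\exp^*$ of the twist by $\eta\chi^j$ of the localisation at $p$ of $\mathbf{z}_{\mathrm{Kato}}$, multiplied by an explicit factorial-and-Euler factor. Projecting to the $\alpha$-eigenline via the Poincar\'e pairing with $e_\alpha$ extracts the $\alpha$-eigencomponent, and it is precisely this step that produces the factor $e_{p,\alpha}(f,\eta,j)$: the crystalline Euler factors $\det(1-\varphi\mid\cdots)$ built into the Perrin--Riou interpolation become, on the $\alpha$-line, the products $(1-\alpha^{-1}\omega_f(p)p^{k-2-j})(1-\alpha^{-1}p^j)$ when $n=0$, and the powers $\alpha^{-n}$ when $n>0$.

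The second, and decisive, input is Kato's explicit reciprocity law (his Theorem~12.5), which identifies the dual exponential of the twisted localisation $\exp^*\big(\eta\chi^j\cdot\mathrm{loc}_p(\mathbf{z}_{\mathrm{Kato}})\big)$ with the special value of the complex $L$-function of $f$. Feeding this identification into the previous step converts the cohomological quantity into $\iota_p(\Lambda_\infty(f,\eta^{-1},j+1))$ and brings in the Gauss sum $G(\eta^{-1})$ and the power $p^{n(j+1)}$ attached to the cyclotomic twist of conductor $p^n$. Collecting the factors reproduces verbatim the formula of Theorem~\ref{MTT}, and density then concludes.

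The hard part is not the density argument, which is formal, but the \emph{matching of normalisations} between the two interpolation formulas: the Perrin--Riou side and Kato's archimedean side are each computed in their own conventions (the periods $\Omega_f^\pm$ versus the de Rham period, arithmetic versus geometric Frobenius, the placement of the factorials and of the Euler factors at $p$), and the whole content of the statement lies in verifying that these combine to give exactly $e_{p,\alpha}(f,\eta,j)$, $G(\eta^{-1})^{-1}$ and $p^{n(j+1)}$ with no spurious constant. In practice this is where Kato's Theorem~12.5 does the heavy lifting, the Perrin--Riou interpolation merely repackaging its output as a single analytic function on $\mathfrak{X}$.
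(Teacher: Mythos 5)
Your proposal is correct and is essentially the argument of Kato himself: the paper states this result as a citation of \cite[Thm.~16.6]{Kato} without reproving it, and Kato's proof is exactly the scheme you describe — match the Perrin--Riou interpolation formula (dual exponentials plus crystalline Euler factors, which become $e_{p,\alpha}$ and $\alpha^{-n}$ on the $\alpha$-eigenline) against the interpolation property of $L_{p,\alpha}(f)$ via the explicit reciprocity law and Theorem~12.5, then conclude by Zariski density and the Amice--V\'elu--Vishik uniqueness when $v_p(\alpha)<k-1$. This is also the same interpolation-plus-reciprocity mechanism the paper itself uses to prove its generalization (Theorem~\ref{propfina} and the final claim of Theorem~\ref{thmeaea}).
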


Remarquons que l'on dispose dans tous les cas d'un système d'Euler $\mathbf{z}_{\rm Kato}$ et que l'application de Perrin-Riou a été généralisée pour les représentations de de Rham par des travaux de Colmez (\cite{ColmezIw1}) et Cherbonnier-Colmez (\cite{ColmezIw2}) et, pour un $(\varphi, \Gamma)$-module sur l'anneau de Robba $\Robba$, par Nakamura (\cite{Nakamura}). Or ces applications ne s'expriment pas naturellement en termes de fonctions analytiques sur l'espace des poids et ne fournissent malheureusement (ou heureusement) pas si simplement des fonctions $L$ $p$-adiques.

\subsection{Le cas supercuspidal}

Décrivons brièvement le résultat principal de cet article. L'isomorphisme $\overline{\mathbf{Q}}_p \cong \C$ que l'on a fixé permet de voir $\eta \colon \zpe \to L^\times$ comme un caractère de Dirichlet de conducteur $p^n$, $n \geq 0$, et on note $f \otimes \eta^{-1}$ la forme tordue de $f$ par $\eta^{-1}$, qui est aussi une forme primitive (de niveau $Np^{2n}$ et caractère $\omega_f \eta^{-2}$, au moins si $n$ est assez grand). Rappelons que l'on a posé \[ \Lambda_\infty(f, \eta^{-1}, s) = \frac{\Gamma(s)}{(2 i \pi)^s} \cdot L(f, \eta^{-1}, s) \] et que la forme $f$ donne lieu à une représentation automorphe $\pi(f) = {\prod'_{v}} \pi_v(f)$ de $\mathrm{GL}_2(\mathbf{A}_\Q)$, où $v$ parcourt l'ensemble des places de $\Q$ et $\pi_{v}(f)$ est une représentation lisse de $\mathrm{GL}_2(\Q_v)$. Pour $\eta$ un caractère de Dirichlet, vu comme un caractère des adèles, et $v$ une place de $\Q$, on note $\epsilon(\pi_v(f) \otimes \eta)$ les facteurs epsilon des composantes locales de la représentation $\pi(f) \otimes \eta$ \footnote{On a $\epsilon(\pi_\infty(f)) = i^k$.}, de sorte que le facteur epsilon global associé à $f$ et $\eta$ est donné par la formule \[ \epsilon(\pi(f) \otimes \eta) = \epsilon(\pi_\infty(f)) \cdot \prod_{\ell \mid N} \epsilon(\pi_\ell(f) \otimes \eta). \] La fonction $\Lambda_\infty(f, \eta^{-1}, s)$ satisfait alors l'équation fonctionnelle \[ \Lambda_\infty(f, \eta^{-1}, j) = i^k (-1)^{j} \epsilon(\pi(f) \otimes \eta^{-1} \otimes | \cdot |^{j - \frac{k - 1}{2}}) \cdot \Lambda_\infty (f, \eta, k - j), \;\;\; j \in \Z \] On obtient le théorème suivant:

\begin{theorem} [def. \ref{defplong} + thm. \ref{propfina} + lemme \ref{eaea2} + thm. \ref{eqfonct4}] \label{eaealala}
Soit $f \in S_k(\Gamma_1(N), \omega_f)$ une forme primitive et soit $V = V(\check{f})(k - 1)$. Il existe des plongements naturels \[ \Lambda_\infty(f, \eta^{-1}, j) \mapsto \iota_p(\Lambda_\infty(f, \eta^{-1}, j)) \in \DdR(V), \;\;\; j \leq k-1, \] un ouvert $\mathfrak{U}_f \subseteq \mathfrak{X}$ ne dépendant que de la puissance de $p$ divisant le niveau $N$ de la forme $f$ et contenant tous les caractères d'ordre $p^n$ pour $n$ assez grand, et une unique fonction rigide analytique $\Lambda_p(f) \in \mathcal{O}(\mathfrak{U}_f) \otimes \DdR(V)$ telle que, si $\eta \colon \zpe \to L^\times$ est un caractère de conducteur $p^n$ et $j < k-1$ sont tels que $\eta x^j \in \mathfrak{U}_f$, alors
\[ \Lambda_p(f)(\eta x^j) = \frac{p^{n( j + 1)}}{G(\eta^{-1})} \cdot \iota_p(\Lambda_\infty(f, \eta^{-1}, j + 1)). \]

De plus, la fonction $\Lambda_p(f)$ satisfait une équation fonctionnelle de la forme \footnote{La formulation de l'équation fonctionnelle est légèrement imprécise, cf. théorème \ref{eqfonct4}.} \[ \Lambda_p(f)(\eta x^j) = C(f, \eta, j) \cdot \Lambda_p(\check{f})(\eta^{-1} x^{k - 2 - j}), \] où
\[ C(f, \eta, j) =  p^n \, \epsilon(\eta \otimes | \cdot |^{-j + \frac{k - 1}{2}})^2 \epsilon(\pi_p(\check{f}) \otimes \eta \otimes | \cdot |^{-j + k - 1})^{-1} \cdot \prod_{\ell \mid N'} \epsilon(\pi_\ell(\check{f}) \otimes \eta^{-1} \otimes |\cdot|^{-j + \frac{k - 1}{2}})^{-1}, \] où $N'$ dénote la partie de $N$ première à $p$ et $\epsilon(\eta \otimes | \cdot |^s) = p^{-ns} \eta(p)^n G(\eta^{-1})$.
% Vieille \[ C(f, \eta, j) =  G(\eta^{-1})^2 \cdot \epsilon \big( \pi_p(f) \otimes \eta, \frac{k-1}{2} \big)^{-1} \cdot \prod_{\ell \mid N'} \epsilon \big( \pi_\ell(f) \otimes \eta,  j + \frac{k - 1}{2} \big).\]

Finalement, si $p \nmid N$ alors $\mathfrak{U}_f = \mathfrak{X}$ et, si $\alpha$ est une valeur propre du polynôme de Hecke en $p$ de $f$ et $e_\alpha \in \Dcris(V(f)) = \Dcris(V)^*$ est un vecteur propre du Frobenius cristallin de valeur propre $\alpha$, on a \[ L_{p, \alpha}(f) = \langle \Lambda_{p}(f), e_\alpha \rangle. \]
\end{theorem}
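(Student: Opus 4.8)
The plan is to realise $\Lambda_p(f)$ as the image of Kato's zeta element $\mathbf{z}_{\mathrm{Kato}}$ under a regulator map adapted to the de Rham---possibly non-crystalline---setting, generalising the Perrin-Riou map $\mathrm{Log}_V$ recalled above, and then to deduce the four assertions of the theorem from the formal properties of this map together with Kato's explicit reciprocity law.

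First I would build the regulator. Working over the Robba ring with the $(\varphi, \Gamma)$-module $\Drig(V)$, one identifies $H^1_{\mathrm{Iw}}(\qp, V)$, after extending scalars to $\mathscr{D}(\Gamma)$, with a cohomology group to which one applies the Mellin/overconvergence machinery; this produces an analytic interpolation over weight space of the Bloch--Kato dual exponential maps $\exp^* \colon H^1(\qp, V(\eta x^j)) \to \DdR(V(\eta x^j))$. In the crystalline case this is defined on all of $\mathfrak{X}$ with values in $\Dcris(V)$; in the de Rham case the relevant $p$-adic integrals diverge at the finitely many weights dictated by the Hodge--Tate--Sen data of $\Drig(V)$, and removing neighbourhoods of these yields the open set $\mathfrak{U}_f$. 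Since the twist $\Drig(V \otimes \eta)$ only feels the $p$-part of the conductor, $\mathfrak{U}_f$ depends only on the power of $p$ dividing $N$, and a Sen-weight computation shows it contains every finite character of order $p^n$ with $n$ large. I would then set $\Lambda_p(f)$ equal to the image of $\mathbf{z}_{\mathrm{Kato}}$, an element of $\mathcal{O}(\mathfrak{U}_f) \otimes \DdR(V)$; uniqueness is immediate from the density in $\mathfrak{U}_f$ of the locally algebraic points $\eta x^j$.

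Next I would establish the interpolation formula. At a point $\eta x^j$ of $\mathfrak{U}_f$ with $j < k-1$, the defining property of the regulator is that $\Lambda_p(f)(\eta x^j)$ is, up to the elementary factor $p^{n(j+1)} G(\eta^{-1})^{-1}$ produced by normalising the Mellin transform, the dual exponential $\exp^*$ of $\mathbf{z}_{\mathrm{Kato}}$ localised at $p$ and twisted by $\eta x^j$. Kato's explicit reciprocity law (\cite[Thm. 12.5]{Kato}) identifies this dual-exponential value with $\iota_p(\Lambda_\infty(f, \eta^{-1}, j+1))$, the map $\iota_p$ being precisely the de Rham comparison carrying the period-normalised archimedean special value into $\DdR(V)$. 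Matching normalisations gives the asserted formula.

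The hard part will be the functional equation. I would exploit the self-duality of Kato's construction under $V \leftrightarrow V^*(1) = V(\check{f})$, combined with the behaviour of the regulator under the involution $x \mapsto x^{-1}$ of weight space and the duality pairing $\Drig(V) \times \Drig(V^*(1)) \to \Drig(\qp(1))$. This produces a relation between $\Lambda_p(f)$ and $\Lambda_p(\check{f})$ whose constant is a product of local $\epsilon$-factors of the $(\varphi, \Gamma)$-module; the main obstacle is to pin this constant down exactly---not merely up to an ambiguous unit---and to match the local $p$-adic $\epsilon$-factor of $\Drig(V)$ and the factors at $\ell \mid N'$ with the automorphic constants appearing in $C(f, \eta, j)$. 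Here I expect to use a local functional equation for the $(\varphi, \Gamma)$-module (in the spirit of Nakamura, \cite{Nakamura}), and it is at this step that the restriction to dimension $2$ enters. Finally, when $p \nmid N$ the representation $V$ is crystalline, so $\DdR(V) = \Dcris(V)$, no bad weights occur and $\mathfrak{U}_f = \mathfrak{X}$; the regulator reduces to the classical $\mathrm{Log}_V$, and pairing with the Frobenius eigenvector $e_\alpha$ recovers Kato's identity, giving $L_{p,\alpha}(f) = \langle \Lambda_p(f), e_\alpha \rangle$.
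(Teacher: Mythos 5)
Your overall architecture --- push Kato's element through a Perrin-Riou-type regulator extended to the de Rham setting, then obtain the functional equation by combining a local $\epsilon$-type duality in dimension $2$ with the self-duality of Kato's construction \`a la Nakamura --- is indeed the paper's architecture (thm. \ref{propfina} + lemme \ref{eaea2} + thm. \ref{eqfonct4}, with \ref{eqfonct1b} and \ref{eqfonctNakamura} playing exactly the roles you anticipate). But two of your steps have genuine gaps. The first concerns the embeddings $\iota_p$. The interpolation statement covers all $j < k-1$, hence also $j+1 \leq 0$, i.e.\ values of the complex $L$-function at integers outside the critical strip. There Kato's explicit reciprocity law gives you nothing: \cite[Thm.\ 12.5]{Kato} only interprets the dual exponentials of the Euler system inside the critical strip $1 \leq j+1 \leq k-1$, where the special values admit a period normalisation; for $s = j+1 \leq 0$ there is no period-normalised algebraic value, so your $\iota_p$ is simply undefined at those points. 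The paper must \emph{define} $\iota_p(\Lambda_\infty(f,\eta^{-1},-j))$, $j \geq 0$, as the image under the syntomic/$p$-adic regulator $r_p$ of Gealy's motivic classes $\mathscr{Z}(\check f\otimes\eta,j,\gamma)$ built from Beilinson's Eisenstein symbols (def.\ \ref{defplong}); that these deserve to be called special values is Gealy's Beilinson-type theorem (prop.\ \ref{GealyBK}), and the link with $\Lambda_{D,\mathbf{z}_{\rm Kato}}$ uses Gealy's second theorem $r_{\text{\'et}}(\mathscr{Z}) = \int_\Gamma \eta\chi^{j}\cdot \mathbf{z}^{(p)}_\gamma$ (prop.\ \ref{Gealy2}) together with the Nekovar--Niziol compatibility $\exp \circ\, r_p = r_{\text{\'et}}$ (lemme \ref{eaea2}). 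None of this motivic input appears in your proposal, and it cannot be replaced by Kato's theorem. Relatedly, at those points the regulator interpolates $\exp^{-1}$ (the Bloch--Kato exponential, which is bijective for $j \ll 0$), not $\exp^*$ as you assert; $\exp^*$ only governs $j \geq 0$.

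The second gap is your description of the domain $\mathfrak{U}_f$. It is not obtained by ``removing neighbourhoods of finitely many weights dictated by the Hodge--Tate--Sen data'': in the non-crystalline case the function is defined only on a union of small balls $\mathfrak{B}(\eta, N(D))$ centred at \emph{sufficiently ramified} finite-order characters (thm.\ \ref{propfina}), so it is undefined at every character of small conductor and the complement of $\mathfrak{U}_f$ is enormous, not finite. The obstruction is not divergence at bad weights but the surconvergence radius of Berger's differential equation $\Nrig(D)$: evaluating $\Lambda_{D,z}(\eta\kappa)$ requires applying the localisation $\varphi^{-m}$ (with $p^m$ the conductor of $\eta$) to $\kappa(\partial)(1-\varphi)z$, which only makes sense when that element lies in $\Delta^{]0,r_m]}$; this forces $m > m(\Delta)$ and $\kappa$ very close to the trivial character (lemme \ref{lemme6}). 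The constant $N(D)$ controlling the radii depends on $\D_{\rm pst}(D)$, i.e.\ on the conductor of the smallest Galois extension $K/\qp$ over which $V$ becomes semistable --- not on Hodge--Tate--Sen weights. Getting this geometry right is not cosmetic: it is precisely what makes the theorem applicable to supercuspidal forms, for which no classical construction on all of $\mathfrak{X}$ exists.
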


\begin{remarque}
 Les méthodes du théorème fournissent, plus généralement, pour toute représentation $V \in \mathrm{Rep}_L \mathscr{G}_\qp$ de Rham de dimension $d$ et tout $z \in H^1_{\rm Iw}(\qp, V)$, un ouvert $\mathfrak{U}_V \subseteq \mathfrak{X}$, ne dépendant que de la valuation $p$-adique du discriminant de la plus petite extension finie galoisienne $K / \qp$ sur laquelle $V$ dévient semi-stable, et une unique fonction $\Lambda_{V, z} \in \mathcal{O}(\mathfrak{U}_V)$ interpolant des exponentielles et exponentielles duales de différentes spécialisations de l'élément $z$. Dans le cas général, on n'a pas, hélas! une interprétation si satisfaisante des valeurs interpolées.
%\end{itemize}
\end{remarque}

\subsection{Démonstration}

Disons quelques mots sur la démonstration du théorème.

\subsubsection{Plongements $p$-adiques des valeurs spéciales}

Soit $M = M(f \otimes \eta^{-1})$ le motif associé à la forme $f \otimes \eta^{-1}$ et considérons, pour $j \geq 0$, \[ M^*(1+j) = M(\check{f} \otimes \eta)(k+j), \] dont $V(\eta \chi^{j + 1})$ est la réalisation $p$-adique. En utilisant les symboles d'Eisenstein définis par Beilinson (\cite{Beilinson2}), on peut construire des éléments (cf. \cite[\S 3.2]{Gealy2}) $ \mathscr{Z}(\check{f} \otimes \eta, j) \in H^1(M^*(1+j))$ et un résultat de Gealy (\cite[Thm. 4.1.1]{Gealy2}) montre qu'ils satisfont une variante de la conjecture de Bloch-Kato.

Par ailleurs, grâce aux travaux de Gros (\cite{Gros}), Niziol (\cite{Niziol}), Besser (\cite{Besser}), et Nekovar-Niziol (\cite{NN}), on dispose aussi des régulateurs $p$-adiques \[ r_p \colon H^1(M^*(1 + j)) \to \DdR(V(\eta \chi^{j + 1})), \] qui satisfont la relation de commutativité \[ r_{\text{ét}} = \exp \circ \; r_p, \] où $\exp$ est l'exponentielle de Bloch-Kato.

Le résultat de Gealy mentionné ci-dessus suggère de considérer les régulateurs $p$-adiques des éléments $\mathscr{Z}(\check{f} \otimes \eta, j)$ comme les valeurs $p$-adiques naturels à interpoler. Notons que les éléments $r_p(\mathscr{Z}(\check{f} \otimes \eta, j)) \in \DdR(V(\eta \chi^{j + 1}))$ vivent dans des modules différents. Nous allons expliquer comment les voir tous naturellement comme des éléments dans $\DdR(V)$.

%\subsubsection{Bases et modules de de Rham}

Si $\eta \colon \zpe \to L^\times$ est un caractère d'ordre fini, $G(\eta)$ dénote sa somme de Gauss et $a \in \zpe$, alors $\sigma_a(G(\eta)) = \eta(a)^{-1} G(\eta)$ et, si $e_\eta$ dénote une base du module $L(\eta)$ \footnote{$L(\eta)$ dénote le $L$-espace vectoriel de dimension $1$ sur lequel $\mathscr{G}_\qp$ agit à travers $\eta$.}, le groupe $\Gamma$ (et donc aussi $\mathscr{G}_\qp$) agit trivialement sur l'élément $\mathbf{e}^{\rm dR}_\eta = G(\eta) \cdot e_\eta \in \BdR \otimes L(\eta)$ et est donc une base du $L$-espace vectoriel $\DdR(L(\eta))$. Si $j \in \Z$, il en est de même de $\mathbf{e}^{\rm dR}_j = t^{-j} e_j \in \BdR \otimes L(\chi^j)$. Notons \[ \mathbf{e}^{\rm dR}_{\eta, j} =  \mathbf{e}^{\rm dR}_\eta \otimes \mathbf{e}^{\rm dR}_j = G(\eta) e_\eta \otimes t^{-j} e_j \] qui est une base du module $\DdR(L(\eta \chi^j))$ et, pour $e_\eta^\vee = e_{\eta^{-1}}$ et $e_{-j}$ les éléments duaux de $e_\eta$ et $e_j$, on note \[ \mathbf{e}^{\rm dR, \vee}_{\eta, j} = G(\eta)^{-1} e_\eta^\vee \otimes t^j e_{-j}, \] qui est une base du module $\DdR(L(\eta \chi^j)^*)$.

Si $V \in \mathrm{Rep}_L \mathscr{G}_\qp$ est de Rham, alors $V(\eta \chi^j)$ l'est aussi et on a, par ce qui précède, \[ \DdR(V(\eta \chi^j)) = (\BdR \otimes V \otimes L(\eta \chi^j))^{\mathscr{G}_\qp} = (\BdR \otimes V)^{\mathscr{G}_\qp} \otimes \mathbf{e}^{\rm dR}_{\eta, j} = \DdR(V) \otimes \mathbf{e}^{\rm dR}_{\eta, j}, \] de sorte que l'application $x \mapsto x \otimes \mathbf{e}^{\rm dR, \vee}_{\eta, j}$ induit un isomorphisme \[ \DdR(V(\eta \chi^j)) \xrightarrow{\sim} \DdR(V). \]

Notons \[ \Gamma^*(j+1) = \left\{
  \begin{array}{c c}
    j! & \quad \text{ si $j \geq 0$} \\
    \frac{(-1)^{j-1}}{(-j - 1)!} & \quad \text{si $j < 0$}  \\
   \end{array} \right. \] le coefficient principal de la série de Laurent de la fonction $\Gamma(s)$ en $s = j + 1$. On pose, pour $j \geq 0$,
\[ \iota_p(\Lambda_\infty(f, \eta^{-1}, -j)) =\Gamma^*(j) \, G(\eta) \cdot r_p( \mathscr{Z}(\check{f} \otimes \eta, j)) \otimes \mathbf{e}^{\rm dR, \vee}_{\eta, j + 1} \in \DdR(V). \] Remarquons que, dans la formule définissant $\iota_p(\Lambda_\infty(f, \eta^{-1}, -j))$, on "multiplie" et "divise" par $G(\eta)$, de sorte que son introduction n'a moralement aucun effet. La valeur $\Gamma^*(j)$ correspond donc à $\Gamma(j)$ et la puissance $t^{j + 1}$ correspond à la puissance de $2 i \pi$ dans la définition de $\Lambda_\infty(f, \eta^{-1}, -j)$.

\subsubsection{Interpolation: un premier exemple}

La construction de la fonction $L$ locale repose sur la théorie des $(\varphi, \Gamma)$-modules, et est inspirée de la ressemblance entre celle-ci et l'analyse fonctionnelle $p$-adique, via le dictionnaire d'analyse fonctionnelle $p$-adique de Colmez, qui est aussi à la base même de la construction de la correspondance de Langlands $p$-adique pour $\mathrm{GL}_2(\qp)$.

Soit $\mathrm{LA}(\zp, L)$ l'espace des fonctions localement analytiques sur $\zp$ à valeurs dans $L$ et $\mathscr{D}(\zp, L)$, son $L$-dual topologique, l'espace des distributions sur $\zp$. Si $\phi \in\mathrm{LA}(\zp, L)$ et $\mu \in \mathscr{D}(\zp, L)$, on note $\int_\zp \phi \cdot \mu$ l'évaluation de $\mu$ en $\phi$. On a des actions du groupe $\Gamma$ et des opérateurs $\varphi$ et $\psi$ sur l'espace de distributions définis par les formules
\small{\[ \int_\zp \phi(x) \cdot \sigma_a(\mu) = \int_\zp \phi(ax) \cdot \mu, \;\;\; \int_\zp \phi(x) \cdot \varphi(\mu) = \int_\zp \phi(px) \cdot \mu, \;\;\; \int_\zp \phi(x) \cdot \psi(\mu) = \int_{p \zp} \phi(x /p) \cdot \mu, \]}\normalsize
et une opération de "multiplication par $x$" définie par $\int_\zp \phi(x) \cdot m_x(\mu) = \int_\zp \phi(x) x \cdot \mu$. Notons que $\psi(\mu) = 0$ si et seulement si la distribution $\mu$ est supportée sur $\zpe$ et que, si $\psi(\mu) = \mu$, alors $(1 - \varphi) \mu$ est la restriction à $\zpe$ de $\mu$.

La transformée d'Amice \[ \mu \mapsto \mathscr{A}_\mu = \int_\zp (1 + T)^x \cdot \mu \] induit un isomorphisme $\mathscr{D}(\zp, L) \xrightarrow{\sim} \Robba^+$, où $\Robba^+ = \Robba \cap L[[T]]$. L'opérateur différentiel $\partial = (1 + T) \frac{d}{dT}$ sur $\Robba$ correspond à la multiplication par $x$ sur les distributions au sens que $\partial \mathscr{A}_\mu = \mathscr{A}_{m_x(\mu)}$ et les actions de $\varphi$ et de $\sigma_a \in \Gamma$ correspondent à $\varphi(T) = (1 + T)^p - 1$ et $\sigma_a(T) = (1 + T)^a- 1$. En fixant un système compatible $(\zeta_{p^n})_{n \in \N}$ de racines primitives de l'unité (c'est-à-dire, $\zeta_{p^n} \in \overline{\Q}_p$ est une racine primitive $p^n$-ième de $1$ et $\zeta_{p^{n + 1}}^p = \zeta_{p^n}$ pour tout $n \in \N$), on peut définir des applications de "localisation" \[ \varphi^{-n} \colon \E^{]0, r_n]} \to L_\infty[[t]]: \;\;\;  T \mapsto \zeta_{p^n} e^{t / p^n} - 1, \] où $\E^{]0, r_n]}$ dénote les éléments de $\Robba$ qui convergent sur la couronne $0 < v_p(z) \leq r_n = \frac{1}{p^{n - 1}(p - 1)}$ et $L_\infty[[t]] = \varinjlim L_n[[t]]$. L'opérateur $\partial$ stabilise $\E^{]0, r_n]}$ et on a l'identité $\varphi^{-n} \circ \partial = p^n \frac{d}{dt} \circ \varphi^{-n}$. Si $x = \sum_{l \in \N} a_l t^l \in L_\infty[[t]]$, on note $[x]_0 = a_0$. Le lemme suivant nous permet de réécrire l'intégration $p$-adique en termes des applications de localisation.

\begin{lemme} \label{lemmeintro} %[prop \ref{prop1} + cor. \ref{coro3}] \label{lemmeintro}
Pour tout $f \in \Robba^{\psi = 0}$, il existe une unique fonction analytique $\kappa \mapsto \kappa(\partial) f \colon \mathfrak{X} \to \Robba^{\psi = 0}$ interpolant les valeurs $\partial^j f$, $j \in \Z$, aux caractères $x^j$. De plus, si $\mu \in \mathscr{D}(\zp, L)^{\psi = 1}$, $\eta \colon \zpe \to L^\times$ est un caractère de conducteur $p^n$, $n > 0$, et $\kappa \in \mathfrak{X}$, alors \footnote{Si $a \in (\Z / p^n \Z)^\times$, on note $\sigma_a \in \Gamma_n$ l'élément lui correspondant par le caractère cyclotomique.} \[  \int_\zpe \eta^{-1} \kappa \cdot \mu = G(\eta)^{-1} \sum_{a \in (\Z / p^n \Z)^\times} \eta(a) \sigma_a [\varphi^{-n} \kappa(\partial) (1 - \varphi) \mathscr{A}_\mu]_0. \]
\end{lemme}

L'avantage du lemme ci-dessus est que le terme de droite a un sens, pour tout $(\varphi, \Gamma)$-module $D$ de de Rham, si l'on remplace $\mathscr{A}_\mu$ par $z \in \Nrig(D)^{\psi = 1}$, où $\Nrig(D)$ est l'équation différentielle $p$-adique associée à $D$ par Berger, dès que $n$ est assez grand, comme on le verra à continuation.

\subsubsection{Interpolation: le cas général}

Comme suggéré par les résultats de Kato, la construction de $\Lambda_p(f)$ demande à étendre la construction du logarithme de Perrin-Riou aux $(\varphi, \Gamma)$-modules de de Rham.

Soit $V \in \mathrm{Rep}_L \mathscr{G}_\qp$ et notons $D = \D_{\rm rig}(V)$ le $(\varphi, \Gamma)$-module sur l'anneau de Robba $\Robba$ qui lui est associé par l'équivalence de catégories de Fontaine \cite{Fontaine90}, Cherbonnier-Colmez \cite{CC} et Kedlaya \cite{Kedlaya}. On a un isomorphisme, dû à Fontaine et Pottharst (\cite{ColmezIw2}, \cite{Pottharst}), \[ \mathrm{Exp}^* \colon H^1(\qp, \mathscr{D}(\Gamma) \otimes V)) \xrightarrow{\sim} \D_{\rm rig}(V)^{\psi = 1}. \]

Suppposons $V$ de Rham et notons $\Delta = \Nrig(D)$ l'équation différentielle $p$-adique associée à $D$ par Berger (\cite{Berger02}, \cite{Berger03}). Si $D$ est à poids de Hodge-Tate positifs, on a $D \subseteq \Delta$. On dispose d'un opérateur de connexion $\partial$ sur $\Delta$ au-dessus de l'opérateur $\partial = (1 + T) \frac{d}{dT}$ de "multiplication par $x$" sur $\Robba$.

Pour $n \gg 0$, il existe des sous-$\E^{]0, r_n]}$-modules $\Delta^{]0, r_n]} \subseteq \Delta$ tels que $\Delta^{]0, r_n]} \otimes_{\E^{]0, r_n]}} \Robba = \Delta$. Ces modules sont stables par l'action de $\Gamma$ et satisfont $\varphi(\Delta^{]0, r_n]}) \subseteq \Delta^{]0, r_{n + 1}]}$. On a, pour tout $n \gg 0$, des applications de localisation \[ \varphi^{-n} \colon \Delta^{]0, r_n]} \to L_n[[t]] \otimes \DdR(V), \]
et on note $[\cdot]_0 \colon L_n[[t]] \otimes \DdR(V) \to L_n \otimes \DdR(V)$ l'application $\sum_l a_l t^l \otimes d_l \mapsto a_0 \otimes d_0$. L'opérateur stabilise $\Delta^{]0, r_n]}$ et on a $ \varphi^{-n} \circ \partial = p^n \frac{d}{dt} \circ \varphi^{-n}. $ 

Si $z \in D^{\psi = 1} \subseteq \Delta^{\psi = 1}$, $\eta$ est un caractère de conducteur $p^n$, $n \gg 0$ et $j \geq 0$, on a l'égalité suivante dans $L_n \otimes \DdR(D)$, qui est l'analogue, en termes de théorie d'Iwasawa, de l'égalité du lemme \ref{lemmeintro} ci-dessus:
\small{\begin{equation} \label{eq1} G(\eta)^{-1} \sum_{a \in (\Z / p^n \Z)^\times} \eta(a) \sigma_a [\varphi^{-n} \partial^j (1- \varphi) z]_0 = p^{n(j + 1)} \, \Gamma^*(j + 1) \cdot \exp^* (\int_{\zpe} \eta \chi^{-j} \mu_z) \otimes \mathbf{e}^{\rm dR, \vee}_{\eta, -j},
\end{equation}}\normalsize
où $\exp^* \colon H^1(\qp, V(\eta \chi^{-j})) \to \DdR(V(\eta \chi^{-j})) = \DdR(V) \otimes \mathbf{e}^{\rm dR}_{\eta, -j}$ dénote l'exponentielle duale de Bloch-Kato. La proposition suivante permet de prolonger analytiquement le terme de gauche de cette égalité.

\begin{proposition} [prop. \ref{prop4}]
Pour tout $z \in \Delta^{\psi = 0}$, il existe une unique fonction rigide analytique $\kappa \mapsto \kappa(\partial)z \colon \mathfrak{X} \to \Delta^{\psi = 0}$ interpolant les valeurs $\partial^j z$, $j \in \Z$, aux caractères $x^j$.
\end{proposition}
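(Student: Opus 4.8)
The plan is to run the proof of Lemma~\ref{lemmeintro} again, with the Robba ring $\Robba^{\psi=0}$ replaced by $\Delta^{\psi=0}$ and the Amice transform $\mathscr{A}_\mu$ replaced by $z$. The point is that every structure used in the rank-one case has just been made available for $\Delta$: a continuous action of $\Gamma$, an operator $\partial$ satisfying the scaling relation $\partial\sigma_a=a\,\sigma_a\partial$, and, for $n\gg 0$, localisation maps $\varphi^{-n}\colon\Delta^{]0,r_n]}\to L_n[[t]]\otimes\DdR(V)$ with $\varphi^{-n}\circ\partial=p^n\frac{d}{dt}\circ\varphi^{-n}$. The morally correct picture is that the family $\kappa\mapsto\kappa(\partial)z$ is nothing but \emph{pointwise multiplication by $\kappa$} on the distribution side of a Mellin transform, and the task is to extract the interpolation and the analyticity from these three ingredients.

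Concretely, I would first recall the module structure (Colmez, extended to $p$-adic differential equations by Berger and Nakamura): $\Delta^{\psi=0}$ is free of finite rank over the distribution algebra $\Robba(\Gamma)$ of $\Gamma$ (the $\Gamma$-action extended by continuity, i.e.\ by convolution), and we write $\mathfrak{M}\colon \Robba(\Gamma)^{\oplus d}\xrightarrow{\sim}\Delta^{\psi=0}$ for the corresponding Mellin isomorphism. Under $\mathfrak{M}$ the operator $\partial$ becomes \emph{pointwise} multiplication of distributions by the coordinate function $\chi\colon a\mapsto a$ on $\Gamma\cong\zpe$; note that this operation is not $\Robba(\Gamma)$-linear, and indeed $m_\chi\sigma_a=a\,\sigma_a m_\chi$ reproduces exactly the relation $\partial\sigma_a=a\,\sigma_a\partial$. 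Granting this, for $z=\mathfrak{M}(\lambda_z)$ I set
\[ \kappa(\partial)z \defeq \mathfrak{M}(\kappa\cdot\lambda_z), \]
where $\kappa\cdot\lambda_z$ is the distribution obtained by multiplying $\lambda_z$ pointwise by the character $\kappa$. For $\kappa=x^j$ one has $\kappa\cdot\lambda_z=\chi^{j}\lambda_z=m_\chi^{\,j}\lambda_z$, whence $x^j(\partial)z=\partial^j z$, the desired interpolation. Uniqueness is then immediate, since the integer weights $x^j$, $j\in\Z$, are dense in $\mathfrak{X}$: their residues exhaust the components of weight space and, within each residue disc, $j+(p-1)\Z$ is dense in $\zp$; so two rigid analytic functions agreeing at every $x^j$ coincide.

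The main obstacle is the genuine rigid analyticity of $\kappa\mapsto\kappa(\partial)z$, i.e.\ that it defines an element of $\mathscr{O}(\mathfrak{X})\,\widehat\otimes\,\Delta^{\psi=0}$, uniformly over all of $\mathfrak{X}$. Formally this is the continuity of the pointwise-multiplication pairing $(\kappa,\lambda)\mapsto\kappa\cdot\lambda$ — pairing against $\phi\in\mathrm{LA}(\zpe,L)$ gives $\kappa\mapsto\lambda_z(\phi\kappa)$ — but because $\Delta^{\psi=0}$ and $\Robba(\Gamma)$ are only Fréchet spaces and $\partial$ is unbounded, the naive estimates degenerate as $\kappa$ approaches the boundary of a residue disc, and one must also verify the normalisation $\partial\leftrightarrow m_\chi$ for the differential module $\Delta$ rather than just for $\Robba$. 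Both difficulties are handled by the localisations: writing $\kappa=\kappa_{\mathrm{tame}}\cdot\langle\kappa\rangle^{s}$ and $\kappa(\partial)=\kappa_{\mathrm{tame}}\cdot\exp\!\big(s\log\langle\partial\rangle\big)$, I would transport everything to $L_n[[t]]\otimes\DdR(V)$ via $\varphi^{-n}$, where $\partial$ becomes $p^n\frac{d}{dt}$; the extra factor $p^n$ makes $\log\langle\partial\rangle$ small and forces the exponential series to converge on the disc $v_p(\cdot)\le r_n$, with radius improving as $n$ grows and no lower-order correction to the operator. Assembling these estimates over the cover of $\Delta$ by the $\Delta^{]0,r_n]}$ and checking their compatibility with $\psi$ then produces a single rigid analytic function on $\mathfrak{X}$, which completes the argument.
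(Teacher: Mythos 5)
Your guiding picture --- that $\kappa(\partial)z$ should be pointwise multiplication of $z$, viewed as a distribution on $\zpe$, by the character $\kappa$ --- is the right one, and it is also the picture behind the paper's proof of proposition \ref{prop4}. But your first step contains the essential gap. Pointwise multiplication by a character is \emph{not} an $\Robba(\Gamma)$-linear operation, hence it is not intrinsic to $\Delta^{\psi=0}$ as an abstract $\Robba(\Gamma)$-module: it only makes sense after a basis (an identification with $\mathscr{D}(\zpe,L)^{\oplus d}$) has been chosen, and it changes when the basis changes. The commutation relation you invoke proves much less than what you need: if $\mathfrak{M}$ is \emph{any} Mellin isomorphism, then $\mathfrak{M}^{-1}\circ\partial\circ\mathfrak{M}$ and $m_\chi$ both satisfy $A\sigma_a=a\,\sigma_a A$, so $S:=m_\chi^{-1}\circ\mathfrak{M}^{-1}\circ\partial\circ\mathfrak{M}$ commutes with $\Gamma$ and is therefore an $\Robba(\Gamma)$-linear automorphism --- but nothing forces $S=\mathrm{id}$. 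With your definition $\kappa(\partial)z:=\mathfrak{M}(\kappa\cdot\mathfrak{M}^{-1}z)$ one gets $x(\partial)=\partial\circ\mathfrak{M}S^{-1}\mathfrak{M}^{-1}$, so the interpolation property $x^j(\partial)z=\partial^jz$ holds if and only if $S=\mathrm{id}$, i.e.\ if and only if the chosen basis consists of $\partial$-fixed vectors: the analogue of the rank-one facts $\partial(1+T)=1+T$ and $(1+T)=\mathscr{A}_{\delta_1}$. The existence of such a ``Dirac basis'' of $\Delta^{\psi=0}$ is not something you can simply recall from Colmez, Berger or Nakamura (what is citable is finite projectivity of $D^{\psi=0}$ over $\Robba(\Gamma)$, with no statement about $\partial$), and it is at least as delicate as the proposition itself; your proof assumes it without proof.

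There is a second gap in the analyticity step: all your quantitative work happens after applying $\varphi^{-n}$, in $L_n[[t]]\otimes\DdR(D)$, where $\partial$ indeed becomes $p^n\tfrac{d}{dt}$. Convergence there only controls the localization $\varphi^{-n}(\kappa(\partial)z)$, not $\kappa(\partial)z$ itself; to produce an element of $\mathcal{O}(\mathfrak{X})\widehat{\otimes}\Delta^{\psi=0}$ one needs estimates in the Fr\'echet topology of $\Delta$, namely a lower bound for the norm of $\partial$ relative to the valuations $v^{[r,s]}$ on the modules $\Delta^{]0,s]}$ themselves. This is precisely the content of lemme \ref{lemme11} ($v^{[r,s]}(\partial z)\geq v^{[r,s]}(z)-s-1$, obtained from the description of $\Delta$ via $\D_{\rm pst}$ and the estimate of lemme \ref{CN}), and it is what makes the paper's explicit construction converge: writing $z=\sum_i(1+T)^i\varphi^N(z_i)$ with $z_i=\psi^N((1+T)^{-i}z)$, the paper sets
\[ \kappa(\partial)z=\sum_{i\in(\Z/p^N\Z)^\times}\sum_{j=0}^{+\infty}{\omega_\kappa\choose j}\kappa(i)\,i^{-j}\,(1+T)^i\,p^{Nj}\,\varphi^N(\partial^jz_i), \]
i.e.\ the Taylor expansion of $\kappa$ at $i$ applied to $\partial$ on each residue piece; for $N$ large and $s<r(\Delta)p^{-N}$ the factor $p^{Nj}$ beats the norm of $\partial^j$, and independence of $N$ and of the representatives follows from the values at the $x^k$ together with Zariski density. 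Note that this series is exactly the multiplication-by-$\kappa$ structure your Mellin picture was meant to provide; the honest route is to take it as the definition and prove the convergence estimates, which is what the paper does.
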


La restriction à $\zpe$ (i.e l'application de l'opérateur $1 - \varphi$) permet d'interpoler $p$-adiquement l'opérateur $\partial$ mais, en revanche, l'application de spécialisation $\varphi^{-n}$ n'est pas définie sur tout le module $\Delta^{\psi = 0}$. Une étude du rayon de convergence de l'élément $\kappa(\partial)$, pour un caractère $\kappa \in \mathfrak{X}(L)$, montre que le terme de gauche de l'équation \label{eq2} définit bien une fonction analytique sur une boule ouverte autour $\eta$ dans l'espace des poids. Le théorème suivant, qui est une généralisation du Logarithme de Perrin-Riou et le résultat principal de cet article, appliqué à $D = \Drig(V(f)(k - 1))$, fournit la construction de la fonction $\Lambda_p(f)$. On renvoie au texte pour les notations qui n'ont pas encore été introduites.
%On a, sur un $(\varphi, \Gamma)$-module, une notion de rayon de surconvergence permettant de définir les applications de localisation $\varphi^{-n}$

\begin{theorem} [thm. \ref{propfina}] \label{propfinaintro}
Soient $D$ un $(\varphi, \Gamma)$-module qui est de Rham à poids de Hodge-Tate positifs et $z \in D^{\psi = 1}$. Notons $\mu_z = \mathrm{Exp}^*(z) \in H^1_{\rm Iw}(\qp, D)$. Il existe un ouvert $\mathfrak{U}_D \subseteq \mathfrak{X}$, ne dépendant que du module $\D_{\rm pst}(D)$, et une unique fonction analytique $\Lambda_{D, z} \in \mathcal{O}(\mathfrak{U}_D) \otimes \DdR(D)$ telle que, pour tout $\eta \chi^j \in \mathfrak{U}_D$, où $\eta$ est un caractère de conducteur $p^n$ et $j \in \Z$, on a
\[ \Lambda_{D, z} (\eta \chi^j) = p^{n(j + 1)} \Gamma^*(j + 1) \cdot \left\{ \begin{array}{l l}
  \exp^{-1} (\int_{\Gamma} \eta \chi^{-j} \cdot \mu_z)  \otimes \mathbf{e}^{\rm dR, \vee}_{\eta, -j}  & \quad \text{si $j \ll 0$} \\
  \exp^* (\int_{\Gamma} \eta \chi^{-j} \cdot \mu_z)  \otimes \mathbf{e}^{\rm dR, \vee}_{\eta, -j} & \quad \text{si $j \geq 0$.} \\ \end{array} \right. \] 
\end{theorem}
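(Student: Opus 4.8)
The plan is to construct $\Lambda_{D,z}$ by interpolating $p$-adically the left-hand side of \eqref{eq1}, replacing the discrete operators $\partial^j$ by the analytic family of Proposition \ref{prop4}. Since $z \in D^{\psi=1}$, one has $\psi((1-\varphi)z)=0$, so $(1-\varphi)z\in\Delta^{\psi=0}$, and Proposition \ref{prop4} produces a unique rigid analytic map $\kappa_0\mapsto\kappa_0(\partial)(1-\varphi)z\colon\mathfrak{X}\to\Delta^{\psi=0}$ whose value at $\kappa_0=x^j$ is $\partial^j(1-\varphi)z$ (recall that $\partial$, being multiplication by the unit $x$ on distributions supported on $\zpe$, is invertible on $\Delta^{\psi=0}$, so negative $j$ are allowed). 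Fixing a finite-order character $\eta$ of conductor $p^n$ and writing a nearby weight as $\kappa=\eta\kappa_0$, the candidate is
\[ \Lambda_{D,z}(\eta\kappa_0)=G(\eta)^{-1}\sum_{a\in(\Z/p^n\Z)^\times}\eta(a)\,\sigma_a\,[\varphi^{-n}(\kappa_0(\partial)(1-\varphi)z)]_0, \]
which lands in $\DdR(D)$ because the operator $G(\eta)^{-1}\sum_a\eta(a)\sigma_a$ descends the $L_n$-coefficients to $L$. This is exactly the left-hand side of \eqref{eq1} when $\kappa_0=\chi^j$, with the fixed $\eta$ supplying the twisted sum and Proposition \ref{prop4} supplying the analytic continuation.

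The crux, and the main obstacle, is that $\varphi^{-n}$ is defined only on the submodule $\Delta^{]0,r_n]}$ of elements converging on $0<v_p(z)\le r_n$, whereas $\kappa_0(\partial)(1-\varphi)z$ a priori lies in all of $\Delta^{\psi=0}$ and, for a transcendental weight $\kappa_0$, may converge on a strictly smaller annulus. I would therefore carry out a quantitative estimate of the radius of convergence of $\kappa_0(\partial)(1-\varphi)z$ in terms of $\kappa_0$: writing $\kappa_0(\partial)$ as a power series in $\partial$ whose coefficients grow with the distance from $\kappa_0$ to the algebraic characters, one shows that for every $\eta$ of conductor $p^n$ with $n$ large enough there is an open ball $B(\eta)\subseteq\mathfrak{X}$ about $\eta$ on which $\kappa_0(\partial)(1-\varphi)z$ lands in $\Delta^{]0,r_n]}$, so that $\varphi^{-n}$ applies and the displayed formula defines an element of $\mathcal{O}(B(\eta))\otimes\DdR(D)$. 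The open set $\mathfrak{U}_D$ is the union of these balls and their $\chi^j$-translates; since the radii $r_n$ and the modules $\Delta^{]0,r_n]}$ are read off from Berger's equation $\Delta=\Nrig(D)$, which depends only on $\D_{\rm pst}(D)$, so does $\mathfrak{U}_D$. Matching the growth of $\kappa_0(\partial)$ against the radii $r_n$ is the delicate part of the argument.

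To identify the interpolated values, one specializes the analytic family at the algebraic points $\kappa_0=\chi^j$, where $\kappa_0(\partial)(1-\varphi)z=\partial^j(1-\varphi)z$. For $j\ge0$ the displayed formula then becomes the left-hand side of \eqref{eq1} and equals $p^{n(j+1)}\Gamma^*(j+1)\,\exp^*(\int_\Gamma\eta\chi^{-j}\mu_z)\otimes\mathbf{e}^{\rm dR,\vee}_{\eta,-j}$, as required. For $j\ll0$ the twist $V(\eta\chi^{-j})$ has all its Hodge-Tate weights in the range where $\exp^*$ vanishes and the Bloch-Kato exponential is an isomorphism; here I would establish the companion identity to \eqref{eq1}, namely that the same twisted sum equals $p^{n(j+1)}\Gamma^*(j+1)\,\exp^{-1}(\int_\Gamma\eta\chi^{-j}\mu_z)\otimes\mathbf{e}^{\rm dR,\vee}_{\eta,-j}$, the passage from $\exp^*$ to $\exp^{-1}$ being precisely the $(\varphi,\Gamma)$-module form of Perrin-Riou's reciprocity law, with the two regimes separated by the poles of $\Gamma(s)$ encoded in $\Gamma^*(j+1)$.

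Finally, uniqueness is immediate: the characters $\eta\chi^j$ occurring in the interpolation accumulate at every point of $\mathfrak{U}_D$, and $\mathfrak{U}_D$ being quasi-Stein, an element of $\mathcal{O}(\mathfrak{U}_D)\otimes\DdR(D)$ is determined by its values on such a Zariski-dense set; this also forces the locally defined sections on overlapping balls $B(\eta)$ to glue into the single function $\Lambda_{D,z}$.
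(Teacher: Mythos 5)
Your proposal is correct and follows essentially the same approach as the paper: the same defining formula \eqref{forminterp} obtained by applying Proposition \ref{prop4} to $(1-\varphi)z$, the same identification of the convergence radius of $\kappa(\partial)(1-\varphi)z$ as the crux (the paper's Lemme \ref{lemme6}, which produces the constant $N(D)$ and the open set $\mathfrak{U}_D$), identification of the special values through the reciprocity laws, and uniqueness by Zariski density of the characters $\eta\chi^j$. The only step you compress is the case $j\ll 0$: since the reciprocity law in $(\varphi,\Gamma)$-module form (Nakamura, Theorem \ref{recPR}) is stated for $\mu_{\nabla_h z}$ rather than for $\mu_z$, your ``companion identity'' is obtained in the paper by first proving the interpolation with the auxiliary operator $\nabla_h$ inserted (Proposition \ref{lemme5}) and then removing it via Lemme \ref{lemmeconv}, which is possible precisely because $z\in D^{\psi=1}$ when the Hodge-Tate weights are positive.
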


Ce théorème, accouplé à un deuxième résultat de Gealy reliant les éléments motiviques du théorème \ref{GealyBK} et le système d'Euler de Kato (cf. prop. \ref{Gealy2}), permet de démontrer l'existence de l'application $\Lambda_p(f)$ du théorème \ref{eaealala}. Le fait que dans le cas cristallin l'on récupère la fonction $L$ $p$-adique classique associée à la forme $f$ suit du fait que notre application est une généralion du Logaritme de Perrin-Riou et de la description de la fonction $L$ $p$-adique de $f$ en termes du système d'Euler de Kato.

\subsubsection{L'équation fonctionnelle en dimension $2$ et valeurs aux entiers positifs}

Il est naturel de se demander si ce que la fonction $\Lambda_p(f)$ interpole aux entiers positifs s'interprète aussi en termes de valeurs spéciales de la fonction $L$ complexe. Rappelons que, dans la bande critique $1 \leq j \leq k - 1$, les valeurs $L(f, \eta, j)$ sont naturellement interprétées $p$-adiquement en les divisant par les périodes complexes de la forme $f$. Si $j > k - 2$, une équation fonctionnelle purement locale demontrée dans \cite{epsilonKato}, accouplée avec une équation fonctionnelle satisfaite par le système d'Euler de Kato demontrée par Nakamura (prop. \ref{eqfonctNakamura}), fournit l'équation fonctionnelle du théorème \ref{eaealala}.

\subsection{Plan de l'article}

L'organisation de l'article ne reflet pas celle de l'introduction. Les premières sections présentent des résultats purement locaux, reportant l'application aux formes modulaire à la fin.

Le premier chapitre contient le premier résultat principal de la thèse, à savoir l'extension analytique (partielle) de l'application Logarithme de Perrin-Riou pour les représentations de de Rham, fournissant des fonctions rigides analytiques sur des ouverts de l'espace des poids interpolant les applications exponentielle et exponentielle duale de Bloch et Kato. On y trouvera aussi des rappels et généralités sur les $(\varphi, \Gamma)$-modules, ainsi qu'une estimation précise du rayon de convergence de $\Delta$ en termes du module $\D_{\rm pst}(D)$, permettant de décrire l'ouvert $\mathfrak{U}_D$ du théorème \ref{propfinaintro}. Dans le cas étale de dimension $2$, en utilisant le résultat principal de \cite{epsilonKato}, on obtient une équation fonctionnelle pour notre fonction $L$ locale.

Finalement, on montre, à l'aide d'un théorème de M. Gealy et des conjectures de Bloch-Kato pour les formes modulaires, comment la construction de la fonction $L$ locale peut être utilisée pour donner une construction de la fonction $L$ $p$-adique d'une forme modulaire, sans aucune hypothèse sur sa pente, et montrer qu'elle interpole des valeurs spéciales de la forme (dûment interprétées $p$-adiquement) en tout caractère algébrique de composante finie suffisament ramifiée.

\subsection{Remerciements}

Le travail présent fait partie de ma thèse de doctorat réalisée à l'Institut de Mathématiques de Jussieu-Paris Rive Gauche (IMJ-PRG) sous la direction de Pierre Colmez. Je tiens à lui exprimer mes plus grands et sincères remerciements pour tout ce qu'il m'a appris et pour tout le temps qu'il a investi dans ma formation mathématique, avec disponibilité et bonne humeur. Je remercie aussi Arthur-César Le Bras, Gabriel Dospinescu et Shanwen Wang, avec qui j'ai partagé des nombreuses discussions qui ont beaucoup aidé à ma compréhension du sujet. Je remercie finalement les rapporteurs pour ses remarques et corrections.

%\newpage
\section*{Notations} On fixe certaines notations:

\begin{itemize}
\item Soient $p$ un nombre premier, $\qp$ le corps des nombres $p$-adiques, $\zp \subseteq \qp$ l'anneau des entiers de $\qp$, $\zpe$ le groupe des unités et $\cp$ la complétion $p$-adique de la clôture algébrique $\overline{\Q}_p$ de $\qp$.

\item On fixe un système $(\zeta_{p^n})_{n \in \N}$, où $\zeta_{p^n} \in \overline{\Q}_p$, de racines $p^n$-ièmes primitives de l'unité satisfaisant la relation $\zeta_{p^{n+1}}^p = \zeta_{p^n}$ dès que $n \geq 0$. Si $n \in \N$, on note $F_n = \qp(\zeta_{p^n})$ le $n$-ième niveau de la tour cyclotomique et $F_\infty = \cup_n F_n$.

\item Soit $\mathscr{G}_{\qp} = \mathrm{Gal}(\overline{\Q}_p / \qp)$ le groupe de Galois absolu de $\qp$. On définit le caractère cyclotomique $\chi \colon \mathscr{G}_\qp \to \zpe$ par la formule $g(\zeta_{p^n}) = \zeta_{p^n}^{\chi(g)}$, pour tout $n \in \N$, et on note $\mathscr{H} = \mathscr{H}_\qp$ le noyau de $\chi$, qui s'identifie au groupe de Galois absolu de $F_\infty$, et $\Gamma = \Gamma_\qp = \mathscr{G}_\qp / \mathscr{H}_\qp$ qui s'identifie à $\mathrm{Gal}(F_\infty / \qp)$. Le caractère cyclotomique induit un isomorphisme $\chi \colon \Gamma \xrightarrow{\sim} \zpe$. Si $a \in \zpe$, on note $\sigma_a \in \Gamma$ son inverse par $\chi$.

\item Soit $L$ une extension finie de $\qp$, qui sera notre corps de coefficients, et notons $L_n = L \otimes F_n = L(\mu_{p^n})$ et $L_\infty = \cup_n L_n$. On note $\mathrm{Rep}_L(\mathscr{G}_\qp)$ la catégorie des $L$-espaces vectoriels de dimension finie munis d'une action $L$-linéaire continue du groupe $\mathscr{G}_\qp$. On omettra souvent $L$ des notations, cependant il faut être conscient que $L$ est le corps sous-jacent et qu'il change au fur et à mesure des besoins. Par exemple, si $\eta$ est un caractère de $\mathscr{G}_\qp$ et si on note $\qp(\eta)$ la représentation de dimension $1$ engendrée par un élément $e_\eta$ sur lequel l'action de $\mathscr{G}_\qp$ est définie par la formule $g(e_\eta) = \eta(g) e_\eta$, on sous-entendra que le corps $L$ de coefficients contient les valeurs prises par $\eta$.
\end{itemize}

\subsubsection{Anneaux des séries de Laurent}

Commençons par quelques définitions classiques.

\begin{itemize}
\item On définit le corps $\E$ par $$ \E = \big\lbrace \sum_{k \in \Z} a_k T^k: \; a_k \in L; \;  \liminf_{k \to +\infty} v_p(a_k) > -\infty; \; \lim_{k \to -\infty} v_p(a_k) = + \infty \big\rbrace,  $$ muni de la valuation donnée par $v_\E(\sum_{k \in \Z} a_k T^k) = \inf_k v_p(a_k)$, ce qui fait de $\E$ un corps valué de dimension $2$ dont on note $\mathscr{O}_\E$ l'anneau des entiers.	
\item Si $0 < r < s$, on définit $\E^{[r, s]}$ comme l'anneau des fonctions analytiques à valeurs dans $L$ sur la couronne $C_{[r, s]} = \{ z \in \cp: \; r \leq v_p(z) \leq s \}$. On a $$ \E^{[r, s]} = \big\lbrace \sum_{k \in \Z} a_k T^k: a_k \in L, \lim_{k \to -\infty} v_p(a_k) + k s = +\infty, \; \mathrm{et} \; \lim_{k \to +\infty} v_p(a_k) + k r = +\infty \big\rbrace. $$ L'anneau $\E^{[r, s]}$ est principal, de Banach pour la valuation $v^{[r,s]}$ définie par $$ v^{[r,s]}(\sum_{k \in \Z} a_k T^k) = \inf_{r \leq v_p(z) \leq s} v_p(f(z)) = \min\{ \inf_{k \in \Z}(v_p(a_k) + kr), \inf_{k \in \Z}(v_p(a_k) + ks) \}.$$
\item Pour $0 < r < s$ on note $$ \E^{]r, s]} = \varprojlim_{t > r} \E^{[t, s]} $$ l'anneau de fonctions analytiques sur la couronne $C_{]r, s]} = \{ z \in \cp: r < v_p(z) \leq s \},$ qui est un anneau de Fréchet pour la famille de valuations $v^{[t, s]}$ pour $t \in ]r, s]$.
\item  Soit $r_n = \frac{1}{p^{n-1}(p - 1)} = v_p(\zeta_{p^n} - 1)$. On note $$ \E^{]0, r_n]} = \E^{]0, r_n]}, \;\;\; \Robba = \varinjlim_{s > 0} \E^{]0, s]} $$ l'anneau de Robba et $\E^\dagger \subseteq \Robba$ son sous-anneau d'éléments bornés. C'est l'anneau des séries de Laurent (resp. des séries de Laurent à coefficients bornés) qui convergent sur une couronne $C_{]0, s]}$ pour $s$ assez petit (qui dépend de chaque fonction). On remarquera que l'on obtient $\E$ et $\Robba$ en complétant $\E^\dagger$, respectivement, par la topologie $p$-adique et par la topologie de Fréchet définie par la famille de normes $v^{[r,s]}$, $0 < r < s$.
\end{itemize}

Si $\mathscr{A}$ est un des anneaux définis ci-haut, on note $\mathscr{A}^+$ son intersection avec $L[[T]]$. On a, par exemple, $\E^+ = \mathscr{O}_L[[T]][\frac{1}{p}]$ et $\Robba^+$ s'identifie à l'anneau des fonctions analytiques sur la boule ouverte unité.  On a une action de $\Gamma$ sur tous les anneaux définis et une action de l'opérateur $\varphi$ sur les anneaux $\E$, $\E^\dagger$ et $\Robba$, définies par les formules $$ \sigma_a(T) = (1 + T)^a - 1 \; \; \; \; \varphi(T) = (1 + T)^p - 1. $$

Tous ces anneaux portent des topologies naturelles pour lesquelles les actions de $\Gamma$ et $\varphi$ sont continues. Posons $\mathscr{A} \in \{ \E, \Robba \}.$ L'anneau $\mathscr{A}$ est muni d'une action de l'opérateur $\psi$: $\mathscr{A}$ est une extension de degré $p$ de $\varphi(\mathscr{A})$ avec une base formée par les éléments $(1 + T)^i$, $i = 0, \hdots, p - 1$, et on pose $\psi(\sum_{i = 0}^{p - 1} (1 + T)^i \varphi (f_i)) = f_0$. Ceci peut être écrit comme $$ \psi = p^{-1} \varphi^{-1} \circ \mathrm{tr}_{\mathscr{A} / \varphi(\mathscr{A})}. $$ L'opérateur $\psi$ ainsi construit est un inverse à gauche de $\varphi$.

Pour $\mathscr{A}$ comme ci-dessus, on note $\Phi \Gamma(\mathscr{A})$ la catégorie des $(\varphi, \Gamma)$-modules sur $\mathscr{A}$.

\subsubsection{Caractères de $\zpe$} Soit $\eta \colon \zpe \to \cpe$ un caractère de conducteur $p^n$. On définit, pour $b \in \zp$, $G(\eta, b) = \sum_{a = 1}^{p^n -1} \eta(a) \zeta_{p^n}^{ab}$ la somme de Gauss tordue et on note $G(\eta) = G(\eta, 1)$. On note $\eta^{-1}$ le caractère de Dirichlet modulo $p^n$, défini par $\eta^{-1}(n) = \eta(n)^{-1}$ pour $n \in (\Z / p^n \Z)^\times$. Rappelons deux résultats classiques de la théorie des caractères:

\begin{prop} Soit $\eta \colon \zpe \to \cpe$ un caractère de conducteur $p^n$. Alors
\begin{itemize}
\item $G(\eta, b) = \eta^{-1}(b) G(\eta, 1)$ pour tout $b \in \zp$.
\item $G(\eta) G(\eta^{-1}) = \eta(-1) p^n.$
\end{itemize}
\end{prop}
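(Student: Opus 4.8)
The plan is to prove the two identities separately, establishing the twisted Gauss sum formula first and deducing the second identity from it by an orthogonality (Fourier inversion) argument.

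For the first identity I would distinguish according to whether $b$ is a unit. If $b \in \zpe$, then multiplication by $b$ permutes $(\Z/p^n\Z)^\times$, so reindexing $G(\eta, b) = \sum_a \eta(a)\zeta_{p^n}^{ab}$ by $a \mapsto a b^{-1}$ gives
\[ G(\eta, b) = \sum_{a} \eta(ab^{-1}) \zeta_{p^n}^{a} = \eta^{-1}(b) \sum_a \eta(a)\zeta_{p^n}^a = \eta^{-1}(b) G(\eta), \]
which is exactly the claim, bearing in mind that $\eta(a) = 0$ for $p \mid a$, so the sum over $a = 1, \dots, p^n-1$ is really supported on the units. If $p \mid b$, then $\eta^{-1}(b) = 0$ and one must show $G(\eta, b) = 0$. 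Here I would use the subgroup $H \subseteq (\Z/p^n\Z)^\times$ of elements congruent to $1$ modulo $p^{n-1}$: since $p \mid b$ and $p^{n-1} \mid (u-1)$ for $u \in H$, one has $\zeta_{p^n}^{aub} = \zeta_{p^n}^{ab}$, so grouping the $a$-sum over the cosets of $H$ factors out $\sum_{u \in H}\eta(u)$. This sum vanishes precisely because $\eta$ has conductor exactly $p^n$, hence is nontrivial on $H$.

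For the second identity, rather than evaluating a Ramanujan-type sum case by case, I would exploit orthogonality directly. Summing the first identity against $\zeta_{p^n}^{-cb}$ over all residues $b$ modulo $p^n$ and interchanging the order of summation gives, for any unit $c$,
\[ \sum_{b \bmod p^n} G(\eta, b)\, \zeta_{p^n}^{-cb} = \sum_a \eta(a) \sum_{b \bmod p^n} \zeta_{p^n}^{(a - c)b} = p^n \eta(c), \]
the inner sum being $p^n$ when $a \equiv c$ and $0$ otherwise. On the other hand, substituting $G(\eta, b) = \eta^{-1}(b) G(\eta)$ turns the left-hand side into $G(\eta)\cdot G(\eta^{-1}, -c)$, and applying the first identity to $\eta^{-1}$ (whose inverse is $\eta$) gives $G(\eta^{-1}, -c) = \eta(-c) G(\eta^{-1})$. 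Taking $c = 1$ and using $\eta(-1)^2 = \eta(1) = 1$ then yields $G(\eta) G(\eta^{-1}) = \eta(-1)\, p^n$.

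The only genuine subtlety is the vanishing of $\sum_{u \in H}\eta(u)$ in the non-unit case of the first identity, which is exactly where the conductor hypothesis is used; everything else is formal reindexing and the orthogonality of the characters of $\Z/p^n\Z$. I would also note that the degenerate case $n = 1$, where $H = (\Z/p\Z)^\times$, is handled identically, the required vanishing then being the nontriviality of $\eta$ itself.
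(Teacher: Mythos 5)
Your proof is correct and complete: the reindexing for unit $b$, the coset argument over $H = 1 + p^{n-1}\Z_p \bmod p^n$ (which is exactly where the hypothesis that the conductor is $p^n$ enters) for non-unit $b$, and the orthogonality/Fourier-inversion derivation of the second identity are all valid, including the degenerate case $n=1$. Note that the paper itself states this proposition without proof, as a recall of classical facts about Dirichlet characters, so there is no argument in the text to compare against; your write-up is the standard proof, with the pleasant feature that deducing $G(\eta)G(\eta^{-1}) = \eta(-1)p^n$ from the first identity via summation against $\zeta_{p^n}^{-cb}$ avoids the case analysis of Ramanujan sums that the more common direct computation of the product $G(\eta)G(\eta^{-1})$ requires.
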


Si $f \in \mathrm{LC}_c(\qp, \cp)$ est une fonction localement constante modulo $p^n$ et à support compact, on peut définir sa transformée de Fourier discrète par la formule $$ \hat{f}(x) = p^{-m} \sum_{y \; \mathrm{mod} \; p^m} f(y) e^{- 2 \pi i x y}, $$ où $m$ est un entier arbitraire tel que $m \geq \sup{(n, - v_p(x))}$, et $e^{- 2 \pi i x y}$ est la racine de l'unité d'ordre une puissance de $p$ définie par l'application $\qp \to \qp / \zp \cong \Z[\frac{1}{p}] / \Z$ et par le choix du système $(\zeta_{p^n})_{n \in \N}$ de racines de l'unité: si $k \in \N$ est tel que $a = p^k y x\in \zp$, alors $e^{- 2 \pi i x y} = \zeta_{p^k}^{-a}$. Si $\eta$ est un caractère de Dirichlet de conducteur $p^n$, on a 
\[ \hat{\eta}(x) = \left\{ 
  \begin{array}{l l}
    \frac{1}{G(\eta^{-1})} \eta^{-1}(p^n x) & \quad \text{si $n > 0$}\\
    \mathbf{1}_{\zp}(x) - p^{-1} \mathbf{1}_{p^{-1}\zp}(x) & \quad \text{si $n = 0$.}
  \end{array} \right.\]
En particulier, on observe que $\hat{\eta}$ est à support dans $p^{-n} \zpe$ (resp. $p^{-1}\zp$) si $n > 0$ (resp. si $n = 0$).

\subsubsection{L'espace des poids $p$-adiques} \label{poids}

On note $\mathfrak{X} = \mathrm{Hom}(\zpe, \mathbf{G}_m)$ l'espace des poids $p$-adiques. Il est un espace rigide analytique, dont les $L$ points, pour une extension finie $L$ de $\qp$, paramètrent les caractères continus de $\zpe$ à valeurs dans $L^\times$. Posons $q = p$ si $p > 2$ et $q = 4$ si $p = 2$. On a $\zpe = (\Z / q \Z)^\times \times (1 + q \zp)$. L'application logarithme induit un isomorphisme de groupes $(1 + q \zp, \times) \xrightarrow{\sim} (q \zp, +)$, d'inverse l'application exponentielle. On a un isomorphisme \[ \mathfrak{X} \xrightarrow{\sim} {( (\Z / q \Z)^\times)}^\wedge \times \mathrm{B(1, 1^-)}: \;\;\; \eta \mapsto (\eta|_{(\Z / q \Z)^\times}, \eta(\exp(q))), \] où $B(1, 1^-)$ est la boule unité ouverte centrée en $1$. L'inverse de cet isomorphisme envoie $(\chi, z) \in ( (\Z / q \Z)^\times)^\wedge \times \mathrm{B(1, 1^-)}$ sur le caractère $x \mapsto \chi(\bar{x}) z^{\frac{\log(x)}{q}} \in \mathfrak{X}$, ou $\bar{x}$ dénote la réduction modulo $p$ (resp. $2p$ si $p = 2$) de $x$.

Si $\eta \colon \zpe \to L \in \mathfrak{X}$, on note $z_\eta = \eta(\exp(q)) \in \mathrm{B}(1, 1^-)$ la deuxième coordonné de l'image de $\eta$ par l'isomorphisme ci-dessus et on note $\omega_\eta = \eta'(1) = \frac{\log(z_\eta)}{q}$ son \emph{poids}.

%Le théorème \ref{Amice-Colmez} implique le fait suivant très utile: les caractères de la forme $x \mapsto x^k$, pour $k \in \Z$, sont denses dans $\mathfrak{X}$ pour sa topologie d'espace rigide associée: \textit{une fonction rigide s'annulant sur les caractères de la forme $x^k$ est identiquement nulle}.

L'espace $\mathfrak{X}$ admet un recouvrement croissant admissible $\mathfrak{X} = \cup_n \mathfrak{X}_n$ par des ouverts affinoïdes $\mathfrak{X}_n = {( (\Z / q \Z)^\times)}^\wedge \times \mathrm{B}(1, p^{-\frac{1}{n}})$, ce qui fait de $\mathfrak{X}$ un espace quasi Stein. On note $\mathcal{O}(\mathfrak{X})$ et $\mathcal{O}(\mathfrak{X}_n)$ les anneaux des fonctions analytiques de ces espaces. On a \[ \mathcal{O}(\mathfrak{X}) = \varprojlim_n \mathcal{O}(\mathfrak{X}_n). \] 

On dispose (\cite{AmiceVelu}) d'un isomorphisme, dû à Amice, \[ \mathscr{D}(\zpe, L) \xrightarrow{\sim} \mathcal{O}_L(\mathfrak{X}), \] envoyant $\mu \in \mathscr{D}(\zpe, L)$ sur la fonction analytique $F_\mu  \in \mathcal{O}_L(\mathfrak{X})$ définie par $F_\mu(\eta) =  \int_\zpe \eta(x) \cdot \mu$. Comme les polynômes sont denses dans l'espace des fonctions localement analytiques, l'isomorphisme précédent montre qu'une fonction $F \in \mathcal{O}(\mathfrak{X})$ s'annulant sur les caractères $x \mapsto x^k$, $k \in \Z$, est identiquement nulle. On exprime ceci en disant que \textit{les caractères de la forme $x \mapsto x^k$, $k \in \Z$, sont Zariski denses dans $\mathfrak{X}$}.

On définit $$ \mathcal{O}(\mathfrak{X}) \widehat{\otimes} \Robba = \varprojlim_{n > 0} \varinjlim_{s>0} \varprojlim_{0 < r < s} \mathcal{O}(\mathfrak{X}_n) \widehat{\otimes} \E^{[r, s]}, $$ où le produit tensoriel à droite c'est le produit tensoriel complété usuel entre deux espaces de Banach. On peut, plus généralement, considérer des distributions à valeurs dans une limite inductive de Fréchets quelconque par les mêmes formules. On dira que $f$ est une fonction analytique sur $\mathfrak{X}$ à valeurs dans $\Robba$ si elle appartient à $\mathcal{O}(\mathfrak{X}) \widehat{\otimes} \Robba$.
%On note $$ \mathscr{D}(\zpe, \Robba) = \mathcal{\D}(\zpe, L) \widehat{\otimes} \Robba =  \varprojlim_n \mathcal{O}(\mathfrak{X}_n) \widehat{\otimes} \varinjlim_{s > 0} \varprojlim_{0 < r < s} \E^{[r, s]} = \mathcal{O}(\mathfrak{X}) \widehat{\otimes} \Robba, $$ où les égalités aux extrémités sont des définitions. Ceci permet de parler indistinctement de distributions sur $\zpe$ et de fonctions analytiqu{es sur $\mathfrak{X}$ à valeurs dans $\Robba$.

\makeatletter
\def\thesection{\Roman{section}}
\def\thesubsection{\thesection.\arabic{subsection}}
\def\thesubsubsection{\Roman{section}.\arabic{subsection}.\arabic{subsubsection}}

\makeatother

\section{La fonction $L$ locale d'un $(\varphi, \Gamma)$-module de de Rham}

Le présent chapitre contient le premier résultat de cet article, encadré dans l'étude des fonctions $L$ $p$-adiques des $(\varphi, \Gamma)$-modules de de Rham associées à un système d'Euler, comme démarré par Perrin-Riou (\cite{PerrinRiou1}). On construit (thm. \ref{propfin}), pour un $(\varphi, \Gamma)$-module $D$ de Rham sur $\Robba$, une extension analytique de l'application Logarithme de Perrin-Riou fournissant, à partir d'un élément dans la cohomologie d'Iwasawa de $D$, une fonction $L$ $p$-adique, définie sur un ouvert de l'espace de poids.

\subsection{Résultat principal} \label{sectresprinc}

On a besoin d'introduire quelques notations pour énoncer le résultat principal de cet article. Si $\xi, \delta \in \mathfrak{X}$ sont deux caractères, on définit leur distance par $v_p(\xi - \delta) = v_p(z_\xi - z_\delta)$ si $\xi|_{(\Z / q\Z)^\times} =  \delta|_{(\Z / q\Z)^\times}$  et $v_p(\xi - \delta) = - \infty$ si non, où $z_\xi$, $z_\delta \in B(1, 1^-)$ sont définis dans \ref{poids}. Si $\eta \in \mathfrak{X}$ est un caractère d'ordre fini, on note $\mathrm{c}(\eta)$ la $p$-partie de son conducteur (de sorte que $\mathrm{cond}(\eta) = p^{\mathrm{c}(\eta)}$) et, pour $N \geq -\infty$, on définit \[ \mathfrak{B}(\eta, N) = \{ \xi \in \mathfrak{X} \; | \;  v_p( \xi - \eta )  > p^{N - \mathrm{c}(\eta)} \} \subseteq \mathfrak{X}. \] Rappelons que $\Gamma^*(j)$ dénote le coefficient principal de la série de Laurent de la fonction $\Gamma(s)$ en $s = j$. Notons, pour $D \in \Phi\Gamma(\Robba)$ de Rham et $\mu \in H^1_{\rm Iw}(\qp, D)$,
$$ \log(\int_{\Gamma} \eta \chi^{-j} \cdot \mu) = \left\{ 
\begin{array}{l l}
  \exp^*(\int_{\Gamma} \eta \chi^{-j} \cdot \mu) & \quad \text{si ${\j} \geq 0$}\\
  \exp^{-1} (\int_{\Gamma} \eta \chi^{-j} \cdot \mu) & \quad \text{si ${\j} \ll 0$.}\\ \end{array} \right. $$ Enfin, on note $r_{m(D)}$, $m(D) \in \N$, le rayon de surconvergence de $D$ (cf. \S \ref{generalites} ci-dessous). Le résultat principal de ce chapitre est le suivant:

\begin{theorem} \label{propfina}
Soient $D \in \Phi\Gamma(\Robba)$ de Rham à poids de Hodge-Tate positifs, $z \in D^{\psi = 1}$, et notons $\mu_z = \mathrm{Exp}^*(z) \in H^1_{\rm Iw}(\qp, D)$. Il existe un entier $N(D)$ \footnote{L'entier $N(D)$ ne dépend que du module $\D_{\rm pst}(D)$ et est borné en termes du conducteur de la plus petite extension galoisienne $K$ de $\qp$ tel que l'action de $\mathscr{G}_\qp$ sur $\D_{\rm pst}(D)$ se factorise à travers $\mathscr{G}_K$. Si $D = \Drig(V)$, où $V \in \mathrm{Rep}_L \mathscr{G}_\qp$, la constante $N(D)$ est donc bornée en termes du conducteur de la plus petite extension $K / \qp$ sur laquelle $V$ dévient semi-stable.}, et une unique fonction rigide analytique $\Lambda_{D, z} \in \mathcal{O}(\mathfrak{U}_D) \otimes \DdR(D)$, où l'on a posé $\mathfrak{U}_D = \cup_{\mathrm{c}(\eta) > m(D)} \mathfrak{B}(\eta, N(D))$, telle que, pour tout $\eta \chi^j \in \mathfrak{U}_D$, où $\eta$ est un caractère de conducteur $p^n$, $n > 0$, et $j \in \Z$ est tel que $j \geq 0$ ou $j \ll 0$ \footnote{Plus précisément, $j$ doit être suffisamment petit de sorte que $\exp_{D(-j)} \colon \DdR(D(-j)) \to H^1_{\varphi, \gamma}(D(-j))$ soit un isomorphisme. Il suffirait donc de demander $j < h_0 - 2$, où $h_0$ dénote le plus petit poids de Hodge-Tate de $D$ (cf. \cite[Thm. 0.9]{Berger02}).}, on a
\[ \Lambda_{D, z} (\eta \chi^{\j}) = \Gamma^*(j + 1) p^{n(j + 1)} \cdot \log (\int_{\Gamma} \eta \chi^{-j} \cdot \mu_z)  \otimes \mathbf{e}^{\rm dR, \vee}_{\eta, -j} . \]
\end{theorem}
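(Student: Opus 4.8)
The plan is to reduce the construction to the local identity~\eqref{eq1} and to the analytic interpolation of Proposition~\ref{prop4}, building $\Lambda_{D,z}$ piece by piece over the cover $\mathfrak{U}_D = \cup_{\mathrm{c}(\eta) > m(D)} \mathfrak{B}(\eta, N(D))$. First I would pass to Berger's equation $\Delta = \Nrig(D)$, in which $D$ sits because its Hodge--Tate weights are positive. Since $z \in D^{\psi=1} \subseteq \Delta^{\psi=1}$ and $\psi$ is a left inverse of $\varphi$, the element $w = (1-\varphi)z$ lies in $\Delta^{\psi=0}$, and Proposition~\ref{prop4} produces a rigid analytic family $\kappa' \mapsto \kappa'(\partial)w \colon \mathfrak{X} \to \Delta^{\psi=0}$ interpolating the values $\partial^j(1-\varphi)z$ at the characters $\kappa' = x^j$. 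The idea is to feed this family into the left-hand side of~\eqref{eq1}, replacing the integral power $\partial^j$ by the analytically varying operator $(\kappa\eta^{-1})(\partial)$.

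Concretely, I would fix a finite-order character $\eta$ of conductor $p^n$ with $n = \mathrm{c}(\eta) > m(D)$, so that the localization $\varphi^{-n} \colon \Delta^{]0, r_n]} \to L_n[[t]] \otimes \DdR(D)$ is available, and set, for $\kappa \in \mathfrak{B}(\eta, N(D))$,
\[ \Lambda_{D,z}(\kappa) = G(\eta)^{-1} \sum_{a \in (\Z/p^n\Z)^\times} \eta(a)\, \sigma_a \big[\varphi^{-n}\big((\kappa\eta^{-1})(\partial)(1-\varphi)z\big)\big]_0 . \]
At a locally algebraic point $\kappa = \eta\chi^{j}$ one has $\kappa\eta^{-1} = \chi^{j}$, hence $(\kappa\eta^{-1})(\partial)w = \partial^{j}(1-\varphi)z$, and~\eqref{eq1} identifies the value with $\Gamma^*(j+1)\,p^{n(j+1)}\,\exp^*(\int_\Gamma \eta\chi^{-j}\cdot\mu_z) \otimes \mathbf{e}^{\rm dR,\vee}_{\eta,-j}$, the desired formula for $j \geq 0$. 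For $j \ll 0$ the Bloch--Kato exponential $\exp_{D(-j)}$ is an isomorphism (the range $j < h_0 - 2$), so the same localization computation reads off $\exp^{-1}$ instead of $\exp^*$; this is precisely the content of the $\log$ notation, and the two regimes are interpolated by one and the same expression.

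The hard part will be purely analytic: the displayed formula only makes sense once one knows that $(\kappa\eta^{-1})(\partial)(1-\varphi)z$ actually lands in the overconvergent submodule $\Delta^{]0, r_n]}$ on which $\varphi^{-n}$ is defined, and that it does so uniformly as $\kappa$ ranges over the ball $\mathfrak{B}(\eta, N(D))$. This forces a precise estimate of the radius of overconvergence of $\kappa'(\partial)w$ in terms of the distance $v_p(\kappa' - x^{j})$: applying $\kappa'(\partial)$ for a non-integral character shrinks the annulus on which the output converges, and one must show that for $\kappa'$ within $p^{N(D)-\mathrm{c}(\eta)}$ of the relevant power the image still converges on $C_{]0, r_n]}$. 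The admissible radius, hence the integer $N(D)$, is governed by the behaviour of $\Delta = \Nrig(D)$ near the boundary $v_p = 0$, which is controlled by the Frobenius-and-monodromy data of $\D_{\mathrm{pst}}(D)$; this is exactly why $N(D)$ depends only on $\D_{\mathrm{pst}}(D)$ (equivalently, on the conductor of the field over which $D$ becomes semistable). Carrying out this estimate is the technical heart of the argument.

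Granting the estimate, each such expression is obtained from the analytic family of Proposition~\ref{prop4} by the continuous operations $\varphi^{-n}$, $[\cdot]_0$ and a finite $\Gamma$-average, so it defines an element of $\mathcal{O}(\mathfrak{B}(\eta, N(D))) \otimes \DdR(D)$. Uniqueness on each ball is immediate from the Zariski-density of the locally algebraic characters together with the interpolation formula, and the same density shows that the pieces attached to different centres are compatible on overlaps, since they compute the same special values on a dense set. The local functions therefore glue to a single $\Lambda_{D,z} \in \mathcal{O}(\mathfrak{U}_D) \otimes \DdR(D)$ satisfying the required interpolation property.
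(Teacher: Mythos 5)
Your construction is the paper's construction: formula \eqref{forminterp} is exactly your Gauss-sum average of $[\varphi^{-n}\kappa(\partial)(1-\varphi)z]_0$, Proposition \ref{prop4} supplies the analyticity, the radius estimate determines $N(D)$ and the balls $\mathfrak{B}(\eta,N(D))$, and uniqueness/gluing follow from density of the locally algebraic characters. The genuine gap is at the negative integers. You assert that for $j\ll 0$ \emph{the same localization computation reads off} $\exp^{-1}$ \emph{instead of} $\exp^*$, the only input being that $\exp_{D(-j)}$ is bijective. That is not so: bijectivity only makes $\exp^{-1}$ well defined, it does not compute it. For $j\geq 0$ the constant term of $\varphi^{-n}\partial^{j}(1-\varphi)z$ computes the dual exponential essentially by unwinding definitions (this is \eqref{eq1}, i.e. Proposition \ref{recPRcomp} combined with Lemmas \ref{lemme7} and \ref{lemme1b}); but for $j<0$ the operator $\partial^{j}$ becomes an integration operator on $\Delta^{\psi=0}$, and identifying the resulting constant terms with $\exp^{-1}$ of the specialization $\int_\Gamma\eta\chi^{-j}\cdot\mu_z$ is precisely the hard half of the explicit reciprocity law, i.e. Perrin-Riou's big exponential in Nakamura's form (Theorem \ref{recPR}$(i)$). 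Nor can you get the $j\ll 0$ identity from the $j\geq 0$ one by analytic continuation: the right-hand sides $\log(\int_\Gamma\eta\chi^{-j}\cdot\mu_z)$ are individual cohomological quantities, and their being interpolated by an analytic function is exactly the statement under proof, so that argument is circular. The paper's route, absent from your proposal, is Proposition \ref{lemme5}: solve $\partial^{j}z_{j}=z$ in $\Delta^{\psi=0}$, apply Theorem \ref{recPR}$(i)$ --- which unavoidably introduces the auxiliary operator $\nabla_h$ (with $\mathrm{Fil}^{-h}\,\DdR(D)=\DdR(D)$), since the reciprocity law is stated for $\mu_{\nabla_h z}$ --- and then strip $\nabla_h$ off using Lemma \ref{lemmeconv}, which for locally constant $\eta$ converts $\int_\Gamma\eta\chi^{j}\cdot\nabla_h\mu_z$ into an explicit scalar multiple of $\int_\Gamma\eta\chi^{j}\cdot\mu_z$. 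Without this input the $j\ll 0$ half of the interpolation formula is unproven.

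Two secondary points. First, you correctly identify the analytic heart --- that $(\kappa\eta^{-1})(\partial)(1-\varphi)z$ must land in $\Delta^{]0,r_n]}$ uniformly on $\mathfrak{B}(\eta,N(D))$, with $N(D)$ controlled by $\D_{\rm pst}(D)$ --- but you leave it as a black box; in the paper this is actually carried out, via Lemma \ref{lemme11} (the monodromy theorem produces submodules $\Delta^{]0,s]}$, $s<(\delta_K+1)^{-1}$, on which $\partial$ has $v^{[r,s]}$-norm bounded below by $-s-1$) and Lemma \ref{lemme6} (which turns this bound into the explicit constant $N(D)=C_p+m(\Delta)$). Second, \eqref{eq1} is not an off-the-shelf fact you may simply cite: the paper proves it, and Proposition \ref{recPRcomp} is included there precisely because no reference was found; so even your $j\geq 0$ case rests on results established in the body of the paper rather than on external inputs.
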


Voici quelques remarques:

\begin{itemize}
\item On obtiendra ce théorème (thm. \ref{propfin}) comme conséquence d'un résultat un peu plus général énoncé dans le théorème \ref{theo1}.

\item Comme les caractères $\eta x^j$ sont Zariski denses dans $\mathfrak{X}$, la fonction $\Lambda_{D, z}$ est unique.

%\item Si $\mathrm{c}(\eta) > N$, l'ouvert $\mathfrak{B}(\eta, N)$ contient tous les caractères de la forme $\eta \chi^{\j}$, ${\j} \in \Z$, ${\j} \equiv 0 \mod q$, ce qui implique que la fonction $\Lambda_{D, z}$ est unique

\item Si $D$ est cristallin, alors l'application $\Delta_{D,z}$ provient par restriction d'une fonction rigide analytique définie sur tout l'espace des poids $\mathfrak{X}$.

\item Comme corollaire de ce théorème, on obtiendra une construction partielle de la fonction $L$ $p$-adique associée à un système d'Euler d'un $(\varphi, \Gamma)$-module de de Rham. Si $D = \D_{\rm rig}(V)$ est le $(\varphi, \Gamma)$-module associé à la représentation $p$-adique d'une forme modulaire, on verra comment ces valeurs s'interprètent en termes de valeurs spéciales de la fonction $L$ complexe de la forme modulaire.
\end{itemize}

Voici un bref résumé de ce chapitre. On commence par rappeler les outils et notations nécessaires dont on aura besoin pour la preuve du théorème. On pourra consulter \cite{Nakamura} et \cite{Berger03}, qui sont les références principales. La structure de la preuve du théorème \ref{propfina} est la suivante: Dans la section \ref{generalites}, on rappelle des généralités sur les $(\varphi, \Gamma)$-modules de de Rham sur l'anneau de Robba. Ensuite (\S \ref{prolan}) on définit la multiplication analytique par un caractère sur un $(\varphi, \Gamma)$-module. Dans \ref{interpolation}, on définit la fonction $\Lambda_{D, z}$. Le Lemme \ref{lemme6} calcule le rayon de convergence de $\Lambda_{D, z}$, ce qui explique la définition de l'ouvert $\mathfrak{B}(N)$ du théorème. Les propositions \ref{lemme4} et \ref{lemme5} montrent, en utilisant la loi de réciprocité explicite de Perrin-Riou comme démontrée par Nakamura, les propriétés d'interpolation de $\Lambda_{D, z}$, introduisant l'opérateur différentiel auxiliaire $\nabla_h$ dans les formules. Enfin, quand $D$ est à poids de Hodge-Tate positifs et $z \in D^{\psi = 1} \subseteq \Nrig(D)^{\psi = 1}$ on se débarrasse (\S \ref{poidspositifs}) de l'opérateur différentiel $\nabla_h$ pour obtenir ainsi l'interpolation voulue.

\subsection{Généralités sur les $(\varphi, \Gamma)$-modules} \label{phigammagen}

Notons $\Phi\Gamma(\Robba)$ la catégorie de $(\varphi, \Gamma)$-modules sur $\Robba$.

\subsubsection{Sous-modules naturels de $D$} \label{generalites}

Soit $D \in \Phi\Gamma(\Robba)$ de rang $d$. L'algèbre de Lie de $\Gamma$ agit sur $D$ (cf. \cite[\S 5.1]{Berger02}) via l'opérateur $L$-linéaire $$ \nabla = \lim_{a \to 1} \frac{\sigma_a - 1}{a - 1} = \frac{\log (\gamma)}{\log (\chi(\gamma))} = \frac{1}{\log (\chi(\gamma))} \sum_{i = 1}^{+\infty} (-1)^{i + 1} \frac{(\gamma - 1)^i}{i}, $$ où $\gamma \in \Gamma$ dénote n'importe quel élément sans torsion de $\Gamma$, ce qui définit un opérateur différentiel au-dessus de l'opérateur $\nabla = t (1 + T) \frac{d}{dT}$ agissant sur $\Robba$. 
%En utilisant les morphismes de localisation, on a l'égalité $\varphi^{-n} \circ \nabla = t \frac{d}{dt} \circ \varphi^{-n}$ dans $F_n[[t]]$.

\begin{lemme} [{\cite[Thm. I.3.3]{Berger08}}] \label{subm1}
Il existe $\epsilon > 0$ et, pour tout $0 < s \leq \epsilon$, des uniques sous-$\E^{]0, s]}$-modules $D^{]0, s]}$ de $D$, satisfaisant
\begin{itemize}
\item $D = \Robba \otimes_{\E^{]0, s]}} D^{]0, s]}.$
\item $\varphi(D^{]0, s]}) \subseteq D^{]0, s/p]}$, et on a un isomorphisme $\E^{]0, s/p]} \otimes_{\E^{]0, s]}} D^{]0, s]} \to D^{]0, s/p]}: f \otimes x \mapsto f \varphi(x)$, pour tout $n$ tel que $r_n \leq \epsilon$.
\end{itemize}
De plus, les modules $D^{]0, s]}$ sont stables par $\Gamma$ et $\nabla$.
\end{lemme}

%\begin{proof}
%Soit $v_1, \hdots, v_D$ une base de $D$ sur $\Robba$. Soient $A$ et $B$ les matrices de $\Gamma$ et $\varphi$ dans cette base et soit $\epsilon$ tel que $A, B \in \mathrm{GL}_d(\E^{]0, \epsilon]})$. On peut voir que les sous $\E^{]0, s]}$-modules $D^{]0, s]} = \oplus_{i = 1}^d \E^{]0, s]} v_i$ de $D$ satisfont les propriétés du lemme.
%\end{proof}

On définit $m(D)$ comme le plus petit entier tel que le lemme \ref{subm1} est vrai avec $r_{m(D)} \leq \epsilon$ et on dit que $r_{m(D)}$ est le rayon de surconvergence de $D$. Pour $0 < r < s \leq r_{m(D)}$, on pose \[ D^{[r, s]} = \E^{[r, s]} \otimes_{\E^{]0, s]}} D^{]0, s]}. \] On a alors \[ D = \varinjlim_{s > 0} \varprojlim_{0 < r < s} D^{[r, s]}, \] ce qui montre que $D$ est un espace de type $\mathrm{LF}$ (i.e limite inductive d'espaces de Fréchet). 

Rappelons que l'on a des morphismes de localisation \[ \varphi^{-n} \colon \E^{]0, r_n]} \hookrightarrow L_n[[t]] \] envoyant $T$ sur $\zeta_{p^n} e^{t / p^n} - 1$. Pour $n \geq m(D)$ on définit \[ \D^+_{\mathrm{dif}, n}(D) = L_n[[t]] \otimes_{\varphi^{-n}, \E^{]0, r_n]}} D^{]0, r_n]}, \] \[ \D_{\mathrm{dif}, n}(D) = L_n((t)) \otimes_{L_n[[t]]} \D^+_{\mathrm{dif}, n}(D), \] qui sont des $L_n[[t]]$ et $L((t))$-modules, respectivement, libres de rang $d$ et munis d'une action semi-linéaire de $\Gamma$. Finalement, on définit $$ \D_{\mathrm{dif}}(D) = \varinjlim_n \D_{\mathrm{dif}, n}(D) \; \;\; \D^+_{\mathrm{dif}}(D) = \varinjlim_n \D^+_{\mathrm{dif}, n}(D), $$ qui sont, respectivement, des $L_\infty((t)) = \cup_n L_n((t))$ et $L_\infty[[t]] = \cup_n L_n[[t]]$-modules libres de rang  $d$.

\subsubsection{Théorie de Hodge $p$-adique}

Comme l'on a déjà remarqué, la plupart des objets de la théorie de Hodge $p$-adique peuvent être exprimés purement en terme des $(\varphi, \Gamma)$-modules. Suivant ce programme, commencé par Fontaine \cite{Fontaine90}, on définit les invariants suivants:

\begin{definition}
Soit $D \in \Phi\Gamma(\Robba)$ de rang $d$. On définit $$ \Dcris(D) = (D[1 / t])^\Gamma = (D \otimes_\Robba \Robba[1/t])^\Gamma \;\;\;\; \DdR(D) = (\D_{\mathrm{dif}}(D))^\Gamma, $$ qui sont des $L$-espaces vectoriels de dimension finie.
\end{definition}

On munit $\DdR(D)$ de sa filtration de Hodge, donnée par $ \mathrm{Fil}^i \, \DdR(D) = \DdR(D) \cap t^i \D_{\mathrm{dif}}^+ = (t^i \D_{\mathrm{dif}}^+)^\Gamma$. On observe que $\Dcris(D)$ est muni d'une action bijective du Frobenius $\varphi$ ainsi que d'une filtration induite par l'inclusion $\Dcris(D) \subseteq \DdR(D)$ définie par $x \in \Dcris(D) \mapsto \iota_n (\varphi^n(x)) \in \DdR(D)$, où on a noté $\iota_n = \varphi^{-n} \colon D^{]0, r_n]}[1/t] \to \Ddif(D)$ l'application de localisation \footnote{Il existe ici un petit abus évident en notant par $\varphi^{-n}$ deux applications différentes, l'une étant l'application de localisation notée usuellement $\iota_n$ et l'autre l'inverse de l'opérateur $\varphi$ agissant sur $\Dcris(D)$, mais cela ne devrait pas causer de problèmes de lecture.}. On a $$ \dim_L \Dcris(D) \leq \dim_L \DdR(D) \leq \mathrm{rang}_\Robba \; D,  $$ où la première inégalité est évidente par ce qui précède et la dernière suit en remarquant que $\Ddif(D)$ est un $L_\infty((t))$-espace vectoriel de rang $d = \mathrm{rang}_\Robba \; D$ et $\DdR(D) = (\Ddif(D))^\Gamma$ est donc un $(L_\infty((t)))^\Gamma = L$-espace vectoriel de rang $\leq d$.  

\begin{definition}
Soit $D$ un $(\varphi, \Gamma)$-module sur $\Robba$. On dit que $D$ est \textit{cristallin} (resp. \textit{de Rham}) si l'inégalité $\dim_\qp \Dcris(D) \leq \mathrm{rang}_\Robba \; D$ (resp. $\dim_\qp \DdR(D) \leq \mathrm{rang}_\Robba \; D$) est une égalité.
\end{definition}

Si $D$ est de Rham, on définit ses \textit{poids de Hodge-Tate} comme les opposés des entiers où la filtration change, c'est-à-dire l'ensemble $\{ -h \in \Z: \mathrm{Fil}^h \, \DdR(D) / \mathrm{Fil}^{h + 1} \, \DdR(D) \neq 0 \}$.

\subsubsection{L'équation différentielle $p$-adique $\Nrig(D)$} \label{eqdiff}

Rappelons la construction de Berger (cf. \cite{Berger02}, \cite{Berger08}) de l'équation différentielle $p$-adique $\Nrig(D)$ associée à un  $(\varphi, \Gamma)$-module de de Rham $D$ sur $\Robba$.

\begin{proposition} [{\cite[Thm. III.2.3]{Berger08}}] \label{Ndifprop}
Soit $D \in \Phi\Gamma(\Robba)$ de rang $d$, de Rham, et, pour chaque $n \geq m(D)$, posons $$ \mathbf{N}_{\mathrm{rig}}^{]0, r_n]}(D) = \{ x \in D^{]0, r_n]}[1/t] : \text{ $\varphi^{-m}(x) \in L_m[[t]] \otimes_L \DdR(D)$ pour tout $m \geq n$} \}, $$ et $\mathbf{N}_{\mathrm{rig}}(D) = \varinjlim_n \mathbf{N}_{\rm rig}^{]0, r_n]}(D)$. Alors, $\Nrig(D)$ est un $(\varphi, \Gamma)$-module sur $\Robba$, de rang $d$, qui satisfait
\begin{itemize}
\item $\Nrig(D)[1/t] = D[1/t]. $
\item $\mathbf{D}_{\mathrm{dif}, n}^+(\Nrig(D)) = L_n[[t]] \otimes_L \mathbf{D}_{\mathrm{dR}}(D)$ pour tout $n \geq m(D)$.
\item $\nabla(\Nrig(D)) \subseteq t \Nrig(D).$
\end{itemize}
\end{proposition}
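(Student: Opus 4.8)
The plan is to construct $\Nrig(D)$ level by level from the positivity conditions at the localisation maps $\varphi^{-m}$, to check that these cut out $\Gamma$-stable $\E^{]0,r_n]}$-submodules of $D^{]0,r_n]}[1/t]$, and then to verify the four assertions in turn; the only genuinely hard point is that the de Rham hypothesis forces these submodules to be lattices of full rank $d$.

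First I would check that each $\Nrig^{]0,r_n]}(D)$ is a $\Gamma$-stable sub-$\E^{]0,r_n]}$-module of $D^{]0,r_n]}[1/t]$. For $f\in\E^{]0,r_n]}$ and $m\ge n$ one has $\varphi^{-m}(f)\in L_m[[t]]$ (since $\varphi^{-m}(T)=\zeta_{p^m}e^{t/p^m}-1$ is a unit of $L_m[[t]]$), so multiplication by $f$ preserves the condition $\varphi^{-m}(x)\in L_m[[t]]\otimes_L\DdR(D)$; and $\Gamma$ fixes $\DdR(D)$ and stabilises $L_m[[t]]$, so it stabilises the condition as well. Inverting $t$ deletes the positivity requirement, whence $\Nrig^{]0,r_n]}(D)[1/t]=D^{]0,r_n]}[1/t]$; the reverse inclusion giving the first bullet follows because any $y\in D^{]0,r_n]}[1/t]$ has $\varphi^{-m}(y)$ of $t$-pole order bounded uniformly in $m$ (write $y=t^{-a}y_0$ with $y_0\in D^{]0,r_n]}$ and use $\varphi^{-m}(t^{-a})=p^{ma}t^{-a}$), so $t^a y\in\Nrig(D)$. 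To glue a $(\varphi,\Gamma)$-module over $\Robba$ I use the level-shift $\varphi^{-m}\circ\varphi=\varphi^{-(m-1)}$: if $x\in\Nrig^{]0,r_n]}(D)$ then $\varphi^{-m}(\varphi(x))=\varphi^{-(m-1)}(x)$ lands in the positive part for $m-1\ge n$, so $\varphi(\Nrig^{]0,r_n]}(D))\subseteq\Nrig^{]0,r_{n+1}]}(D)$, and one checks the base-change isomorphism $\E^{]0,r_{n+1}]}\otimes_{\varphi,\E^{]0,r_n]}}\Nrig^{]0,r_n]}(D)\xrightarrow{\sim}\Nrig^{]0,r_{n+1}]}(D)$, exactly as in Lemme \ref{subm1} for $D$ itself.

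The crux is that $\Nrig^{]0,r_n]}(D)$ is free of rank $d$ over $\E^{]0,r_n]}$, and here the de Rham hypothesis enters through the identification $\Ddif(D)=L_\infty((t))\otimes_L\DdR(D)$ with trivial $\Gamma$-action on $\DdR(D)$. Setting $\Lambda_m=L_m[[t]]\otimes_L\DdR(D)$, the data $(\Lambda_m)_{m\ge m(D)}$ form a $\Gamma$-stable, $\varphi$-compatible family of $L_m[[t]]$-lattices in $\Ddif(D)$, and the content of the proposition is a descent (``d\'ecompl\'eter'') statement: such a family descends to a unique $\E^{]0,r_n]}$-lattice in $D^{]0,r_n]}[1/t]$ whose localisations $\varphi^{-m}$ recover the $\Lambda_m$, and that lattice is precisely $\Nrig^{]0,r_n]}(D)$. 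Concretely I would argue circle by circle: on each $C_{[r,s]}$ with $0<r<s\le r_n$ only finitely many Frobenius levels $m$ (and their $\Gamma$-conjugates) contribute, the positivity conditions there are closed integrality conditions at the corresponding zeros of $t$, so $\Nrig^{]0,r_n]}(D)\cap D^{[r,s]}[1/t]$ is a modification of the lattice $D^{[r,s]}$ at finitely many points, hence free of rank $d$ since $\E^{[r,s]}$ is principal; gluing over $s$ and passing to the Fr\'echet limit gives freeness of rank $d$, so $\Nrig(D)$ is a $(\varphi,\Gamma)$-module of rank $d$. The middle bullet then follows: by construction $\varphi^{-n}$ maps $\Nrig^{]0,r_n]}(D)$ into $\Lambda_n$, giving $\D^+_{\mathrm{dif},n}(\Nrig(D))\subseteq L_n[[t]]\otimes_L\DdR(D)$, and equality is exactly the surjectivity half of the descent. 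I expect this descent step, which rests on Berger's (Kedlaya-style) local-monodromy machinery and genuinely fails without the de Rham assumption, to be the main obstacle.

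Finally the connection bullet is formal. The operators satisfy $\nabla=t\partial$ on $\Robba$ and $\varphi^{-m}\circ\partial=p^m\frac{d}{dt}\circ\varphi^{-m}$ together with $\varphi^{-m}(t)=p^{-m}t$, so under localisation $\nabla$ becomes $t\frac{d}{dt}$: for $x\in\Nrig(D)$ one gets $\varphi^{-m}(\nabla x)=t\frac{d}{dt}\big(\varphi^{-m}(x)\big)$. Since $\varphi^{-m}(x)\in L_m[[t]]\otimes_L\DdR(D)$ and $t\frac{d}{dt}$ carries $L_m[[t]]$ into $tL_m[[t]]$, the element $\varphi^{-m}(\nabla x)$ lies in $t\cdot\big(L_m[[t]]\otimes_L\DdR(D)\big)$ for every $m\ge n$; as $\nabla$ also preserves $D^{]0,r_n]}[1/t]$ by Lemme \ref{subm1}, this says $t^{-1}\nabla x\in\Nrig(D)$, that is $\nabla(\Nrig(D))\subseteq t\Nrig(D)$, which is the third bullet.
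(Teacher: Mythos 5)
The paper does not actually prove this proposition: it is recalled verbatim from Berger, with the proof delegated to \cite[Thm. III.2.3]{Berger08}. So the only meaningful comparison is with Berger's argument, and your sketch is in substance a reconstruction of it: $\Nrig(D)$ is the modification of $D$ along the zeros of $t$ in the annulus $0<v_p(T)\le r_n$ (the points $T=\zeta-1$, $\zeta$ a primitive $p^m$-th root of unity, $m\ge n$), the prescribed lattice at the level-$m$ point being $L_m[[t]]\otimes_L\DdR(D)$; a closed sub-annulus $C_{[r,s]}$ meets only finitely many of these points, so there the modified module is finitely generated and torsion-free over the principal ring $\E^{[r,s]}$, hence free, of rank $d$ because inverting $t$ undoes the modification; one then glues over $s$ and $r$. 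Your verification of the module structure, the $\Gamma$- and $\varphi$-compatibilities, and of the third bullet (via $\varphi^{-m}(\nabla x)=t\frac{d}{dt}\varphi^{-m}(x)$, which kills the constant term) is correct.

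Three points, however, should be made honest. (1) \emph{Uniform commensurability.} Both your proof of $\Nrig(D)[1/t]=D[1/t]$ and the boundedness of the modification on $C_{[r,s]}$ (i.e. $t^{c}D^{[r,s]}\subseteq M^{[r,s]}\subseteq t^{-c}D^{[r,s]}$, needed for finite generation) require $t^{c}\,\D^+_{\mathrm{dif},m}(D)\subseteq L_m[[t]]\otimes_L\DdR(D)\subseteq t^{-c}\,\D^+_{\mathrm{dif},m}(D)$ with $c$ independent of $m$. Your pole-order argument only compares $\varphi^{-m}(y)$ with $\D^+_{\mathrm{dif},m}(D)$; the uniform comparison with $L_m[[t]]\otimes_L\DdR(D)$ is exactly where de Rham-ness (Hodge--Tate weights in a fixed interval) is used a second time, together with the base-change compatibility $\D^+_{\mathrm{dif},m+1}(D)=L_{m+1}[[t]]\otimes_{L_m[[t]]}\D^+_{\mathrm{dif},m}(D)$ coming from Lemme \ref{subm1}; it must be stated. (2) \emph{Gluing.} Passing from freeness of rank $d$ over every $\E^{[r,s]}$ to freeness of rank $d$ over $\E^{]0,r_n]}$ is not formal: it is a Lazard/Kiehl-type theorem on vector bundles over the quasi-Stein annulus, which is precisely the input Berger invokes. (3) \emph{Attribution.} The $p$-adic local monodromy theorem plays no role in this proposition; it enters only afterwards, to produce $\D_{\rm pst}$ and the descent of Proposition \ref{monodrom}. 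The construction of $\Nrig(D)$ is pure modification plus quasi-Stein gluing, so your remark that the descent ``rests on local-monodromy machinery'' misplaces the source of the difficulty, even though you locate the hard step (full rank $d$) correctly.
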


Le $(\varphi, \Gamma)$-module $\Nrig(D)$ ainsi obtenu est de Rham à poids de Hodge-Tate tous nuls. Remarquons que l'on peut reconstruire $D$, à partir de la donnée de $\Nrig(D)$ et de la filtration de Hodge sur $\DdR(D)$, en utilisant la formule $$ D = \{ x \in \Nrig(D)[1/t] \; : \; \varphi^{-n}(x) \in \mathrm{Fil}^0(L_n((t)) \otimes_\qp \DdR(D)) \;\;\; \forall n \gg 0 \}. $$ Si les poids de Hodge-Tate de $D$ sont contenus dans $[a, b]$, on a des inclusions $t^{-a} D \subseteq \Nrig(D) \subseteq t^{-b} D$.

%Ceci implique en particulier que, si les poids de Hodge-Tate de $D$ sont négatifs (resp. positifs), alors $\DdR(D) = \mathrm{Fil}^0 \, \DdR(D)$ (resp. $\mathrm{Fil}^1 \, \DdR(D) = 0$), d'où $L_m[[t]] \otimes_\qp \DdR(D) \subseteq \mathrm{Fil}^0(L_n((t)) \otimes_\qp \DdR(D))$ (resp. $\mathrm{Fil}^0(L_n((t)) \otimes_\qp \DdR(D)) \subseteq L_m[[t]] \otimes_\qp \DdR(D)$) et donc $\Nrig(D) \subseteq D$ (resp. $D \subseteq \Nrig(D)$). Plus généralement, si les poids de Hodge-Tate de $D$ sont contenus dans $[a, b]$, on a des inclusions $t^{-a} D \subseteq \Nrig(D) \subseteq t^{-b} D$. Par exemple, si $D$ est à poids de Hodge-Tate $1, k$, avec $k > 1$, on a $ D \subseteq t \Nrig(D) \subseteq t^{1 - k} D$.

La troisième propriété de la proposition \ref{Ndifprop} caractérisant $\Nrig(D)$ permet de définir un opérateur différentiel $$ \partial := \frac{1}{t} \nabla \colon \Nrig(D) \to \Nrig(D), $$ satisfaisant les identités $\partial \circ \varphi = p \; \varphi \circ \partial$ et $\partial \circ \sigma_a = a \; \sigma_a \circ \partial $. Si $D$ est un $(\varphi, \Gamma)$-module de de Rham sur $\Robba$, on notera $\Delta = \Nrig(D)$.

\subsubsection{Les anneaux de Fontaine}

Rappelons la construction des anneaux de Fontaine associés à une extension galoisienne finie $K$ de $\qp$. Notons $K_n = K F_n = K(\mu_{p^n})$, $n \geq 1$, $K_\infty = K F_\infty = \cup_{n \geq 1} K_n$, $K_0 = K \cap \mathbf{Q}_p^{\mathrm{nr}}$ la plus grande sous-extension de $K$ non ramifiée et $K'_0$ la plus grande extension non ramifiée de $K_0$ dans $K_\infty$. Notons $\mathscr{H}_K = \mathrm{Gal}(\overline{\Q}_p / K_\infty)$, $\Gamma_K = \mathrm{Gal}(K_\infty / K)$.

La théorie du corps des normes (cf., par exemple, \cite{ColmezEV} ou \cite[\S I.2]{Berger08}) permet de construire des extensions étales $\E_K^\dagger / \E_\qp^\dagger$ de degré $[K_\infty: F_\infty]$, munies d'une action du Frobenius $\varphi$ et du groupe $\Gamma_K$. Plus précisément, soit $\widetilde{\mathbf{E}} = \varprojlim_{x \mapsto x^p} \C_p = \C_p^\flat$ le corps basculé de $\cp$. Il est un corps de caractéristique $p$, algébriquement clos et muni d'une valuation $v_\mathrm{E}(x) = v_p(x^{(0)})$ pour laquelle il est complet. Notons $1 \neq \epsilon = (1, \epsilon^{(1)}, \hdots) \in \widetilde{\mathbf{E}}$, et $\widetilde{\mathbf{E}}^+$ l'anneau des entiers de $\widetilde{\Ebf}$, qui s'identifie à la limite projective $\varprojlim_{x \mapsto x^p} \mathscr{O}_{\C_p} / \mathfrak{a}$ pour n'importe quel idéal $\mathfrak{a} \subseteq \mathscr{O}_{\C_p}$ contenant $p$ et différent de l'idéal maximal de $\mathscr{O}_{\C_p}$. Soient $\mathbf{E}_\qp = \F_p((\epsilon - 1))$ et $\mathbf{E}$ la clôture séparable de $\mathbf{E}_\qp$ dans $\widetilde{\mathbf{E}}$. La théorie du corps des normes permet aussi de montrer que $\mathrm{Gal}(\mathbf{E} / \mathbf{E}_\qp) \cong \mathscr{H}_\qp$. Si $K / \qp$ est une extension finie galoisienne de $\qp$, on pose $\mathbf{E}_K = \mathbf{E}^{\mathscr{H}_K}$. On dispose d'une application bien définie et injective $\varprojlim_{x \mapsto N_{K_n / K_{n-1}}(x)} \mathscr{O}_{K_n} \to \widetilde{\mathbf{E}}^+$ d'image l'anneau des entiers $\mathbf{E}_K^+$ de $\mathbf{E}_K$, qui fournit une uniformisante $\overline{\pi}_K$ de l'extension $\mathbf{E}_K / \mathbf{E}_\qp$ (l'image d'un système compatible $(\omega_n)_{n \in \N}$ où $\omega_n$ est une uniformisante de $K_n$ pour $n$ assez grand). Soit $\overline{P}(X) = X^{d_{K_\infty}} + \overline{a}_{{d_{K_\infty}} - 1} X^{d_{K_\infty} - 1} + \hdots + \overline{a}_0 \in \mathbf{E}_\qp[X]$, où $d_{K_\infty} = [K_\infty : F_\infty]$ et $\overline{a}_i \in \mathbf{E}_\qp$, le polynôme minimal de $\overline{\pi}_K$ sur $\mathbf{E}_\qp$. Enfin, si $P \in \zp[[T]][X]$ est tel que sa réduction modulo $p$ évaluée en $T = \epsilon - 1$ est $\overline{P}(X)$, alors $\E^\dagger_K = \E^\dagger_\qp [X] / P(X)$ et on note $\pi_K$ l'image de $X$ dans ce quotient, dont la réduction modulo $p$ est $\overline{\pi}_K$.

L'anneau $\E_K^\dagger$ s'identifie à l'anneau des séries formelles $f(T_K) = \sum_{k \in \Z} a_k T_K^k$, $a_k \in K'_0$, à coefficients bornés, qui convergent sur une couronne $0 < v_p(T_K) \leq r$ pour un $r$ assez petit (qui dépend de $f$). On peut de la sorte définir les anneau $\E_K^{[r, s]}$, qui s'identifient à l'espace des séries de Laurent à coefficients dans $K'_0$, convergentes sur la couronne $C_{[r / e, s / e]} = \{ z \in \cp: \; r / e \leq v_p(z) \leq s / e \}$, où $e$ dénote l'indice de ramification de l'extension $K_\infty / F_\infty$, muni de la norme spectrale $v^{[r, s]}$, ainsi que $\E^{]0, s]}_K$, $\E^{\dagger, r}_K$, etc. En complétant $\E^\dagger_K$ pour la famille de normes $v^{[r,s]}$, on obtient une extension étale $\Robba_K / \Robba_\qp$ et on a $$ \mathrm{Gal}(\Robba_K / \Robba_\qp) = \mathrm{Gal}(\E^\dagger_K / \E^\dagger_\qp) = \mathrm{Gal}(K_\infty / F_\infty). $$

On a un opérateur de dérivation $\partial$ agissant sur $\Robba_K$: si $K / \qp$ est une extension non ramifiée, on a $T_K = T$ et $\partial f(T) = (1 + T) f'(T)$ pour $f(T) \in \Robba_K$; si $K / \qp$ est ramifiée et $P \in \zp[[T]][X]$ dénote le polynôme minimal de $T_K \in \E^\dagger_K$ sur $\E^\dagger_\qp$ comme ci-dessus, l'identité $P(T_K) = 0$ et le fait que $\partial$ est une dérivation donnent la formule \[ \partial  T_K = - P'(T_K)^{-1} (\partial  P)(T_K), \] où, si $P = \sum f_i X^i \in \zp[[T]][X]$, $\partial P = \sum \partial f_i \cdot X^i$.

Enfin, soit $\ell_T = \log (T)$ une variable formelle et étendons les actions $\varphi$ et $\Gamma_K$ sur $\Robba_K$ à des actions sur $\Robba_K[\ell_T]$ par les formules $$ \varphi(\ell_T) = p \ell_T + \log (\varphi(T) / T^p), \;\;\; \gamma(\ell_T) = \ell_T + \log (\gamma(T) / T). $$ La dérivation $\partial$ agit sur $\ell_T$ par $\partial \ell_T = T^{-1} \partial T = 1 + T^{-1}$. On définit un opérateur de monodromie sur $\Robba_K[\ell_T]$ comme la dérivation $\Robba_K$-linéaire $N$ telle que $N(\ell_T) = -p / (p - 1)$.

%On aura besoin de quelques résultats classiques sur ces anneaux.

\subsubsection{Théorème de monodromie $p$-adique et surconvergence}

Soit \[ \mathfrak{d}_{\Ebf_K / \Ebf_\qp} \subseteq \Ebf_K \] la différente de l'extension $\Ebf_K / \Ebf_\qp$ et posons $\delta_K = d_{K_\infty} \cdot v_\Ebf(\mathfrak{d}_{\Ebf_K / \Ebf_\qp}) \in \N$, où $d_{K_\infty} = [\Ebf_K : \Ebf_\qp] = [K_\infty: F_\infty]$ comme plus haut. Rappelons (cf. \cite[Prop. 4.12]{ColmezEV}) que, si $c(K)$ dénote le conducteur de $K$ \footnote{Le conducteur de $K$ est la borne inférieure de l'ensemble des $t$ tels que le groupe de ramification supérieure $\mathscr{G}_\qp^{(t)}$ agit trivialement sur $K$.} et $n \geq c(K) + 1$ est un entier, alors $[K_n:F_n] = d_{K_\infty}$ et $\delta_K = d_{K_\infty} p^n v_p(\mathfrak{d}_{K_n / F_n})$. En particulier, si $K$ a suffisamment de racines de l'unité, dans la terminologie de \cite{CN}, alors $v_K = [K:\qp] v_p(\mathfrak{d}_{K/\qp})$. On aura besoin des faits suivants:

\begin{lemme} [{\cite[Lem. 2.17]{CN}}] \label{CN} Si $s < (\delta_K + 1)^{-1}$, alors $\partial T_K \in T^{-\delta_K} \mathscr{O}_{\E^{\dagger, s}_K}^\times$ et on a $v^{[r, s]}(\partial T_K) \geq -1$ pour tout $0 < r < s$.
\end{lemme}

Soit $D \in \Phi \Gamma(\Robba)$ de Rham et notons $\Delta = \Nrig(D)$. D'après le théorème de monodromie $p$-adique (cf. \cite[Thm. III.2.1]{Berger08}), il existe une extension galoisienne finie $K$ de $\qp$ tel que l'on a un isomorphisme $$ \Robba_K[\ell_T] \otimes_{K_0} (\Robba_K[\ell_T] \otimes_{\Robba} \Delta)^{\Gamma_K} \cong \Robba_K[\ell_T] \otimes_{\Robba} \Delta. $$ On note \[ D_{\mathrm{pst}} = (\Robba_K[\ell_T] \otimes_{\Robba} \Delta)^{\Gamma_K} \] l'espace des $\Gamma_K$-solutions de $\Delta$. C'est un $(\varphi, N, \mathrm{Gal}(K / \qp))$-module filtré (la filtration dépend de la donnée de $D$). Le résultat suivant permet de reconstruire $\Delta$ à partir de $D_{\rm pst}$. 

\begin{proposition}[{\cite[Thm. III.2.1, Thm. C]{Berger08}}] \label{monodrom} Soient $D$ et $D_{\mathrm{pst}}$ comme ci-dessus. Si $K$ est une extension galoisienne finie de $\qp$ telle que $\mathscr{G}_\qp$ agit sur $D_{\rm pst}$ à travers $\mathrm{Gal}(K / \qp)$, alors $$ \Delta = (\Robba_K[\ell_T] \otimes_{K_0} D_{\mathrm{pst}})^{\mathrm{Gal}(K_\infty / F_\infty), N = 0}, $$ le groupe $\mathrm{Gal}(K_\infty / F_\infty)$ agissant sur $D_{\mathrm{pst}}$ à travers $\mathrm{Gal}(K / (K \cap F_\infty)) \subseteq \mathrm{Gal}(K / \qp)$, et sur $\Robba_K$ à travers $\mathrm{Gal}(K_\infty / F_\infty) = \mathrm{Gal}(\Robba_K / \Robba_\qp)$ (il agit trivialement sur l'élément $\ell_T$).
\end{proposition}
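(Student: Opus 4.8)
La stratégie que je propose repose entièrement sur le théorème de monodromie $p$-adique rappelé juste avant l'énoncé : celui-ci fournit une extension galoisienne finie $K / \qp$ et la trivialisation
\[ \Robba_K[\ell_T] \otimes_{K_0} (\Robba_K[\ell_T] \otimes_{\Robba} \Delta)^{\Gamma_K} \;\xrightarrow{\sim}\; \Robba_K[\ell_T] \otimes_{\Robba} \Delta, \]
et, quitte à agrandir $K$, on suppose que $\mathscr{G}_\qp$ agit sur $D_{\rm pst} = (\Robba_K[\ell_T] \otimes_{\Robba} \Delta)^{\Gamma_K}$ à travers $\mathrm{Gal}(K / \qp)$, ce qui est l'hypothèse. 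Posons $M = \Robba_K[\ell_T] \otimes_{\Robba} \Delta$, de sorte que la trivialisation s'écrit $M \cong \Robba_K[\ell_T] \otimes_{K_0} D_{\rm pst}$. Il suffit alors de montrer, d'une part, que $\Delta$ se récupère à partir de $M$ comme le sous-module des éléments à la fois invariants sous $\mathrm{Gal}(K_\infty / F_\infty)$ et annulés par $N$, et, d'autre part, que ces deux opérations correspondent, via l'isomorphisme ci-dessus, aux actions diagonales décrites dans l'énoncé.

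Pour le premier point, on raisonne sur la description $M = \Robba_K[\ell_T] \otimes_{\Robba} \Delta$. Rappelons que $\Robba_K / \Robba_\qp$ est une extension étale galoisienne de groupe $\mathrm{Gal}(K_\infty / F_\infty) = \mathrm{Gal}(\Robba_K / \Robba_\qp)$, que ce groupe agit trivialement sur $\ell_T$ et sur $\Delta$ (ce dernier étant défini sur $\Robba = \Robba_\qp$), et que $(\Robba_K)^{\mathrm{Gal}(K_\infty / F_\infty)} = \Robba_\qp$. En écrivant $M = \Robba_K \otimes_{\Robba} (\Robba_\qp[\ell_T] \otimes_{\Robba} \Delta)$, la descente galoisienne donne donc $M^{\mathrm{Gal}(K_\infty / F_\infty)} = \Robba_\qp[\ell_T] \otimes_{\Robba} \Delta$. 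Par ailleurs l'opérateur de monodromie $N$ est $\Robba_K$-linéaire, agit trivialement sur $\Delta$ et envoie $\ell_T$ sur $-p/(p-1)$ ; sur $\Robba_\qp[\ell_T] \otimes_{\Robba} \Delta$ il s'annule donc exactement sur les éléments de degré $0$ en $\ell_T$, c'est-à-dire sur le plongement $x \mapsto 1 \otimes x$ de $\Delta$. On obtient ainsi
\[ \Delta = M^{\mathrm{Gal}(K_\infty / F_\infty),\, N = 0}. \]

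Il reste à transporter ces deux actions par l'isomorphisme de monodromie et à vérifier qu'elles induisent bien, sur $\Robba_K[\ell_T] \otimes_{K_0} D_{\rm pst}$, l'action diagonale de $\mathrm{Gal}(K_\infty / F_\infty)$ (naturelle sur $\Robba_K$, à travers $\mathrm{Gal}(K / (K \cap F_\infty))$ sur $D_{\rm pst}$, triviale sur $\ell_T$) ainsi que la monodromie totale ($N$ sur $\ell_T$ et la monodromie induite sur $D_{\rm pst}$). C'est ici le point qui demandera le plus de soin : il faut contrôler comment les structures de $(\varphi, N, \mathrm{Gal}(K / \qp))$-module filtré de $D_{\rm pst} = M^{\Gamma_K}$ proviennent, par passage aux $\Gamma_K$-invariants, des structures de $M$ — notamment que l'action résiduelle de $\mathrm{Gal}(K_\infty / F_\infty)$ sur $D_{\rm pst}$ se factorise par $\mathrm{Gal}(K / (K \cap F_\infty))$ et que $N$ se restreint à la monodromie de $D_{\rm pst}$, en tenant compte des relations de commutation entre $N$, $\varphi$ et $\Gamma_K$. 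Ces compatibilités constituent l'essentiel de \cite[Thm. C]{Berger08} ; une fois établies, la combinaison avec le paragraphe précédent fournit
\[ \Delta = (\Robba_K[\ell_T] \otimes_{K_0} D_{\rm pst})^{\mathrm{Gal}(K_\infty / F_\infty),\, N = 0}, \]
ce qui conclut. On notera que, sur cette dernière description, la condition $N = 0$ n'est plus triviale : elle couple les degrés en $\ell_T$ à la monodromie nilpotente de $D_{\rm pst}$, et c'est précisément ce couplage qui reconstitue $\Delta$.
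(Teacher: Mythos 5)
The paper offers no proof of this proposition at all: it is quoted as a black box from Berger (\cite[Thm. III.2.1, Thm. C]{Berger08}), so your attempt has to be judged on its own as a derivation from the trivialization isomorphism stated just before. Where your argument is explicit, it is correct. The computation on the model $M = \Robba_K[\ell_T]\otimes_{\Robba}\Delta$ is exactly right: an element of $\mathrm{Gal}(K_\infty/F_\infty)$ fixes $T$, hence fixes $\ell_T$, and acts on the $\Delta$-factor through its (trivial) image in $\Gamma$, so invariance under the finite group (characteristic $0$, averaging idempotent, or etale descent for $\Robba_K/\Robba_\qp$) gives $M^{\mathrm{Gal}(K_\infty/F_\infty)} = \Delta[\ell_T]$; and since $N$ acts there only through $N(\ell_T) = -p/(p-1) \neq 0$, its kernel is the degree-$0$ part, i.e. $\Delta$.

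The weak point is your last step. As written, you defer the equivariance of the trivialization to ``l'essentiel de \cite[Thm. C]{Berger08}'' --- but the proposition you are proving \emph{is} that theorem, so the argument is circular at precisely the place where it needs to close. The good news is that the compatibilities you list are not the essential content of Berger's theorem; they are formal, once one notes that the trivialization isomorphism is the multiplication map $f \otimes d \mapsto f \cdot d$ from $\Robba_K[\ell_T]\otimes_{K_0} D_{\rm pst}$ to $M$. Concretely: (i) since $K/\qp$ is Galois, $\Gamma_K = \mathrm{Gal}(K_\infty/K)$ is normal in $\mathrm{Gal}(K_\infty/\qp)$, so the latter acts on $D_{\rm pst} = M^{\Gamma_K}$ through $\mathrm{Gal}(K/\qp)$, and the subgroup $\mathrm{Gal}(K_\infty/F_\infty)$ acts through its image $\mathrm{Gal}(K/(K\cap F_\infty))$; (ii) $N = N_{\Robba_K[\ell_T]}\otimes 1$ commutes with $\Gamma_K$ (direct check using $\gamma(\ell_T) = \ell_T + \log(\gamma(T)/T)$), hence preserves $D_{\rm pst}$, and its restriction there is by definition the monodromy of $D_{\rm pst}$; (iii) the multiplication map intertwines the diagonal group action and the diagonal $N$ with those of $M$, because the Galois action on $M$ is semi-linear over $\Robba_K[\ell_T]$ and $N$ satisfies the Leibniz rule over $\Robba_K[\ell_T]$. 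The only non-formal input is the bijectivity of the multiplication map, which is exactly the monodromy theorem you took as your starting point. Replace the appeal to Thm.~C by the checks (i)--(iii) and your proof becomes complete and self-contained.
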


On récupère l'action de $\Gamma$ via l'action résiduelle du groupe $\Gamma_K$ sur le module $\Delta$, l'action de $\varphi$ via son action (diagonale) sur le produit tensoriel et l'action de l'opérateur $\partial$ à travers celle sur $\Robba_K[l_K]$. Le lemme suivant sera très utile pour nos constructions futures.

\begin{lemme} \label{lemme11}
Il existe, pour tout  $0 < s < (\delta_K + 1)^{-1} $, des sous-$\E_\qp^{]0, s]}$-modules $\Delta^{]0, s]}$ de $\Delta$, munis d'une famille de valuations $(v^{[r, s]})_{0 < r < s}$, satisfaisant les conditions du lemme \ref{subm1}, et de sorte que, pour tout $0 < r < s$, la norme $v^{[r, s]}$ de l'opérateur $\partial$ sur $\Delta^{]0, s]}$ soit $\geq - s - 1$.
\end{lemme}

\begin{proof}
Débarrassons-nous d'abord de l'opérateur $N$. Le module $M = K_0[\ell_T] \otimes_{K_0} D_{\rm pst}$ est un $K_0[\ell_T]$-module libre de rang $d$ muni d'une action de $G = \mathrm{Gal}(K_\infty / F_\infty)$ (agissant sur le facteur de droite) et de $N$ (agissant diagonalement) et on affirme qu'il contient un système libre de générateurs $w_1, \hdots, w_d$ tués par $N$. En effet, notons $\alpha = - \frac{p}{p-1}$ la valeur de $N$ sur $\ell_T$ et posons, pour $v_1, \hdots, v_d$ une base quelconque de $D_{\rm pst}$, \[ w_i = \sum_{k = 0}^{+\infty} (-1)^k \frac{\alpha^{-k}}{k!} \ell_T^k \otimes N^k v_i , \;\;\; 1 \leq i \leq d, \] chaque somme étant une somme finie car $N$ est nilpotent. Alors les $w_i$ sont par construction tués par $N$ et ils forment un système libre de générateurs de $M$, car l'opérateur $ \sum_{k = 0}^{+\infty} (-1)^k \frac{\alpha^{-k}}{k!} \ell_T^k \otimes N^k$ est inversible, $N$ étant nilpotent. Le module $W = (K_0[\ell_T] \otimes D_{\rm pst})^{N = 0}$ est donc un $(K_0[\ell_T])^{N = 0} = K_0$-espace vectoriel de dimension $d$, muni d'une action de $G$, et on a \[ K_0[\ell_T] \otimes_{K_0} W = K_0[\ell_T]  \otimes_{K_0} D_{\rm pst}. \]

Soit $v_1, \hdots, v_d$ une base de $D_{\rm pst}$ de sorte que la matrice de l'opérateur $N$ soit à coefficients entiers \footnote{Ceci est possible grâce au fait que $N$ est nilpotent: si $v_1, \hdots, v_d$ est une base de $D_{\rm pst}$ de sorte que la matrice de $N$ soit triangulaire supérieure avec des zéros sous la diagonale, alors on peut choisir des entiers $0 < n_2 < n_3 < \hdots < n_d$ de sorte que la matrice de $N$ dans la base $v_1, p^{n_2} v_2, \hdots, p^{n_d} v_d$ soit à coefficients entiers comme on voulait.}, et soient $w_1, \hdots, w_d$ les éléments associés à cette base formant une base de $W$ fournis par le paragraphe précédent. Posons, pour $0 < s < (\delta_K + 1)^{-1}$,  \[ \Delta^{]0, s]} = (\E_K^{]0, s]} \otimes W)^G \subseteq \E_K^{]0, s]} \otimes W \subseteq \Robba_K \otimes W. \]

On a $G \cong \mathrm{Gal}(\Robba_K / \Robba_\qp)$, et le théorème de Hilbert $90$ implique que $H^1(G, \mathrm{GL}_d(\Robba_K)) = 1$. Le module $W$ est donc $\Robba_K$-admissible au sens de Fontaine et on a un isomorphisme $$ \Robba_K \otimes_{\Robba_{\qp}} (\Robba_K \otimes_{K_0} W)^G \cong \Robba_K \otimes_{K_0} W, $$ induit par $f \otimes (g \otimes x) \mapsto f g \otimes x$, de $\Robba_K$-modules topologiques de rang $d$ munis d'une action de $G$. Cet isomorphisme est en outre compatible avec les structures présentes sur $\Robba_K$. Cela veut dire en particulier que le $\Robba_\qp$-module $(\Robba_K \otimes_{K_0} W)^G$ est de dimension $d$.

Si $s < (\delta_K + 1)^{-1}$, alors $\Robba_K = \Robba_\qp \otimes_{\E^{]0, s]}_\qp} \E_K^{]0, s]}$ \footnote{Comme on le voit, par exemple, en notant que, si $s < (\delta_K + 1)^{-1}$, $\E_K^{]0, s]}$ est un $\E_\qp^{]0, s]}$-module libre de rang $d_{K_\infty}$, dont une base est formée par $1, T_K, \hdots, T^{d_{K_\infty} - 1}$, et donc $\Robba_\qp \otimes_{\E^{]0, s]}_\qp} \E_K^{]0, s]} = \oplus_i \Robba_\qp \otimes_{\E^{]0, s]}_\qp} \E_\qp^{]0, s]} \cdot T_K^i = \oplus_i \Robba_\qp \cdot T_K^i = \Robba_K$. }, d'où \[ (\Robba_K \otimes_{K_0} W)^G = (\Robba_\qp \otimes_{\E^{]0, s]}_\qp} \E_K^{]0, s]} \otimes_{K_0} W)^G = \Robba_\qp \otimes_{\E^{]0, s]}_\qp} (\E_K^{]0, s]} \otimes_{K_0} W)^G, \] ce qui montre que \[ \Delta = \Robba \otimes_{\E^{]0, s]}} \Delta^{]0, s]}. \] De plus, le fait que $\E_K^{]0, s/p]} \otimes_{\E_K^{]0, s]}} \varphi(\E_K^{]0, s]}) = \E_K^{]0, s/p]}$ et les formules pour l'action de $\varphi$ sur $\ell_T$ montrent que la deuxième condition du lemme \ref{subm1} est satisfaite.

Les modules $\E_K^{]0, s]} \otimes W$ sont des $\E^{]0, s]}_\qp$-modules libres, dont une base est donnée par les éléments $(T_K^i \otimes w_j)_{0 \leq i \leq d_\infty, 1 \leq j \leq d}$, et ils sont munis d'une famille $(v^{[r, s]})_{0 < r < s}$ de valuations définies par \[ v^{[r, s]}(\sum_{i, j} f_{ij} \cdot T_k^i \otimes w_j) = \inf_{i, j} v^{[r, s]}(f_{ij}) \;\;\; (f_{ij} \in \E_\qp^{]0, s]}). \]

Calculons la norme de l'opérateur $\partial$ sur la base décrite ci-dessus pour une de ces valuations. On a $\partial(\ell_T) = 1 + T^{-1}$ et donc
\begin{eqnarray*}
\partial(w_i) &=& \partial( \sum_{k = 0}^{+\infty} (-1)^k \frac{\alpha^{-k}}{k!} \ell_T^k \otimes N^k v_i) \\
&=& (1 + T^{-1}) \sum_{k = 1}^{+\infty} (-1)^k \frac{\alpha^{-k}}{(k-1)!} \ell_T^{k-1} \otimes N^k v_i \\
&=& - \alpha^{-1} (1 + T^{-1}) \sum_{k = 0}^{+\infty} (-1)^k \frac{\alpha^{-k}}{k!} \ell_T^k \otimes N^k (N v_i) \\
&=& \sum_{l = 1}^d - a_l \alpha^{-1} (1 + T^{-1}) \sum_{k = 0}^{+\infty} (-1)^k \frac{\alpha^{-k}}{k!} \ell_T^k \otimes N^k v_l \\
&=& \sum_{l = 1}^d - a_l \alpha^{-1} (1 + T^{-1}) w_l, \\
\end{eqnarray*}
où $N v_i = \sum_{l = 1}^d a_l \cdot v_l$, avec $v_p(a_l) \geq 0$ pour tout $l$, par le choix de la base $v_1, \hdots, v_d$ de $D_{\rm pst}$. On en déduit que, si $A$ dénote l'anneau des entiers de $K_0$ et $W_0$ dénote le $A$-module engendré par les éléments $w_1, \hdots, w_d$, alors (rappelons que $\alpha = - \frac{p}{ p - 1}$) \[ \partial(W_0) \subseteq p^{-1} (1 + T^{-1})\cdot W_0. \] On en déduit
% \begin{eqnarray*}
% v^{[r, s]}(\partial(T_K^i \otimes w_j)) &=& v^{[r, s]}( i \partial(T_K) T_K^{i-1} \otimes \partial w_j) \\
% &\geq & v^{[r, s]}(\partial(T_K) T^{-1} p^{-1}) \\
% & \geq & - s - 2.
% \end{eqnarray*}
\begin{eqnarray*}
v^{[r, s]}(\partial(T_K^i \otimes w_j)) &=& v^{[r, s]}( i \partial(T_K) T_K^{i-1} \otimes w_j + T_K^{i} \otimes \partial w_j) \\
&\geq & \inf\{ v^{[r, s]}( i \partial(T_K) T_K^{i-1} \otimes w_j), v^{[r, s]}(p^{-1} (1 + T^{-1})) \} \\
&\geq & \inf\{ -1 , -1 - s \} \\
& = & - s - 1,
\end{eqnarray*}
où dans la première égalité on a utilisé la définition de $\partial$, dans la deuxième l'inégalité triangulaire et l'inclusion $\partial(W_0) \subseteq p^{-1} (1 + T^{-1})\cdot W_0$ et dans la troisième l'estimation du lemme \ref{CN} et le fait que $ v^{[r, s]}(p^{-1} (1 + T^{-1})) = -1 - s$.

Vu que l'on a borné la norme de l'opérateur $\partial$ sur une base de $\E_K^{]0, s]} \otimes W$, un petit calcul montre que \[ v^{[r, s]}(\partial z) \geq v^{[r, s]}(z) - s - 1 \] pour tout $z \in \E_K^{]0, s]} \otimes W$. En effet, écrivons $z = \sum_{i, j} f_{ij} \cdot T_k^i \otimes w_j$, $f_{ij} \in \E^{]0, s]}_\qp$,  de sorte que $\partial(z) = \sum_{i, j} \partial f_{ij} \cdot T_k^i \otimes w_j + f_{ij} \cdot \partial(T_k^i \otimes w_j)$. On a alors
 \begin{align*}
  v^{[r, s]}(\partial z) &\geq \inf \big\{ v^{[r, s]}(\sum_{ij} (\partial f_{ij}) (T_K^i \otimes w_j)), v^{[r, s]}( \sum_{ij} f_{ij} \partial(T_K^i \otimes w_j)) \big\} \\
  &= \inf \big\{ \inf_{ij} v^{[r, s]}(\partial f_{ij}), v^{[r, s]}( \sum_{ij} f_{ij} \partial(T_K^i \otimes w_j)) \big\} \\
  &\geq \inf \big\{ \inf_{ij} v^{[r, s]}(f_{ij}) - s, \inf_{ij} v^{[r, s]}( f_{ij}) - s - 1 \big\} \\
  &= \inf_{ij} v^{[r, s]}(f_{ij}) - s - 1 = v^{[r, s]}(z) - s - 1,
 \end{align*}
 où la première ligne suit de l'inégalité triangulaire, la deuxième suit de la définition de $v^{[r, s]}$ sur $\E_K^{]0, s]} \otimes W$, la troisième de l'inégalité (sur $\E^{]0, s]}_\qp$) $v^{[r, s]}(\partial f_{ij}) \geq v^{[r, s]}(f_{ij}) - s$ et du calcul du paragraphe précédent et la dernière suit en appliquant la définition de $v^{[r, s]}$ une autre fois.
 
 Enfin, $\Delta^{]0, s]} \subseteq \E_K^{]0, s]} \otimes W$ est stable par $\partial$, il est muni par restriction d'une famille de valuations $v^{[r, s]}$ et, par ce qui précède, la norme $v^{[r, s]}$ de $\partial$ agissant sur $\Delta^{]0, s]}$ est bornée par $-s - 1$, ce qui permet de conclure.

\end{proof}

\begin{remarque} \leavevmode
\begin{itemize}
\item Si $0 < r < s < (\delta_K + 1)^{-1}$, les modules \[ \Delta^{[r, s]} = \E_\qp^{[r, s]} \otimes_{\E_\qp^{]0, s]}} \Delta^{]0, s]} = (\E_K^{[r, s]} \otimes W)^G, \] complétés de $\Delta^{]0, s]}$ pour la valuation $v^{[r, s]}$, sont des $\E^{[r, s]}$-modules de rang $d$, munis de la valuation $v^{[r, s]}$ pour les quelles la norme de $\partial$ est $\geq -s - 1$. On a \[ \Delta = \varinjlim_{s > 0} \varprojlim_{0 < r < s} \Delta^{[r, s]}. \]
\item Le lemme \ref{lemme11} ci-dessus montre que le rayon de surconvergence du module $\Delta$ peut être majoré en termes du conducteur de la plus petite extension galoisienne $K / \qp$ telle que $\mathscr{G}_K$ agit trivialement sur $D_{\rm pst}$. Si $V \in \mathrm{Rep}_L \mathscr{G}_\qp$ est de Rham, ceci permet de borner le rayon de surconvergence de $\Delta = \Nrig(V)$ en termes de la plus petite extension (galoisienne) $K$ de $\qp$ sur laquelle $V$ dévient semi-stable.
\end{itemize}
\end{remarque}

%\begin{corollary}
%Si $r < (\delta_K + 1)^{-1}$, il existe une base $v_1, \hdots, v_d$ de $\Delta$ dans laquelle la matrice de $\partial$ est à coefficients dans $\E^{\dagger, r}$ et dont la valuation pour la norme $v^{]0, r]}$ est $\geq -1$.
%\end{corollary}
%
%\begin{proof}
%Reprenons les notations de la preuve du lemme \ref{lemme11} ci-dessus. Considérons le groupe $G = \mathrm{Gal}(K_\infty / F_\infty) \subseteq \mathrm{Gal}(K / \qp)$ agissant sur $D_{\rm pst}$. On a $W = (K_0[\ell_T] \otimes D_{\rm pst})^{N = 0}$, qui est un $G$-module de dimension $d$ sur $K_0$. Si $r < (\delta_K + 1)^{-1}$, on a posé $\Delta^{[r, s]} = (\E_K^{[r, s]} \otimes W)^G \subseteq \E_K^{[r, s]} \otimes W$. Or, 
%\end{proof}

\subsubsection{$(\varphi, \Gamma)$-modules relatifs} \label{phiGammarel} Soit $A$ une algèbre affino\"ide sur $\qp$. L'anneau de Robba $\Robba_A$ relatif à $A$ est défini en posant, pour $0 < r < s$, $$ \Robba_A^{[r, s]} = \Robba^{[r, s]} \widehat{\otimes} A; \;\;\; \Robba_A^{]0, s]} = \varprojlim_{0 < r < s} \Robba_A^{[r, s]}; \; \; \; \Robba_A = \varinjlim_{s > 0} \Robba_A^{]0, s]}, $$ où le premier produit tensoriel est le produit tensoriel complété entre deux espaces de Banach. On peut montrer que $\Robba_A = \Robba_\qp \widehat{\otimes} A$ (produit tensoriel complété inductif ou projectif entre deux espaces topologiques localement convexes). Ceci s'interprète en termes de fonctions analytiques sur des espaces rigides: si $I \subseteq [0, 1[$ est un intervalle d'extremités dans $p^\Q$ et $\mathbb{A}^1 = \mathrm{Sp}(\qp \langle T \rangle)$ dénote la droite affine rigide de paramètre $T$, en notant $B_I$ l'ouvert admissible de $\mathbb{A}^1$ défini par $v_A(T) \in I$ ($v_A = - \log_p |\cdot|_A$ dénote la valuation de l'algèbre aff\"inoide $A$), on a un isomorphisme naturel $$ \Robba^I_A \cong \mathcal{O}(\mathrm{Sp}(A) \times B_I). $$ On a aussi une interprétation en termes de séries de Laurent (resp. de puissances si $0 \in I$) à coefficients dans $A$ de la manière évidente. On a un endomorphisme $A$-linéaire d'anneaux $\varphi \colon \Robba_A^{]0, s]} \to \Robba_A^{]0, s / p]}$, qui envoie $T$ sur $(1 + T)^p - 1$, induisant une action de $\varphi$ sur $\Robba_A$ et on a une action continue du groupe $\Gamma$, agissant par $\sigma_a(T) = (1 + T)^a - 1$, $a \in \zpe$, sur tous les anneaux définis ci-dessus.

Pour un module $D^{]0, s]}$ sur $\Robba_A^{]0, s]}$ et $0 < s' < s$, on note $D^{]0, s']} = D^{]0, s]} \otimes_{\Robba_A^{]0, s]}} \Robba_A^{]0, s']}$ et $\varphi^*(D^{]0, s]}) = D^{]0, s]} \otimes_{\Robba_A^{]0, s]}, \varphi} \Robba_A^{]0, s/p]}$. Un \textit{$(\varphi, \Gamma)$-module sur $\Robba_A^{]0, s]}$} est un module projectif de type fini $D^{]0, s]}$ sur $\Robba_A^{]0, s]}$, muni d'un isomorphisme $\E^{]0, s/p]}$-linéaire $\tilde{\varphi} \colon \varphi^*(D^{]0, s]}) \to D^{]0, s/p]}$ et d'une action semi-linéaire de $\Gamma$, commutant avec $\tilde{\varphi}$ dans le sens évident. L'isomorphisme $\varphi^*(D^{]0, s]}) \to D^{]0, s/p]}$ induit, pour tout $0 < s' < s$, des opérateurs semi-linéaires $\varphi \colon D^{]0, s']} \to D^{]0, s'/p]}$, définis comme la composée \[ D^{]0, s']} = D^{]0, s]} \otimes_{\Robba_A^{]0, s]}} \Robba_A^{]0, s']} \to \varphi^*(D^{]0, s]}) \otimes_{\Robba_A^{]0, s]}} \Robba_A^{]0, s']} \to D^{]0, s/p]} \otimes_{\Robba_A^{]0, s]}} \Robba_A^{]0, s'/p]} = D^{]0, s'/p]}, \] la première flèche étant celle induite par $D^{]0, s]} \to \varphi^*(D^{]0, s]}) \colon x \mapsto x \otimes 1$ et la deuxième étant $\tilde{\varphi} \otimes \varphi$.

Un \textit{$(\varphi, \Gamma)$-module sur $\Robba_A$} est un $\Robba_A$-module (projectif de type fini) $D$ tel qu'il existe $s > 0$ et un $(\varphi, \Gamma)$-module $D^{]0, s]}$ sur $\Robba_A^{]0, s]}$ tel que $D \cong D^{]0, s]} \otimes_{\Robba_A^{]0, s]}} \Robba_A$.

Plus généralement, si $X$ est un espace rigide analytique sur $\qp$ et $r \geq 0$, on définit $\Robba_X^{]0, r]}$ comme le faisceau des fonctions rigides analytiques sur $X \times B_{]0, r]}$, $\Robba_X = \varinjlim_{r < 0} \Robba_X^{]0, r]}$ et un $(\varphi, \Gamma)$-module sur $\Robba_X$ est une collection compatible de $(\varphi, \Gamma)$-modules sur $\Robba_A$ pour chaque ouvert admissible $A$ de $X$.

\subsubsection{Cohomologie des $(\varphi, \Gamma)$-modules}

Si $V \in \mathrm{Rep}_L \mathscr{G}_\qp$ est une représentation $p$-adique de $\mathscr{G}_\qp$ et $\gamma$ est un générateur topologique de $\Gamma$ \footnote{Si $p = 2$, il faut considérer $\Gamma' \subseteq \Gamma$ la $p$-partie du sous-groupe de torsion de $\Gamma$, $\gamma \in \Gamma$ dont l'image dans le quotient $\Gamma / \Gamma'$ soit un générateur topologique, et prendre les invariants par $\Gamma'$ dans tous les modules de la suite ci-dessous. }, le complexe $$ 0 \to \D(V) \xrightarrow{x \mapsto ((1-\varphi) x, (1 - \gamma) x)} \D(V) \oplus \D(V) \xrightarrow{(x, y) \mapsto (1 - \gamma) x - (1 - \varphi) y} \D(V) \to 0 $$ calcule la cohomologie galoisienne de $V$ en termes de son $(\varphi, \Gamma)$-module $\D(V)$ sur $\E_\qp$ associé (cf. \cite{Herr}). D'après \cite{Liu} et \cite{KPX}, on peut définir la cohomologie des $(\varphi, \Gamma)$-modules sur l'anneau de Robba $\Robba_A$ relatif à une algèbre aff\"inoide $A$ sur $\qp$ (et, plus généralement, sur un espace analytique rigide sur une extension finie de $\qp$), retrouvant les constructions de Herr dans le cas d'un $(\varphi, \Gamma)$-module étale au-dessus d'un point.

Soient $A$ une algèbre aff\"inoide sur $\qp$ et $D$ un $(\varphi, \Gamma)$-module sur $\Robba_A$. On note $\Gamma' \subseteq \Gamma$ la partie de $p$-torsion de $\Gamma$ (qui est triviale si $p \neq 2$, et cyclique d'ordre $2$ quand $p = 2$). Soit $\gamma \in \Gamma$ tel que son image dans $\Gamma / \Gamma'$ en est un générateur topologique. On pose $\gamma_0 = \gamma$ et, pour $n \geq 1$, $\gamma_n$ un générateur topologique de $\Gamma_n$. Pour $\delta \in \{ \varphi, \psi \}$ et $\gamma' \in \{ \gamma_n : n \geq 0\}$, on note $D' = D^{\Gamma'}$ si $n = 0$ et $D' = D$ si $n \geq 1$, et on définit le complexe $$ \mathscr{C}^\bullet_{\delta, \gamma'}(D): 0 \to D' \to D' \oplus D' \to D' \to 0, $$ où les flèches sont données, respectivement, par $x \mapsto ((\delta - 1)x, (\gamma' - 1)x)$ et $(x, y) \mapsto (\gamma' - 1)x - (\delta - 1) y$. Les modules $H_{\delta, \gamma'}^\bullet(D)$ sont définis comme les groupes de cohomologie de ce complexe. Quand $n = 0$, on note $H_{\delta, \gamma}^i(D) = H_{\delta, \gamma_0}^i(D)$. 
%Si $K = F_n$, $n \geq 0$, on notera $ H^i(K, D) = H^i_{\varphi, \gamma_n}(D). $

\begin{proposition} [{\cite[Prop. 2.3.6, Thm. 4.4.2]{KPX}}] \label{KPX}
Soient $A$ une algèbre aff\"inoide sur $\qp$ et $D$ un $(\varphi, \Gamma)$-module sur $\Robba_A$. Les complexes $\mathscr{C}_{\varphi, \gamma'}(D)$ et $\mathscr{C}_{\psi, \gamma'}(D)$ sont quasi-isomorphes et les groupes de cohomologie $H^i_{\varphi, \gamma'}(D)$ sont des $A$-modules de type fini, compatibles au changement de base. On a une dualité locale et une formule de Euler-Poincaré.
\end{proposition}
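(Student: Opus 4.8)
Ce r\'esultat profond est d\^u \`a Kedlaya, Pottharst et Xiao, et je n'en donnerais qu'une esquisse, dans l'esprit de \cite{KPX} et du cas absolu trait\'e par Herr \cite{Herr} et Liu \cite{Liu}. Pour comparer les deux complexes, je partirais de l'identit\'e $\psi \circ (\varphi - 1) = -(\psi - 1)$ (cons\'equence de $\psi \circ \varphi = \mathrm{id}$), qui fait de $f = (\mathrm{id}, (x, y) \mapsto (-\psi x, y), -\psi)$ une application de complexes $\mathscr{C}^\bullet_{\varphi, \gamma'}(D) \to \mathscr{C}^\bullet_{\psi, \gamma'}(D)$. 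Pour montrer que $f$ est un quasi-isomorphisme, j'utiliserais la surjectivit\'e de $\psi - 1$ sur $D$ et la d\'ecomposition $D = \varphi(D) \oplus D^{\psi = 0}$ afin d'analyser le comportement de $\varphi$ et $\psi$ sur chaque facteur, ce qui ram\`ene l'acyclicit\'e du c\^one aux propri\'et\'es de $\psi$ d\'ej\`a \'etablies dans le cas absolu.

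Le c\oe ur de la preuve, et son principal obstacle, est la finitude en famille. Je travaillerais de pr\'ef\'erence avec le complexe en $\psi$: \'ecrivant $\Robba_A = \varinjlim_{s > 0} \varprojlim_{0 < r < s} \Robba_A^{[r, s]}$, les modules en jeu sont des $A$-Fr\'echet sur lesquels $\psi$ est \emph{compact} --- il am\'eliore le rayon de surconvergence et se factorise donc par une application de restriction compacte entre couronnes embo\^it\'ees --- de sorte que $\psi - 1$ est de Fredholm et que chaque $H^i_{\psi, \gamma'}(D) \cong H^i_{\varphi, \gamma'}(D)$ est de type fini sur $A$. La v\'eritable difficult\'e est de passer de cette finitude ponctuelle \`a la repr\'esentation du complexe de Herr par un complexe \emph{parfait} de $A$-modules, uniform\'ement sur $\mathrm{Sp}(A)$: cela requiert toute la machinerie de familles et de pentes de \cite{KPX}, ainsi qu'un contr\^ole soigneux des applications de transition dans l'\'ecriture $\Robba_A = \varinjlim_s \varprojlim_r \Robba_A^{[r, s]}$.

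Une fois le complexe de Herr repr\'esent\'e par un complexe parfait $C^\bullet$ de $A$-modules, la compatibilit\'e au changement de base r\'esulterait de deux ingr\'edients: d'une part, la formation de $\mathscr{C}^\bullet_{\psi, \gamma'}(D)$ commute \`a $- \widehat{\otimes}_A B$ pour tout morphisme d'affino\"ides $A \to B$ (platitude du produit tensoriel compl\'et\'e sur l'anneau de Robba relatif); d'autre part, l'alg\`ebre homologique des complexes parfaits montre que $C^\bullet \otimes^{\mathbb{L}}_A B$ calcule la cohomologie sur $B$.

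Enfin, pour la dualit\'e locale, je construirais le cup-produit $H^i_{\varphi, \gamma'}(D) \times H^{2 - i}_{\varphi, \gamma'}(D^\vee(\chi)) \to H^2_{\varphi, \gamma'}(\Robba_A(\chi)) \cong A$ et l'application trace, puis je v\'erifierais la perfection de l'accouplement par sp\'ecialisation aux points ferm\'es de $\mathrm{Sp}(A)$ (gr\^ace au changement de base pr\'ec\'edent), o\`u l'on se ram\`ene aux \'enonc\'es absolus connus de Herr, Liu et Nakamura \cite{Nakamura} sur $\qp$. De m\^eme, la formule d'Euler--Poincar\'e $\sum_i (-1)^i \, \mathrm{rang}_A H^i_{\varphi, \gamma'}(D) = - \mathrm{rang}_{\Robba_A}(D)$ se v\'erifierait par sp\'ecialisation, la caract\'eristique d'Euler d'un complexe parfait \'etant localement constante en famille.
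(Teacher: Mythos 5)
L'article ne d\'emontre pas cette proposition : elle est import\'ee telle quelle de \cite{KPX} (Prop. 2.3.6 et Thm. 4.4.2), si bien que votre esquisse ne peut \^etre confront\'ee qu'\`a la preuve de Kedlaya--Pottharst--Xiao elle-m\^eme ; jug\'ee \`a cette aune, elle comporte deux lacunes r\'eelles. La premi\`ere concerne le quasi-isomorphisme. Votre morphisme de complexes $f = (\mathrm{id},\, (x, y) \mapsto (-\psi x, y),\, -\psi)$ est le bon, et il est surjectif en chaque degr\'e parce que $\psi$ est surjectif (cons\'equence de $\psi \circ \varphi = \mathrm{id}$) ; son noyau est le complexe $[\, D^{\psi = 0} \xrightarrow{\gamma' - 1} D^{\psi = 0} \,]$ plac\'e en degr\'es $1$ et $2$. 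L'acyclicit\'e du c\^one \'equivaut donc exactement au lemme cl\'e suivant, qui est tout le contenu de \cite[Prop. 2.3.6]{KPX} et qui n'appara\^it nulle part dans votre texte : \emph{$\gamma' - 1$ est bijectif sur $D^{\psi = 0}$} (extension en famille du lemme de Herr \cite{Herr} et de Cherbonnier--Colmez \cite{CC}). La ``surjectivit\'e de $\psi - 1$ sur $D$'' que vous invoquez \`a la place n'est ni l'ingr\'edient pertinent (c'est celle de $\psi$ qui sert), ni vraie en g\'en\'eral : le conoyau de $\psi - 1$ s'identifie \`a $H^2_{\rm Iw}(D)$ (cf. \S~\ref{cohomIw}), non nul par exemple pour $D = \Drig(\qp(1))$.

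La seconde lacune, plus grave, concerne la finitude, dont vous dites vous-m\^eme qu'elle est le c{\oe}ur de la preuve. D'une part $\psi$ n'est pas compact sur $D$ : vu comme endomorphisme de $D^{]0, s]}$, il se factorise par la restriction $D^{]0, s]} \to D^{]0, s/p]}$, et ces deux couronnes partagent le bord ouvert $v_p(T) \to 0^{+}$, ce qui interdit la compacit\'e (seules les restrictions \`a des couronnes \emph{strictement} int\'erieures sont compactes, point de d\'epart des arguments de type Kiehl). D'autre part, et surtout, $\psi - 1$ n'est certainement pas de Fredholm : son noyau $D^{\psi = 1} \cong H^1_{\rm Iw}(D)$ est un $\Lambda_\infty$-module de rang $\mathrm{rang}_{\Robba}(D)$, donc de dimension infinie sur $A$ d\`es que $D \neq 0$. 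La finitude des $H^i_{\psi, \gamma'}(D)$ ne peut donc pas provenir du seul op\'erateur $\psi - 1$ : elle r\'esulte de l'interaction entre $\psi$ et l'action de $\Gamma$ (c'est $\gamma' - 1$ qui ``tue'' l'\'enorme noyau de $\psi - 1$), men\'ee uniform\'ement sur les morceaux de Banach $D^{[r, s]}$ que $\varphi$ et $\psi$ ne pr\'eservent pas ; c'est pr\'ecis\'ement l\`a toute la difficult\'e de \cite[Thm. 4.4.2]{KPX}. Vos deux derniers paragraphes (changement de base par perfection, dualit\'e et formule d'Euler--Poincar\'e par sp\'ecialisation aux points ferm\'es, o\`u l'on se ram\`ene \`a \cite{Liu} et \cite{Nakamura}) sont conformes \`a la structure formelle de la preuve de \cite{KPX}, mais ils reposent enti\`erement sur cette perfection que votre esquisse admet sans la justifier.
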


\subsubsection{Cohomologie d'Iwasawa des $(\varphi, \Gamma)$-modules} \label{cohomIw}

%Soit $\Lambda = \zp[[\Gamma]] = \varprojlim_n \zp[[\Gamma / \Gamma_n]]$ l'algèbre d'Iwasawa de $\Gamma$. On décompose $\Gamma = \Gamma^{\rm tors} \times \Gamma^{\rm st}$, où $\Gamma^{\rm tors}$ et $\Gamma^{\rm st}$ désignent, respectivement, la partie de torsion et sans torsion de $\Gamma$, ce qui fournit un isomorphisme $\Lambda \cong \zp[\Gamma^{\rm tors}] \otimes_\zp \zp[[\Gamma^{\rm st}]]$. Soit $\gamma$ un générateur topologique de $\Gamma^{\rm st}$ et notons $[\gamma]$ son image dans $\Lambda$. On obtient un isomorphisme $\Lambda \cong \zp[\Gamma^{\rm tors}] \otimes_\zp \zp[[T]]$ en envoyant $[\gamma]$ sur $1+T$.

Soit $\Lambda = \zp[[\Gamma]]$ l'algèbre d'Iwasawa de $\Gamma$. On décompose $\Gamma = \Gamma^{\rm tors} \times \widetilde{\Gamma}$, où $\Gamma^{\rm tors}$ désigne la partie de torsion de $\Gamma$ et $\widetilde{\Gamma} \cong 1 + 2p \zp$ via le caractère cyclotomique, et on obtient un isomorphisme $\Lambda \cong \zp[\Gamma^{\rm tors}] \otimes_\zp \zp[[\widetilde{\Gamma}]]$. Soit $\gamma$ un générateur topologique de $\widetilde{\Gamma}$ et notons $[\gamma]$ son image dans $\Lambda$. On obtient un isomorphisme $\Lambda \cong \zp[\Gamma^{\rm tors}] \otimes_\zp \zp[[T]]$ en envoyant $[\gamma]$ sur $1+T$.

Soit $\Lambda_\infty = \Robba^+(\Gamma)$ l'algèbre algèbre de distributions sur $\Gamma$. Précisément, on obtient $\Robba^+(\Gamma)$ en remplaçant la variable $T$ par $[\gamma] - 1$ dans la définition de $\Robba^+$. On peut de la sorte définir les anneaux $\Lambda_n = \Robba^{[r_n, +\infty]}(\Gamma)$ et on a $ \Lambda_\infty = \varprojlim_n \Lambda_n. $ Le choix de l'isomorphisme $\Lambda \cong \zp[\Gamma^{\rm tors}] \otimes_\zp \zp[[T]] $ fait que $\Lambda_n$ s'identifie aux fonction analytiques sur la boule $v_p(T) \geq r_n$ et $\Lambda_\infty$ à celui des fonctions analytiques sur la boule ouverte unité. Ce dernier espace s'identifie à l'anneau $H^0(\mathfrak{X}, \mathcal{O})$ des sections globales sur l'espace des poids $p$-adiques, et aussi, par le théorème d'Amice (cf. \cite[Th. 2.2]{S-T}) au $L$-dual continu des fonctions localement analytiques sur $\zpe$.

%Précisément, si $\mathfrak{m} \subseteq \Lambda$ est l'idéal maximal de $\Lambda$ (c'est l'idéal $(p, T)$ si l'on choisit un isomorphisme $\Lambda \cong \zp[\Gamma^{\rm tors}] \otimes_\zp \zp[[\Gamma^{\rm st}]]$) et, pour $n \geq 1$, on pose $\Lambda_n = \Robba^{[r_n, +\infty]}(\Gamma) = \Lambda[\frac{\mathfrak{m}^{1/r_n}}{p}]^\wedge$ la complétion $p$-adique de $\Lambda[\frac{\mathfrak{m}^{1/r_n}}{p}]$, qui est une algèbre affinoïde sur $\qp$, alors l'application naturelle $\Lambda[\frac{\mathfrak{m}^{1/ r_{n+1}}}{p}] \to \Lambda[\frac{\mathfrak{m}^{1/r_n}}{p}]$ induit $\Lambda_{n+1} \to \Lambda_n$ et on a $$ \Lambda_\infty = \varprojlim_n \Lambda_n \otimes_\zp \qp. $$ Le choix de l'isomorphisme $\Lambda \cong \zp[\Gamma^{\rm tors}] \otimes_\zp \zp[[T]] $ fait que $\Lambda_n = \zp[\Gamma^{\rm tors}] \otimes_\zp \zp[[T]][\frac{T^{1/r_n}}{p}]^\wedge$ s'identifie aux fonction analytiques sur la boule $v_p(T) \geq r_n$, d'où un isomorphisme $\Lambda_\infty \cong \qp[\Gamma^{\rm tors}] \otimes_\qp \Robba^+$. Ce dernier espace s'identifie à l'anneau $H^0(\mathfrak{X}, \mathcal{O})$ des sections globales sur l'espace des poids $p$-adiques, qui, par le théorème d'Amice, s'identifie aussi à l'algèbre des distributions $\mathscr{D}(\Gamma)$ sur $\Gamma$.

On note (cf. \cite[Def. 4.4.7]{KPX}) $\Lambda_n^\iota$ le module $\Lambda_n$ muni de l'action de $\Gamma$ via $\gamma(f) = [\gamma^{-1}] \cdot f$, $\gamma \in \Gamma$ et $f \in \Lambda_n$. On définit $$ \mathbf{Dfm} = \varprojlim_{n} \Lambda_n^\iota \widehat{\otimes}_\qp \Robba = \varprojlim_{n} \varinjlim_{s > 0} \varprojlim_{r < s} \Robba^{[s, r]} \widehat{\otimes} \Lambda_n^\iota. $$
%qui, dans la notation de la section \ref{poids}, n'est rien d'autre que l'espace des distributions $\mathscr{D}(\Gamma, \Robba)$ sur $\Gamma$ à valeurs dans $\Robba$.
Plus généralement, si $D$ est un $(\varphi, \Gamma)$-module sur $\Robba$, on définit sa déformation cyclotomique par 
\[ \mathbf{Dfm}(D) = D \widehat{\otimes}_\qp \Lambda_\infty^\iota = \varprojlim_n \varinjlim_{s > 0} \varprojlim_{r < s} D^{[r, s]} \widehat{\otimes} \Lambda_n^\iota. \]
Les actions $\varphi$, $\psi$ et $\Gamma$ sont données par les formules \[ \varphi(x \otimes \lambda) = \varphi(x) \otimes \lambda, \;\;\; \psi(x \otimes \lambda) = \psi(x) \otimes \lambda, \;\;\; \gamma(x \otimes \lambda) = \gamma(x) \otimes [\gamma^{-1}] \lambda, \] pour $x \in D$, $\lambda \in \Lambda_\infty$ et $\gamma \in \Gamma$. Le module $\mathbf{Dfm}(D)$ est un $(\varphi, \Gamma)$-module sur l'anneau $\Robba_\mathfrak{X} = \varprojlim_{n} \Robba_{\mathfrak{X}_n}$ de Robba relatif à l'espace des poids.

On définit la cohomologie d'Iwasawa de $D$ comme $$ H^i_{\rm Iw}(D) = H^i_{\psi, \gamma}(\mathbf{Dfm}(D)). $$ Ce sont, d'après le théorème \ref{KPX}, des $\Lambda_\infty$-modules de type fini. On peut donc voir les groupes de cohomologie d'Iwasawa comme des groupes de cohomologie à valeurs dans $\mathscr{D}(\zpe, D)$.

%Si $\eta \colon \Gamma \to L^\times$ est un caractère, on note aussi $\eta \colon \Lambda_\infty \to L$ le morphisme induit, et on a un isomorphisme $$\mathbf{Dfm}(D) \otimes_{\Lambda_\infty, \eta} L \xrightarrow{\sim} D(\eta^{-1}). $$Si $\mu \in H^i_{\rm Iw}(D)$, cet isomorphisme induit des morphismes de spécialisation $$ H^i_{\rm Iw}(\qp, D) \to H^i_{\psi, \gamma}(D(\eta)): \; \; \; \mu \mapsto \int_\Gamma \eta \cdot \mu, $$ la notation étant justifiée par l'interprétation de la cohomologie d'Iwasawa en termes de distributions.

Si $\eta \colon \Gamma \to L^\times$ est un caractère, le changement de base par rapport à $f_\eta \colon \Lambda_\infty \to L : [\gamma] \mapsto \eta^{-1}(\gamma)$ fournit un isomorphisme $$\mathbf{Dfm}(D) \otimes_{\Lambda_\infty, f_\eta} L \xrightarrow{\sim} D(\eta). $$Si $\mu \in H^i_{\rm Iw}(D)$, cet isomorphisme induit des morphismes de spécialisation $$ H^i_{\rm Iw}(D) \to H^i_{\psi, \gamma}(D(\eta)): \; \; \; \mu \mapsto \int_\Gamma \eta \cdot \mu, $$ la notation étant justifiée par l'interprétation classique de la cohomologie d'Iwasawa en termes des distributions.

Si $D \in \Phi\Gamma(\Robba)$, on définit le complexe $\mathscr{C}^\bullet_\psi(D)$, concentré en $[1, 2]$, par $$ [D \xrightarrow{\psi - 1} D]. $$ Le complexe $\mathscr{C}^\bullet_{\psi}(D)$ appartient à $\D^{[0, 2]}_{\rm perf}(\Lambda_\infty)$ \footnote{Rappelons que, pour un anneau $R$, $\D^{[0, 2]}_{\rm perf}(R)$ dénote la sous-catégorie de la catégorie dérivée de la catégorie des modules sur $R$ dont les objets sont quasi-isomorphes à un complexe borné formé par des $R$-modules projectifs finis et tel que leur cohomologie est concentrée en dégrées $[0, 2]$.} et calcule la cohomologie d'Iwasawa de $D$ (cf. \cite[Thm. 4.4.8]{KPX}). On a, en particulier, un isomorphisme $$ \mathrm{Exp}^* \colon H^1_{\rm Iw}(D) \to D^{\psi = 1} $$ dont l'inverse est donnée par $ z \mapsto [\frac{p-1}{p} \log(\chi(\gamma)) (z \otimes 1), 0]$ \footnote{Si $p = 2$, il faut appliquer à $z$ le projecteur naturel sur le sous espace d'éléments $\Gamma'$-invariants. On évitera ce cas-ci, se traitant de la même manière mais avec une complication technique supplémentaire. }. Si $z \in D^{\psi = 1}$, on note, afin d'alléger les notation, $\mu_z$ l'élément $(\mathrm{Exp}^*)^{-1}(z) \in H^1_{\rm Iw}(D)$. Si $n \geq 0$, on a (cf. \cite[\S VIII.1.3]{ColmezPhiGamma} ou \cite[\S 2.2.3, Eq. (6)]{Nakamura2}) la formule pour la spécialisation $$\int_{\Gamma_n} \eta \cdot \mu_z = [\tau_n(\gamma_n) (z \otimes e_\eta), 0] \in H^1_{\psi, \gamma_n}(D(\eta)), $$ où $\tau_n(\gamma_n) = p^{-n} \log(\chi(\gamma_n))$, si $n \geq 1$ et $\tau_0(\gamma_0) = \frac{p - 1}{p} \log(\chi(\gamma_0))$.

Terminons en mentionnant que $\eta$ induit un automorphisme sur $\Lambda_\infty$ donné par $\eta([\gamma]) = \eta(\gamma)^{-1} \cdot [\gamma]$, ce qui induit un isomorphisme de $(\varphi, \Gamma)$-modules $\mathbf{Dfm}(D) \cong \mathbf{Dfm}(D(\eta))$, donné par $x \otimes [\gamma] \mapsto (x \otimes e_\eta) \otimes \eta(\gamma)^{-1} [\gamma]$, et donc un isomorphisme de $\Lambda_\infty$-modules \[ H^i_{\rm Iw}(D) \to H^i_{\rm Iw}(D(\eta)): \;\;\; \mu \mapsto \mu \otimes e_\eta. \] On a, par exemple, $$ \int_\Gamma \eta \cdot \mu = \int_\Gamma 1 \cdot (\mu \otimes e_\eta). $$

\subsubsection{Applications exponentielles}

Si $D \in \Phi\Gamma(\Robba)$ est de Rham et $n \geq 0$, on note $$ \exp_{D, F_n} \colon L_n \otimes \DdR(D) \to \H^1_{\varphi, \gamma_n}(D), $$ $$ \exp^*_{D, F_n} \colon \H^1_{\varphi, \gamma_n}(D) \to L_n \otimes \DdR(D), $$ les applications exponentielle et exponentielle duale de Bloch-Kato comme définies dans \cite[\S 2.3; \S 2.4]{Nakamura}. Quand cela ne pose pas de problèmes, on omettra les indices dans les notations des applications exponentielles.

Explicitement (\cite[Lem. 2.12(1)]{Nakamura}), si $x \in \DdR(D) = (\D_{\mathrm{dif}}(D))^\Gamma$, alors il existe $m$ tel que $x \in \D_{\mathrm{dif}, m}(D)$, et il existe aussi $\tilde{x} \in D^{]0, r_m]}[1/t]$ tel que $\varphi^{-k}(\tilde{x}) - x \in \D_{\mathrm{dif}, k}^+(D)$ pour tout $k \geq m$. Alors on a $$ \exp_{D, F_n}(x) = [(\gamma_n - 1) \tilde{x}, (\varphi - 1) \tilde{x} ] \in H^1_{\varphi, \gamma_n}(D). $$ On remarquera que l'application $\exp_D$ est nulle sur $\mathrm{Fil}^0 \, \DdR(D)$.

Si $M$ est un module muni d'une action de $\Gamma$, et $\gamma' \in \{ \gamma_n: n \geq 0 \}$, on pose \footnote{Comme précédemment, $M' = M^{\Gamma'}$ si $\gamma' = \gamma_0$ et $M' = M$ autrement.} $$ C^\bullet_{\gamma'}(M) = [M' \xrightarrow{\gamma' - 1} M'], $$ et on définit les groupes de cohomologie $H^i_{\gamma'}(M) = H^i(C^\bullet_{\gamma'}(M))$. Par exemple, si $D \in \Phi\Gamma(\Robba)$ et $n \geq 0$, alors $$ H^0_{\gamma_n}(\Ddif(D)) = \Ddif(D)^{\Gamma_n} = L_n \otimes \D_{\rm dR}(D). $$

Si $D \in \Phi\Gamma(\Robba)$ est de Rham et $n \geq 0$, on a un isomorphisme $L_\infty((t)) \otimes_{L_n} (L_n \otimes \DdR(D)) \cong \D_{\mathrm{dif}}(D)$ et l'application $L_n \otimes \DdR(D) \to H^1_{\gamma_n}(\D_{\mathrm{dif}}(D))$ qui envoie $x \in L_n \otimes \DdR(D)$ vers la classe de cohomologie $[\log \chi(\gamma_n) (1 \otimes x)] \in H^1_{\gamma_n}(\D_{\mathrm{dif}}(D))$ est un isomorphisme (cf. \cite{Nakamura}, juste avant le lemme 2.14). De plus, on a une application naturelle $H^1_{\varphi, \gamma_n}(D) \to H^1_{\gamma_n}(\Ddifp(D)) \to H^1_{\gamma_n}(\Ddif(D))$ définie par $[(x, y)] \mapsto [\varphi^{-m} x]$, où $m \gg 0$ et $[ \cdot ]$ dénote la classe de cohomologie correspondante.
%Cette application est bien définie: si $(x, y) \in D \oplus D$ est un cocycle représentant une classe de cohomologie dans $H^1_{\varphi, \gamma_n}(D)$, alors $(1 - \varphi) x = (1 - \gamma_n) y$ et donc $\varphi^{-m} x - \varphi^{-(m - 1)} x = (1 - \gamma_n) \varphi^{-m} y$, ce qui implique que les images de $\varphi^{-m} x$ et $\varphi^{-(m - 1)}x$ dans $H^1_{\gamma_n}(\D_{\rm dif}^+(D)) = \D_{\rm dif}^+(D)' / (1 - \gamma_n) \D_{\rm dif}^+(D)'$ \footnote{Rappelons une dernière fois que si $M$ est un $\Gamma$-module et $n \geq 0$, on note $M' = M^{\Gamma'}$ si $n = 0$ et $M'$ si non.} coïncident.
On définit $$ \exp^*_{D, F_n} \colon H^1_{\varphi, \gamma_n}(D) \to L_n \otimes \DdR(D) $$ comme la composition $H^1_{\varphi, \gamma_n}(D) \to H^1_{\gamma_n}(\Ddifp(D)) \to H^1_{\gamma_n}(\Ddif(D))  \xrightarrow{\sim} L_n \otimes_\qp \DdR(D)$. Remarquons que, par construction, l'image de $\exp^*_{D, F_n}$ tombe dans $L_n \otimes \mathrm{Fil}^0 \, \DdR(D)$.

%Si $D = \D_{\rm rig}(V)$, on a $\exp_{D, F_n} = \exp_{V, F_n}$ et $\exp^*_{D, F_n} = \exp^*_{V, F_n}$ via les isomorphismes $L_n \otimes \DdR(D) = L_n \otimes \DdR(V)$ et $H^i_{\varphi, \gamma_n}(D) \cong H^i(F_n, V)$. Enfin, les applications $\exp$ et $\exp^*$ sont l'une l'adjointe de l'autre: si $x \in \DdR(D)$ et $y \in H^1_{\varphi, \gamma}(D^*(1))$, on a $$ \langle x, \exp^*_{D^*(1)}(y) \rangle = \exp_{D}(x) \cup y, $$ où l'accouplement de gauche $ \langle \;,\; \rangle \colon \DdR(D) \times \DdR(D^*(1)) \to \qp$ est induit par $\DdR(D) \times \DdR(D^*(1)) \to \DdR(\Robba(1)) \cong \qp$ et celui de droite est l'accouplement local de cohomologie. Quand cela ne pose pas de problèmes, on omettra les indices dans les notations des applications exponentielles.

\subsubsection{Exponentielle de Perrin-Riou} \label{expPR}

Rappelons la formulation de l'application exponentielle de Perrin-Riou pour un $(\varphi, \Gamma)$-module de de Rham (cf. \cite{Nakamura}, \cite{Berger03}) et la loi de réciprocité. On définit, pour $h \geq 0$ \footnote{Dans la formule ci-dessous, $\nabla_0$ dénote simplement l'identité et $\nabla_1 = \nabla$.}, l'opérateur différentiel $$ \nabla_h = (\nabla - h + 1) \circ (\nabla - h + 2) \circ \hdots \circ \nabla \in \Lambda_\infty. $$

\begin{lemme} [{\cite[Lem. 3.6]{Nakamura}}]
Soit $D \in \Phi \Gamma(\Robba)$ de Rham et soit $h \geq 1$ tel que $\mathrm{Fil}^{-h} \, \DdR(D) = \DdR(D)$. Alors $\nabla_h (\Nrig(D)) \subseteq D$ et il induit un opérateur $\Lambda_\infty$-linéaire $$ \nabla_h \colon \Nrig(D)^{\psi = 1} \to D^{\psi = 1}. $$
\end{lemme}

Si $x \in \Nrig(D)^{\psi = 1}$ et $n\geq 0$, l'expression $[\varphi^{-n} x]_j$ \footnote{Rappelons que, si $x \in D^{]0, r_n]}$, l'élément $\varphi^{-n}(x)$ appartient à $L_n((t)) \otimes_\qp \DdR(D)$, s'écrit donc sous la forme $\varphi^{-n} x = \sum_j d_j t^j$, $d_j \in L_n \otimes \DdR(D)$ et on note $[\varphi^{-n}x]_j = d_j \in L_n \otimes \DdR(D) $.}, pour $j \in \N$, n'est définie que pour $n$ assez grand. Pourtant, l'identité $\mathrm{Tr}_{L_m / L_n} \circ \varphi^{-m} = \varphi^{-m} \circ \psi^{m-n}$, $m \geq n \geq 0$, nous permet de lui donner toujours un sens en utilisant les traces et en considérant les valeurs $p^{-m} \mathrm{Tr}_{L_m / L_n} [\varphi^{-m} x]_j$, où $m$ est n'importe quel entier assez grand.

% Efin, notons \[ \Gamma^*(j+1) = \left\{
%   \begin{array}{c c}
%     j! & \quad \text{ si $j \geq 0$} \\
%     \frac{(-1)^{j-1}}{(-j - 1)!} & \quad \text{si $j < 0$}  \\
%    \end{array} \right. \] le coefficient principal de la série de Laurent de la fonction $\Gamma(s)$ en $s = j + 1$.

%On remarque que $(\nabla - h + 1) \circ \hdots \circ (\nabla - 1) (\sum_{i \geq 0} a_i t^i) = (-1)^{h - 1} (h - 1)! a_0 + t^h z_h, $ où $z_h =  \sum_{k \geq h} a_k (k - 1) (k - 2) \hdots (k - h + 1) t^{k - h}$. L'opérateur $ (\nabla - h + 1) \circ \hdots (\nabla -1) $ fixe donc le coefficient constant et ''pousse'' le reste. Cette observation est le germe du théorème suivant:

\begin{theorem} [{\cite[Thm. 3.10]{Nakamura}}] \label{recPR}
Soient $D \in \Phi\Gamma(\Robba)$ de Rham, $h \geq 1$ un entier tel que $\mathrm{Fil}^{-h} \, \DdR(D) = \DdR(D)$, $z \in \Nrig(D)^{\psi = 1}$ et $n \geq 0$. On a

$(i)$ Si ${\j} > 0$ et si il existe $z_{\j} \in \Nrig(D)^{\psi = p^{-j}}$ vérifiant $\partial^{\j} z_{\j} = z$, ou si $-(h-1) \leq j \leq 0$ et $z_{\j} = \partial^{-{\j}} z$, alors, on a, pour $m \gg 0$, \[ \int_{\Gamma_n} \chi^{\j} \cdot \mu_{\nabla_h z} = \frac{p^{m (j - 1)}}{\Gamma^*(- j - h + 1)}  \exp ( \mathrm{Tr}_{L_m / L_n}  [\varphi^{-m} z_j]_0 \otimes t^{-{\j}} e_j), \]

$(ii)$ Si ${\j} \geq h$, on a, pour $m \gg 0$, \[ \exp^* (\int_{\Gamma_n} \chi^{-{\j}} \cdot \mu_{\nabla_h z} ) = \frac{p^{m (-j - 1)}}{\Gamma^*(- j - h + 1)} \mathrm{Tr}_{L_m / L_n} [ \varphi^{-m} \partial^{\j} z]_0 \otimes t^{\j} e_{-j}. \]
\end{theorem}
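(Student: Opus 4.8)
La stratégie consiste à calculer explicitement les deux membres à l'aide des descriptions déjà rappelées : d'une part la formule de spécialisation $\int_{\Gamma_n} \eta \cdot \mu_z = [\tau_n(\gamma_n)(z \otimes e_\eta), 0] \in H^1_{\psi, \gamma_n}(D(\eta))$, d'autre part les descriptions explicites de $\exp$ et $\exp^*$ via les complexes de Herr et les applications de localisation $\varphi^{-m}$. Les cas $(i)$ et $(ii)$, ainsi que les sous-cas $\j > 0$ et $\j \leq 0$ dans $(i)$, sont gouvernés par l'opérateur $\partial$ et par la torsion : comme $\partial \circ \varphi = p\,\varphi \circ \partial$ et $\partial \circ \sigma_a = a\,\sigma_a \circ \partial$, appliquer $\partial$ décale $\j$ d'une unité tout en introduisant des puissances de $p$ contrôlées, ce qui permet de ramener chaque régime à un calcul de base dans une plage commode de valeurs de $\j$, quitte à suivre soigneusement les facteurs produits.

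Le point central de la preuve est le transfert entre le complexe en $\psi$ et celui en $\varphi$. Posons $D' = D(\chi^{-j})$. Partant de $\mu_{\nabla_h z} \in H^1_{\rm Iw}(D) \cong D^{\psi = 1}$, on écrit sa spécialisation $\int_{\Gamma_n} \chi^{-\j} \cdot \mu_{\nabla_h z} = [\tau_n(\gamma_n)(\nabla_h z \otimes e_{-j}), 0]$ comme un $1$-cocycle du complexe $\mathscr{C}^\bullet_{\psi, \gamma_n}(D')$, que l'on transporte dans $H^1_{\varphi, \gamma_n}(D')$ au moyen du quasi-isomorphisme de la proposition \ref{KPX} (ce qui revient à résoudre une équation du type $(1 - \varphi) b = \text{(bord)}$). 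Pour $(ii)$ on applique ensuite la composition définissant $\exp^*$, dont le premier pas $H^1_{\varphi, \gamma_n}(D') \to H^1_{\gamma_n}(\Ddifp(D'))$ est donné par $[(x,y)] \mapsto [\varphi^{-m} x]$ pour $m \gg 0$, suivi de l'isomorphisme $H^1_{\gamma_n}(\Ddif(D')) \xrightarrow{\sim} L_n \otimes \DdR(D')$. Le point-clé est le comportement de $\varphi^{-m}$ composé avec $\nabla_h$ : puisque $\varphi^{-m} \circ \partial = p^m \frac{d}{dt} \circ \varphi^{-m}$, l'opérateur $\nabla = t\partial$ se localise en $t \frac{d}{dt}$, donc $\nabla_h$ agit sur la composante en $t^{\j}$ de $\varphi^{-m}(\partial^{\j} z)$ par le produit $j(j-1)\cdots(j-h+1)$, qui fournit précisément le facteur $1/\Gamma^*(-j-h+1)$ de l'énoncé. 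En extrayant le terme constant $[\cdot]_0$ et en utilisant l'indépendance de $m$ pour $m \gg 0$ (conséquence de $\mathrm{Tr}_{L_m / L_n} \circ \varphi^{-m} = \varphi^{-m} \circ \psi^{m-n}$), on obtient la formule pour $\exp^*$.

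Pour $(i)$ on procède de façon parallèle avec la description explicite de l'exponentielle : si $x \in \DdR(D(\chi^j))$ et $\tilde{x}$ est un relèvement vérifiant $\varphi^{-k}(\tilde{x}) - x \in \D^+_{\mathrm{dif}, k}$, alors $\exp(x) = [(\gamma_n - 1)\tilde{x}, (\varphi - 1)\tilde{x}]$. Dans le cas $-(h-1) \leq j \leq 0$ on prend $z_{\j} = \partial^{-\j} z$, et dans le cas $\j > 0$ on utilise l'antécédent $z_{\j} \in \Nrig(D)^{\psi = p^{-j}}$ fourni par hypothèse ; la relation $\varphi^{-m} \circ \partial^{\j} = p^{m\j} (\frac{d}{dt})^{\j} \circ \varphi^{-m}$ engendre alors la puissance $p^{m(j-1)}$ (en tenant compte du facteur $p^{-m}$ présent dans $\tau_n(\gamma_n)$ pour $n \geq 1$), et l'action de $\nabla_h$ produit de nouveau $1/\Gamma^*(-j-h+1)$. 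On identifie enfin le cocycle obtenu à $\exp(\mathrm{Tr}_{L_m/L_n}[\varphi^{-m} z_j]_0 \otimes t^{-\j} e_j)$ via l'expression explicite de $\exp$.

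L'obstacle principal est le contrôle simultané de deux points techniques. D'abord la bonne définition des localisations $\varphi^{-m}$ sur les éléments concernés, qui vivent a priori dans $D^{]0, r_m]}[1/t]$ et non dans $D^{]0, r_m]}$ : c'est précisément ici qu'intervient $\Nrig(D)$, dont $\Ddifp(\Nrig(D)) = L_n[[t]] \otimes \DdR(D)$ d'après la proposition \ref{Ndifprop}, et il faut vérifier que $\nabla_h$ ramène bien dans $D$ afin que les classes soient définies sur le bon module. Ensuite le suivi exact de toutes les normalisations --- les $\tau_n(\gamma_n)$, les puissances de $p$ issues de $\partial \circ \varphi = p\,\varphi \circ \partial$ et de la localisation, et les facteurs Gamma issus de l'action de $\nabla_h$ sur les puissances de $t$ --- dont la combinaison produit les constantes $p^{m(j-1)}/\Gamma^*(-j-h+1)$ et $p^{m(-j-1)}/\Gamma^*(-j-h+1)$. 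Le décalage par $\partial$ entre valeurs consécutives de $\j$ fournit une vérification utile de la cohérence de ces puissances de $p$ et de ces facteurs Gamma.
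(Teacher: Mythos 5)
You should first note that the paper itself contains no proof of this statement: it is quoted from Nakamura [Thm.~3.10], so the only internal arguments one can compare with are the paper's proof of Proposition \ref{recPRcomp} (the dual-exponential reciprocity without $\nabla_h$) and the $\nabla_h$ bookkeeping carried out in Proposition \ref{lemme4}. Measured against those, your part $(ii)$ is essentially sound and follows exactly that route: specialization cocycle, transfer from the $\psi$-complex to the $\varphi$-complex, the definition of $\exp^*$ as a composite through $H^1_{\gamma_n}(\Ddif(D'))$, extraction of the constant term, and the eigenvalue $j(j-1)\cdots(j-h+1)$ of $\nabla_h$ on the $t^j$-component. One warning, though: that combination produces the factor $1/(j-h)!=1/\Gamma^*(j-h+1)$ (compare Theorem \ref{theo1} and Proposition \ref{lemme4}), which, with the paper's definition of $\Gamma^*$, is \emph{not} the factor $1/\Gamma^*(-j-h+1)$ literally written in $(ii)$; the statement as transcribed appears to carry a slip, and your claim that the eigenvalue yields ``precisely'' the stated constant suggests the constants were asserted rather than tracked.

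Part $(i)$, the exponential side, is where the real content of the Perrin-Riou--Berger--Nakamura theorem lies, and there your proposal has a genuine gap. The final step, identifying the specialization cocycle $[\tau_n(\gamma_n)(\nabla_h z\otimes e_j),0]$ with $\exp\bigl(\mathrm{Tr}_{L_m/L_n}[\varphi^{-m}z_j]_0\otimes t^{-j}e_j\bigr)$ up to the constant, is the entire difficulty, and nothing in your sketch addresses it: one must construct the lift $\tilde{x}$ appearing in the explicit formula for $\exp$ out of $z_j$, use the hypothesis $\psi(z_j)=p^{-j}z_j$ (which your argument never invokes), and exhibit an explicit coboundary relating the two cocycles. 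Worse, the mechanism you propose for the constant in $(i)$ --- ``l'action de $\nabla_h$ produit de nouveau $1/\Gamma^*(-j-h+1)$'', i.e.\ the same eigenvalue argument as in $(ii)$ --- is demonstrably wrong in this regime: for $-(h-1)\le j\le 0$, say $j=0$, the eigenvalue $j(j-1)\cdots(j-h+1)$ on the relevant $t$-component is $0$ (indeed $\nabla_h$ kills every component $t^l$ with $0\le l\le h-1$), whereas $1/\Gamma^*(-j-h+1)=\pm(j+h-1)!$ is nonzero. The factorial $(j+h-1)!$ on the exponential side does not arise as an eigenvalue at all; it comes from the identity $\nabla_h=t^h\partial^h$ combined with the fact that passing from $z$ to $z_j$ is an integration, which creates terms invisible in the expansion of $\varphi^{-m}z$ --- exactly the phenomenon the author calls mysterious in the remark following the theorem. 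Without this additional input, your plan cannot close case $(i)$.
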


\begin{remarque} \leavevmode
\begin{itemize}
\item Rappelons que, pour $j \in \Z$, l'élément $e_j$ dénote une base du $L$-espace vectoriel $L(j)$ muni d'actions de $\Gamma$ et $\varphi$ par les formules $\sigma_a(e_j) = a^j \cdot e_j$, $a \in \zpe$, et $\varphi(e_j) = e_j$. Si $D$ est un $(\varphi, \Gamma)$-module, on note $D(j) = D \otimes_L L(j)$ la tordue de $D$ par la $j$-ième puissance du caractère cyclotomique. Si $D$ est de Rham, $D(j)$ l'est aussi et on a $\DdR(D(j)) = \DdR(D) \otimes \DdR(\Robba(j)) = \DdR(D) \otimes L \cdot (t^{-j} e_j) = \DdR(D) \otimes L \cdot \mathbf{e}^{\rm dR}_j$, dans la notation de \S \ref{basesdeRham2}. Le terme $[\varphi^{-m} z_{\j}]_0$ appartient à $L_m \otimes \DdR(D)$ et $x \mapsto x \otimes t^{-j} e_j$ induit un isomorphisme $\DdR(D) \xrightarrow{\sim} \DdR(D(j))$. La première égalité a lieu dans $L_n \otimes H^1_{\varphi, \gamma}(D(j))$ (ou $H_{\varphi, \gamma_n}(D(j))$) et la deuxième dans $L_n \otimes_\qp \DdR(D(-j))$.

\item Notons que $\mu \mapsto \mu \otimes e_j$ induit, pour $j \in \Z$, un isomorphisme $H^1_{\rm Iw}(D) \xrightarrow{\sim} H^1_{\rm Iw}(D(j))$ et on a donc, pour $z \in \Nrig(D)^{\psi = 1}$, $\int_{\Gamma_n} \chi(x)^{-j} \cdot \mu_z = \int_{\Gamma_n} 1 \cdot (\mu_z \otimes e_{-j})$. Si $j \geq 0$, l'élément $\varphi^{-n} (z \otimes e_{-j}) \in L_n[[t]] \otimes \DdR(D(-j))$ s'écrit sous la forme $t^{-j} \varphi^{-n} z \otimes t^{j} e_{-j}$ et, si on écrit $\varphi^{-n} z = \sum_l d_l t^l \in L_n[[t]] \otimes \DdR(D)$, l'expression $[\varphi^{-n}(z \otimes e_{-j})]_0$ dénote l'élément $d_j \otimes t^{-j} e_j$, qui n'est autre que $\frac{p^{-nj}}{j!}[\varphi^{-n}\partial^{j} z]_0 \otimes t^{j} e_{-j}$. Le deuxième point du théorème est donc une paraphrase de la loi de réciprocité de Cherbonnier-Colmez. Pour $j < 0$, le résultat est (au moins pour l'auteur) un peu plus mystérieux  car l'opérateur $\partial^{j}$, devenant un opérateur d'intégration, fait apparaître des termes qu'on ne voyait pas directement dans le développement de $\varphi^{-n} z$.

\item Si $j \gg 0$ l'application $\exp_{D(j), F_n}$ est bijective et on peut reformuler le résultat en disant que, si $j \geq 0$ ou $j \ll 0$, alors, pour $m \gg 0$, on a \[ \mathrm{Tr}_{L_m / L_n} [ \varphi^{-m} \partial^{\j} z]_0 =   \frac{\Gamma^*(- j - h + 1)}{p^{m (-j - 1)}} \cdot \log (\int_{\Gamma_n} \chi^{-j} \cdot \nabla_h \mu_z) \otimes \mathbf{e}^{\rm dR, \vee}_{-j}, \] où $\log$ dénote $\exp^{-1}$ ou $\exp^*$ selon que $j \ll 0$ ou que $j \geq 0$.

\item Si $n \geq m(\Delta)$ ($\Delta = \Nrig(D)$), alors l'introduction de $m$ dans les formules est superflue et, comme $\psi(z) = z$, alors $z \in D^{]0, r_n]}$ et on a \[ p^{-m(j + 1)} \mathrm{Tr}_{L_m / L_n} [\varphi^{-m} \partial^j z ]_0 =  \frac{[L_m: L_n]}{p^{m - n}} [\varphi^{-n(j+1)} \partial^j z]. \] On remarque que $\frac{[L_m: L_n]}{p^{m - n}} = 1$ si $n > 0$ et $\frac{[L_m: L]}{p^{m}} = \frac{p - 1}{p}$.

%\item Si $V \in \mathrm{Rep}_L(\mathscr{G}_\qp)$ est une représentation cristalline, on a un isomorphisme $\Robba \otimes_\qp \Dcris(V) \cong \Nrig(V)$ défini par $f(T) \otimes x \mapsto f(T) x$. On peut composer la restriction de $\nabla_h$ à $(\Robba^+ \otimes_\qp \Dcris(V))^{\psi = 1} \cong \mathscr{D}(\zp, \Dcris(V))^{\psi = 1}$ avec l'inverse de l'isomorphisme de Fontaine-Pottharst $\mathrm{Exp}^* \colon H^1_{\rm Iw}(\qp, V) \otimes_{\zp[[\Gamma]]} \Lambda_\infty \to \D_{\rm rig}(V)^{\psi = 1}$ et l'application $$ (\mathrm{Exp}^*)^{-1} \circ \nabla_h \colon \mathscr{D}(\zp, \Dcris(V))^{\psi = 1} \to H^1_{\rm Iw}(\qp, V) \otimes_{\zp[[\Gamma]]} \Lambda_\infty $$ ainsi obtenue co\"incide avec l'application de périodes de Perrin-Riou (cf., par exemple, \cite{Nakamura}, \S $3.5$).
\end{itemize}
\end{remarque}

On se servira de la version suivante de la loi de réciprocité de Cherbonnier-Colmez, qui est un des ingrédients de la preuve du théorème précédent, mais pour lequel on n'a pas trouvé de référence précise.

\begin{proposition} \label{recPRcomp} Soient $D \in \Phi\Gamma(\Robba)$ de Rham, $z \in D^{\psi = 1}$, $n \geq 0$ et $j \geq 0$. Alors, pour $m \gg 0$, on a l'égalité suivante dans $L_n \otimes \DdR(D)$:
\[ \exp^*(\int_{\Gamma_n} \chi^{-j} \cdot \mu_z ) \otimes t^{-j} e_j = \frac{1}{j!} p^{-m(j + 1)} \mathrm{Tr}_{L_m/ L_n}([\varphi^{-m} \partial^j z]_0). \]
\end{proposition}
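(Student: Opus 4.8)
The plan is to reduce the statement to its case $j = 0$ and then extract it from the definition of $\exp^*$ together with the specialization formula. For the reduction I would twist by $\chi^{-j}$: setting $w = z \otimes e_{-j} \in D(-j)$, the relation $\varphi(e_{-j}) = e_{-j}$ gives $\psi(w) = \psi(z) \otimes e_{-j} = w$, so $w \in D(-j)^{\psi = 1}$, and the twisting isomorphism $H^1_{\rm Iw}(D) \xrightarrow{\sim} H^1_{\rm Iw}(D(-j))$ of \S\ref{cohomIw} identifies $\int_{\Gamma_n} \chi^{-j} \cdot \mu_z$ with $\int_{\Gamma_n} 1 \cdot \mu_w$. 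Granting the case $j = 0$ applied to $D(-j)$, namely $\exp^*(\int_{\Gamma_n} 1 \cdot \mu_w) = p^{-m} \mathrm{Tr}_{L_m/L_n}[\varphi^{-m} w]_0$, it remains to rewrite the right-hand term. This is exactly the computation recorded in the remarks after Theorem \ref{recPR}: one has $\varphi^{-m}(z \otimes e_{-j}) = t^{-j}\varphi^{-m}(z) \otimes t^j e_{-j}$, so that $[\varphi^{-m}(z \otimes e_{-j})]_0 = \tfrac{p^{-mj}}{j!}[\varphi^{-m}\partial^j z]_0 \otimes t^j e_{-j}$. Substituting and tensoring by $t^{-j}e_j$ to return to $L_n \otimes \DdR(D)$ produces the factor $\tfrac{1}{j!}p^{-m(j+1)}$ and gives the claimed identity.

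For the core case $j = 0$ I would unwind $\exp^*$ as the composite $H^1_{\varphi,\gamma_n}(D) \to H^1_{\gamma_n}(\Ddifp(D)) \to H^1_{\gamma_n}(\Ddif(D)) \xrightarrow{\sim} L_n \otimes \DdR(D)$, whose first arrow is $[(x,y)] \mapsto [\varphi^{-m}x]$ and whose last arrow sends $[\log\chi(\gamma_n)(1\otimes d)]$ to $d$. By the specialization formula of \S\ref{cohomIw}, $\int_{\Gamma_n} 1 \cdot \mu_z$ is represented in $H^1_{\psi,\gamma_n}(D)$ by the cocycle attached to $\tau_n(\gamma_n)z$, with $\tau_n(\gamma_n) = p^{-n}\log\chi(\gamma_n)$ and using $\psi z = z$. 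Transporting this class to $H^1_{\varphi,\gamma_n}(D)$ through the comparison quasi-isomorphism of Proposition \ref{KPX} yields a $\varphi$-cocycle whose first coordinate localizes under $\varphi^{-m}$ to $\tau_n(\gamma_n)\varphi^{-m}z$ modulo a $(\gamma_n-1)$-coboundary in $\Ddif(D)$. Reading off the resulting class in $H^1_{\gamma_n}(\Ddif(D))$ via the normalization $[\log\chi(\gamma_n)(1\otimes d)] \mapsto d$ cancels the factor $\log\chi(\gamma_n)$ and isolates the constant term $[\varphi^{-m}z]_0$; the trace compatibilities $\mathrm{Tr}_{L_m/L_n}\circ\varphi^{-m} = \varphi^{-m}\circ\psi^{m-n}$ together with $\psi z = z$ then reconcile the $p^{-n}$ coming from $\tau_n(\gamma_n)$ with the symmetric expression $p^{-m}\mathrm{Tr}_{L_m/L_n}[\varphi^{-m}z]_0$, which is the quantity independent of $m$ for $m \gg 0$.

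The main obstacle is precisely the passage between the $\psi$- and $\varphi$-descriptions: $\exp^*$ is defined on $H^1_{\varphi,\gamma_n}$, whereas the Iwasawa specialization naturally lands in $H^1_{\psi,\gamma_n}$, so one must make the comparison of Proposition \ref{KPX} explicit enough at the cochain level to track the first coordinate through the localization $\varphi^{-m}$. Since $z \in D^{\psi=1}$ forces $(1-\varphi)z \in D^{\psi=0}$, this comes down to solving the relevant $\gamma_n-1$ (resp. $\varphi-1$) equation on $D^{\psi=0}$, and the delicate point is the bookkeeping of the scalars $\tau_n(\gamma_n)$, $\log\chi(\gamma_n)$ and the powers of $p$ and trace factors; this is exactly the ingredient of Nakamura's proof of Theorem \ref{recPR} (\cite{Nakamura}) that we isolate here. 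Once the comparison and these normalizations are pinned down, both the independence of $m$ for $m$ large and the stated equality follow formally.
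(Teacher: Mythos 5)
Your proposal is correct and follows essentially the same route as the paper's proof: reduction to $j = 0$ by twisting by $e_{-j}$, the specialization formula $\int_{\Gamma_n} 1 \cdot \mu_z = [\tau_n(\gamma_n) z, 0]$, the passage from $\psi$- to $\varphi$-cohomology via $[z,0] \mapsto [z, (1-\gamma_n)^{-1}(1-\varphi)z]$, and the observation that the terms $a_l t^l$, $l \geq 1$, of $\varphi^{-m}z$ are $(\gamma_n - 1)$-coboundaries in $\Ddif(D)$, which isolates $[\varphi^{-m}z]_0$ against the normalization $d \mapsto [\log\chi(\gamma_n)(1 \otimes d)]$. The only presentational difference is that the paper first invokes the corestriction compatibility $\mathrm{Tr}_{L_{n+1}/L_n} \circ \exp^* = \exp^* \circ\, \mathrm{cor}_{F_{n+1}/F_n}$ to reduce to $n$ large enough that $z \in D^{]0, r_n]}$ (so the traces disappear and the identity reads $\exp^*(\int_{\Gamma_n} 1 \cdot \mu_z) = p^{-n}[\varphi^{-n}z]_0$), whereas you keep $n$ fixed and carry the $L_m$-versus-$L_n$ bookkeeping through normalized traces; both are valid.
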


\begin{proof}
La formule $\exp^*(\int_{\Gamma_n} \chi^{-j} \cdot \mu_z ) = \exp^*(\int_{\Gamma_n} 1 \cdot \mu_{z \otimes e_{-j}} )$ permet de nous ramener au cas $j = 0$. En outre, la formule $\mathrm{Tr}_{L_{n + 1} / L_n} (\exp^*(x)) = \exp^*(\mathrm{cor}_{F_{n+1} / F_n}(x))$ \footnote{Si $[x, y] \in H^1_{\psi, \gamma_{n+1}}(D)$, sa corestriction est définie par la formule $\mathrm{cor}_{F_{n+1} / F_n}([x, y]) = [\frac{1 - \gamma_{n+1}}{1 - \gamma_n}x, y] \in H^1_{\psi, \gamma_n}(D)$, cf. \cite[Lem. II.2.1]{ColmezIw2}.} nous ramène à montrer le résultat pour $n$ assez grand. En particulier on peut considérer $n > 0$ de sorte que $z \in D^{]0, r_n]}$ et on doit donc montrer \[ \exp^*(\int_{\Gamma_n} 1 \cdot \mu_z ) = p^{-n} [\varphi^{-n} z]_0. \]

On a la formule pour la spécialisation $\int_{\Gamma_n} 1 \cdot \mu_{z} = [\tau_n(\gamma_n) (z \otimes 1), 0] \in H^1_{\psi, \gamma}(D)$, où $\tau_n(\gamma_n) = p^{-n} \log(\chi(\gamma_n))$ (car on suppose $n > 0$). Rappelons que l'application $\exp^*$ est définie comme la composition du morphisme $H^1_{\varphi, \gamma_n}(D) \to H^1_{\gamma_n}(\Ddif(D))$ avec l'inverse de l'isomorphisme $L_n \otimes \DdR(D) \xrightarrow{\sim} H^1_{\gamma_n}(\Ddif(D))$ donné par $x \mapsto [\log (\chi(\gamma_n)) (1 \otimes x)]$.

Si $z \in D^{\psi = 1}$, la classe de cohomologie $[z, 0] \in H^1_{\psi, \gamma_n}(D)$ correspond à la classe $[z, (1 - \gamma_n)^{-1} (1 - \varphi) z] \in H^1_{\varphi, \gamma}$ sous l'isomorphisme entre ces deux modules. L'image du cocycle $[ \tau_n(\gamma_n) (z \otimes 1), 0] $ par le morphisme $H^1_{\varphi, \gamma_n}(D) \to H^1_{\gamma_n}(\Ddif(D))$ est donc donnée par $ [ \tau_n(\gamma_n) \varphi^{-n} z]$.

Or, $\varphi^{-n} z = \sum_{l \geq 0} a_l t^l$, $a_l \in L_n[[t]] \otimes \DdR(D)$ et $[ \tau_n(\gamma_n) \sum_{l \geq 0} a_l t^l] = [\tau_n(\chi(\gamma_n)) a_0]$ (dans $H_{\gamma_n}^1(\Ddif(D))$), car tous les autres termes sont dans l'image de $(1 - \gamma_n)$ (en effet, si $l \neq 0$, $a_l t^l = (1-  \gamma_n) ( (1 - \chi(\gamma_n)^l)^{-1} a_l t^l$...), ce qui permet de conclure car $a_0 = [\varphi^{-n} z]_0$.
\end{proof}

\subsection{La fonction $L$ locale}

\subsubsection{Distributions à valeurs dans un espace de type $\mathrm{LF}$}

On reprend les constructions de \S \ref{poids}. Si $M = \varinjlim_s \varprojlim_r M^{[r, s]}$ est un espace de type $\mathrm{LF}$, on pose $$ \mathcal{O}(\mathfrak{X}) \widehat{\otimes} M = \varprojlim_{n} \varinjlim_{s > 0} \varprojlim_{0 < r < s} \mathcal{O}(\mathfrak{X}_n) \widehat{\otimes} M^{[r, s]}, $$ où le dernier produit tensoriel est le produit tensoriel usuel entre deux espaces de Banach. On dit que $f$ est une fonction analytique sur $\mathfrak{X}$ à valeurs dans $M$ si $f$ est un élément de $\mathcal{O}(\mathfrak{X}) \widehat{\otimes} M$.
%On a $$ \mathscr{D}(\zpe, M) = \mathscr{D}(\zpe) \widehat{\otimes} M = \varprojlim_n \mathcal{O}(\mathfrak{X}_n) \widehat{\otimes} \varinjlim_{s > 0} \varprojlim_{r < s} M^{[r, s]} = \varprojlim_n \varinjlim_s \varprojlim_r \mathcal{O}(\mathfrak{X}_n) \widehat{\otimes} M^{[r,s]}, $$ où les égalités des extrémités sont des définitions. Ceci nous permettrait de parler indistinctement de distributions sur $\zpe$ et de fonctions analytiques sur $\mathfrak{X}$ à valeurs dans $M$.

\subsubsection{Prolongement analytique de $k \mapsto \partial^k$} \label{prolan}

Soit $D \in \Phi \Gamma(\Robba)$ de rang $d$, de Rham, et notons $\Delta = \Nrig(D)$. Rappelons que, d'après le lemme \ref{lemme11}, si $K / \qp$ dénote la plus petite extension galoisienne telle que $\mathscr{G}_\qp$ agit sur $\D_{\rm pst}(D)$ à travers $\mathrm{Gal}(K / \qp)$ et si l'on note $r(\Delta) = (\delta_K + 1)^{-1}$, il existe, pour tout $s < r(\Delta)$, des sous-$\E^{]0, s]}$-modules $\Delta^{]0, s]}$ de $\Delta$, munis d'une famille de valuations $v^{[r, s]}$, $0 < r < s < r(\Delta)$, pour lesquelles la norme de l'opérateur $\partial$ satisfait \[ v^{[r, s]}(\partial) \geq -s - 1 \geq - (\delta_K + 1)^{-1} - 1. \] \textit{On supposera dorénavant, et par commodité, que $m(\Delta)$ est tel que $r_{m(\Delta)} < r(\Delta)$}. De plus, pour $s < r(\Delta)$, les opérateurs \[ \varphi \colon \Delta^{]0, s]} \to \Delta^{]0, s/p]}, \;\;\; \psi \colon \Delta^{]0, s/p]} \to \Delta^{]0, s]}, \] agissent de façon continue pour les valuations $v^{[r, s]}$. Rappelons aussi que l'on a \[ \Delta = \varinjlim_{s > 0} \Delta^{]0, s]} = \varinjlim_{s > 0} \varprojlim_{0 < r < s} \Delta^{[r, s]}. \]

Si $z \in \Delta^{\psi = 0}$ et $N > 0$, on peut écrire $z = \sum_{i \in (\Z / p^N \Z)^\times} (1 + T)^i \varphi^N(z_i)$, $z_i = \psi^N( (1 + T)^{-i} z)$. La formule de Leibnitz et l'identité $\partial \circ \varphi = p \, \varphi \circ \partial$ donnent, pour $k \geq 0$, \[ \partial^k  z = \sum_{i \in (\Z / p^n \Z)^\times} \sum_{j = 0}^k {k \choose j} i^{k - j} (1 + T)^i p^{Nj} \varphi^N( \partial^j z_i). \] Enfin, rappelons que, si $\eta \in \mathfrak{X}(L)$ alors $\omega_\eta$ dénote son poids (cf. \S \ref{poids}) et que le poids du caractère $x \mapsto x^k$ est $k$. L'identité de la formule ci-dessus suggère la proposition suivante, qui est le point de départ des constructions de cet article.

%Soit $v_1, \hdots, v_d$ une base de $\Delta$ tel que la matrice de l'action $\partial$ sur cette base soit à coefficients bornés (cf. lemme \ref{lemme11}) et soit $r(\Delta)$ telle que le $\epsilon$ du lemme \ref{subm1} soit plus petit que $r_{m(\Delta)} = \frac{1}{(p - 1) p^{m(\Delta) - 1}}$. Pour $0 < r < s \leq r_{m(\Delta)}$ et $z = \sum_i f_i v_i \in \Delta^{[r, s]}$, on pose $v^{[r, s]}(z) = \inf_{i = 1, \hdots, d} v^{[r, s]}(f_i)$, qui en fait un espace de Banach. La valuation dépend de la base choisie mais si l'on change cette base, on obtient une valuation équivalente. La proposition suivante est l'analogue pour $\Delta^{\psi = 0}$ de la proposition \ref{prop1}

\begin{proposition} \label{prop4}
Soient $\kappa \in \mathfrak{X}$ et $z \in \Delta^{\psi = 0}$. Alors, la série \begin{equation} \label{form2}  \sum_{i \in (\Z / p^N \Z)^\times} \sum_{j = 0}^{+\infty} {\omega_\kappa \choose j} \kappa(i) i^{-j} (1+T)^i p^{Nj} \varphi^N(\partial^j z_i)  \end{equation} converge, pour $N \gg 0$, dans $\Delta^{\psi = 0}$, et la somme ne dépend ni de $N$ ni du choix du système de représentants de $(\Z / p^N \Z)^\times$. L'application $\kappa \mapsto \kappa(\partial) z$ ainsi définie est une fonction rigide analytique sur $\mathfrak{X}$ à valeurs dans $\Delta^{\psi = 0}$.
\end{proposition}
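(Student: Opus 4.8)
The plan is to take the finite expansion
$\partial^k z=\sum_{i}\sum_{j=0}^{k}\binom{k}{j}\,i^{k-j}(1+T)^i p^{Nj}\varphi^N(\partial^j z_i)$
recalled just before the statement as a template, and to show that the series \eqref{form2}, obtained from it by replacing $\binom{k}{j}$ with $\binom{\omega_\kappa}{j}$ and $i^{k}$ with $\kappa(i)$, converges and defines the desired interpolation. Fix $z\in\Delta^{\psi=0}$, say $z\in\Delta^{]0,s_0]}$ with $s_0<r(\Delta)$, and note that since $\psi(\Delta^{]0,t]})\subseteq\Delta^{]0,pt]}$ one has $z_i=\psi^N((1+T)^{-i}z)\in\Delta^{]0,s_0]}$ for every $i$. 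Fix also an affino\"id $\mathfrak{X}_n$ of the weight space, on which $v_p(\omega_\kappa)\geq \tfrac1n-1$ is bounded below (cf.\ \S\ref{poids}). On $\mathfrak{X}_n$ I will bound the $v^{[r,s]}$-valuation of the general term of \eqref{form2}, uniformly in $\kappa$.

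The heart of the argument is the interplay of three estimates. First, Lemma \ref{lemme11} gives $v^{[r,s]}(\partial w)\geq v^{[r,s]}(w)-s-1$, hence, iterating, $v^{[r,s]}(\partial^j w)\geq v^{[r,s]}(w)-j(s+1)$. Second, $\varphi^N$ identifies the valuation $v^{[r,\,s_0/p^N]}$ on $\Delta^{]0,s_0/p^N]}$ with $v^{[p^N r,\,s_0]}$ on $\Delta^{]0,s_0]}$, up to an additive constant depending on $N$ but not on $j$ (because the $v^{[r,s]}$ are built from a basis as in the proof of Lemma \ref{lemme11}, and $\varphi$ is isometric on $\E$). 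Third, there is the explicit factor $p^{Nj}$. Since $v_p(\kappa(i))=v_p(i^{-j})=0$ and $(1+T)^i$ is a unit, the term of index $(i,j)$ has $v^{[r,\,s_0/p^N]}$-valuation at least
\begin{equation*}
v_p\binom{\omega_\kappa}{j}+Nj+v^{[p^N r,\,s_0]}(z_i)-j(s_0+1).
\end{equation*}
Bounding each factor of the numerator of $\binom{\omega_\kappa}{j}$ by $v_p(\omega_\kappa)\geq\tfrac1n-1$ and using $v_p(j!)\leq j/(p-1)$ gives $v_p\binom{\omega_\kappa}{j}\geq -j\bigl(1-\tfrac1n+\tfrac1{p-1}\bigr)$, uniformly for $\kappa\in\mathfrak{X}_n$. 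Hence the term valuation grows at least linearly in $j$ with slope $N-(s_0+1)-\bigl(1-\tfrac1n+\tfrac1{p-1}\bigr)$, which is positive once $N$ is large; the decisive gain comes from $p^{Nj}$, which is exactly why $N\gg 0$ is required. This yields convergence in $\Delta^{]0,s_0/p^N]}\subseteq\Delta$.

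For analyticity, note that in each term the dependence on $\kappa$ is only through $\kappa(i)$ (an analytic function of $\kappa$, being a coefficient of the universal character) and through $\binom{\omega_\kappa}{j}$, a polynomial in the analytic function $\omega_\kappa$; thus each term lies in $\mathcal{O}(\mathfrak{X}_n)\widehat{\otimes}\,\Delta^{[r,s]}$, and the estimate above shows the series converges there uniformly, so its sum is analytic. Letting $n$ vary produces an element of $\mathcal{O}(\mathfrak{X})\widehat{\otimes}\,\Delta^{\psi=0}$: the value indeed lands in $\Delta^{\psi=0}$ because $\partial$ preserves $\Delta^{\psi=0}$ (from $\psi\partial=p\,\partial\psi$) and this subspace is closed in $\Delta$.

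Finally, independence of $N$ and of the chosen representatives of $(\Z/p^N\Z)^\times$: each resulting analytic function specializes, at the character $x^k$ with $k\geq 0$ (so $\omega_\kappa=k$ and $\binom{k}{j}=0$ for $j>k$), to the finite sum $\partial^k z$, which is manifestly independent of these choices. Since the characters $x^k$, $k\geq 0$, are Zariski dense in $\mathfrak{X}$ (polynomials being dense in the locally analytic functions, cf.\ \S\ref{poids}), two such analytic functions agreeing on them coincide, giving the claim. The main obstacle is the uniform valuation estimate of the second paragraph — extracting from Lemma \ref{lemme11} the linear-in-$j$ bound for $\partial^j$ on the rescaled annulus and checking that the gain $Nj$ dominates both the loss from $\partial^j$ and the possible denominators in $\binom{\omega_\kappa}{j}$, uniformly over $\mathfrak{X}_n$; once this is in hand, analyticity and independence of the choices are formal.
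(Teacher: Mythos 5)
Your proof is correct and follows essentially the same route as the paper's: the same $\varphi^N$-decomposition, the same three estimates (the uniform bound $v_p\binom{\omega_\kappa}{j}\geq jC_n$ on $\mathfrak{X}_n$, the norm $\geq -s-1$ of $\partial$ from Lemma \ref{lemme11}, and the gain $Nj$ from $p^{Nj}$, with $j$-independent constants for $\varphi^N$ and $\psi^N$), yielding a slope linear in $j$ that is positive for $N\gg 0$, followed by Zariski density of the characters $x^k$ to get independence of $N$ and of the representatives. The only differences are bookkeeping (you keep the outer radius $s_0$ fixed and push the annulus through $\varphi^N$, where the paper fixes $s<r(\Delta)p^{-N}$ at the outset; both give the same bound), so there is nothing to correct.
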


\begin{proof}
Notons, pour $N > 0$ et $0 < r < s < r(\Delta) p^{-N}$, $C^{[r, s]}_{\varphi^N}$ et $C^{[r, s]}_{\psi^N}$ les normes, relatives aux valuations $v^{[r, s]}$ et $v^{[p^N r, p^N s]}$, des opérateurs \[ \varphi^N \colon \Delta^{]0, p^N s]} \to \Delta^{]0, s]}, \;\;\; \psi^N \colon \Delta^{]0, s]} \to \Delta^{]0, p^N s]}, \] et, pour $N > 0$ et $0 < r < s < r(\Delta)$, $C^{[r, s]}_\partial$ la norme de l'opérateur $\partial$ pour la valuation $v^{[r, s]}$, de sorte que $C_{\varphi^N}^{[r, s]}, C_{\psi^N}^{[r,s]} > -\infty$ et $C^{[r, s]}_\partial \geq - s - 2$.

Pour $i \in (\Z / p^N \Z)^\times$, on pose  $$g_j(\kappa) = {\omega_\kappa \choose j} \kappa(i) i^{-j} (1+T)^i p^{Nj} \varphi^N(\partial^j z_i).  $$ Supposons $s < r(\Delta) p^{-N}$, on a alors
\begin{eqnarray*}
v^{[r, s]}(\varphi^N (\partial^j z_i)) &\geq& v^{[p^Nr, p^Ns]}(\partial^j \psi^N ((1 + T)^{-i} z)) + C^{[r, s]}_{\varphi^N} \\
&\geq& v^{[p^Nr, p^Ns]}(\psi^N ((1 + T)^{-i} z)) + C^{[r, s]}_{\varphi^N} + j C^{[p^N r, p^N s]}_\partial \\
&\geq& v^{[r, s]}(z) + C^{[r, s]}_{\varphi^N} + j C^{[p^N r, p^N s]}_\partial + C^{[r, s]}_{\psi^N} \\
&\geq& v^{[r, s]}(z) + C^{[r, s]}_{\varphi^N} + j (- p^N s - 1) + C^{[r, s]}_{\psi^N}.
\end{eqnarray*}

Remarquons d'ailleurs les estimations suivantes évidentes:
\begin{itemize}
\item $v^{[r,s]}((T + 1)^i) = 0.$
\item $v^{[r,s]}({\omega_\kappa \choose j}) = v_p({\omega_\kappa \choose j}) \geq j(\inf(v_p(\omega_\kappa), 0) - \frac{1}{p - 1}) = j C_\kappa. $ Or $ \omega_\kappa = \frac{\log z_\kappa}{q}$ et donc \[ v_p(\omega_\kappa) \geq \inf_{k \geq 0} \{ p^k v_p(z_\kappa - 1) - k \} -  v_p(q). \] Or, si $\kappa \in \mathfrak{X}_n$, la constante $C_\kappa$ est bornée en fonction de $n$. En effet, si $v_p(z_\kappa - 1) \geq \frac{1}{n}$, la formule pour $\omega_\kappa$ donne $C_\kappa \geq C_n = \min ( \inf_{k \geq 0} \{ \frac{p^k}{n} - k\} - v_p(q), 0) - \frac{1}{p - 1}$. On en déduit donc \[ v_p({\omega_\kappa \choose j}) \geq j C_n .\]
\item $v^{[r,s]}(i^{-j}) = v_p(i^{-j}) = 0.$
\item $v^{[r,s]}(\kappa(i)) = v_p(\kappa(i)) = 0.$
\end{itemize}

On en déduit:
\[ \inf_{\kappa \in \mathfrak{X}_n} v^{[r,s]}(g_j(\kappa)) \geq v^{[r, s]}(z) + C^{[r, s]}_{\varphi^N} + C^{[r, s]}_{\psi^N} + j (C_n + N - p^N s - 1). \]

Par définition de fonction analytique sur $\mathfrak{X}$ à valeurs dans $\Delta$, il faut montrer que, pour tout $n > 0$, il existe $s > 0$ et $N > 0$ tel que, pour tout $r \in ]0, s]$, l'expression (\ref{form2}) est convergente pour la valuation naturelle de $\mathcal{O}(\mathfrak{X}_n) \widehat{\otimes} \Delta^{[r,s]}$, et que les éléments dans $\mathcal{O}(\mathfrak{X}_n) \widehat{\otimes} \Robba$ ainsi définis ne dépendent ni de $N$ ni du système de représentants de $(\Z / p^N \Z)^\times$ et sont compatibles par rapport aux applications naturelles de restriction $\mathcal{O}(\mathfrak{X}_{n+1}) \widehat{\otimes} \Robba \to \mathcal{O}(\mathfrak{X}_n) \widehat{\otimes} \Robba$.

Fixons $n$ et choisissons $N$ assez grand et $s < r(\Delta) p^{-N}$ assez petit de sorte que $(C_n + N - p^N s - 1) > 0$. On observe qu'aucun choix ne dépend de $r$. Ceci montre que la somme des $g_j(\kappa)$ converge sur $\mathcal{O}(\mathfrak{X}_n) \widehat{\otimes} \Delta^{[r, s]}$ pour tout $r < s$ et définit, pour chaque $n \geq 0$, un élément dans $\mathcal{O}(\mathfrak{X}_n) \widehat{\otimes} \Delta$.

On montre maintenant que l'expression ne dépend ni de $N$ ni du choix du système de représentants de $(\Z / p^N \Z)^\times$. Si l'on fixe $n > 0$, $N > 0$ et $s > 0$ comme ci-dessus et un système de représentants de $(\Z / p^N \Z)^\times$, l'expression \eqref{form2} définit une fonction rigide analytique sur $\mathfrak{X}_n$. De plus, les valeurs de ces fonctions aux caractères $x^k \in \mathfrak{X}_n$, $k \in \Z$, ne dépendent pas du choix de $N$. On conclut en remarquant que ces caractères sont Zariski denses dans $\mathfrak{X}_n$.

Enfin, les fonctions définies ne dépendent évidement pas de $n$, et on conclut donc que (\ref{form2}) définit un élément de $\mathcal{O}(\mathfrak{X}) \widehat{\otimes} \Delta^{\psi = 0}.$
\end{proof}

\begin{corollary} \label{coro2}
Soit $N(n)$ le plus petit $N$ qui fait que la formule \eqref{form2} soit bien définie sur $\mathfrak{X}_n$ et soit $\kappa \in \mathfrak{X}_n$. Alors, pour tous $0 < r < s < r(\Delta) p^{-N(n)}$, $\kappa(\partial)$ stabilise $\Delta^{[r, s]}$.
\end{corollary}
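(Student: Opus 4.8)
Le plan est de montrer que, pour $\kappa \in \mathfrak{X}_n$ fix\'e et $z \in \Delta^{\psi = 0} \cap \Delta^{[r, s]}$, la s\'erie \eqref{form2} d\'efinissant $\kappa(\partial) z$ converge dans l'espace de Banach complet $(\Delta^{[r, s]}, v^{[r, s]})$ et que sa somme y appartient; comme $\kappa(\partial) z \in \Delta^{\psi = 0}$ par la proposition \ref{prop4}, ceci montrera que $\kappa(\partial)$ stabilise $\Delta^{[r, s]}$. La condition $s < r(\Delta) p^{-N(n)}$ y joue un r\^ole crucial: elle \'equivaut \`a $p^{N(n)} s < r(\Delta)$, c'est-\`a-dire exactement \`a ce que les op\'erateurs $\psi^{N(n)}$, $\partial$ et $\varphi^{N(n)}$ figurant dans \eqref{form2} soient tous d\'efinis sur les modules appropri\'es. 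Comme la somme \eqref{form2} ne d\'epend pas du choix de $N$, on prendra $N = N(n)$.

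D'abord, je v\'erifierais que chaque terme de la s\'erie appartient \`a $\Delta^{[r, s]}$. Posons $N = N(n)$. Pour $z \in \Delta^{[r, s]}$, l'op\'erateur $\psi^N$ envoie $\Delta^{[r, s]}$ dans $\Delta^{[p^N r, p^N s]}$ --- ce dernier module \'etant bien d\'efini puisque $p^N s < r(\Delta)$ ---, de sorte que $z_i = \psi^N((1 + T)^{-i} z) \in \Delta^{[p^N r, p^N s]}$. Toujours parce que $p^N s < r(\Delta)$, le lemme \ref{lemme11} garantit que $\partial$ stabilise $\Delta^{[p^N r, p^N s]}$, d'o\`u $\partial^j z_i \in \Delta^{[p^N r, p^N s]}$ pour tout $j \geq 0$. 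L'op\'erateur $\varphi^N$ renvoie alors $\Delta^{[p^N r, p^N s]}$ dans $\Delta^{[r, s]}$, donc $\varphi^N(\partial^j z_i) \in \Delta^{[r, s]}$, et la multiplication par l'unit\'e $(1 + T)^i \in (\E^{[r, s]})^\times$ pr\'eserve $\Delta^{[r, s]}$. Ainsi chaque terme ${\omega_\kappa \choose j} \kappa(i) i^{-j} (1 + T)^i p^{Nj} \varphi^N(\partial^j z_i)$ appartient bien \`a $\Delta^{[r, s]}$.

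Ensuite, pour la convergence, je reprendrais l'estimation obtenue dans la preuve de la proposition \ref{prop4}, \`a savoir $v^{[r, s]}(g_j(\kappa)) \geq v^{[r, s]}(z) + C^{[r, s]}_{\varphi^N} + C^{[r, s]}_{\psi^N} + j (C_n + N - p^N s - 1)$, o\`u $g_j(\kappa)$ d\'esigne le terme g\'en\'eral ci-dessus. Le point d\'ecisif est que $N(n)$ est choisi assez grand pour que le coefficient $C_n + N - p^N s - 1$ demeure strictement positif sur toute la plage $s < r(\Delta) p^{-N(n)}$: on a en effet $p^N s < r(\Delta)$, et $N(n)$ est pris de sorte que $C_n + N(n) - 1 \geq r(\Delta)$, ce qui est pr\'ecis\'ement le contenu de sa d\'efinition comme plus petit entier rendant \eqref{form2} bien d\'efinie sur $\mathfrak{X}_n$. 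On en d\'eduit $v^{[r, s]}(g_j(\kappa)) \to + \infty$ quand $j \to + \infty$; le terme g\'en\'eral tendant vers $0$ dans l'espace de Banach complet $\Delta^{[r, s]}$, la s\'erie y converge et sa somme $\kappa(\partial) z$ y appartient.

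Le principal obstacle est de garantir que la positivit\'e du coefficient de $j$ subsiste jusqu'au bord $s \to r(\Delta) p^{-N(n)}$ de la plage autoris\'ee: c'est ici qu'interviennent de fa\c{c}on essentielle la borne $v^{[r, s]}(\partial) \geq -s - 1$ du lemme \ref{lemme11}, qui contr\^ole la croissance en $j$ via le terme $-p^N s - 1$, et la d\'efinition m\^eme de $N(n)$. Une fois cette positivit\'e acquise, le reste se r\'eduit \`a un suivi soigneux des modules $\Delta^{[\, \cdot, \cdot]}$ le long des op\'erateurs $\psi^N$, $\partial$ et $\varphi^N$, valable seulement dans la plage $p^N s < r(\Delta)$.
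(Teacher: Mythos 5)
Your strategy is the same as the paper's implicit one: Corollary \ref{coro2} is stated without proof as a by-product of the proof of Proposition \ref{prop4}, and your two steps --- (i) each term $g_j(\kappa)$ lies in $\Delta^{[r,s]}$ because $\psi^{N}$ maps $\Delta^{[r,s]}$ to $\Delta^{[p^N r, p^N s]}$, $\partial$ stabilises the latter by Lemma \ref{lemme11} as soon as $p^N s < r(\Delta)$, and $\varphi^{N}$ maps it back to $\Delta^{[r,s]}$; (ii) the series converges in the complete space $(\Delta^{[r,s]}, v^{[r,s]})$ by the valuation estimate of Proposition \ref{prop4} --- are exactly the content of that proof, with the bookkeeping of annuli (which the paper leaves implicit) done correctly.

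There is, however, one step that is not justified as written: the assertion that ``$N(n)$ est pris de sorte que $C_n + N(n) - 1 \geq r(\Delta)$, ce qui est pr\'ecis\'ement le contenu de sa d\'efinition''. Under the natural reading, ``bien d\'efinie sur $\mathfrak{X}_n$'' means that \eqref{form2} converges to an element of $\mathcal{O}(\mathfrak{X}_n)\widehat{\otimes}\Delta$; since this space is a colimit $\varinjlim_{s>0}\varprojlim_{0<r<s}$, well-definedness only requires convergence in $\mathcal{O}(\mathfrak{X}_n)\widehat{\otimes}\Delta^{[r,s]}$ for \emph{some} small $s$, and by the estimate this already holds once $C_n + N - 1 > 0$ (take $p^N s < C_n + N - 1$). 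Minimality of $N(n)$ therefore only gives $C_n + N(n) - 1 > 0$, which can be strictly smaller than $r(\Delta)$ (e.g.\ $p=3$, $n=1$, $\delta_K = 0$ gives $C_1 = -1/2$, $N(1)=2$, $C_1+N(1)-1 = 1/2 < 1 = r(\Delta)$); for $s$ with $C_n + N(n) - 1 \leq p^{N(n)} s < r(\Delta)$ the coefficient of $j$ in your estimate is $\leq 0$ and the convergence argument fails, and switching to a larger $N$ only makes it worse since $p^{N}s$ grows faster than $N$. So you must either adopt explicitly the strong reading of ``bien d\'efinie'' (convergence on every $\Delta^{[r,s]}$ with $0 < r < s < r(\Delta)p^{-N}$), in which case your inequality \emph{is} the definition and the corollary becomes essentially definitional, or else restrict the conclusion to the range $p^{N(n)}s < C_n + N(n) - 1$. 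This ambiguity is arguably present in the paper's own statement, but your proof resolves it by fiat, and as written that inference is a gap.
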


\subsubsection{Bases et modules de de Rham} \label{basesdeRham2}

Les éléments définis ci-dessous permettront de simplifier considérablement les notations futures ainsi que de clarifier certains facteurs apparaissant dans les formules d'interpolation  (on y reviendra dans \S \ref{basesdeRham3}).

Si $\xi \colon \qpe \to L^\times$ est un caractère, on note $e_\xi$ une base du $L$-espace vectoriel $L(\xi)$ de dimension $1$ muni d'actions de $\varphi$ et $\Gamma$ via les formules $\varphi(e_\xi) = \xi(p) \cdot e_\xi$ et $\sigma_a(e_\xi) = \xi(a) \cdot e_\xi$, $a \in \zpe$. Si $D$ est un $(\varphi, \Gamma)$-module, on note $D(\xi) = D \otimes \xi$ le module $D \otimes_L L(\xi)$ (c'est le tordu de $D$ par $\xi$). Le choix de $e_\xi$ fournit un isomorphisme de $L$-espaces vectoriels $D \xrightarrow{\sim} D(\xi): x \mapsto x \otimes e_\xi$. Si $\eta \colon \zpe \to L^\times$ est un caractère, les mêmes constructions s'appliquent sur $\eta$ en regardant $\eta$ comme un caractère sur $\qpe$ en posant $\eta(p) = 1$.

Soit $\eta \colon \zpe \to L^\times$ un caractère localement constant. L'élément $G(\eta) e_\eta \in L_\infty(\eta)$ est fixé par l'action de $\Gamma$ et on a donc un isomorphisme $\DdR(\Robba(\eta)) = (L_\infty((t)) \cdot e_\eta)^\Gamma = L \cdot G(\eta) e_\eta$, ce qui nous fournit un générateur $\mathbf{e}^{\rm dR}_\eta = G(\eta) e_\eta$ de ce module. Si $j \in \Z$, on rappelle que l'on a noté $e_j = e_{\chi^j}$ une base du module $L(\chi^j)$, de sorte que $\mathbf{e}^{\rm dR}_j =  t^{-j} e_j$ est une base de $\DdR(\Robba(\chi^j))$. Notons \[ \mathbf{e}_{\eta, j} = e_\eta \otimes e_j, \] qui est une base de $L(\eta \chi^j)$. L'élément \[ \mathbf{e}^{\rm dR}_{\eta, j} =  \mathbf{e}^{\rm dR}_\eta \otimes \mathbf{e}^{\rm dR}_j = G(\eta) t^{-j} \cdot \mathbf{e}_{\eta, j} = G(\eta) e_\eta \otimes t^{-j} e_j \] constitue une base du module $\DdR(\Robba(\eta \chi^j))$.

Si $D \in \Phi\Gamma(\Robba)$ est de Rham, alors $D(\eta \chi^j)$ l'est aussi et on a, par ce qui précède, \[ \DdR(D(\eta \chi^j)) = \Ddif( D \otimes L(\eta \chi^j))^\Gamma = \DdR(D) \otimes L \cdot \mathbf{e}^{\rm dR}_{\eta, j}, \] de sorte que l'application $x \mapsto x \otimes \mathbf{e}^{\rm dR}_{\eta, j}$ induit un isomorphisme $\DdR(D) \xrightarrow{\sim} \DdR(D(\eta \chi^j))$.

Enfin, on note $e_\eta^\vee = e_{\eta^{-1}}$, $e_j^\vee = e_{-j}$ les éléments duaux, respectivement, de $e_\eta$ et $e_j$, ainsi que \[ \mathbf{e}^{\rm dR, \vee}_{\eta, j} = G(\eta)^{-1} \cdot e_\eta^\vee \otimes t^j e_j^\vee, \] base du module $\DdR(\Robba(\eta^{-1} \chi^{-j}))$. L'application $x \mapsto x \otimes \mathbf{e}^{\rm dR, \vee}_{\eta, j}$ induit un isomorphisme $\DdR(D(\eta \chi^j)) \xrightarrow{\sim} \DdR(D)$ et on a $x \otimes \mathbf{e}^{\rm dR}_{\eta, j} \otimes \mathbf{e}^{\rm dR, \vee}_{\eta, j} = x$ pour tout $x \in \DdR(D)$.

\subsubsection{Interpolation} \label{interpolation}

Soient $D \in \Phi \Gamma(\Robba)$ de Rham, $\Delta = \Nrig(D)$, $\eta$ un caractère de Dirichlet de conducteur $p^m$, $m \geq m(\Delta)$ et $\kappa(x) = z_{\kappa}^{\frac{\log x}{q}}$. On pose
%\footnote{Comparer cette définition avec les formules des propositions \ref{prop3} et \ref{prop3b}}
\begin{equation} \label{forminterp}
\Lambda_{D, z}(\eta \kappa) = \mathrm{G}(\eta)^{-1} \sum_{a \in (\Z / p^m \Z)^\times} \eta(a) \sigma_a [ \varphi^{-m} \kappa(\partial) (1 - \varphi) z ]_0.
\end{equation}

L'application $\varphi^{-m}$ est définie sur $\Delta^{]0, r_m]}$ et l'expression ci-dessus n'a donc un sens que pour les caractères $\kappa$ tels que $\kappa(\partial)(1 - \varphi) z \in \Delta^{]0, r_m]}$ et, dans ce cas, on a  $\Lambda_{D, z}(\eta \kappa) \in L_m \otimes \DdR(D)$. Nous allons montrer que $\Lambda_{D, z}$ est bien définie dès que $\eta$ est un caractère de Dirichlet assez ramifié et $\kappa$ est un caractère vivant dans un certain voisinage ouvert du caractère trivial et que les valeurs interpolées par $\Lambda_{D, z}$ aux caractères spéciaux $\eta \chi^{\j}$, ${\j} \in \Z$, sont reliées à des valeurs arithmétiquement intéressantes. Le théorème principal de ce chapitre est le suivant:

\begin{theorem} \label{theo1}
Soient $D \in \Phi\Gamma(\Robba)$ de Rham, $z \in \Delta^{\psi = 1}$ et $h \in \Z$ tel que $\mathrm{Fil}^{-h} \, \D_{\mathrm{dR}}(D) = \D_{\mathrm{dR}}(D)$. Il existe une constante $N(D)$ (ne dépendant que de l'extension $K$ fournie par le théorème de monodromie $p$-adique et de $p$) \footnote{cf. le lemme \ref{lemme6} ci-dessous pour le calcul de la constante.}, telle que la formule \eqref{forminterp} définit une fonction rigide analytique $\Lambda_{D, z} \in \mathcal{O}( \mathfrak{U}_D ) \otimes \DdR(D)$, où $\mathfrak{U}_D = \cup_{\mathrm{c}(\eta) > m(\Delta)} \mathfrak{B}(\eta, N(D))$ (cf. \S \ref{sectresprinc}). De plus, pour tout caractère $\eta \chi^j \in \mathfrak{U}_D$, où $\eta$ est un caractère de conducteur $p^n$, $n > 0$, et $j \in \Z$, on a

\[ \Lambda_{D, z} (\eta \chi^{\j}) = \Gamma^*(j - h + 1) p^{n(j + 1)} \cdot \left\{ 
  \begin{array}{l l}
   \exp^*(\int_{\Gamma} \eta \chi^{-j} \cdot \mu_{\nabla_h z}) \otimes \mathbf{e}^{\rm dR, \vee}_{\eta, -j} & \quad \text{si ${\j} \geq h$} \\
 \exp^{-1}(\int_{\Gamma} \eta \chi^{-j} \cdot \mu_{\nabla_h z}) \otimes \mathbf{e}^{\rm dR, \vee}_{\eta, -j}  & \quad \text{si ${\j} \ll 0$,}
  \end{array} \right.\] De plus, si $D$ est cristallin, l'application $\Lambda_{D, z}$ provient par restriction d'une fonction rigide analytique définie sur tout l'espace des poids. 
\end{theorem}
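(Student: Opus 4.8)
The plan is to reduce the statement to three ingredients already available: the analytic continuation $\kappa \mapsto \kappa(\partial)z$ of Proposition \ref{prop4}, a precise control of the radius of convergence of this family (which will produce the constant $N(D)$ and the open set $\mathfrak{U}_D$), and the explicit reciprocity law of Perrin--Riou in the form of Theorem \ref{recPR}. Since $z \in \Delta^{\psi = 1}$, one has $\psi((1-\varphi)z) = 0$, so $(1-\varphi)z \in \Delta^{\psi = 0}$ and Proposition \ref{prop4} already gives a rigid analytic function $\kappa \mapsto \kappa(\partial)(1-\varphi)z$ on $\mathfrak{X}$ with values in $\Delta^{\psi = 0}$, interpolating $\partial^j(1-\varphi)z$ at the characters $x^j$. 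Formula \eqref{forminterp} is obtained by post-composing with the localisation $\varphi^{-m}$, the projection $[\cdot]_0$ and the twisted average $G(\eta)^{-1}\sum_a \eta(a)\sigma_a$. The entire difficulty is that $\varphi^{-m}$ is defined only on $\Delta^{]0, r_m]}$, so one must check that $\kappa(\partial)(1-\varphi)z$ lands in this submodule for the characters $\kappa$ that intervene.

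The main obstacle is therefore the radius estimate (Lemme \ref{lemme6}). Using Corollaire \ref{coro2} together with the bound $v^{[r, s]}(\partial) \geq -s-1$ furnished by Lemme \ref{lemme11} and the constant $C_n$ controlling $v_p\binom{\omega_\kappa}{j}$ on $\mathfrak{X}_n$ in the proof of Proposition \ref{prop4}, I would estimate, for $\kappa$ in a ball about the trivial character, the largest $s$ for which $\kappa(\partial)(1-\varphi)z \in \Delta^{]0, s]}$. Applying $\varphi^{-m}$ requires $s \geq r_m = (p^{m-1}(p-1))^{-1}$, and since $r_m$ shrinks as $m = \mathrm{c}(\eta)$ grows, a more ramified $\eta$ is compensated by forcing $\kappa$ to be closer to the trivial character; balancing these two effects, and requiring $\mathrm{c}(\eta) > m(\Delta)$ so that the localisation exists at all, yields a constant $N(D)$ bounded in terms of $\delta_K$ (equivalently of the conductor of the monodromy extension $K$) such that the construction converges exactly on $\mathfrak{U}_D = \cup_{\mathrm{c}(\eta) > m(\Delta)} \mathfrak{B}(\eta, N(D))$. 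This produces $\Lambda_{D, z} \in \mathcal{O}(\mathfrak{U}_D) \otimes \DdR(D)$ and simultaneously explains the shape of the open set.

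To compute $\Lambda_{D, z}(\eta\chi^j)$ I would specialise $\kappa = \chi^j$, so that $\kappa(\partial) = \partial^j$ by Proposition \ref{prop4}. Since $\eta$ is primitive of conductor $p^n$ with $n > 0$, the operator $\sum_a \eta(a)\sigma_a$ annihilates $L_{n-1} \otimes \DdR(D)$ (summing a primitive character over a coset of $\Gamma_{n-1}$ vanishes); as $\varphi^{-n}\varphi = \varphi^{-(n-1)}$ takes values in $L_{n-1}[[t]] \otimes \DdR(D)$, the $\varphi$ term drops out and $[\varphi^{-n}\partial^j(1-\varphi)z]_0$ may be replaced by $[\varphi^{-n}\partial^j z]_0$ under the twisted average. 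I then feed this into the reciprocity law of Theorem \ref{recPR} (part $(ii)$ for $j \geq h$, and the reformulation given in the remark following it for $j \ll 0$, where $\exp$ becomes bijective): this converts $[\varphi^{-n}\partial^j z]_0$ into $\exp^*$, respectively $\exp^{-1}$, of $\int_{\Gamma_n} \chi^{-j} \cdot \mu_{\nabla_h z}$, up to the explicit factor read off from that formula, which is where the operator $\nabla_h$ and the $\Gamma^*$- and $p$-power constants enter; the hypothesis $\mathrm{Fil}^{-h}\,\DdR(D) = \DdR(D)$ guarantees $\nabla_h z \in D^{\psi=1}$, so $\mu_{\nabla_h z}$ makes sense. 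The remaining bookkeeping is the role of the twisted average and the Gauss sum: $G(\eta)^{-1}\sum_a \eta(a)\sigma_a$ realises the passage from the level-$n$ specialisation $\int_{\Gamma_n}$ to the full $\eta$-twisted one $\int_{\Gamma} \eta\chi^{-j} \cdot \mu_{\nabla_h z}$, and the relation $\sigma_a(G(\eta)) = \eta^{-1}(a)G(\eta)$ produces precisely the dual de Rham basis $\mathbf{e}^{\rm dR, \vee}_{\eta, -j}$; this is the $(\varphi,\Gamma)$-module incarnation of Lemme \ref{lemmeintro}. The equality Proposition \ref{recPRcomp} supplies the concrete Cherbonnier--Colmez input in the regime $j \geq h$.

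Finally, uniqueness follows in every case because the special characters $\eta\chi^j$ are Zariski dense in each ball $\mathfrak{B}(\eta, N(D))$, hence in $\mathfrak{U}_D$. For the crystalline case one notes that $D$ becomes crystalline already over $\qp$, so $K = \qp$, whence $\delta_K = 0$, $r(\Delta) = 1$, and the monodromy operator $N$ vanishes so that the $\ell_T$-contribution to the norm of $\partial$ in Lemme \ref{lemme11} disappears; the radius estimate of the second paragraph then holds without the ramification constraint on every $\mathfrak{X}_n$, and the resulting function on all of $\mathfrak{X}$ restricts to $\Lambda_{D, z}$.
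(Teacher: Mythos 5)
Your architecture for the domain and for the interpolation at $j \geq h$ is the paper's own: your second paragraph is exactly Lemme \ref{lemme6}, and your third paragraph, in the regime $j \geq h$, reproduces the mechanism of Lemme \ref{lemme7}, Lemme \ref{lemme1b} and Proposition \ref{lemme4} (with Theorem \ref{recPR}(ii) and Proposition \ref{recPRcomp} packaged in a slightly different order), while uniqueness by Zariski density is also the paper's argument. The genuine gap is the case $j \ll 0$. Your pivot step --- drop the $\varphi$-term under the twisted average so as to replace $[\varphi^{-n}\kappa(\partial)(1-\varphi)z]_0$ by $[\varphi^{-n}\partial^{j}z]_0$, then apply the reciprocity law --- does not parse when $j<0$: the element $\partial^{j}z$ does not exist, since $z\in\Delta^{\psi=1}$ and $\partial$ is invertible only on $\Delta^{\psi=0}$; this is precisely why formula \eqref{forminterp} applies $\kappa(\partial)$ to $(1-\varphi)z$ and not to $z$. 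The remark following Theorem \ref{recPR} that you invoke is an informal paraphrase; the actual statement, part (i) of that theorem, is phrased in terms of an element $z_{j'}\in\Nrig(D)^{\psi=p^{-j'}}$ solving $\partial^{j'}z_{j'}=z$ (writing $j=-j'$, $j'>0$), whose existence is a \emph{hypothesis} (cf.\ Proposition \ref{lemme5}). The missing step, which is the entire content of the paper's Proposition \ref{lemme5}, is the bridge identity
\[ (1-p^{-j'}\varphi)\,z_{j'}\;=\;\partial^{-j'}\big((1-\varphi)z\big) \quad \text{in } \Delta^{\psi=0}, \]
valid because both sides lie in $\Delta^{\psi=0}$, where $\partial^{j'}$ is injective, and both are sent to $(1-\varphi)z$ by $\partial^{j'}$ (using $\partial^{j'}\circ\varphi=p^{j'}\varphi\circ\partial^{j'}$). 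It is this identity that connects what \eqref{forminterp} actually computes (an antiderivative of the restriction $(1-\varphi)z$) to what the reciprocity law sees (the element $z_{j'}$); only after it can the twisted average kill the $p^{-j'}\varphi$-term as in the positive case, and Theorem \ref{recPR}(i) together with the bijectivity of $\exp$ conclude. Without introducing $z_{j'}$ and this identity, your chain of equalities has no meaning for $j\ll 0$.

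Your crystalline paragraph also does not prove the last assertion of the theorem. Taking $K=\qp$, $\delta_K=0$ only improves the norm of $\partial$, hence the radii; but the obstruction to evaluating $\Lambda_{D,z}$ at a character $\eta\kappa$ is not only convergence: formula \eqref{forminterp} requires applying the localisation $\varphi^{-m}$ at the level $m=\mathrm{c}(\eta)$, hence requires $\mathrm{c}(\eta)>m(\Delta)$ and $\kappa$ close to the trivial character relative to $\mathrm{c}(\eta)$. Whatever the radius, $\mathfrak{U}_D$ remains a union of balls centred at sufficiently ramified finite-order characters, and never contains the characters $x^j$ themselves nor any character of small conductor; no radius improvement can turn the same formula into a function on all of $\mathfrak{X}$. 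The paper's argument is structurally different: for $D$ crystalline one has $\Delta\cong\Robba\otimes\Dcris(D)$ and $z\in\Robba^+\otimes\Dcris(D)$, so that $z=\sum_i \mathscr{A}_{\lambda_i}\otimes e_i$ with $\psi(\lambda_i)=\alpha_i\lambda_i$ in a $\varphi$-eigenbasis, and the explicit evaluation $\Lambda_{D,z}(\eta\chi^j)=\sum_i \big(\int_\zpe \eta^{-1}x^j\cdot\lambda_i\big)\otimes \alpha_i^{-n}e_i$ exhibits $\Lambda_{D,z}$ as the restriction of a rigid analytic function defined on the whole weight space, via the Amice isomorphism.
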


\begin{remarque} \leavevmode
\begin{itemize}
\item L'application $\mathrm{Exp}^*$ étant $\Lambda_\infty$-linéaire, on a $\mu_{\nabla_h z} = \nabla_h \mu_z$.
\item La preuve du théorème conste de trois parties. Dans la section \ref{calculrayon}, on calcule la constante $N(D)$, qui fournit l'ouvert de définition de $\Lambda_{D, z}$, et, dans les sections \ref{interexpd} et \ref{interexp}, on montre les propriétés d'interpolation. Notons que, si l'on définit $\log$ comme $\exp^{-1}$ ou $\exp^*$ selon le cas, on obtient un énoncé d'interpolation uniforme pour tous les entiers $j \in \Z$. 
\end{itemize}
\end{remarque}

\subsubsection{Calcul du rayon de convergence} \label{calculrayon}

Si $\eta$ est un caractère de conducteur $p^m$, $\kappa(x) = z_{\kappa}^{\frac{\log x}{q}}$ alors, pour que la formule \eqref{forminterp} définissant l'application $\Lambda_{D, z}(\eta \kappa)$ ait un sens, il suffit que $\kappa(\partial)(1 - \varphi) z \in \Delta^{]0, r_m]}$. Ceci impose des conditions sur la valeur de $N$ dans la définition de $\kappa(\partial)$, comme le montre le corollaire \ref{coro2}. Le lemme suivant calcule, pour un $\eta$ fixe, le rayon de convergence autour $\eta$ de cette formule, ce qui décrit l'ouvert $\mathfrak{U}_D$ de définition de l'application $\Lambda_{D, z}$.

%$\varphi^{-m}(\int_{\mathbf{Z}^*_p} \hat{\eta}(\frac{-x}{p^m}) \zeta_{p^m}^{-x} \kappa(x) t^k \lambda_z) \in F_m[[t]], $ ce qui est équivalent à demander que  $\varphi^{-m}(\int_{\mathbf{Z}^*_p} \kappa(x) t^k \lambda_z) \in F_m[[t]], $ $\hat(\eta)$ étant d'ordre fini et $\zeta_{p^m}^x$ étant une unité.

\begin{lemme} \label{lemme6}
Soient $z \in \Delta^{\psi = 1}$ et $\eta \colon \zpe \to L^\times$ un caractère de conducteur $p^m$, $m > m(\Delta)$. Il existe une constante $N(D) = C_p + m(\Delta)$, où $C_p$ est une constante qui ne dépend que de $p$, telle que la formule définissant $\Lambda_{D, z}(\eta \kappa)$ est bien définie dès que $v_p(z_{\kappa} - 1) > p^{N(D) - m}$.
\end{lemme}

\begin{proof}
L'élément $\kappa(\partial) (1 - \varphi) z$ est défini comme somme des $$ g_j(\kappa) =  {\omega_\kappa \choose j} \kappa(a) a^{-j} (1+T)^a p^{Nj} \varphi^N(\partial^j z_a), $$ pour $N$ assez grand. Ces éléments appartiennent \footnote{Rappelons que, comme $z \in \Delta^{\psi = 1}$, alors $z \in \Delta^{]0, r_{m(\Delta)}]}$, et donc $\varphi^N(\partial^j z_a) \in \Delta^{]0, r_{m(\Delta) + N + 1}]}$.} à $\Delta^{]0, r_m]}$ si $N < m - m(\Delta)$. Prenons donc $N = m - m(\Delta) - 1$.

Soit $N(D) =  C_p +  m(\Delta)$, avec \[ C_p = - 1 / \log p - \log \log p + v_p(q) + \frac{1}{p - 1} + 3, \] et montrons que, si $v_p(z_{\kappa} - 1) > p^{N(D) - m}$, la somme définissant $\kappa(\partial) (1 - \varphi) z$ converge.

Par la démonstration de la proposition \ref{prop4}, on a \[ v^{[r, r_m]}(g_j(\kappa)) \geq C + j (\frac{1}{j} v_p \big( {\omega_\kappa \choose j} \big) + N + C_\partial^{[p^N r, p^N r_m]}), \] où $C$ est une constante qui ne dépend pas de $j$ et $C_\partial^{[p^N r, p^N r_m]} \geq - p^N r_m - 1 = - r_{m(\Delta) + 1} - 1$. On se ramène donc à montrer que $$ \frac{1}{j} v_p\big( {\omega_\kappa \choose j} \big) > m(\Delta) - m + r_{m(\Delta) + 1} + 2. $$

On a $\omega_\kappa = \frac{\log \kappa( \exp(q))}{ q } = \frac{\log z_\kappa}{ q }$, donc $v_p(\omega_\kappa) = v_p( \log z_\kappa ) - v_p(q) \geq \inf \{ v_p (z_\kappa - 1) p^k - k \} - v_p(q)$. On a deux cas:

\begin{itemize}
\item Si $v_p(\omega_\kappa) \geq 0$, alors la condition est automatiquement satisfaite dès que $m > m(\Delta) + 3$ .
\item Supposons que $v_p(\omega_\kappa) < 0$. On a $ v_p\big( {\omega_\kappa \choose j} \big) = j v_p(\omega_\kappa) - v_p(j!) \geq j(v_p(\omega_\kappa) - \frac{1}{p - 1})$. Pour montrer l'inégalité ci-dessus, il suffit donc de montrer que $$ \inf_{k \geq 0} \{ v_p(z_\kappa - 1) p^k - k \} > v_p(q) + \frac{1}{p - 1} +  m(\Delta) - m + r_{m(\Delta) + 1} + 2. $$

Ceci revient à montrer que, pour tout $k \geq 0$, on a $v_p(z_\kappa - 1) p^k - k > v_p(q) + \frac{1}{p - 1} +  m(\Delta) - m + r_{m(\Delta) + 1} + 2$, ou, de manière équivalente, $$ v_p(z_\kappa - 1) > p^{-k}(v_p(q) + \frac{1}{p - 1} +  m(\Delta) - m + r_{m(\Delta) + 1} + 2 + k) = p^{-k}(C + k). $$ La fonction d'une variable réelle $f(x) = p^{-x}(C + x)$ atteint son maximum absolu en $x = 1/ \log p - C$. On a donc $$ p^{-k}(v_p(q) + \frac{1}{p - 1} +  m(\Delta) - m + r_{m(\Delta) + 1} + 2 + k) \leq p^{m(\Delta) + C_p - m}, $$ ce qui permet de conclure.
\end{itemize}
\end{proof}

\begin{remarque} \leavevmode
\begin{itemize}
\item Le lemme ci-dessus nous permet de décrire l'ouvert $\mathfrak{U}_D \subseteq \mathfrak{X}$ de définition de $\Lambda_{D, z}$: il est une union de boules centrées sur les points $\zeta_{p^m}$, $m > m(\Delta)$, de rayon $p^{-p^{N(D) - m}}$ (qui tend vers $1$ quand $\mathrm{c}(\eta) \to +\infty$).
\item Si $\zeta_{p^m}^b$, $\zeta_{p^m}^a$, $1 \neq a, b \in (\Z / p^m \Z)^\times$, sont deux racines primitives de l'unité (correspondant au choix de deux caractères d'ordre fini du même conducteur), alors $v_p(\zeta_{p^m}^b - \zeta_{p^m}^a) = v_p(\zeta_{p^m}^{b - a} - 1) = \frac{1}{p^{m - 1 - v_p(b - a)}(p - 1)} = p^{v_p(b - a)} r_{m}$. On en déduit que les boules sont disjointes dès que $v_p(b - a) < N(D)$.
%qui est, sauf peut être à quelques exceptions près, $ < p^{N(D) - m}$, et donc les boules sont disjointes...
\item Un entier $j \in \Z$, $j \equiv 0 \text{ (mod } p - 1)$, correspond au caractère $\chi^j$ et $z_{j} = z_{\chi^j} = (1 + 2p)^j$ \footnote{On a posé $z_{\chi^j} = (\exp q)^j$, mais cela ne change rien si l'on choisit un autre générateur de $1 + 2p \zp$, comme par exemple $1 + 2p$.}. On en déduit que $v_p(z_j - 1) > p^{N - \mathrm{c}(\eta)}$ si $v_p(j) \geq p^{N - \mathrm{c}(\eta)}$. L'ouvert $\mathfrak{B}(\eta, N)$ contient alors tous les caractères de la forme $\eta \chi^j$, $v_p(j) \geq p^{N - \mathrm{c}(\eta)}$. En particulier, si $N - \mathrm{c}(\eta) < 0$, il contient tous les caractères $\eta \chi^j$, avec $(p - 1)p \mid j$.
\end{itemize}
\end{remarque}

\subsubsection{Interpolation des applications exponentielles duales} \label{interexpd}

On commence par quelques résultats préliminaires. On pourra comparer le lemme suivant avec \cite[Lem. II.1]{Berger03} (pour le cas cristallin et le caractère trivial).

\begin{lemme} \label{lemme7}
Soient $D \in \Phi\Gamma(\Robba)$ de Rham, $z \in \Delta^{\psi = 1}$ et $\eta \colon \zpe \to L^\times$ un caractère de conducteur $p^n$, $n \geq m(\Delta)$ et $m \geq 0$. Alors

$(i)$ Si $m(\Delta) \leq m < n$, on a $$ \sum_{a \in (\Z / p^n \Z)^\times} \eta(a) \sigma_a(\left[ \partial^{\j} \varphi^{-m} z \right]_0 ) = 0, $$

$(ii)$ L'expression $ \sum_{a \in (\Z / p^m \Z)^\times} \eta(a) \sigma_a( p^{- m} \mathrm{Tr}_{L_{m} / L_n} \left[ \partial^{\j} \varphi^{-m} z \right]_0 )$ ne dépend pas de $m \geq n$.
\end{lemme}

\begin{proof}
Par définition de $\Delta$, on a $ \varphi^{-m} z \in L_m[[t]] \otimes \DdR(D)$ et $ p^{-k} \mathrm{Tr}_{L_{m+k} / L_m} \varphi^{-(m+k)} z = \varphi^{-m} z$ car $z$ est fixé par $\psi$, d'où $\varphi^{-m} z$ peut être exprimé comme $\sum_{h \geq 0} \sum_{b \in (\Z / p^m \Z)} (\zeta_{p^m}^b \otimes d_{h, b}) t^h$, avec $d_{h, b} \in \DdR(D)$ (l'expression n'est évidement pas unique).

Montrons le premier point. Il suffit, par linéarité, de montrer le résultat sous l'hypothèse que $[\partial^{\j} \varphi^{-m} z]_0 = \zeta_{p^m}^b \otimes d$, avec $0 \leq b \leq p^m - 1, d \in \DdR(D)$. Or, $$ \sum_{a \in (\Z / p^n \Z)^\times} \eta(a) \sigma_a( \zeta_{p^m}^b \otimes d )  = \sum_{a \in (\Z / p^n \Z)^\times} \eta(a) \zeta_{p^m}^{ab} \otimes d = p^n \hat{\eta}(-b / p^{m}) \otimes d. $$ On en déduit le résultat en remarquant que $\hat{\eta}$ est une fonction à support dans $p^{-n} \zpe$ et que $v_p(-b / p^{m}) > -n$.

En ce qui concerne le dernier point, on peut supposer par linéarité que $\mathrm{Tr}_{L_{m} / L_n} [ \partial^{\j} \varphi^{-m} z ]_0 = \zeta_{p^n}^b \otimes d$, où $b \in (\Z / p^n \Z)$ et $d \in \DdR(D)$. On a
$$ p^{-m} \sum_{a \in (\Z / p^m \Z)^\times} \eta(a) \sigma_a( \zeta_{p^n}^b \otimes d )  = p^{-m} \sum_{a \in (\Z / p^m \Z)^\times} \eta(a) \zeta_{p^n}^{ab} \otimes d = \hat{\eta}(-b / p^n) \otimes d = p^{- n} G(\eta) \eta^{-1}(b) \otimes d. $$
L'expression ci-dessus ne dépend pas de $m$, d'où le résultat.
\end{proof}

\begin{remarque}
Si $D$ est cristallin, $z = \sum_k a_k T^k \in (\Robba^+ \otimes \Dcris(D))^{\psi = 1} \subseteq (\Delta[1/t])^{\psi = 1}$ et $n = 0$ dans le lemme ci-dessus, on a, d'après \cite[Lem. II.1]{Berger03}, \[ p^{-m} \mathrm{Tr}_{L_m / L} [\varphi^{-m} z]_0 = (1 - p^{-1} \varphi^{-1}) a_0.\]
\end{remarque}

\begin{lemme} \label{lemme1b} Soient $z \in D^{\psi = 1}$, $\eta$ un caractère constant modulo $p^n$ avec $n \geq m(\Delta)$ et $m \geq n$. On a alors l'égalité suivante dans $L_m \otimes H^1_{\varphi, \gamma}(D(-{\j}))$:
$$\int_{\Gamma} \eta \chi^{-j} \cdot \mu_z = \sum_{a \in (\Z / p^m \Z)^{\times}} \eta(a) a^{-{\j}} \int_{\Gamma_m} \chi^{-j} \cdot \mu_{\sigma_a(z)}. $$
De plus, si $D$ est de Rham, on a l'égalité (dans $L_m \otimes \DdR(D(-j))$) $$ \exp^* (\int_{\Gamma} \eta \chi^{-j} \cdot \mu_z ) \otimes \mathbf{e}^{\rm dR, \vee}_\eta = G(\eta)^{-1} \sum_{a \in (\Z / p^m \Z)^{\times}} \eta(a) a^{-{\j}} \exp^*_{D(-{\j})} (\int_{\Gamma_m} \chi^{-j} \cdot \mu_{\sigma_a(z)}). $$
\end{lemme}

\begin{proof}
%Ce lemme est une reformulation de \ref{lemme1} et se démontre de la même manière.
Montrons seulement le deuxième point, vu que les mêmes techniques seront utilisées plus tard. %Notons $T_\qp = \lim_{m \to +\infty}p^{-m} \mathrm{Tr}_{L_m / L}$ la trace de Tate normalisée.
Par la proposition \ref{recPRcomp} (appliquée à $D(\eta \chi^{-j})$, $j = 0$ et $n = 0$), on a
\begin{eqnarray*}
\exp^*(\int_\Gamma \eta \chi^{-j} \cdot \mu_z ) &=& p^{-m} \mathrm{Tr}_{L_m / L} ([\varphi^{-m}(z \otimes e_\eta \otimes e_{-j}) ]_0) \\
&=&p^{-m} \mathrm{Tr}_{L_m / L}(G(\eta)^{-1} [t^{-j} \varphi^{-m} z]_0) \otimes G(\eta) e_\eta \otimes t^j e_{-j},
\end{eqnarray*}
où on a utilisé que l'élément $\mathbf{e}^{\rm dR}_{\eta, -j} = G(\eta) e_\eta \otimes t^j e_{-j}$ est fixé par $\Gamma$ et commute donc à la trace. Notons que $[t^{-j} \varphi^{-m} z]_0 = [\varphi^{-m} z]_j$. La somme de Gauss s'écrit comme $$ G(\eta)^{-1} = p^{-n} \eta(-1) G(\eta^{-1}) = p^{-n} \eta(-1) \sum_{a \in (\Z / p^n \Z)^\times} \eta^{-1}(a) \zeta_{p^n}^a $$ et on a
\begin{eqnarray*} 
\mathrm{Tr}_{L_m / L}(G(\eta)^{-1}[\varphi^{-m} z]_j) &=& \eta(-1) p^{-n} \sum_{b \in (\Z / p^m \Z)^\times} \eta(b) \sum_{a \in (\Z / p^n \Z)^\times} \eta^{-1}(ab) \zeta_{p^n}^{ab} \sigma_b[\varphi^{-m} z]_j \\
&=& G(\eta)^{-1} \sum_{b \in (\Z / p^m \Z)^\times} \eta(b) \sigma_b[\varphi^{-m} z]_j \\
\end{eqnarray*}
où on a utilisé la formule pour la trace et encore une fois la formule reliant $G(\eta)$ et $G(\eta^{-1})$. On en déduit
\[ \exp^* (\int_\Gamma \eta \chi^{-j} \cdot \mu_z ) = p^{-m} G(\eta)^{-1} \sum_{b \in (\Z / p^m \Z)^\times} \eta(b) \sigma_b([\varphi^{-m} z]_j) \otimes G(\eta) e_\eta \otimes t^j e_{-j}. \] Or $\sigma_b[ \varphi^{-m} z ]_j = b^{-j} [\varphi^{-m} \sigma_b z]_j$ et, par la loi de réciprocité encore une fois (pour $D$, $j$ et $m$), on sait que $p^{-m} [\varphi^{-m} \sigma_b z]_j \otimes t^j e_{-j} = \exp^*(\int_{\Gamma_m} \chi^{-j} \cdot \mu_{\sigma_b(z)})$, d'où
\[ \exp^* (\int_\Gamma \eta \chi^{-j} \cdot \mu_z ) \otimes \mathbf{e}^{\rm dR, \vee}_\eta = G(\eta)^{-1} \sum_{b \in (\Z / p^m \Z)^\times} \eta(b) b^{-j} \exp^*(\int_{\Gamma_m} \chi^{-j} \mu_{\sigma_b(z)}) ,\]
ce qui permet de conclure.
\end{proof}

\begin{proposition} \label{lemme4} Soient $D \in \Phi\Gamma(\Robba)$ de Rham, $z \in \Delta^{\psi = 1}$, $h \in \Z$ tel que $\mathrm{Fil}^{-h} \, \D_{\mathrm{dR}}(D) = \D_{\mathrm{dR}}(D)$, $\eta \colon \zpe \to L^\times$ un caractère de conducteur $p^n$, $n \geq m(\Delta)$, et $j \geq h$ un entier. On a alors l'égalité suivante dans $L_n \otimes \DdR(D)$
$$ \Lambda_{D, z}(\eta \chi^{\j}) = (-h + {\j})! p^{n({\j} + 1)} \cdot \exp^* (\int_{\Gamma} \eta \chi^{-j} \cdot \mu_{\nabla_h z}) \otimes \mathbf{e}^{\rm dR, \vee}_{\eta, -j}. $$
\end{proposition}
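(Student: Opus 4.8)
The plan is to unwind the definition \eqref{forminterp} at the locally algebraic character $\eta\chi^j$ and then to recognise, via Nakamura's explicit reciprocity law, the resulting expression as the right-hand side. First I would specialise: by Proposition \ref{prop4} the family $\kappa\mapsto\kappa(\partial)z$ interpolates $\partial^j$ at $\kappa=\chi^j$, and since $n\geq m(\Delta)$ we have $z\in\Delta^{]0,r_n]}$, so $\varphi^{-n}$ is defined on the relevant elements and we may take $m=n$ in \eqref{forminterp}, giving
\[ \Lambda_{D,z}(\eta\chi^j)=G(\eta)^{-1}\sum_{a\in(\Z/p^n\Z)^\times}\eta(a)\,\sigma_a\big[\varphi^{-n}\partial^j(1-\varphi)z\big]_0. \]
I would then remove the factor $1-\varphi$: from $\partial^j\varphi=p^j\varphi\partial^j$ one gets $[\varphi^{-n}\partial^j\varphi z]_0=p^j[\varphi^{-(n-1)}\partial^j z]_0$, and Lemma \ref{lemme7}(i), applied at the intermediate level $n-1<n$, shows that $\sum_a\eta(a)\sigma_a[\varphi^{-(n-1)}\partial^j z]_0=0$ since $\hat{\eta}$ is supported on $p^{-n}\zpe$. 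Thus the $\varphi$-contribution vanishes and $\Lambda_{D,z}(\eta\chi^j)=G(\eta)^{-1}\sum_a\eta(a)\,\sigma_a[\varphi^{-n}\partial^j z]_0$.

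Next I would compute the right-hand side independently. As $\mathrm{Exp}^*$ is $\Lambda_\infty$-linear we have $\mu_{\nabla_h z}=\nabla_h\mu_z$, and $\nabla_h z\in D^{\psi=1}$ by the lemma preceding Theorem \ref{recPR}. Applying Lemma \ref{lemme1b} to $\nabla_h z$ writes $\exp^*(\int_\Gamma\eta\chi^{-j}\cdot\mu_{\nabla_h z})\otimes\mathbf{e}^{\rm dR,\vee}_\eta$ as the average $G(\eta)^{-1}\sum_a\eta(a)a^{-j}\exp^*(\int_{\Gamma_n}\chi^{-j}\cdot\mu_{\nabla_h\sigma_a z})$, using that $\nabla_h$ commutes with the abelian $\sigma_a$. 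To each term I would apply Nakamura's reciprocity law Theorem \ref{recPR}(ii), legitimate since $j\geq h$ and $m=n$ is admissible here, producing an explicit multiple of $[\varphi^{-n}\partial^j\sigma_a z]_0\otimes\mathbf{e}^{\rm dR}_{-j}$ (with $\mathbf{e}^{\rm dR}_{-j}=t^j e_{-j}$). The commutations $\varphi^{-n}\sigma_a=\sigma_a\varphi^{-n}$ and $\partial^j\sigma_a=a^j\sigma_a\partial^j$, together with the fact that $\sigma_a$ fixes $t^0$ and hence commutes with $[\cdot]_0$, rewrite this as $a^j\sigma_a[\varphi^{-n}\partial^j z]_0\otimes\mathbf{e}^{\rm dR}_{-j}$.

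Finally I would assemble the two computations. The weights $a^{-j}\cdot a^j=1$ cancel, so the average collapses to $G(\eta)^{-1}\sum_a\eta(a)\sigma_a[\varphi^{-n}\partial^j z]_0$, which is exactly the expression for $\Lambda_{D,z}(\eta\chi^j)$ obtained above; the de Rham bases recombine through $\mathbf{e}^{\rm dR,\vee}_\eta\otimes\mathbf{e}^{\rm dR,\vee}_{-j}=\mathbf{e}^{\rm dR,\vee}_{\eta,-j}$ and $\mathbf{e}^{\rm dR}_{-j}\otimes\mathbf{e}^{\rm dR,\vee}_{-j}=1$, while the leftover scalar is a power of $p$ times a $\Gamma^*$-factor. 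The main obstacle is precisely this last step of bookkeeping: one must keep careful track of the normalising constant coming out of Theorem \ref{recPR}(ii) and of the conventions of \S\ref{basesdeRham2}, the Gauss-sum identity $G(\eta)G(\eta^{-1})=\eta(-1)p^n$ being the mechanism that turns the local $G(\eta)^{-1}$ weights into the correct global normalisation, and then verify that the power of $p$ is exactly $p^{n(j+1)}$ and the $\Gamma$-factor exactly $\Gamma^*(j-h+1)=(-h+j)!$.
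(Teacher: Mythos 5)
Your first half (specialising the definition at $\eta\chi^j$ and removing the factor $1-\varphi$ via Lemma \ref{lemme7}(i) at level $n-1$) is exactly the paper's own closing step, run in the forward direction, and is fine. Where you genuinely diverge is the right-hand side: the paper never invokes Nakamura's Theorem \ref{recPR}(ii). Instead it applies the formula established in the \emph{proof} of Lemma \ref{lemme1b} (which rests only on Proposition \ref{recPRcomp}, the reciprocity law without $\nabla_h$, proved in the paper) to the element $\nabla_h z\in D^{\psi=1}$, and then disposes of $\nabla_h$ by the elementary Taylor-coefficient identity $[\varphi^{-n}\nabla_h z]_j=\tfrac{j!}{(j-h)!}[\varphi^{-n}z]_j=\tfrac{p^{-nj}}{(j-h)!}[\varphi^{-n}\partial^j z]_0$, valid because $\nabla_h$ multiplies the coefficient of $t^l$ by $l(l-1)\cdots(l-h+1)$. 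That identity is the sole source of the factor $(j-h)!$ in the statement.

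The gap in your proposal sits precisely at the point you defer to ``bookkeeping''. Theorem \ref{recPR}(ii), as stated, carries the constant $p^{m(-j-1)}/\Gamma^*(-j-h+1)$; feeding it through your chain (with $m=n$, $\sigma_a$ commuted past $\varphi^{-n}$ and $\partial^j$) yields $\Lambda_{D,z}(\eta\chi^j)=\Gamma^*(-j-h+1)\,p^{n(j+1)}\cdot\exp^*(\int_\Gamma\eta\chi^{-j}\cdot\mu_{\nabla_h z})\otimes\mathbf{e}^{\rm dR,\vee}_{\eta,-j}$. But $\Gamma^*(-j-h+1)$ is \emph{not} $(j-h)!=\Gamma^*(j-h+1)$: by the paper's convention $\Gamma^*(-j-h+1)=(-1)^{j+h-1}/(j+h-1)!$, so for instance with $h=j=1$ it equals $-1$ while $(j-h)!=1$. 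Thus the $\Gamma$-factor you assert ``works out'' actually contradicts the statement you invoke: either the quoted form of \ref{recPR}(ii) carries a misprint in its $\Gamma^*$-argument (internal consistency with Proposition \ref{recPRcomp} suggests it should read $\Gamma^*(j-h+1)$), or your route produces the wrong normalisation. In either case the discrepancy has to be resolved, and resolving it amounts exactly to redoing the paper's coefficient computation starting from Proposition \ref{recPRcomp}. Since the precise constant is the entire content of the proposition beyond Lemmas \ref{lemme1b} and \ref{lemme7}, this cannot be left as bookkeeping.
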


\begin{proof}
La preuve du lemme \ref{lemme1b} appliqué à $\nabla_h z \in D^{\psi = 1}$ donne
\[ \exp^*(\int_{\Gamma} \eta \chi^{-j} \cdot \mu_{\nabla_h z}) \otimes \mathbf{e}^{\rm dR, \vee}_{\eta, -j} = G(\eta)^{-1} p^{-n} \sum_{a \in (\Z / p^n \Z)^\times} \eta(a) \sigma_a [\varphi^{-n} \nabla_h z]_j. \]
Or, on remarque que $(\nabla - h + 1) \circ \hdots \circ (\nabla - 1) \circ \nabla (\sum_{j \geq 0} a_j t^j) = \sum_{j \geq h} a_j j (j - 1) (j - 2) \hdots (j - h + 1) t^j$. L'opérateur $ (\nabla - h + 1) \circ \hdots (\nabla -1) $ a donc l'effet de tuer les coefficients plus petits que $h - 1$. Comme $j \geq h$, on a
\[ [\varphi^{-n} \nabla_h z]_j = \frac{j!}{(j - h)!} [\varphi^{-n} z]_j = \frac{p^{-nj}}{(j - h)!} [\varphi^{-n} \partial^j z]_0 \]
Par ailleurs, en utilisant, respectivement, le lemme \ref{lemme7} et la définition de $\Lambda_{D, z}$ (cf. \S \ref{interpolation}, \eqref{forminterp}), on obtient
\begin{eqnarray*}
\sum_{a \in (\Z / p^n \Z)^\times} \eta(a) \sigma_a [\varphi^{-n} \partial^{\j} z]_0 &=& \sum_{a \in (\Z / p^n \Z)^\times} \eta(a) \sigma_a [\varphi^{-n} \partial^{\j} z - p \varphi^{-(n-1)} \partial^{\j} z]_0 \\
&=& \sum_{a \in (\Z / p^n \Z)^\times} \eta(a) \sigma_a[\varphi^{-n} \partial^{\j} (1 - \varphi) z ]_0\\
&=& \mathrm{G}(\eta) \cdot \Lambda_{D, z}(\eta \chi^{\j}),
\end{eqnarray*}
ce qui permet de conclure.
\end{proof}

\subsubsection{Interpolation des applications exponentielles} \label{interexp}

Des calculs du même genre montrent que

\begin{proposition} \label{lemme5} Soient $D \in \Phi\Gamma(\Robba)$ de Rham, $z \in \Delta^{\psi = 1}$, $h \in \Z$ tel que $\mathrm{Fil}^{-h} \, \D_{\mathrm{dR}}(D) = \D_{\mathrm{dR}}(D)$, $\eta \colon \zpe \to L^\times$ un caractère de conducteur $p^n$ tel que $n \geq m(\Delta)$ et ${\j} \geq -h + 1$ un entier tel que l'équation $\partial^{\j} z_{\j} = z$ a une solution dans $\Delta$ et tel que $\exp_{D({\j})} \colon \DdR(D({\j})) \to H^1_{\varphi, \gamma}(D({\j}))$ soit un isomorphisme. On a alors l'égalité suivante dans $L_n \otimes \DdR(D)$
$$ \Lambda_{D, z}(\eta \chi^{-{\j}}) =  \Gamma^*(-h - j + 1) p^{n(-j + 1)} \cdot \exp^{-1}(\int_{\Gamma} \eta \chi^{\j} \cdot \mu_{\nabla_h z}) \otimes \mathbf{e}^{\rm dR, \vee}_{\eta, j}.$$
\end{proposition}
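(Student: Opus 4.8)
The plan is to follow the proof of Proposition \ref{lemme4} in its formal skeleton, replacing the dual-exponential reciprocity (Proposition \ref{recPRcomp} and Lemma \ref{lemme1b}) by the genuine exponential reciprocity of Theorem \ref{recPR}$(i)$, and replacing $\partial^j$ (a differentiation) by its formal inverse $\partial^{-j}$ (an integration), which is exactly where the hypothesis that $\partial^j z_j = z$ be solvable enters. Concretely, I start from the defining formula \eqref{forminterp}, which after specialising the analytic family $\kappa \mapsto \kappa(\partial)(1-\varphi)z$ of Proposition \ref{prop4} at the character $\eta\chi^{-j}$ reads $\Lambda_{D, z}(\eta\chi^{-j}) = G(\eta)^{-1}\sum_{a\in(\Z/p^n\Z)^\times}\eta(a)\sigma_a[\varphi^{-n}(\chi^{-j})(\partial)(1-\varphi)z]_0$, the localisation $\varphi^{-n}$ being defined since $n \geq m(\Delta)$ and $(1-\varphi)z \in \Delta^{\psi=0}$.

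The new ingredient is the identification of $(\chi^{-j})(\partial)(1-\varphi)z$. Writing $w = (1 - p^{-j}\varphi)z_j \in \Delta^{\psi = 0}$ (the vanishing of $\psi w$ comes from $\psi z_j = p^{-j} z_j$, which itself follows from $\psi\partial = p\,\partial\psi$ applied to $\partial^j z_j = z \in \Delta^{\psi=1}$ and which matches the hypothesis of Theorem \ref{recPR}$(i)$), the relation $\partial^j\varphi = p^j\,\varphi\partial^j$ gives $\partial^j w = (1-\varphi)z$. Since the two analytic functions $\kappa\mapsto\kappa(\partial)\partial^j w$ and $\kappa\mapsto(\kappa\chi^j)(\partial)w$ agree at every integer character $x^k$ (both equal $\partial^{k+j}w$), they coincide on all of $\mathfrak{X}$ by Zariski density, and evaluation at $\kappa = \chi^{-j}$ yields $(\chi^{-j})(\partial)(1-\varphi)z = (\chi^{-j}\chi^{j})(\partial)w = w = (1 - p^{-j}\varphi)z_j$. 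This simultaneously explains why the solvability hypothesis is needed and records that the solution lies in $\Delta^{\psi = p^{-j}}$. The sub-case $-(h-1)\le j\le 0$, where $z_j = \partial^{-j}z$ with no solvability needed, is handled identically and is also covered by Theorem \ref{recPR}$(i)$.

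Substituting and expanding $\varphi^{-n}(1 - p^{-j}\varphi)z_j = \varphi^{-n}z_j - p^{-j}\varphi^{-(n-1)}z_j$, Lemma \ref{lemme7}$(i)$ (whose Fourier-support argument applies to $z_j$ verbatim, since only $\varphi^{-m}z_j \in L_m[[t]]\otimes\DdR(D)$ is used) kills the second summand in the character sum, leaving $\Lambda_{D, z}(\eta\chi^{-j}) = G(\eta)^{-1}\sum_a\eta(a)\sigma_a[\varphi^{-n}z_j]_0$. It then remains to match this with the exponential side, and for this I carry out the Gauss-sum manipulation of Lemma \ref{lemme1b} on $\mu_{\nabla_h z} = \nabla_h\mu_z$: distributing the $\eta\chi^j$-specialisation as $\sum_a\eta(a)a^{j}\int_{\Gamma_n}\chi^j\cdot\mu_{\sigma_a(\nabla_h z)}$ and applying Theorem \ref{recPR}$(i)$ with $m = n$ to each translate (note that $\partial^j(a^{-j}\sigma_a z_j) = \sigma_a z$, so the solution for $\sigma_a z$ carries an extra $a^{-j}$ that cancels the $a^{j}$), one obtains, after inverting $\exp$ (legitimate since $\exp_{D(j)}$ is an isomorphism by hypothesis), the class $\exp^{-1}(\int_\Gamma\eta\chi^j\cdot\mu_{\nabla_h z})\otimes\mathbf{e}^{\rm dR, \vee}_{\eta, j}$ up to the scalar $\Gamma^*(-h-j+1)^{-1}p^{n(j-1)}G(\eta)^{-1}$.

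Collecting the constants then gives the claimed formula: the factor $\Gamma^*(-j-h+1) = \Gamma^*(-h-j+1)$ comes directly from the denominator in Theorem \ref{recPR}$(i)$, and inverting $\exp$ turns the $p^{n(j-1)}$ of that theorem into the required $p^{n(-j+1)}$. The main obstacle I expect is not conceptual but bookkeeping: keeping the twists straight (the passage between $\DdR(D(\eta\chi^j))$ and $\DdR(D)$ via $\mathbf{e}^{\rm dR, \vee}_{\eta, j}$, and the cancellation of the various powers of $a$ and of $G(\eta)^{\pm 1}$), together with the two verifications that make Theorem \ref{recPR}$(i)$ applicable, namely $z_j \in \Delta^{\psi = p^{-j}}$ and the solvability of $\partial^j z_j = z$ in the integration regime $j > 0$.
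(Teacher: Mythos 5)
Your proposal is correct and follows essentially the same route as the paper's own proof: the Lemma \ref{lemme1b}-style Gauss-sum decomposition of $\exp^{-1}(\int_\Gamma \eta\chi^{j}\cdot\mu_{\nabla_h z})$, the reciprocity law of Theorem \ref{recPR}$(i)$ applied to the translates $a^{-j}\sigma_a z_j$ (with the observation $\partial^{j}(a^{-j}\sigma_a z_j)=\sigma_a z$), the identification $\partial^{-j}(1-\varphi)z=(1-p^{-j}\varphi)z_j$, and Lemma \ref{lemme7} to exchange $z_j$ and $(1-p^{-j}\varphi)z_j$ inside the character sum. The only immaterial differences are that you run the computation from the definition \eqref{forminterp} toward the exponential side rather than the reverse, and that you justify $\chi^{-j}(\partial)(1-\varphi)z=(1-p^{-j}\varphi)z_j$ by Zariski density of the integer characters where the paper directly invokes the invertibility of $\partial$ on $\Delta^{\psi=0}$.
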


\begin{proof}
De la même façon que dans le lemme \ref{lemme1b}, on montre que
$$ \exp^{-1}(\int_{\Gamma} \eta \chi^{\j} \cdot \mu_{\nabla_h z}) \otimes \mathbf{e}^{\rm dR, \vee}_\eta = G(\eta)^{-1} \sum_{a \in (\Z / p^n \Z)^\times} \eta(a) a^{\j} \exp^{-1} (\int_{\Gamma_n} \chi^j \cdot \mu_{\nabla_h \sigma_a(z)} ). $$
Observons que, si $\partial^{\j} z_{\j} = z$, alors $\partial^{\j} (a^{-{\j}} \sigma_a z_{\j}) = \sigma_a z$. En appliquant la loi de réciprocité (théorème \ref{recPR}), on voit que l'expression ci-dessus est égale à
$$ G(\eta)^{-1} \frac{p^{n(j - 1)}}{\Gamma^*(-j - h + 1)} \sum_{a \in (\Z / p^n \Z)^\times} \eta(a) [\varphi^{-n}( \sigma_a z_{\j} )]_0 \otimes t^{-j} e_j. $$ D'où
\[ \Gamma^*(-h-j+1) p^{n(-j + 1)} \cdot \exp^{-1}(\int_{\Gamma} \eta \chi^{\j} \cdot \mu_{\nabla_h z}) \otimes \mathbf{e}^{\rm dR, \vee}_{\eta, j} = G(\eta)^{-1} \sum_{a \in (\Z / p^n \Z)^\times} \eta(a) [\varphi^{-n}( \sigma_a z_{\j} )]_0. \] 

Par le lemme \ref{lemme7} et par le fait que $\partial$ est inversible sur $\Delta^{\psi = 0}$ et qu'on a donc le droit d'écrire $(1 - p^{-{\j}} \varphi) z_{\j} = \partial^{-{\j}} (1 - \varphi) z$, on a

\begin{eqnarray*}
 \sum_{a \in (\Z / p^n \Z)^\times} \eta(a) \sigma_a [\varphi^{-n}(z_{\j})]_0 &=& \sum_{a \in (\Z / p^n \Z)^\times} \eta(a) \sigma_a [\varphi^{-n}((1 - p^{-{\j}} \varphi) z_{\j})]_0 \\
&=&  \sum_{a \in (\Z / p^n \Z)^\times} \eta(a) \sigma_a [\varphi^{-n} (\partial^{-{\j}} (1 - \varphi) z)]_0 \\
&=& \mathrm{G}(\eta) \cdot \Lambda_{D, z}(\eta \chi^{-{\j}}),
\end{eqnarray*}
ce qui permet de conclure.
\end{proof}

Ceci finit la preuve du théoreme \ref{theo1}.

\subsubsection{Le cas des poids de Hodge-Tate positifs} \label{poidspositifs}

Soit $D \in \Phi \Gamma(\Robba)$ de Rham à poids de Hodge Tate positifs et notons toujours $\Delta = \Nrig(D)$. Fixons un entier $h$ tel que $\mathrm{Fil}^{-h} \, \DdR(D) = \DdR(D)$. On a donc $D \subseteq \Delta$ et, si $z \in D^{\psi = 1}$, on peut appliquer la construction faite ci-dessus à l'élément $z$ (et $h$) pour obtenir une application $ \Lambda_{D, z} $. On commence avec quelques remarques

\begin{lemme} \label{lemmeconv} Soient $z \in D^{\psi = 1}$, $\eta \colon \Gamma \to L^\times$ un caractère localement analytique et ${\j} \in \Z$. On a $$ \int_\Gamma \eta \chi^{{\j}} \cdot \mu_{\nabla_h  z} = (- \omega_\eta - {\j} - h + 1) (- \omega_\eta - {\j} - h + 2) \hdots (- \omega_\eta - {\j})  \int_\Gamma \eta \chi^{{\j}} \cdot \mu_z. $$ En particulier, si $\eta$ est localement constant, on a $$ \int_\Gamma \eta \chi^{j} \cdot \mu_{\nabla_h z} = (- {\j} - h + 1) (- {\j} - h + 2) \hdots (-{\j})  \int_\Gamma \eta \chi^{j} \cdot \mu_z. $$
\end{lemme}

\begin{proof}
Si $a \in \zpe$, par définition de l'action de $\Gamma$ sur $D^{\psi = 1}$, on a $$ \int_\Gamma \eta \chi^{j} \cdot \sigma_a \mu_z = \eta(a)^{-1} a^{-{\j}} \int_\Gamma \eta \chi^{j} \cdot \mu_z, $$ et, si $i \in \Z$, la formule $\nabla = \lim_{a \to 1} \frac{\sigma_a - 1}{a - 1}$ donne $$ \int_\Gamma \eta \chi^{j} (\nabla - i) \cdot \mu_z = \bigg( \lim_{a \to 1} \frac{\eta(a)^{-1} a^{-{\j}} - 1}{a - 1} - i \bigg) \int_\Gamma \eta \chi^{j} \cdot \mu_z = (- \eta'(1) - {\j} - i) \int_\Gamma \eta \chi^{j} \cdot \mu_z, $$ ce qui permet de conclure car $\nabla_h = (\nabla - h + 1) \circ \hdots \circ (\nabla - 1) \circ \nabla$.
\end{proof}

% \begin{remarque} La proposition ci-dessus se généralise de la façon suivante. Soit $\lambda \in \Robba^+(\Gamma)$ un élément de l'algèbre de distributions sur $\Gamma$. Alors $D^{\psi = 1}$ est un $\Robba^+(\Gamma)$-module et, si $\xi \colon \Gamma \to L^\times$ est un caractère, on a $$ \int_\Gamma \xi(\gamma) \cdot \mu_{\lambda  \cdot z} = (\int_\Gamma \xi(\gamma) \cdot \lambda) \cdot (\int_\Gamma \xi(\gamma) \cdot \mu_z). $$
% \end{remarque}

\begin{lemme}
Soient $z \in D^{\psi = 1}$, $\eta$ un caractère de Dirichlet de conducteur $p^n$ et ${\j} \geq 0$ assez grand \footnote{Il suffit que $j$ soit plus grand que le plus grand poids de Hodge-Tate et assez grand de sorte que $\exp_{D(j)}$ soit bijective.}. Alors
$$ \Lambda_{D, z}(\eta \chi^{-{\j}}) = \Gamma^*(-j + 1) p^{n(-j + 1)} \cdot \exp^{-1}(\int_{\Gamma} \eta \chi^j \cdot \mu_z) \otimes \mathbf{e}^{\rm dR, \vee}_{\eta, j}. $$
\end{lemme}

\begin{proof}
C'est une conséquence directe de la proposition \ref{lemme5} et du lemme \ref{lemmeconv} ci-dessus.
\end{proof}

\begin{lemme}
Soient $z \in D^{\psi = 1}, $ $\eta$ et ${\j} \geq 0$ comme dans la proposition \ref{lemme4}. Alors
$$ \Lambda_{D, z}(\eta \chi^{\j}) = \Gamma^*(j + 1) p^{n(j + 1)} \cdot \exp^* (\int_{\Gamma} \eta \chi^{-j} \cdot \mu_z) \otimes \mathbf{e}^{\rm dR, \vee}_{\eta, -j}. $$
\end{lemme}

\begin{proof}
Si l'on part de $z \in D^{\psi = 1}$, les calculs faits dans la preuve de la proposition \ref{lemme4} marchent en posant $h = 0$ et donnent exactement le résultat cherché.
\end{proof}

En notant, comme précédemment, par $\log$ l'application $\exp^{-1}$ ou $\exp^*$ selon le cas, on peut résumer ces résultats dans la forme énoncée au début de du chapitre:

\begin{theorem} \label{propfin}
Soient $D \in \Phi \Gamma(\Robba)$ de Rham à poids de Hodge-Tate positifs et $z \in D^{\psi = 1}$. Il existe une fonction rigide analytique $\Lambda_{D, z} \in \mathcal{O}(\mathfrak{U}_D) \otimes \DdR(D)$ telle que, si $\eta \chi^j \in \mathfrak{U}_D$, où $\eta$ est un caractère de conducteur $p^n$ et $j \in \Z$ est tel que $j \geq 0$ ou $j \ll 0$, alors

\[ \Lambda_{D, z} (\eta \chi^{\j}) = \Gamma^*(j + 1) p^{n(j + 1)} \cdot \log (\int_{\Gamma} \eta \chi^{-j} \cdot \mu_z) \otimes \mathbf{e}^{\rm dR, \vee}_{\eta, -j}. \]
\end{theorem}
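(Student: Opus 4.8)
Le plan est de déduire ce théorème du théorème \ref{theo1} en se débarrassant de l'opérateur différentiel $\nabla_h$ présent dans les formules d'interpolation. Comme $D$ est à poids de Hodge-Tate positifs, on a $D \subseteq \Delta = \Nrig(D)$, de sorte que $z \in D^{\psi = 1} \subseteq \Delta^{\psi = 1}$ et que la fonction $\Lambda_{D,z} \in \mathcal{O}(\mathfrak{U}_D)\otimes\DdR(D)$ est déjà construite par le théorème \ref{theo1}. On prendra soin d'observer que la définition \eqref{forminterp} de $\Lambda_{D,z}$ ne fait pas intervenir $h$, qui n'apparaît que dans l'énoncé des propriétés d'interpolation; la fonction est donc intrinsèque, et il suffit de recalculer ses valeurs $\Lambda_{D,z}(\eta\chi^j)$ en termes de $\mu_z$ (et non de $\mu_{\nabla_h z}$), en distinguant les cas $j \geq 0$ et $j \ll 0$. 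Comme $D$ est de Rham, la filtration est finie et l'on peut fixer un entier $h$ égal au plus grand poids de Hodge-Tate, de sorte que $h \geq 1$ et $\mathrm{Fil}^{-h}\DdR(D) = \DdR(D)$.

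Pour le cas $j \geq 0$ (valeurs $\exp^*$), je reprendrais les calculs de la preuve de la proposition \ref{lemme4} en y posant $h = 0$. L'intérêt est que, pour $z \in D^{\psi=1}$, l'élément $\nabla_0 z = z$ appartient automatiquement à $D^{\psi = 1}$, de sorte que la condition $\mathrm{Fil}^0\DdR(D) = \DdR(D)$ (en général fausse en poids strictement positifs) n'est jamais réellement utilisée. La relation $[\varphi^{-n}z]_j = \frac{p^{-nj}}{j!}[\varphi^{-n}\partial^j z]_0$, conséquence de $\varphi^{-n}\circ\partial = p^n\frac{d}{dt}\circ\varphi^{-n}$, combinée au lemme \ref{lemme7} et à la définition \eqref{forminterp}, donne alors directement la formule voulue avec le facteur $\Gamma^*(j+1) = j!$.

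Pour le cas $j \ll 0$ (valeurs $\exp^{-1}$), je partirais de la proposition \ref{lemme5}, valide avec le $h$ fixé ci-dessus pourvu que $\exp_{D(j)}$ soit bijective et que $\partial^j z_j = z$ admette une solution dans $\Delta$, ce qui est le cas pour $j$ assez grand. Elle fournit la formule avec $\mu_{\nabla_h z}$ et le facteur $\Gamma^*(-h-j+1)$. Le lemme \ref{lemmeconv}, appliqué au caractère localement constant $\eta$ (donc $\omega_\eta = 0$), transforme $\int_\Gamma \eta\chi^j \cdot \mu_{\nabla_h z}$ en le produit $(-j-h+1)(-j-h+2)\cdots(-j)$ fois $\int_\Gamma\eta\chi^j\cdot\mu_z$. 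Un calcul élémentaire de factorielles montre que ce produit multiplié par $\Gamma^*(-h-j+1)$ vaut exactement $\Gamma^*(-j+1)$, ce qui élimine toute dépendance en $h$ et redonne précisément la formule de l'énoncé.

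Le point délicat est justement cette élimination de $\nabla_h$: il faut vérifier que les facteurs gamma et les facteurs provenant du lemme \ref{lemmeconv} se recombinent pour reconstituer $\Gamma^*(j+1)$, respectivement $\Gamma^*(-j+1)$, indépendamment du choix de $h$, et que les deux descriptions (via $h=0$ pour $j\geq 0$ et via $h \geq 1$ pour $j \ll 0$) portent bien sur la même fonction, ce qui est assuré par le caractère intrinsèque de \eqref{forminterp}. Une fois les deux cas établis, on les réunit en une seule formule en notant $\log$ l'application $\exp^*$ ou $\exp^{-1}$ selon le signe de $j$; l'unicité de $\Lambda_{D,z}$ résulte alors de la densité Zariski des caractères $\eta\chi^j$ dans $\mathfrak{X}$.
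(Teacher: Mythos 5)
Votre proposition est correcte et suit essentiellement la d\'emarche du texte~: pour $j \geq 0$, on refait les calculs de la proposition \ref{lemme4} en posant $h = 0$ (ce qui est licite puisque $z \in D^{\psi = 1}$, de sorte que $\nabla_0 z = z$ reste dans $D^{\psi=1}$), et pour $j \ll 0$, on combine la proposition \ref{lemme5} et le lemme \ref{lemmeconv} pour \'eliminer $\nabla_h$, les facteurs gamma se recombinant en $\Gamma^*(-j+1)$. Votre v\'erification explicite de cette recombinaison (que le texte \'enonce comme cons\'equence directe) est exacte, de m\^eme que votre observation que la formule \eqref{forminterp} ne d\'epend pas de $h$.
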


\subsubsection{Le cas cristallin}

Soit $D \in \Phi \Gamma(\Robba)$ cristallin de rang $d$. Supposons que les pentes de $\varphi$ sur $\Dcris(D)$ sont $< 0$. On a $\Delta \cong \Robba \otimes \Dcris(D)$ (cf. \cite[Lem. 3.16]{Nakamura}) et, si $z \in \Delta^{\psi = 1}$, alors $z \in \Robba^+ \otimes \Dcris(D)$ (cf. \cite[Prop. 2.5.2]{BergerBreuil}). Notons $\alpha_1, \hdots, \alpha_d \in L$ les valeurs propres de $\varphi$ et choisissons une base $e_i$, $1 \leq i \leq d$, de $\Dcris(D)$ telle que $\varphi(e_i) = \alpha_i e_i$ pour tout $i$. On peut alors écrire \[ z = \sum_{i = 1}^d \mathscr{A}_{\lambda_i} \otimes e_i \] pour certaines distributions $\lambda_i \in \mathscr{D}(\zp, L)$ vérifiant $\psi(\lambda_i) = \alpha_i \lambda_i$, $1 \leq i \leq d$. Quelques calculs classiques nous permettent de montrer le résultat suivant

\begin{prop}
Soient $\eta \colon \zpe \to L^\times$ un caractère de conducteur $p^n$, $n > 0$, et $j > 0$. On a alors
\[ \Lambda_{D, z} (\eta \chi^{\j}) = \sum_{i = 1}^d (\int_\zpe \eta^{-1} x^j \cdot \lambda_i) \otimes \alpha_i^{-n} e_i. \]
\end{prop}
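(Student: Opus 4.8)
The plan is to unwind the definition \eqref{forminterp} of $\Lambda_{D,z}$ at the character $\eta\chi^{j}$ and to transcribe everything, via the Amice dictionary of \S\ref{interpolation}, into $p$-adic integration against the distributions $\lambda_i$. Since $D$ is crystalline one has $\Delta \cong \Robba\otimes\Dcris(D)$ with $K=\qp$ (unramified), hence $m(\Delta)=1$ and $z\in\Robba^+\otimes\Dcris(D)$, so the localization $\varphi^{-n}$ is applicable and \eqref{forminterp} computes $\Lambda_{D,z}(\eta\chi^{j})$ directly for every $n\ge 1$. Taking $\kappa=\chi^{j}$, so $\kappa(\partial)=\partial^{j}$, the formula reads
\[ \Lambda_{D,z}(\eta\chi^{j}) = G(\eta)^{-1} \sum_{a \in (\Z/p^n\Z)^\times} \eta(a)\,\sigma_a\,[\varphi^{-n}\partial^{j}(1-\varphi)z]_0 . \]
First I would compute $\partial^{j}(1-\varphi)z$ componentwise. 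Because $\varphi(e_i)=\alpha_i e_i$ while $\partial$ acts only on the $\Robba$-factor of $\Delta$, the dictionary identities $\partial^{j}\mathscr{A}_{\lambda_i}=\mathscr{A}_{x^{j}\lambda_i}$ and $\partial\circ\varphi=p\,\varphi\circ\partial$ give
\[ \partial^{j}(1-\varphi)(\mathscr{A}_{\lambda_i}\otimes e_i) = \mathscr{A}_{\nu_i}\otimes e_i, \qquad \nu_i = (1 - \alpha_i p^{j}\varphi)(x^{j}\lambda_i), \]
so that $\partial^{j}(1-\varphi)z = \sum_i \mathscr{A}_{\nu_i}\otimes e_i$.

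The crucial bookkeeping step is the localization of the horizontal vectors $e_i$, which is where the interpolation factor $\alpha_i^{-n}$ is born. Since $e_i\in\Dcris(D)$ is a $\varphi$-eigenvector and the canonical inclusion $\Dcris(D)\hookrightarrow\DdR(D)$ is $d\mapsto\iota_n(\varphi^n d)$, the relation $\varphi^n(e_i)=\alpha_i^n e_i$ forces $\varphi^{-n}(e_i)=\iota_n(e_i)=\alpha_i^{-n}e_i$ inside $\DdR(D)$; this is a $t^0$-term because $e_i$ is $\Gamma$-invariant and $\iota_n$ is $\Gamma$-equivariant. As $\varphi^{-n}$ is multiplicative over the scalar action of $\Robba$, I obtain
\[ [\varphi^{-n}(\mathscr{A}_{\nu_i}\otimes e_i)]_0 = [\varphi^{-n}\mathscr{A}_{\nu_i}]_0 \cdot \alpha_i^{-n}\otimes e_i, \]
where $\sigma_a$ acts only on the scalar $[\varphi^{-n}\mathscr{A}_{\nu_i}]_0\in L_n$. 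Pulling $\alpha_i^{-n}\otimes e_i$ out of the sum over $a$ reduces the problem to the scalar Gauss-sum identity already underlying Lemma \ref{lemmeintro}.

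Finally I would invoke that identity. For any distribution $\mu$ one has $[\varphi^{-n}\mathscr{A}_\mu]_0=\int_\zp\zeta_{p^n}^x\cdot\mu$, and since $\eta$ is primitive of conductor $p^n$ the twisted-Gauss-sum relation $\sum_a\eta(a)\zeta_{p^n}^{ax}=\eta^{-1}(x)G(\eta)$ (which vanishes for $p\mid x$) yields
\[ G(\eta)^{-1}\sum_{a\in(\Z/p^n\Z)^\times}\eta(a)\,\sigma_a\,[\varphi^{-n}\mathscr{A}_\mu]_0 = \int_\zpe\eta^{-1}\cdot\mu . \]
Applying this to $\mu=\nu_i$ and using that $\varphi(x^{j}\lambda_i)$ is supported on $p\zp$ — so its restriction to $\zpe$, against which $\eta^{-1}$ is integrated, vanishes — discards the term $\alpha_i p^{j}\varphi(x^{j}\lambda_i)$ and leaves $\int_\zpe\eta^{-1}\cdot\nu_i=\int_\zpe\eta^{-1}x^{j}\cdot\lambda_i$. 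Summing over $i$ produces exactly $\sum_i(\int_\zpe\eta^{-1}x^{j}\cdot\lambda_i)\otimes\alpha_i^{-n}e_i$, as claimed. I expect the genuinely delicate point to be the localization $\varphi^{-n}(e_i)=\alpha_i^{-n}e_i$ and its clean passage through $[\cdot]_0$; the remainder is a formal transcription of the functional-analytic dictionary, with the support property of $\varphi$ on distributions doing the work of removing the Euler factor at $p$.
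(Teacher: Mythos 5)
Your proof is correct and is exactly the \og calcul classique \fg{} that the paper invokes without writing out: you unwind \eqref{forminterp} at $\kappa = x^j$, translate through the Amice dictionary ($[\varphi^{-n}\mathscr{A}_\mu]_0 = \int_{\zp}\zeta_{p^n}^x\cdot\mu$, twisted Gauss sums, vanishing of $\varphi(\cdot)$-terms against functions supported on $\zpe$), and correctly identify the source of the factor $\alpha_i^{-n}$ in the normalization $\iota_n(e_i)=\alpha_i^{-n}e_i$ of the embedding $\Dcris(D)\subseteq\DdR(D)$. Nothing essential is missing.
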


En regardant le terme de droite de cette proposition, on en déduit

\begin{cor}
Soit $D \in \Phi \Gamma(\Robba)$ cristallin, alors la fonction $\Lambda_{D, z}$ provient par restriction d'une fonction définie sur tout l'espace des poids.
\end{cor}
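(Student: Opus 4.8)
The plan is to read off a globally defined interpolating function directly from the explicit formula of the preceding proposition and to check, by Zariski density, that it restricts to $\Lambda_{D,z}$ on $\mathfrak{U}_D$. Since we are in the crystalline case with $\varphi$-slopes $< 0$, we have $z \in \Robba^+ \otimes \Dcris(D)$, so that each $\lambda_i \in \mathscr{D}(\zp, L)$ is an honest locally analytic distribution with $\psi(\lambda_i) = \alpha_i \lambda_i$ and $\alpha_i \neq 0$ of finite valuation. (The general crystalline case reduces to this one after a Tate twist $D \rightsquigarrow D(m)$, $m \gg 0$: this shifts every $\varphi$-slope on $\Dcris$ by $-m$, since $\varphi(t^{-m}e_m) = p^{-m} t^{-m} e_m$, and amounts to precomposing with the automorphism $\kappa \mapsto \kappa \chi^{m}$ of $\mathfrak{X}$, hence preserves the property of being the restriction of a function on all of $\mathfrak{X}$.)

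For each $i$ I would construct a distribution $\mu_i \in \mathscr{D}(\zpe, L)$ specified by its values on the locally algebraic characters, namely $\int_{\zpe} \eta x^j \cdot \mu_i = \alpha_i^{-n} \int_{\zpe} \eta^{-1} x^j \cdot \lambda_i$ for every $\eta$ of conductor $p^n$, $n > 0$, and every $j \in \Z$. The characters $\eta x^j$ are linearly independent and span a dense subspace of $\mathrm{LA}(\zpe, L)$, so this prescription defines a linear functional on that span; the real point is its continuity, i.e. its boundedness on each $\mathrm{LA}_h(\zpe, L)$. Here one uses two observations: replacing $\eta$ by $\eta^{-1}$ leaves the order-$h$ norm of $\eta x^j$ unchanged (both characters have $p$-conductor $p^n$ and values of absolute value $1$), so the ``flip'' between argument and integrand is harmless; and the twist by $\alpha_i^{-n}$, of absolute value $p^{n v_p(\alpha_i)}$, raises the order only by the finite quantity $v_p(\alpha_i)$, because passing from smoothness order $h'$ to $h$ on a disc of radius $p^{-n}$ costs a factor $p^{n(h-h')}$. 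Thus $\mu_i$ is bounded on $\mathrm{LA}_{h + \lceil v_p(\alpha_i) \rceil}(\zpe, L)$ for every $h$, hence lies in $\mathscr{D}(\zpe, L)$. This growth estimate, which is exactly the finite-slope mechanism accounting for the ``order $v_p(\alpha)$'' of the classical $p$-adic $L$-function, is the only genuine obstacle; everything else is formal.

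Finally I would set $\tilde{\Lambda} = \sum_{i=1}^d F_{\mu_i} \otimes e_i \in \mathcal{O}(\mathfrak{X}) \otimes \DdR(D)$, where $F_{\mu_i} \in \mathcal{O}(\mathfrak{X})$ denotes the Amice transform of $\mu_i$, so that $F_{\mu_i}(\eta \chi^j) = \int_{\zpe} \eta x^j \cdot \mu_i = \alpha_i^{-n} \int_{\zpe} \eta^{-1} x^j \cdot \lambda_i$ by construction. The proposition then gives $\tilde{\Lambda}(\eta \chi^j) = \Lambda_{D,z}(\eta \chi^j)$ for every locally algebraic $\eta \chi^j \in \mathfrak{U}_D$ with $n > 0$ and $j > 0$. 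As both $\tilde{\Lambda}|_{\mathfrak{U}_D}$ and $\Lambda_{D,z}$ are rigid analytic on $\mathfrak{U}_D$ and these characters are Zariski dense there, the two functions coincide on $\mathfrak{U}_D$. Hence $\Lambda_{D,z}$ is the restriction of the globally defined $\tilde{\Lambda} \in \mathcal{O}(\mathfrak{X}) \otimes \DdR(D)$, which is the desired assertion.
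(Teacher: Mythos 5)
Your overall plan --- realizing the global extension as the Amice transform of distributions $\mu_i \in \mathscr{D}(\zpe,L)$ interpolating the twisted values --- is the right one, and it is essentially what the paper's terse ``en regardant le terme de droite'' appeals to (namely the classical Amice--V\'elu/Vishik mechanism behind finite-slope $p$-adic $L$-functions). But the proposal has a genuine gap at its central step, the continuity of the functional. Boundedness of a linear functional can never be checked on a spanning family: finite linear combinations of the characters $\eta x^j$ exhibit massive cancellation (already $\eta x^{j} - \eta x^{j'}$ with $j \equiv j' \bmod (p-1)p^{M}$ has $\mathrm{LA}_h$-norm $O(p^{-M})$ while each term has norm $1$), so your two ``observations'', which only control the values on individual characters $\eta x^j$, do not bound $T$ on the span. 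The symptom that the argument cannot be completed along these lines is that it never uses the relation $\psi(\lambda_i)=\alpha_i\lambda_i$ beyond $\alpha_i\neq 0$; yet the statement you are proving is \emph{false} for an arbitrary pair (distribution on $\zpe$, nonzero scalar $\alpha$). Indeed, translating the prescription into coset moments by Fourier inversion on $(\Z/p^n\Z)^\times$, one finds that the level-$h$ \emph{centered} moments (which are what control the $\mathrm{LA}_h$-norm) receive contributions of size $p^{(h-k)j}\,|\alpha|^{-k}$ from every level $k<h$, unbounded in $j$; these are cancelled only when the data is coherent across conductors, which is precisely what the $\psi$-eigenvector property supplies. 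The honest proof is to verify the hypotheses of the Amice--V\'elu/Vishik theorem (additivity and growth of centered coset moments) using $\psi\lambda_i=\alpha_i\lambda_i$ --- which is also needed to know that $\lambda_i$ has finite order at all, a fact that fails for general elements of $\mathscr{D}(\zp,L)$ but follows from the eigen-equation. This is standard, but it is the entire content of the corollary, not a formality.

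The preliminary reduction by Tate twisting is also incorrect as stated. By the paper's own formulas one has, up to the identification of de Rham bases,
\[ \Lambda_{D(m),\,z\otimes e_m}(\eta\chi^{j}) \;=\; p^{nm}\,\tfrac{j!}{(j-m)!}\;\Lambda_{D,z}(\eta\chi^{j-m}), \]
the factor $p^{nm}$ coming from the shift $\alpha_i \mapsto p^{-m}\alpha_i$ of the $\varphi$-eigenvalues on $\Dcris$. A function on $\mathfrak{X}$ that is constant equal to $p^{nm}$ on the ball around each character of conductor $p^n$ is not the restriction of any global analytic function (its derivative would vanish on every ball $\mathfrak{B}(\eta,N)$, hence on each connected component of $\mathfrak{X}$, forcing it to be constant there, whereas it takes infinitely many values). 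Consequently the extension property is \emph{not} invariant under twisting --- if $\Lambda_{D,z}$ extends and is nonzero, then $\Lambda_{D(m),z(m)}$ does not --- so the reduction ``precompose with $\kappa\mapsto\kappa\chi^m$'' is unavailable. Fortunately no reduction is needed: the corollary sits in a section whose standing hypothesis is that the slopes of $\varphi$ on $\Dcris(D)$ are $<0$ (equivalently $z\in\Robba^{+}\otimes\Dcris(D)$), and it is only in that normalization that the statement holds.
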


\subsection{Équation fonctionnelle en dimension $2$} \label{equationfonctionnelle}

Dans cette section, nous utilisons le résultat principal de \cite{epsilonKato} (où on pourra trouver une exposition plus détaillée des résultats ainsi que ses preuves) pour en déduire une équation fonctionnelle satisfaite par la fonction $L$ locale quand le $(\varphi, \Gamma)$-module est de dimension $2$, de Rham et non-triangulin. On commence par fixer un certain nombre de notations.

\subsubsection{Notations} \label{basesdeRham3} Soit $D \in \Phi\Gamma^{\text{ét}}(\Robba)$ de dimension $2$, de Rham à poids de Hodge-Tate $0$ et $k$ que l'on suppose non triangulin \footnote{D'après Kedlaya, tout $(\varphi, \Gamma)$ module sur $\Robba$ est étale à torsion près par un caractère ou triangulin. Dans ce dernier cas les calculs qui suivent sont déjà connus et l'hypothèse de non triangularité n'est donc pas vraiment restrictive.} et notons $\Delta = \Nrig(D)$, qui est à poids de Hodge-Tate tous nuls. 

Étendons un peu les notations de \ref{basesdeRham2}. Si $\delta \colon \qpe \to L^\times$ est un caractère, on note $e_\delta$ une base du module $L(\delta)$ muni d'actions de $\varphi$ et $\Gamma$ via les formules $\varphi(e_\delta) = \delta(p) \cdot e_\delta$ et $\sigma_a(e_\delta) = \delta(a) \cdot e_\delta$, $a \in \zpe$. On note $D(\delta) = D \otimes \delta$ le module $D \otimes_L L(\delta)$. Le choix de $e_\delta$ fournit un isomorphisme de $L$-espaces vectoriels $D \xrightarrow{\sim} D(\delta): x \mapsto x \otimes e_\delta$.

Soit $\check{D} = \mathrm{Hom}_{\varphi, \Gamma}(D, \Robba \frac{dT}{1 + T})$ le dual de Tate de $D$, où $\Robba \frac{dT}{1 + T}$ est le $(\varphi, \Gamma)$-module étale libre de rang $1$ de base $\frac{dT}{1 + T}$ sur lequel $\varphi$ et $\Gamma$ agissent par les formules $\gamma (\frac{dT}{1 + T}) = \chi(\gamma) \frac{dT}{1 + T}$ si $\gamma \in \Gamma$, $\varphi(\frac{dT}{1 + T}) = \frac{dT}{1 + T} $. On note \[ \langle \; , \; \rangle \colon \check{D}_{\rm rig} \times D_{\rm rig} \to \Robba \frac{dT}{1+T} \] l'accouplement naturel. Soient $\omega_D = (\det_D) \chi^{-1}$ et $\omega_\Delta = (\det_\Delta) \chi^{-1}$. Le fait que $D$ soit de dimension $2$ nous permet d'identifier $\check{D} = D \otimes \omega_D^{-1}$. Comme les poids de Hodge-Tate de $D$ sont $0$ et $k$, et ceux de $\Delta$ sont nuls, le caractère $\det_\Delta$ est localement constant et $\omega_D = \omega_\Delta x^k$ (et $\det_D = x^k \det_\Delta$). Notons $e_D = e_{\det_D}$.

Soit $\eta \colon \zpe \to L^\times$ est un caractère localement constant, vu comme un caractère de $\qpe$ en posant $\eta(p) = 1$. Rappelons que l'on a un générateur $\mathbf{e}^{\rm dR}_\eta = G(\eta) e_\eta$ du module $\DdR(\Robba(\eta))$, que $e_\eta^\vee = e_{\eta^{-1}}$ dénote la base de $L(\eta)^*$ duale de $e_\eta$, et que $\DdR(\Robba(\eta)^*) =  L \cdot G(\eta)^{-1} e_\eta^\vee = L \cdot G(\eta^{-1}) e_{\eta^{-1}}$. Ceci fournit deux bases $\mathbf{e}^{\rm dR, \vee}_\eta = G(\eta)^{-1} e_\eta^\vee$ et $\mathbf{e}^{\rm dR}_{\eta^{-1}} = G(\eta^{-1}) e_{\eta^{-1}}$ du module $\DdR(\Robba(\eta)^*) = \DdR(\Robba(\eta^{-1}))$, reliées par la formule $p^n \eta(-1) \mathbf{e}^{\rm dR, \vee}_\eta = \mathbf{e}^{\rm dR}_{\eta^{-1}}$. 

On aura besoin de jongler un peu avec des éléments habitant dans le module de de Rham des différents tordus de $D$ et de son dual de Tate et les identifications suivantes permettent de voir tous ces éléments dans $\DdR(D)$. Fixons une base $f_1, f_2$ de $\DdR(D)$ et notons $$ \langle \;,\; \rangle_{\rm dR} \colon \DdR(D) \times \DdR(D) \to L, $$ le produit scalaire défini par la formule $\langle a_1 f_1 + a_2 f_2, b_1 f_1 + b_2 f_2 \rangle_{\rm dR} = a_1 b_1 + a_2 b_2$.

L'isomorphisme $\wedge^2 D = (\Robba \frac{dT}{1 + T}) \otimes \omega_D$ induit un isomorphisme $\wedge^2 \DdR(D) = \DdR((\Robba \frac{dT}{1 + T}) \otimes \omega_D) = (t^{-k} L_\infty e_D)^\Gamma$. On définit $\Omega \in L_\infty$ par la formule $f_1 \wedge f_2 = (t^k \Omega)^{-1} e_{D}$, ce qui nous permet de fixer les bases $(t^k \Omega)^{-1} e_D$ et $t^k \Omega e_D^\vee$ du module $\DdR(\wedge^2 D)$ et de son dual. On fixe aussi les bases $\mathbf{e}^{\rm dR}_{\omega_D} = (t^{k-1} \Omega)^{-1} e_{\omega_D}$ et $\mathbf{e}^{\rm dR, \vee}_{\omega_D} = (t^{k-1} \Omega) e_{\omega_D}^\vee$ du module $\DdR(\Robba(\omega_D))$ et de son dual.

%Avec les notations introduites ci-dessus, on a des isomorphismes $$ \DdR(D) \xrightarrow{\sim} \DdR(\check{D}(\eta \chi^{-{\j}})); \;\;\; x \mapsto x \otimes G(\eta) e_{\eta} \otimes t^{{\j}} e_{-{\j}} \otimes  \Omega t^{k-1}  e_{\omega_D}^\vee $$ $$ \DdR(D) \xrightarrow{\sim} \DdR(D(\eta^{-1} \chi^{\j})) ; \;\;\; x \mapsto x \otimes G(\eta^{-1}) e_{\eta^{-1}} \otimes t^{-{\j}} e_{\j}. $$ 

Enfin, afin d'alléger les notations dans les calculs futurs, notons, pour $\eta$ comme ci-dessus et $j \in \Z$,
$$ \mathbf{e}_{\eta, j, \omega_D^\vee} \defeq e_\eta \otimes e_j \otimes e_{\omega_D}^\vee, \; \; \; \mathbf{e}_{\eta, j} \defeq e_\eta \otimes e_j, $$
bases de $L(\eta \chi^j \omega_D^{-1})$ et $L(\eta \chi^j)$ respectivement, et leurs duales
$$ \mathbf{e}^\vee_{\eta, j, \omega_D^\vee} \defeq e_\eta^\vee \otimes e_{-j} \otimes e_{\omega_D}, \; \; \; \mathbf{e}^\vee_{\eta, j} \defeq e_\eta^\vee \otimes e_{-j}, $$
ainsi que des bases des module $\DdR(\Robba(\eta \chi^j \omega_D^{-1}))$ et $\DdR(\Robba(\eta \chi^j))$
$$ \mathbf{e}^{\rm dR}_{\eta, j, \omega_D^\vee} \defeq G(\eta) e_\eta \otimes t^{-j} e_j \otimes t^{k-1} \Omega e_{\omega_D}^\vee = \mathbf{e}^{\rm dR}_\eta \otimes \mathbf{e}^{\rm dR}_j \otimes \mathbf{e}^{\rm dR, \vee}_{\omega_D}, $$
$$ \mathbf{e}^{\rm dR}_{\eta, j} \defeq G(\eta) e_\eta \otimes t^{-j} e_j = \mathbf{e}^{\rm dR}_\eta \otimes \mathbf{e}^{\rm dR}_j $$
et leurs duales
$$ \mathbf{e}^{\rm dR, \vee}_{\eta, j, \omega_D^\vee} \defeq G(\eta)^{-1} e_\eta^\vee \otimes t^{j} e_{-j} \otimes (t^{k-1} \Omega)^{-1} e_{\omega_D} = \mathbf{e}^{\rm dR, \vee}_\eta \otimes \mathbf{e}^{\rm dR}_{-j} \otimes \mathbf{e}^{\rm dR}_{\omega_D}, $$
$$ \mathbf{e}^{\rm dR, \vee}_{\eta, j} \defeq G(\eta)^{-1} e_\eta^\vee \otimes t^j e_{-j} = \mathbf{e}^{\rm dR, \vee}_\eta \otimes \mathbf{e}^{\rm dR}_{-j}, $$ et les variantes évidentes que l'on puisse imaginer.

Par exemple, si $\eta \colon \zpe \to L^\times$ est un caractère d'ordre fini, si $j \geq 0$ et si $x \in \DdR(\check{D}(\eta \chi^{-{\j}}))$, on écrira $x \otimes \mathbf{e}^{\rm dR, \vee}_{\eta, -j, \omega_D^\vee} \in \DdR(D)$ l'image de $x$ par l'isomorphisme $$ \DdR(\check{D}(\eta \chi^{-{\j}})) \xrightarrow{\sim} \DdR(D); \;\;\; x \mapsto x \otimes G(\eta)^{-1} e_\eta^\vee \otimes t^{-j} e_{j} \otimes (t^{k - 1} \Omega)^{-1} e_{\omega_D} $$ et de même, si $x \in \DdR(D(\eta^{-1} \chi^{\j}))$, on notera $x \otimes \mathbf{e}^{\rm dR, \vee}_{\eta^{-1}, j} \in \DdR(D)$ l'image de $x$ par l'isomorphisme $$ \DdR(D(\eta^{-1} \chi^j)) \xrightarrow{\sim} \DdR(D); \;\;\; x \mapsto x \otimes G(\eta^{-1})^{-1} e_{\eta^{-1}}^\vee \otimes t^j e_{-j}. $$

\begin{remarque} \leavevmode
\begin{itemize}
\item Les bases des modules de de Rham ainsi définies héritent une action de l'opérateur $\varphi$. On a, par exemple, \[ \varphi(\mathbf{e}^{\rm dR}_\eta) = \mathbf{e}^{\rm dR}_\eta, \;\;\; \varphi(\mathbf{e}^{\rm dR}_j) = p^{-j} \mathbf{e}^{\rm dR}_j, \] \[ \varphi(\mathbf{e}^{\rm dR}_{\eta, j, \omega_D^\vee}) = p^{-j + k - 1} \omega_D^{-1}(p) \cdot \mathbf{e}^{\rm dR}_{\eta, j,\omega_D^\vee} = p^{-j - 1} \omega_\Delta^{-1}(p) \cdot \mathbf{e}^{\rm dR}_{\eta, j,\omega_D^\vee}. \]
\item Il faut faire un peu d'attention et distinguer le caractère identité $x$ et le caractère cyclotomique $\chi = x |x|$. Les deux coïncident sur $\zpe$ mais $\chi(p) = 1$, tandis que le premier prend la valeur $p$. Par exemple, $\Gamma$ agit trivialement sur l'élément $e_j \otimes e_{x^j}^\vee$ mais $\varphi(e_j \otimes e_{x^j}^\vee) = p^{-j} \; e_j \otimes e_{x^j}^\vee$.
\end{itemize}
\end{remarque}

\subsubsection{Facteurs epsilon pour $\mathrm{GL}_1$} \label{replisses}

Commençons par rappeler la définition des facteurs locaux associés à un caractère. Soit $\eta \colon \qpe \to L^\times$ un caractère continu. On dit que $\eta$ est non ramifié si sa restriction à $\zpe$ est triviale et il est ramifié dans le cas contraire. On définit son conducteur par $0$ s'il est non ramifié, et par $p^n$, où $n$ est le plus petit entier tel que la restriction $\eta \mid_{1 + p^n \zp}$ soit triviale, dans le cas contraire. Notons (cf. \cite{BH}, \S 6.23, \cite[\S 1.1]{Schmidt})
$$ \epsilon(\eta, s) = \left\{
  \begin{array}{cc}
    1 & \quad \text{ si $\eta$ n'est pas ramifié} \\
    p^{-ns} \eta(p)^n G(\eta^{-1}) & \quad \text{si $\eta$ est ramifié}  \\
  \end{array}
\right. $$
le facteur epsilon associé au caractère $\eta$. Il satisfait l'équation fonctionnelle $$ \epsilon(\eta, s) \epsilon(\eta^{-1}, 1 - s) = \eta(-1). $$ On notera dans la suite $\epsilon(\eta) := \epsilon(\eta, 1/2) = p^{-n/2} \eta(p)^n G(\eta^{-1})$.

\subsubsection{Facteurs epsilon pour $\mathrm{GL}_2$} Soit $\pi$ une représentation lisse irréductible de $\mathrm{GL}_2(\qp)$ et notons $\check{\pi}$ sa contragrédiente. On note (cf. \cite[\S 6]{BH}) $\epsilon(\pi) \in L$ le facteur epsilon de la représentation $\pi$, ainsi que $\epsilon(\pi, s) = p^{- c(\pi) (s-1/2)} \epsilon(\pi), $ où $c(\pi)$ est le conducteur de $\pi$ \footnote{Le conducteur $c(\pi)$ est défini comme le plus petit entier $n$ tel que $\pi$ possède un élément fixe par les matrices de la forme $K_n = \{ {\matrice a b c d} \in \mathrm{M}_2(\zp)$, $c = d - 1 = 0$ mod $p^n \}$. On a $\dim_L \pi^{K_{c(\pi)}} = 1$. }. Observons que, si $j \in \Z$, alors $\epsilon(\pi, j + 1/2) = p^{-c(\pi) j} \epsilon(\pi) = \epsilon(\pi \otimes | \cdot |^j)$ \footnote{On note $| \cdot |^j := | \det ( \cdot ) |^j$ le caractère non-ramifié de $\mathrm{GL}_2(\qp)$ envoyant ${\matrice a b c d}$ vers $| ad - bc |^j$.}. Le facteur epsilon satisfait une équation fonctionnelle $$ \epsilon(\pi, s) \epsilon(\check{\pi}, 1 - s) = \omega_\pi(-1), $$ où $\omega_\pi$ est le caractère central de la représentation $\pi$.

\subsubsection{Une équation fonctionnelle locale} \label{eqfonctknot}

Soit $D \in \Phi \Gamma^{\text{\'et}}(\Robba)$ \'etale de dimension $2$, de Rham non triangulin à poids de Hodge-Tate $0$ et $k \geq 0$. Soit $\omega_D = \det_D \, \chi^{-1}$ de sorte que $\check{D} = D \otimes \omega_D^{-1}$. Rappelons que la correspondance de Langlands permet (cf. \cite[Prop. V.2.1]{ColmezPhiGamma}, \cite[Rem. V.14]{ColmezDosp}, \cite[\S 2.5]{epsilonKato}) de construire une involution \[ w_D \colon D^{\psi = 1} \to \check{D}^{\psi = 1}. \] Plus précisement, notre $(\varphi, \Gamma)$-module $D$ peut être vu comme un faisceau ${\matrice {\zp - \{0\}} {\zp} 0 1}$-équivariant \footnote{L'action de $\varphi$, $\sigma_a$ et la multiplication par $(1 + T)^b$, $a \in \zpe, b \in \zp$, correspondant à ${\matrice p 0 0 1}$, ${\matrice a 0 0 1}$ et ${\matrice 1 b 0 1}$ respectivement.} $U \mapsto D \boxtimes U$ sur $\zp$ dont les sections globales sont données par $D \boxtimes \zp = D$ et la construction (cf. \cite{ColmezPhiGamma}) de la représentaiton $\Pi(D)$ de $\mathrm{GL}_2(\qp)$ associée à $D$ par la correspondance de Langlands $p$-adique est fondée sur l'extension de ce faisceaux en un faisceau $G$-équivariant $U \mapsto D \boxtimes U$ sur $\PP^1 = \PP^1(\qp)$. On a un accouplement parfait et $G$-équivariant $[\; , \; ]_{\PP^1}$ sur $D \boxtimes \PP^1$ et la suite exacte fondamentale de $G$-modules suivante:
\[ 0 \to \Pi(D)^* \otimes \omega_D \to D \boxtimes \PP^1 \to \Pi(D) \to 0. \]
De plus, on a un isomorphisme
\[ \mathrm{Res}_\zp : (\Pi(D)^* \otimes \omega_D)^{{\matrice p 0 0 1} = 1} \xrightarrow{\sim} D^{\psi = 1}. \] Si $z \in D^{\psi = 1}$, on note $\tilde{z}$ l'image inverse de $z$ par cet isomorphisme. En notant $w = {\matrice 0 1 1 0}$, l'élément $w \cdot \tilde{z}$ appartient donc à $(\Pi(D)^* \otimes \omega_D)^{{\matrice p 0 0 1} = \omega_D(p)}$. On pose alors
\[ w_D(z) \defeq \mathrm{Res}_\zp(w_D(\tilde{z})) \otimes e_{\omega_D}^\vee \in \check{D}^{\psi = 1}. \]
L'équation fonctionnelle suivante est le résultat principal de \cite{epsilonKato}.

\begin{theorem} \label{eqfonct1b}
Soit $z \in D^{\psi = 1}$ et notons $\check{z} = w_D(z) \in \check{D}^{\psi = 1}$. On a \[ \exp^*(\int_\Gamma \eta \chi^{-j} \cdot \mu_{\check{z}}) \otimes \mathbf{e}^{\rm dR, \vee}_{\eta, -j, \omega_D^\vee} = C(D, \eta,  j) \cdot \exp^{-1}(\int_\Gamma \eta^{-1} \chi^{j} \cdot \mu_z) \otimes \mathbf{e}^{\rm dR, \vee}_{\eta^{-1}, j}, \] où \[ C(D, \eta, j) = - \Omega^{-1} \, \frac{\Gamma^*(-j + 1)}{\Gamma^*(j +k)}  \, \epsilon(\eta^{-1})^{-2} \epsilon(\pi \otimes \eta^{-1} \otimes | \cdot |^j) \] pour tout $j \geq 1$, où $\pi$ dénote la représentation lisse de $\mathrm{GL}_2(\qp)$ associée à (la représentation galoisienne associée à) $D$ par la correspondance de Langlands classique.
\end{theorem}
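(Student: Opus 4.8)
The plan is to reduce the functional equation to the behaviour of the Weyl involution $w = \matrice{0}{1}{1}{0}$ on the $G$-equivariant sheaf $U \mapsto D \boxtimes U$ on $\PP^1$ coming from Colmez's $p$-adic local Langlands correspondence (\cite{ColmezPhiGamma}), and then to translate both specialisations into localisation coefficients $[\varphi^{-n}\,\cdot\,]_0$ by means of the explicit reciprocity laws of \S\ref{expPR}. Recall that $\check{z} = \mathrm{Res}_\zp(w\cdot\tilde{z})\otimes e_{\omega_D}^\vee$, where $\tilde{z}\in(\Pi(D)^*\otimes\omega_D)^{{\matrice p 0 0 1}=1}$ is the preimage of $z$ under the isomorphism $\mathrm{Res}_\zp$ attached to the fundamental exact sequence $0\to\Pi(D)^*\otimes\omega_D\to D\boxtimes\PP^1\to\Pi(D)\to0$. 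The content of the statement is therefore a comparison, at the level of de Rham data, between the localisation of $z$ ``at $0$'' and that of $w\cdot\tilde{z}$ at $0$, i.e. the localisation of $z$ ``at $\infty$''; since $w$ exchanges the two standard charts of $\PP^1$ and acts on $\zpe\subseteq\PP^1$ by $x\mapsto x^{-1}$, it is exactly this involution that turns a twist by $\eta$ into a twist by $\eta^{-1}$ and an evaluation at $\chi^{-j}$ into one at $\chi^{j}$.

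First I would rewrite the two sides using the earlier sections. Since $j\geq1$, the right-hand term $\exp^{-1}(\int_\Gamma\eta^{-1}\chi^{j}\cdot\mu_z)$ is computed through Theorem \ref{recPR}$(i)$: choosing $z_j$ with $\partial^{j}z_j=z$ it becomes, up to the basis $\mathbf{e}^{\rm dR,\vee}_{\eta^{-1},j}$, a Gauss-sum-weighted average $G(\eta^{-1})^{-1}\sum_{a}\eta^{-1}(a)\,\sigma_a[\varphi^{-n}z_j]_0$ together with a $\Gamma^*$-normalisation and a power of $p$. Dually, the left-hand term $\exp^*(\int_\Gamma\eta\chi^{-j}\cdot\mu_{\check z})$ is computed through Proposition \ref{recPRcomp} (or Theorem \ref{recPR}$(ii)$) as the analogous average $G(\eta)^{-1}\sum_{a}\eta(a)\,\sigma_a[\varphi^{-n}\partial^{j}\check z]_0$. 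The normalising $\Gamma^*$-factors attached by the reciprocity laws at the two dual points $\chi^{-j}$ and $\chi^{j}$, reflecting in particular the Hodge--Tate weight $k$ carried by $\check D$, combine into the ratio $\Gamma^*(-j+1)/\Gamma^*(j+k)$, while the bases $\mathbf{e}^{\rm dR,\vee}_{\eta,-j,\omega_D^\vee}$ and $\mathbf{e}^{\rm dR,\vee}_{\eta^{-1},j}$ of \S\ref{basesdeRham3} are tailored so as to absorb the residual powers of $t$ and of the Gauss sums; this is also where the period $\Omega^{-1}$, comparing the fixed basis of $\DdR(D)$ with $e_D$ through $f_1\wedge f_2=(t^k\Omega)^{-1}e_D$, enters $C(D,\eta,j)$.

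It then remains to identify $[\varphi^{-n}\partial^{j}\,\mathrm{Res}_\zp(w\tilde z)]_0$ with the corresponding datum for $z$. Here one pairs $w\cdot\tilde z$ against the relevant section of $D\boxtimes\PP^1$ using the perfect $G$-equivariant pairing $[\,,\,]_{\PP^1}$ and moves $w$ across it; since $w$ carries the $\eta$-isotypic test function on $\zp$ to an $\eta^{-1}$-isotypic one via $x\mapsto x^{-1}$, the discrete Fourier transform of a Dirichlet character recalled earlier ($\hat\eta=G(\eta^{-1})^{-1}\eta^{-1}$ up to support) enters twice, once from each of the two $\zpe$-variables, and together with $G(\eta)G(\eta^{-1})=\eta(-1)p^n$ produces the factor $\epsilon(\eta^{-1})^{-2}$ and the sign accounting for the final $-$.

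The main obstacle, and the reason this is the principal result of \cite{epsilonKato} rather than a formal corollary of the reciprocity law, is to show that the action of $w$ on the ${\matrice p 0 0 1}$-eigenspace of $\Pi(D)^*\otimes\omega_D$ is governed by the $\mathrm{GL}_2(\qp)$ epsilon factor $\epsilon(\pi\otimes\eta^{-1}\otimes|\cdot|^{j})$ of the smooth representation $\pi$ attached to $D$ by the classical correspondence. This requires a precise compatibility between $D\mapsto\Pi(D)$ and classical local Langlands: after passing to suitable locally algebraic vectors and twisting by $\eta^{-1}\chi^{j}$, the involution $w_D$ must intertwine with the standard intertwining operator of $\pi\otimes\eta^{-1}\otimes|\cdot|^{j}$, whose normalisation is exactly the local functional equation $\epsilon(\pi,s)\epsilon(\check\pi,1-s)=\omega_\pi(-1)$. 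Establishing this matching is where the real work lies; granting it, assembling it with the Fourier computation above and with the reciprocity laws yields $C(D,\eta,j)$ on the nose for a single $j\geq1$, and Zariski density of the characters $\eta\chi^{j}$ in weight space propagates the identity over the asserted range.
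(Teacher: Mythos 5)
Your proposal is correct in outline and follows essentially the same route as the actual proof: the paper does not reprove this theorem but imports it as the main result of \cite{epsilonKato}, and the argument there (whose machinery --- Kirillov models, the test functions, the pairing $[\,,\,]_{\PP^1}$, Colmez's weight-changing action $\ast_k$ --- appears in a commented block of this paper's source) runs precisely as you sketch: both exponential maps are expressed through $[\,,\,]_{\PP^1}$ against Kirillov-model test functions, $w$ is moved across the pairing, the Bushnell--Henniart functional equation for the action of $w$ in the Kirillov model of the smooth representation attached to $D$ supplies $\epsilon(\pi\otimes\eta^{-1}\otimes|\cdot|^{j})$, the identity $G(\eta)G(\eta^{-1})=\eta(-1)p^n$ yields $\epsilon(\eta^{-1})^{-2}$, and the twist by $\partial^{-k}$ produces the ratio $\Gamma^*(-j+1)/\Gamma^*(j+k)$. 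You also correctly locate the genuine difficulty --- the compatibility of $D\mapsto\Pi(D)$ with classical local Langlands on locally algebraic vectors, which requires Colmez's $\ast_k$-twist since the weight-zero module $\Nrig(D)$ carries no such vectors in its associated representation --- and this is exactly the content of \cite{epsilonKato}, taken as given by the paper just as you take it as given.
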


\subsubsection{Équation fonctionnelle de la fonction $L$ locale}

Le théorème \ref{eqfonct1b} nous permet de montrer une équation fonctionnelle pour la fonction $\Lambda_{D, z}$.

\begin{theorem} \label{eqfonct3}
Soit $D \in \Phi\Gamma^{\mathrm{\acute{e}t}}(\Robba)$ \'etale de dimension $2$, de Rham à poids de Hodge-Tate $0$ et $k \geq 0$. Soient $z \in D^{\psi = 1}$ et $\check{z} = w_D(z) \in \check{D}^{\psi = 1}$. Soit $\eta \colon \zpe \to L^\times$ un caractère de conducteur $p^n$ avec $n \geq m(\Delta)$. Soient $\mathfrak{U}_\Delta \subseteq \mathfrak{X}$ l'ouvert fourni par le théorème \ref{theo1} et $\kappa$ tel que $\eta \kappa \in \mathfrak{U}_\Delta$. Alors
\[ \Lambda_{\check{D}(k-1), \check{z}(k-1)}(\eta \chi^j) \otimes \mathbf{e}^{\rm dR, \vee}_{k-1, \omega_D^\vee} = -  \Omega^{-1} p^{n(k-1)} \epsilon(\eta^{-1} \otimes | \cdot |^{j - k + 1})^{-2} \epsilon(\pi \otimes \eta^{-1} \otimes | \cdot |^{j - k + 1}) \cdot \Lambda_{D, z}(\eta^{-1} \chi^{-j + k-1}). \]
\end{theorem}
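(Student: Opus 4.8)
The plan is to evaluate both sides of the asserted identity through the interpolation formula of Theorem~\ref{propfin} and then to pass from one to the other by means of the local functional equation of Theorem~\ref{eqfonct1b}. First I would check that the left-hand construction makes sense: since $D$ has Hodge--Tate weights $0$ and $k$ and $\check{D}=D\otimes\omega_D^{-1}$ with $\omega_D=\det_D\,\chi^{-1}$, a weight computation shows that $\check{D}(k-1)$ is again de Rham with Hodge--Tate weights $0$ and $k$; in particular it has positive Hodge--Tate weights, so Theorem~\ref{propfin} produces $\Lambda_{\check{D}(k-1),\check{z}(k-1)}$, and $\check{z}(k-1)=\check{z}\otimes e_{k-1}\in\check{D}(k-1)^{\psi=1}$. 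I would fix an integer $j$ large enough that $j\geq k$ (so the dual exponential branch applies on the left and Theorem~\ref{eqfonct1b} applies with parameter $j'=j-k+1\geq 1$) and that the cyclotomic exponent $-j'=k-1-j$ is $\ll 0$ (so the inverse exponential branch applies on the right), prove the identity of values at $\eta\chi^j$ for such $j$, and then extend to all $\eta\chi^j\in\mathfrak{U}_\Delta$ by Zariski density and rigid analyticity.

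The heart of the argument is a three-step chain. Applying Theorem~\ref{propfin} to $(\check{D}(k-1),\check{z}(k-1))$ at $\eta\chi^j$ and using the twisting isomorphism $H^1_{\rm Iw}(\check{D})\cong H^1_{\rm Iw}(\check{D}(k-1))$ of \S\ref{cohomIw} together with the identification of $(\varphi,\Gamma)$-modules $\check{D}(k-1)(\eta\chi^{-j})=\check{D}(\eta\chi^{-j'})$, one rewrites
\[ \Lambda_{\check{D}(k-1),\check{z}(k-1)}(\eta\chi^j)\otimes\mathbf{e}^{\rm dR,\vee}_{k-1,\omega_D^\vee} = \Gamma^*(j+1)\,p^{n(j+1)}\cdot\exp^*\Big(\int_\Gamma\eta\chi^{-j'}\cdot\mu_{\check{z}}\Big)\otimes\mathbf{e}^{\rm dR,\vee}_{\eta,-j',\omega_D^\vee}. \]
The key verification here is that the factor $\mathbf{e}^{\rm dR,\vee}_{k-1,\omega_D^\vee}$ of the statement, combined with the de Rham class $\mathbf{e}^{\rm dR}_{-(k-1)}$ carried along by the twist, collapses $\mathbf{e}^{\rm dR,\vee}_{\eta,-j}$ into $\mathbf{e}^{\rm dR,\vee}_{\eta,-j',\omega_D^\vee}$; one checks this directly using $\mathbf{e}^{\rm dR,\vee}_{k-1,\omega_D^\vee}=\mathbf{e}^{\rm dR}_{-(k-1)}\otimes\mathbf{e}^{\rm dR}_{\omega_D}$ and the compatibility $e_j\otimes e_{-(k-1)}=e_{j'}$ of the rank-one bases of \S\ref{basesdeRham3}.

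Now Theorem~\ref{eqfonct1b}, applied with parameter $j'$, converts this dual exponential value for $\check{z}$ into the inverse exponential value for $z$,
\[ \exp^*\Big(\int_\Gamma\eta\chi^{-j'}\cdot\mu_{\check{z}}\Big)\otimes\mathbf{e}^{\rm dR,\vee}_{\eta,-j',\omega_D^\vee} = C(D,\eta,j')\cdot\exp^{-1}\Big(\int_\Gamma\eta^{-1}\chi^{j'}\cdot\mu_z\Big)\otimes\mathbf{e}^{\rm dR,\vee}_{\eta^{-1},j'}, \]
while Theorem~\ref{propfin} applied to $(D,z)$ at the character $\eta^{-1}\chi^{-j'}$ yields
\[ \Lambda_{D,z}(\eta^{-1}\chi^{-j'}) = \Gamma^*(-j+k)\,p^{n(-j+k)}\cdot\exp^{-1}\Big(\int_\Gamma\eta^{-1}\chi^{j'}\cdot\mu_z\Big)\otimes\mathbf{e}^{\rm dR,\vee}_{\eta^{-1},j'}. \]
The two inverse exponential terms, together with their bases $\mathbf{e}^{\rm dR,\vee}_{\eta^{-1},j'}$, agree exactly, so composing the three displays expresses $\Lambda_{\check{D}(k-1),\check{z}(k-1)}(\eta\chi^j)\otimes\mathbf{e}^{\rm dR,\vee}_{k-1,\omega_D^\vee}$ as the scalar $\Gamma^*(j+1)p^{n(j+1)}C(D,\eta,j')\big/\big(\Gamma^*(-j+k)p^{n(-j+k)}\big)$ times $\Lambda_{D,z}(\eta^{-1}\chi^{-j'})$, and $\eta^{-1}\chi^{-j'}=\eta^{-1}\chi^{-j+k-1}$ is precisely the character of the right-hand side.

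It then remains to identify this scalar with $-\Omega^{-1}p^{n(k-1)}\epsilon(\eta^{-1}\otimes|\cdot|^{j-k+1})^{-2}\epsilon(\pi\otimes\eta^{-1}\otimes|\cdot|^{j-k+1})$, which is where the real bookkeeping lies. The gamma factors telescope cleanly: since $-j'+1=k-j$ and $j'+k=j+1$, one has $\Gamma^*(j+1)\cdot\frac{\Gamma^*(-j'+1)}{\Gamma^*(j'+k)}=\Gamma^*(-j+k)$, so all gamma factors cancel. The main obstacle is the precise accounting of the powers of $p$, the Gauss sums, and the scalar $\Omega$: one must rewrite $\epsilon(\eta^{-1})^{-2}$ in terms of $\epsilon(\eta^{-1}\otimes|\cdot|^{j'})^{-2}$ using the explicit formulas of \S\ref{replisses} and the identity $G(\eta)G(\eta^{-1})=\eta(-1)p^n$, and keep consistent track of the two normalizations of the $\mathrm{GL}_1$ epsilon factor (the central value $\epsilon(\eta^{-1})$ versus the $|\cdot|^s$-twisted version) as well as of the basis relation $p^n\eta(-1)\,\mathbf{e}^{\rm dR,\vee}_\eta=\mathbf{e}^{\rm dR}_{\eta^{-1}}$; it is exactly this reconciliation that makes the residual power of $p$ collapse to $p^{n(k-1)}$. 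Once the constant matches for all sufficiently large $j$, Zariski density of $\{\eta\chi^j\}$ in the ball $\mathfrak{B}(\eta,N(D))$ around $\eta$, together with the rigid analyticity of $\Lambda_{D,z}$, of $\Lambda_{\check{D}(k-1),\check{z}(k-1)}$, and of the interpolating constant as a function of the character, extends the identity to every $\eta\chi^j\in\mathfrak{U}_\Delta$.
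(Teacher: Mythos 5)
Your proposal is correct and follows essentially the same route as the paper's own proof: the identical three-step chain (Theorem~\ref{propfin} for $(\check{D}(k-1),\check{z}(k-1))$ at $\eta\chi^j$, the local functional equation of Theorem~\ref{eqfonct1b} at $j'=j-k+1$, then Theorem~\ref{propfin} for $(D,z)$ at $\eta^{-1}\chi^{-j'}$), with the paper's ``petit calcul'' being exactly your telescoping of the $\Gamma^*$-factors and the reconciliation $\epsilon(\eta^{-1}\otimes|\cdot|^{j'})^{-2}=p^{2nj'}\epsilon(\eta^{-1})^{-2}$ that produces $p^{n(k-1)}$. Your explicit treatment of the basis collapse $\mathbf{e}^{\rm dR,\vee}_{\eta,-j}\otimes\mathbf{e}^{\rm dR,\vee}_{k-1,\omega_D^\vee}=\mathbf{e}^{\rm dR,\vee}_{\eta,-j',\omega_D^\vee}$ and of the Zariski-density extension merely makes precise steps the paper leaves implicit.
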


\begin{proof}
Par le théorème \ref{propfin}, on sait que
\[ \Lambda_{\check{D}(k-1), \check{z}(k-1)}(\eta \chi^j) \otimes \mathbf{e}^{\rm dR, \vee}_{k-1, \omega_D^\vee} = \Gamma^*(j + 1) \, p^{n(j + 1)} \cdot \exp^*(\int_\Gamma \eta \chi^{-j + k - 1} \cdot \mu_{\check{z}}) \otimes \mathbf{e}^{\rm dR, \vee}_{\eta, -j+k-1, \omega_D^\vee}, \]
L'équation fonctionnelle du théorème \ref{eqfonct1b} nous dit que
\[ \exp^*(\int_\Gamma \eta \chi^{-j + k - 1} \cdot \mu_{\check{z}}) \otimes \mathbf{e}^{\rm dR, \vee}_{\eta, -j+k-1, \omega_D^\vee} = C(D, \eta, j - k + 1) \cdot \exp^{-1}(\int_\Gamma \eta^{-1} \chi^{j - k + 1} \cdot \mu_z) \otimes \mathbf{e}^{\rm dR, \vee}_{\eta^{-1}, j - k + 1}. \]
Finalement, en appliquant encore une fois le théorème \ref{propfin}, on obtient
\[ \exp^{-1}(\int_\Gamma \eta^{-1} \chi^{j - k + 1} \cdot \mu_z) \otimes \mathbf{e}^{\rm dR, \vee}_{\eta^{-1}, j - k + 1} = \big( \Gamma^*(-j + k) \, p^{n(-j + k)} \big)^{-1} \cdot \Lambda_{D, z}(\eta^{-1} \chi^{-j + k - 1}). \]
Ces trois équations et un petit calcul permettent de conclure.
\end{proof}

\section{Fonction $L$ $p$-adique d'une forme modulaire}

Pour terminer, on applique les résultats obtenus à la représentation associée à une forme modulaire et au système d'Euler de Kato pour obtenir une construction (partielle) de la fonction $L$ $p$-adique de la forme modulaire en question.

Soit \[ f = \sum_{n = 1}^{+\infty} a_n q^n \in \mathrm{S}_k(\Gamma_1(N), \omega_f) \otimes \C \] une forme primitive (cuspidale, propre pour les opérateurs de Hecke, nouvelle et normalisée) de poids $k \geq 2$, niveau $N$ et caractère $\omega_f \colon (\Z / N \Z)^\times \to \C^\times$. Les opérateurs de Hecke $T_n$ agissent sur $f$ par $T_n f = a_n f$. On note $F = \Q(a_1, a_2, \hdots)$ le corps de nombres engendré par les coefficients de $f$ et $\check{f} = \sum_{n = 1}^{+\infty} \overline{a}_n q^n \in \mathrm{S}_k(\Gamma_1(N), \omega_f^{-1}) \otimes \C$ la forme conjuguée à $f$. On note \[ \Lambda_\infty(f, \eta^{-1}, s) = \frac{\Gamma(s)}{(2 i \pi)^s} L(f, \eta^{-1}, s) \] la normalisation de la fonction $L$ complexe de la forme $f$. Soit $v$ une place de $F$ au-dessus de $p$ et soit $L = F_v$. Notons $V(f) \in \mathrm{Rep}_L \mathscr{G}_\Q$ la représentation galoisienne de dimension $2$ attachée à $f$ (\cite[\S 6.3]{Kato}) et $D = \D_{\rm rig}(V(f)|_{\mathscr{G}_\qp})(k - 1) \in \Phi\Gamma^{\text{ét}}(\Robba)$, qui est de Rham à poids de Hodge-Tate $0$ et $k - 1$.

En appliquant une version $p$-adique de la conjecture de Bloch-Kato, on construit (cf. \S \ref{transmutation}), pour tout $\eta \colon \zpe \to L$ (que l'on voit comme un caractère de Dirichlet de conducteur une puissance de $p$) et tout $j \geq 0$, un plongement $p$-adique naturel \[ \Lambda_\infty(f, \eta^{-1}, -j) \mapsto \iota_p(\Lambda_\infty(f, \eta^{-1}, -j)) \in \DdR(D) \] des valeurs spéciales aux entiers négatifs de la fonction $L$ complexe (normalisée) de la forme modulaire. Rappelons que, dans la bande critique $1 \leq j \leq k - 1$, les valeurs $\Lambda_\infty(f, \eta^{-1}, j)$ sont naturellement interprétés (cf. par exemple \cite[Thm. 16.2]{Kato}) $p$-adiquement en les multipliant par les périodes complexes de la forme $f$.
%Si $j \geq k$, l'équation fontionnelle satisfaite par la fonction $L$ $p$-adique (théorème \ref{eqfonct4}) et l'équation fonctionnelle de la fonction $L$ complexe nous fournissent aussi (cf. \ref{entierspositifs}) une interprétation $p$-adique des valeurs $\Lambda_\infty(f, \eta^{-1}, j)$ et on note donc, pour $j > 0$, \[ \Lambda_\infty(f, \eta^{-1}, j) \mapsto \iota_p(\Lambda_\infty(f, \eta^{-1}, j)) \in \DdR(D) \] les plongements ainsi construits.
Le théorème final (annoncé dans l'introduction) de ce texte peut être énoncé sous la forme suivante:

\begin{theorem} \label{thmeaea}
Il existe un ouvert $\mathfrak{U}_f \subseteq \mathfrak{X}$, ne dépendant que de l'extension sur laquelle la représentation galoisienne associée à $f$ dévient semi-stable, et contenant tous les caractères d'ordre fini assez ramifiés, et une fonction rigide analytique $L_p(f) \in \mathcal{O}(\mathfrak{U}_f) \otimes \DdR(D)$ telle que, si $\eta \chi^j \in \mathfrak{U}_f$, où $\eta \colon \zpe \to L^\times$ est un caractère de conducteur $p^n$ et $j \in \Z$ est tel que $0 \leq j \leq k - 2$ ou $j \ll 0$, alors
\[L_p(f)(\eta \chi^j) =p^{n(j + 1)} G(\eta)^{-1} \cdot \iota_p(\Lambda_\infty(f, \eta^{-1}, j + 1)). \]

De plus, la fonction $L_p(f)$ satisfait une équation fonctionnelle de la forme
\[ L_p(f)(\eta \chi^{j}) =  C(f, \eta, j) \cdot L_p(\check{f})(\eta^{-1} \chi^{-j+k-2}) \otimes \mathbf{e}^{\rm dR, \vee}_{k-1, \omega_D^{-1}}, \] où \[ C(f, \eta, j) = \Omega \, p^n \, \epsilon(\eta \otimes | \cdot |^{-l + \frac{k - 1}{2}})^2 \epsilon(\pi_p(\check{f}) \otimes \eta \otimes | \cdot |^{-j + k - 1})^{-1} \cdot \prod_{\ell \mid N'} \epsilon(\pi_\ell(\check{f}) \otimes \eta^{-1} \otimes |\cdot|^{-j + \frac{k - 1}{2}})^{-1}. \]

%\[ L_p(f)(\eta \chi^j) = C(f, \eta, j) \cdot L_p(\check{f})(\eta^{-1} \chi^{k - 2 - j}), \] où
%\[ C(f, \eta, j) =  p^n \, \epsilon(\eta \otimes | \cdot |^{-l + \frac{k - 1}{2}})^2 \epsilon(\pi_p(\check{f}) \otimes \eta \otimes | \cdot |^{-j + k - 1})^{-1} \cdot \prod_{l \mid N'} \epsilon(\pi_l(\check{f}) \otimes \eta^{-1} \otimes |\cdot|^{-j + \frac{k - 1}{2}})^{-1}.\]
%% Vieille \[ C(f, \eta, j) =  G(\eta^{-1})^2 \cdot \epsilon \big( \pi_p(f) \otimes \eta, \frac{k-1}{2} \big)^{-1} \cdot \prod_{\ell \mid N'} \epsilon \big( \pi_\ell(f) \otimes \eta,  j + \frac{k - 1}{2} \big).\]

Finalement, si $p \nmid N$ \footnote{Plus généralement, si la représentation associée à $f$ est cristabéline.}, $\alpha$ est une valeur propre du polynôme de Hecke de $p$ de $f$ et $e_\alpha \in \Dcris(V(f)) = \Dcris(V)^*$ est un vecteur propre du Frobenius cristallin de valeur propre $\alpha$, alors $\mathfrak{U}_f = \mathfrak{X}$ et on a \[ L_{p, \alpha}(f) = \langle L_p(f), e_\alpha \rangle. \]
\end{theorem}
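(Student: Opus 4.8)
The plan is to produce $L_p(f)$ by feeding the local machinery of Theorem \ref{propfin} with Kato's zeta element. Concretely, I would take $D = \Drig(V(f)|_{\mathscr{G}_\qp})(k-1)$, which is de Rham with Hodge--Tate weights $0$ and $k-1 \geq 0$, hence with positive weights, and let $z = \mathbf{z}_{\rm Kato} \in D^{\psi=1}$ be the image of Kato's Euler system under the Fontaine--Pottharst isomorphism $\mathrm{Exp}^*$. Setting $L_p(f) := \Lambda_{D,z}$ and $\mathfrak{U}_f := \mathfrak{U}_D$, the footnote to Theorem \ref{propfina} guarantees that $\mathfrak{U}_D$ depends only on $\D_{\rm pst}(D)$, hence only on the smallest extension over which $V(f)$ becomes semi-stable, and that it contains every character of large enough conductor, exactly as claimed.

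The interpolation formula is then read off from Theorem \ref{propfin}: for $\eta\chi^j \in \mathfrak{U}_f$ one has $\Lambda_{D,z}(\eta\chi^j) = \Gamma^*(j+1)\,p^{n(j+1)}\cdot \log(\int_{\Gamma} \eta\chi^{-j}\cdot\mu_z)\otimes\mathbf{e}^{\rm dR,\vee}_{\eta,-j}$, with $\log = \exp^*$ for $j \geq 0$ and $\log = \exp^{-1}$ for $j \ll 0$. The task is to identify this with $p^{n(j+1)}G(\eta)^{-1}\iota_p(\Lambda_\infty(f,\eta^{-1},j+1))$, and two regimes occur. For $0 \leq j \leq k-2$ the argument $j+1$ lies in the critical strip; here I would invoke Kato's explicit reciprocity law (\cite[Thm. 16.2]{Kato}) together with the comparison \ref{Gealy2} between $\mathbf{z}_{\rm Kato}$ and the Beilinson--Eisenstein classes $\mathscr{Z}(\check{f}\otimes\eta,j)$, so that $\exp^*(\int_{\Gamma}\eta\chi^{-j}\cdot\mu_z)$ becomes the critical $L$-value divided by the appropriate period, and the factors $G(\eta)^{-1}$ and $\Gamma^*(j+1)$ rebuild the $p$-adic embedding of definition \ref{defplong}. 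For $j \ll 0$ the value $j+1$ is non-critical, and I would instead use Gealy's $p$-adic Beilinson theorem (\cite[Thm. 4.1.1]{Gealy2}) and the commutation $r_{\text{\'et}} = \exp\circ r_p$ to match $\exp^{-1}(\int_{\Gamma}\eta\chi^{-j}\cdot\mu_z)$ with the $p$-adic regulator $r_p(\mathscr{Z}(\check{f}\otimes\eta,-j-1))$ entering $\iota_p(\Lambda_\infty(f,\eta^{-1},j+1))$; note that the de Rham twist $\mathbf{e}^{\rm dR,\vee}_{\eta,-j}$ is already the one appearing in both formulas. Zariski density of the $\eta\chi^j$ then forces uniqueness.

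For the functional equation I would specialize Theorem \ref{eqfonct3} to $D$ and to $\check{D}(k-1)$, the $(\varphi,\Gamma)$-module attached to $\check f$. The only genuinely global input is that the local involution $w_D$ carries the zeta element of $f$ to that of $\check f$ up to explicit prime-to-$p$ epsilon factors; this is Nakamura's functional equation for Kato's Euler system (prop. \ref{eqfonctNakamura}, cf. \cite{Nakamura2}). Combining the purely local equation of Theorem \ref{eqfonct3}, whose constant already contains $\Omega^{-1}$, the $p$-part epsilon factor and $p^{n(k-1)}$, with the product of local factors $\epsilon(\pi_\ell(\check f)\otimes\eta^{-1}\otimes|\cdot|^{\cdots})$ over $\ell \mid N'$ supplied by Nakamura's comparison, reconstitutes the stated constant $C(f,\eta,j)$ and the basis twist $\mathbf{e}^{\rm dR,\vee}_{k-1,\omega_D^{-1}}$.

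Finally, when $p \nmid N$ the module $D$ is crystalline, so by the crystalline case of Theorem \ref{propfin} the function $\Lambda_{D,z}$ extends to all of $\mathfrak{X}$ and $\mathfrak{U}_f = \mathfrak{X}$. Pairing against $e_\alpha \in \Dcris(V(f))$ and using that $\Lambda_{D,z}$ generalizes the Perrin-Riou logarithm, I would invoke Kato's identity $L_{p,\alpha}(f) = \langle \mathrm{Log}_V(\mathbf{z}_{\rm Kato}), e_\alpha\rangle$ (\cite[Thm. 16.6]{Kato}) to conclude $L_{p,\alpha}(f) = \langle L_p(f), e_\alpha\rangle$. I expect the main obstacle to be the second step: keeping the two interpolation regimes mutually consistent while tracking the Gauss sums, the $\Gamma^*$-factors (whose reflection across $j+1 \leftrightarrow -j-1$ must match the $\Gamma^*(-j-h+1)$ appearing in the reciprocity law of Theorem \ref{recPR}), the powers $p^{n(j+1)}$, and above all the comparison of the archimedean periods $\Omega_f^{\pm}$ with the chosen de Rham bases, so that the abstract dual-exponential values glue into the arithmetically meaningful $\iota_p(\Lambda_\infty)$.
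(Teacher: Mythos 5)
Your proposal is correct and follows essentially the paper's own route: set $L_p(f) := \Lambda_{D,\mathbf{z}_{\rm Kato}}$ using Theorem \ref{propfin}, prove the interpolation at negative integers via Gealy's comparison plus the syntomic compatibility $r_{\text{\'et}} = \exp \circ r_p$, handle the critical strip via Kato, deduce the functional equation by combining Theorem \ref{eqfonct3} with Nakamura's Proposition \ref{eqfonctNakamura}, and get the crystalline comparison from Kato's Thm.~16.6. The only slip is one of attribution: for $j \ll 0$ the identity you need, $\int_\Gamma \eta\chi^{-j}\cdot\mu_{\mathbf{z}_{\rm Kato}} = r_{\text{\'et}}(\mathscr{Z}(\check f\otimes\eta,-j-1,\gamma))$, is the \'etale comparison of Proposition \ref{Gealy2} (Gealy's Prop.~9.1.1), not Thm.~4.1.1 (which is archimedean and only motivates Definition \ref{defplong}), while conversely the critical strip requires no motivic input at all, since the paper simply \emph{defines} $\iota_p(\Lambda_\infty(f,\eta^{-1},j+1))$ there as the $\exp^*$-value of the Kato element and cites Kato's Thm.~12.5 for its arithmetic meaning --- which also dissolves the period-comparison obstacle you flag at the end.
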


%\begin{theorem} \label{thmeaea}
%Il existe un ouvert $\mathfrak{U} \subseteq \mathfrak{X}$, ne dépendant que de l'extension sur laquelle la représentation galoisienne associée à $f$ dévient semi-stable, et contenant tous les caractères d'ordre fini assez ramifiés, et une fonction rigide analytique $\Lambda_f \in \mathcal{O}(\mathfrak{U)} \otimes \DdR(D)$ telle que, si $\eta \in \mathfrak{U}$ est un caractère de conducteur $p^n$ et $j \in \Z$ \footnote{On doit, comme précédemment, supposer que $j \geq 0$ ou $j \ll 0$ pour être précis.}, alors
%\[\Lambda_f(\eta \chi^j) =p^{n(j + 1)} G(\eta)^{-1} \cdot \iota_p(\Lambda_\infty(f, \eta^{-1}, j + 1)) \]
%\end{theorem}

\begin{remarque} \leavevmode
\begin{itemize}
\item Si $N = N' p^r$, $(N, N') = 1$, on devrait pouvoir trouver un lien entre l'exposant $r$ et le discriminant de l'extension $K$ sur laquelle la $L$-représentation $V_L(f)$ dévient semi-stable, c'est-à-dire, entre $r$ et le rayon de surconvergence de l'équation différentielle $p$-adique associée à $V(f)$. L'ouvert du théorème ne devrait donc dépendre que de $r$.
\item La dernière affirmation suit de la construction de Kato de la fonction $L$ $p$-adique de $f$ en utilisant le Logarithme de Perrin-Riou, et du fait que la construction menée dans ce travail est une généralisation directe de celui-ci.
\end{itemize}
\end{remarque}

%\begin{remarque} \leavevmode
%\begin{itemize}
%\item Il semble naturel normaliser les bases des modules de de Rham que l'on a fixé de la forme suivante afin d'obtenir des énoncés plus limpides: si l'on posait 
%\[ \Gamma^*(j+1) = \left\{
%  \begin{array}{c c}
%    j! \cdot t^{-j} & \quad \text{ si $j \geq 0$} \\
%    \frac{(-1)^{j-1}}{(-j - 1)!} \cdot t^{-j} & \quad \text{si $j < 0$}  \\
%   \end{array} \right. \] et \[ \mathbf{e}^{\rm dR}_{\eta, j} = \mathbf{e}^{\rm dR}_{\eta \otimes \mid\cdot\mid^{j+1}} \otimes \mathbf{e}^{\rm dR}_j =  \epsilon(\eta^{-1}, j + 1) \cdot e_\eta \otimes \Gamma^*(j+1) \cdot e_j, \] alors le théorème ci-dessus s'énoncerait comme \[ \Lambda_f(\eta \chi^j) = \iota_p(L(f, \eta, j)) \] pour tout $j \in \Z$ et tous les facteurs d'interpolation de la fonction $L$ $p$-adique s'interprètent simplement en termes périodes des bases des modules $\DdR(\Robba(\eta \chi^j))$ induisant des isomorphismes des modules $\DdR(D(\eta \chi^j))$ vers $\DdR(D)$.
%\end{itemize}
%\end{remarque}

\subsection{Notations et compléments} \label{repgal}

\subsubsection{Conjecture de Bloch-Kato pour les formes modulaires} Soit $Y_1(N)$ la courbe modulaire de niveau $\Gamma_1(N)$ et notons $\mathcal{KS}_{\Gamma_1(N)}^{k-2}$ la $k-2$-ième variété de Kuga-Sato de niveau $\Gamma_1(N)$ et $\epsilon$ l'idempotent usuel (cf. \cite[\S 1.1; \S 11.1]{Kato} ou \cite{Scholl}). Soit $M = M(f \otimes \eta^{-1})$ le motif associé à la forme $f \otimes \eta^{-1}$ (cf. \cite{Gealy2}, \cite{Scholl}) et considérons \[ M^*(1+j) = M(\check{f} \otimes \eta)(k+j), \] dont $V(\eta \chi^{j + 1})$ est la réalisation $p$-adique.

Notons $\mathbf{T}$ l'algèbre engendrée par les opérateurs de Hecke de niveau premier à $N$ et $\overline{\lambda} \colon \mathbf{T} \to L$ le caractère associé à $\check{f} \otimes \eta$. On a une description (cf. \cite{Scholl}, \cite{Sch-Den}, \cite{Gealy2})
\[ H^1(M^*(1+j)) = H_{\mathscr{M}}^{k}(\mathcal{KS}_{\Gamma_1(N)}^{k-2}, k + j)(\epsilon) \otimes_{\mathbf{T}} \overline{\lambda}, \]
\[ H^1_{\mathscr{D}}(M^*(1+j)) = H_{\mathscr{D}}^{k}(\mathcal{KS}_{\Gamma_1(N)}^{k - 2}, \R(k + j))(\epsilon) \otimes_{\mathbf{T}} \overline{\lambda}, \]
\[ H^1_{\text{ét}}(M^*(1+j)) = H^1(\Q, H^{k-1}_{\text{ét}}(\mathcal{KS}_{\Gamma_1(N), \overline{\Q}}^{k-2}, \qp)(k + j)(\epsilon) \otimes_{\mathbf{T}} \overline{\lambda}), \]
des groupes de cohomologie motivique, de Deligne et étale, respectivement, du motif $M^*(1 + j)$, ainsi que des régulateurs \[ r_\infty \colon H^1(M^*(1+j)) \otimes_\Q \R \to H^1_{\mathscr{D}}(M^*(1+j)), \] \[  r_{\text{ét}} \colon H^1(M^*(1+j)) \otimes_\Q \qp \to H^1_{\text{ét}}(M^*(1+j)). \]

En utilisant les symboles d'Eisenstein définis par Beilinson (cf. \cite{Beilinson2}), on construit,pour $1 \leq r \leq k - 1$, des éléments (cf. \cite[\S 3.2]{Gealy}, où les éléments sont notés $\overline{\xi}_r$) \[ \mathscr{Z}(k, j, r) \in H^{k}_{\mathscr{M}}(\mathcal{KS}_{\Gamma_1(N)}^{k-2}, k + j) (\epsilon). \] Si $\xi \in \mathrm{SL}_2(\Z)$, on pose \[ \mathscr{Z}(\check{f} \otimes \eta, j, r, \xi) = \xi^* ( \mathscr{Z}(k, j, r) ) \otimes_{\mathbf{T}} \overline{\lambda} \in H^1(M^*(1 + j)). \] Enfin, on sait (\cite[Thm. 13.6]{Kato}) que les symboles modulaires $\delta(\check{f} \otimes \eta, r, \xi) \in V_{F}(\check{f} \otimes \eta)$, $1 \leq r \leq k -1$, $\xi \in \mathrm{SL }_2(\Z)$ engendrent $V_{F}(\check{f} \otimes \eta))$ sur $F$, ce qui nous permet, en prenant des combinaisons linéaires des éléments $\mathscr{Z}(\check{f} \otimes \eta, j, r, \xi)$, de définir, pour tout $\gamma \in V_F(\check{f})$, \[ \mathscr{Z}(\check{f} \otimes \eta, j, \gamma) \in H^1(M^*(1 + j)). \] 

\begin{proposition} [{\cite[Thm. 4.1.1]{Gealy2}}] \label{GealyBK}  Soit $\gamma \in V_F(\check{f})$. Alors \[ r_\infty (\mathscr{Z}(\check{f} \otimes \eta, j, \gamma)) = L^{(N), *}(f, \eta^{-1}, -j) \cdot \gamma, \] où $L^{(N), *}(f, \eta^{-1}, -j)$ dénote le coefficient principal de la série de Taylor en $s = -j$ de la fonction $L$ de $f$ sans ses facteurs en les places divisant $N$. 
\end{proposition}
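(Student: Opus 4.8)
Le plan est de ramener l'�nonc� au calcul explicite du r�gulateur de Beilinson des symboles d'Eisenstein, puis d'identifier, via la m�thode de Rankin-Selberg, l'int�grale p�riodique qui en r�sulte avec une valeur sp�ciale de la fonction $L$. Comme l'�l�ment $\mathscr{Z}(\check{f} \otimes \eta, j, \gamma)$ est, par construction, une combinaison lin�aire (index�e par les symboles modulaires $\delta(\check{f} \otimes \eta, r, \xi)$, qui engendrent $V_F(\check{f} \otimes \eta)$) des tir�s en arri�re $\xi^*(\mathscr{Z}(k, j, r))$ des symboles d'Eisenstein de base, et comme le r�gulateur $r_\infty$ est $L$-lin�aire et compatible aux tir�s en arri�re, il suffit de calculer $r_\infty$ sur un symbole d'Eisenstein $\mathscr{Z}(k, j, r)$ et de propager la combinaison lin�aire jusqu'� la projection $\otimes_{\mathbf{T}} \overline{\lambda}$ sur la composante $(\check{f} \otimes \eta)$-isotypique.

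D'abord, je calculerais le r�gulateur du symbole d'Eisenstein. D'apr�s Beilinson (\cite{Beilinson2}), raffin� par Deninger-Scholl (\cite{Sch-Den}) et Scholl (\cite{Scholl}), l'image de $\mathscr{Z}(k, j, r)$ par le r�gulateur de Beilinson dans la cohomologie de Deligne $H^k_{\mathscr{D}}(\mathcal{KS}_{\Gamma_1(N)}^{k-2}, \R(k+j))(\epsilon)$ s'exprime explicitement en termes d'une (d�riv�e en $s$ d'une) s�rie d'Eisenstein r�elle-analytique de poids $k$ et de niveau $\Gamma_1(N)$. Ce calcul repose sur la description du symbole d'Eisenstein par les classes d'Eisenstein attach�es aux sections de torsion de la courbe elliptique universelle au-dessus de $Y_1(N)$, et sur l'identification de la r�alisation de Hodge de ces classes avec des formes diff�rentielles construites � partir de ces s�ries d'Eisenstein.

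Ensuite, pour transformer cette classe de Deligne en un nombre, je l'accouplerais avec l'�l�ment $\gamma \in V_F(\check{f})$, vu dans la r�alisation de Betti gr�ce aux isomorphismes de comparaison et aux p�riodes de $f$. Cet accouplement se r�alise comme une int�grale de la forme diff�rentielle repr�sentant $r_\infty(\mathscr{Z})$ contre un cycle repr�sentant $\gamma$, que l'on �crit sous la forme d'une int�grale de type Petersson-Rankin entre $f$, la s�rie d'Eisenstein fournie par l'�tape pr�c�dente et les coefficients de $\mathrm{Sym}^{k-2}$. Le d�pliage (\emph{unfolding}) de cette int�grale de convolution la ram�ne alors � la s�rie de Dirichlet d�finissant $L(\check{f} \otimes \eta, s)$, � des facteurs $\Gamma$ et des constantes de normalisation pr�s ; en �valuant au point $s = -j$, en identifiant le coefficient principal du d�veloppement de Taylor (un terme principal, et non une valeur, appara�t aux points o� la s�rie d'Eisenstein prolong�e peut s'annuler) et en utilisant la relation standard entre $L(\check{f} \otimes \eta, s)$ et $L(f, \eta^{-1}, s)$, on obtient $L^{(N),*}(f, \eta^{-1}, -j) \cdot \gamma$. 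L'absence des facteurs d'Euler aux places divisant $N$ provient directement du niveau $\Gamma_1(N)$ de la s�rie d'Eisenstein intervenant dans le d�pliage.

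La partie la plus d�licate sera le calcul pr�cis du r�gulateur en poids $k > 2$, c'est-�-dire le suivi exact des coefficients de $\mathrm{Sym}^{k-2}$ sur la vari�t� de Kuga-Sato, ainsi que le contr�le de toutes les constantes (puissances de $2 i \pi$, factorielles, normalisations des p�riodes $\Omega_f^\pm$) et du coefficient principal au point possiblement non critique $s = -j$. C'est pr�cis�ment ce calcul qui est men� dans \cite{Gealy2}, dont on cite ici le r�sultat.
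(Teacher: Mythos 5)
La proposition \ref{GealyBK} n'est pas d�montr�e dans l'article : elle y est import�e telle quelle de Gealy (\cite[Thm.\ 4.1.1]{Gealy2}), exactement comme vous le faites en dernier ressort. Votre esquisse — r�duction par lin�arit� aux classes $\xi^*(\mathscr{Z}(k,j,r))$, calcul du r�gulateur de Beilinson des symboles d'Eisenstein � la Beilinson/Deninger--Scholl, puis d�pliage de Rankin--Selberg donnant le coefficient principal $L^{(N),*}(f,\eta^{-1},-j)$ avec perte des facteurs d'Euler en $N$ — correspond bien � la strat�gie de la r�f�rence cit�e, donc votre proposition est en accord avec le traitement de l'article.
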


\subsubsection{Cohomologie syntomique} \label{plongement}

Les groupes de cohomologie syntomique $H^k_{\rm syn}(X_h, r)$ d'un schema séparé de type fini $X$ sur un corps $p$-adique, ainsi que des morphismes de périodes syntomiques \[ \rho_\mathrm{syn} \colon \mathbf{R} \Gamma_{\mathrm{syn}}(X_h, r) \to \mathbf{R}\Gamma_{\textrm{ét}} (X_{\text{ét}}, \qp(r)), \] et des morphismes de réalisation $p$-adiques (ou syntomiques) \[ r_p \colon H^i_{\mathscr{M}}(X, r) \to H^i_{\rm syn}(X_h, r), \] de la cohomologie motivique vers la cohomologie syntomique compatibles avec les morphismes de périodes syntomiques et les réalisations étales, ont été définis dans \cite{NN} (cf. \cite[Thm. A]{NN}).

Soient $X = \mathcal{KS}_{\Gamma_1(N)}^{k-2}$, $j \geq 0$ et $r = k + j$ (et donc $\mathrm{Fil}^r \, H^{k-1}_{\rm dR}(\mathcal{KS}_{\Gamma_1(N)}^{(k-2)}) = 0$). On a $H^k_{\rm syn}(X_h, r) \cong H^{k - 1}_{\rm dR}(X) = \DdR(H^{k - 1}_{\text{ét}}(X_{\overline{\Q}_p}, \qp))$ (cf. \cite[rem. 4.14]{NN} et le diagramme qui le précède pour la première égalité et \cite[Eq. 11.3.3]{Kato} pour la deuxième) et, en appliquant le projecteur $\epsilon$, en projetant sur la partie correspondante à la forme $\check{f} \otimes \eta$ et en tordant, on obtient des régulateurs $p$-adiques \[ r_p \colon H^1(M^*(1 + j)) \to \DdR(V(\eta \chi^{j + 1})). \] La proposition \cite[Prop. 4.13]{NN}, avec $q = k - 1$ (et $r = j + k$) se traduit alors en la relation \[ \exp \circ \; r_p = r_{\text{ét}}, \] qui nous sera très utile dans la suite.

\subsection{Plongements $p$-adiques des valeurs spéciales} \label{transmutation}

Soient $f$ comme ci-dessus et $\eta \colon \zpe \to L^\times$ d'ordre fini, vu comme un caractère de Dirichlet en fixant un isomorphisme entre $\overline{\Q}_p$ et $\C$. La proposition \ref{GealyBK} nous permet, en utilisant les régulateurs $p$-adiques, de donner un sens $p$-adique aux valeurs spéciales de la fonction $L$ complexe associée à $f$ en dehors de la bande critique.

%\subsubsection{Transmutation} 

Notons \[ D = D(\check{f})(k-1), \] qui est de Rham à poids de Hodge-Tate $0$ et $ k -1$. On a, inspirés de la proposition \ref{GealyBK}, envie de voir les éléments $r_p ( \mathscr{Z}_{\rm Kato}(\check{f} \otimes \eta, j, r, \xi) ) \in \DdR(D(\eta \chi^{j + 1}))$ comme les transmutations des valeur spéciales $L(f, \eta^{-1}, j)$ en $p$-adique. Or, comme on l'a déjà remarqué, afin de construire une fonction interpolant ces valeurs, il faut les voir tous dans un même module. Rappelons que, pour $\eta$ un caractère de Dirichlet, on a \[ \Lambda_\infty(f, \eta^{-1}, s) = \frac{\Gamma(s)}{(2 i \pi)^s} \cdot L(f, \eta^{-1}, s). \]

\begin{definition} \label{defplong} On pose \footnote{Notons que, dans la formule, on "mutiplie" et "divise" par la somme de Gauss de $\eta$, de sorte qu'elle n'a moralement aucun effet. Le terme $2 i \pi$ correspond, dans le monde $p$-adique, à l'élément $t$ de Fontaine, apparaissant dans le facteur $\mathbf{e}^{\rm dR, \vee}_{\eta, j + 1}$. Enfin, le facteur $\Gamma^*(j)$ est le coefficient principal de la série de Laurent de $\Gamma(s)$ en $s = -j$, où elle a un pôle simple.}, pour $j \geq 0$, \[ \iota_p(\Lambda_\infty(f, \eta^{-1}, -j)) = \Gamma^*(-j) \cdot G(\eta) \cdot r_p ( \mathscr{Z}(\check{f} \otimes \eta, j, r, \xi) ) \otimes \mathbf{e}^{\rm dR, \vee}_{\eta, j + 1} \in \DdR(D). \]
\end{definition}

\subsection{Interpolation} Dans cette section, on démontre que les constructions faites dans les chapitres précédents nous permettent d'interpoler les plongements $p$-adiques des valeurs spéciales de la fonction $L$ complexe de $f$ définis dans \ref{transmutation}. Ceci constitue la preuve du théorème \ref{thmeaea} annoncé au début du chapitre. On démontre d'abord, en utilisant un deuxième résultat de Gealy reliant les classes de cohomologie motiviques $\mathscr{Z}(\check{f} \otimes \eta, j, r, \xi)$ au système d'Euler de Kato et le théorème \ref{propfin}, les propriétés d'interpolation des valeurs spéciales aux entiers négatifs. On sait déjà, d'après les résultats de Kato, que les valeurs interpolées par notre fonction dans la bande critique s'interprètent bien en termes des valeurs spéciales complexes. Finalement, en utilisant une équation fonctionnelle du système d'Euler de Kato établie par Nakamura et l'équation fonctionnelle du théorème \ref{eqfonct3}, on obtiendra dans la section suivante une interprétation des valeurs spéciales aux entiers positifs $j \geq k - 1$, ce qui donne une image complète des valeurs interpolées par la fonction $L$ $p$-adique d'une forme modulaire.

\subsubsection{Relèvement motivique des éléments de Kato}
%
%Faisons encore quelques rappels sur les constructions des éléments de Kato. En partant des unités de Beilinson des courbes modulaires, Kato construit (cf. \cite{Kato}, 8.1), pour $N \geq 3$, $k \geq 2$, $j \in \Z$, $1 \leq r \leq k-1$ et pour des entiers auxiliaires $c, d$ tels que $(cd, 6pN) = 1$, des éléments \[ {}_{c, d} \mathbf{z}_N^{(p)}(k, j, r) \in H^1(\Q, V_{k, \zp}(Y(N))(k-j)), \] satisfaisant certaines relations par rapport aux normes quand l'on varie le niveau $N$ (cf. \cite{Kato}, prop. 8.7).

Rappelons que, pour chaque $\gamma \in V_L(\check{f})$, on a des éléments dans la cohomologie d'Iwasawa \[ \mathbf{z}_\gamma^{(p)}(\check{f})(k + j) =  (\mathbf{z}_{p^n}^{(p)}(\check{f}, -j, \gamma))_{n \geq 1}  \in H^1_{\rm Iw}(\Q, V(\check{f})(k + j)) \] construits par Kato (cf. \cite[Thm. 12.5]{Kato}).

\begin{proposition} [{\cite[Prop. 9.1.1]{Gealy2}}] \label{Gealy2} Soit $\gamma \in V_L(\check{f})$. Alors \[ r_{\text{ét}}(\mathscr{Z}(\check{f} \otimes \eta, j, \gamma)) = \int_\Gamma 1 \cdot \mathbf{z}_\gamma^{(p)}(\check{f} \otimes \eta)(k + j). \]
\end{proposition}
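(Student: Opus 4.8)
The plan is to exhibit both sides as \'etale realizations of the \emph{same} Beilinson--Eisenstein construction and then commute the \'etale regulator past the geometric operations. Recall that Kato's zeta elements ${}_{c,d}\mathbf{z}_{p^n}^{(p)}(\check f,-j,\gamma)$ are not built directly in Galois cohomology: by \cite[\S 8]{Kato} they arise as the images, under the \'etale Chern class map (the \'etale regulator), of cup products of two Siegel units, i.e. of motivic Eisenstein symbols, pushed forward along the Kuga--Sato projections and cut out by $\epsilon$. The motivic classes $\mathscr{Z}(k,j,r)$ are assembled from exactly this datum --- the cup product of the two Eisenstein symbols, followed by $\pi_*$, $j_*$ and $\epsilon$. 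Thus the identity to be proved is, at bottom, the assertion that applying $r_{\text{\'et}}$ to the motivic class reproduces the bottom layer of Kato's norm-compatible system, the latter being read off by the specialization $\int_\Gamma 1\cdot$.

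First I would record the naturality of the \'etale regulator: $r_{\text{\'et}}$ commutes with the pullback $\xi^*$, with the proper pushforwards $\pi_*$ and $j_*$, with cup products, with the idempotent $\epsilon$, and with the Hecke projection $\otimes_{\mathbf{T}}\overline{\lambda}$, since each of these is induced by a correspondence acting compatibly on motivic and on \'etale cohomology. Feeding the defining formula $\mathscr{Z}(\check f\otimes\eta,j,r,\xi)=\xi^*\mathscr{Z}(k,j,r)\otimes_{\mathbf{T}}\overline{\lambda}$ through $r_{\text{\'et}}$ therefore reduces the claim to the corresponding equality for the \'etale realizations of the Eisenstein symbols themselves, which is precisely the geometric input of Kato's construction. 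Since $\gamma\mapsto\mathscr{Z}(\check f\otimes\eta,j,\gamma)$ is $L$-linear in the modular symbols $\delta(\check f\otimes\eta,r,\xi)$ and $r_{\text{\'et}}$ is linear, it suffices to treat the generators $\gamma$.

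Next I would match the cyclotomic tower. Kato's element $\mathbf{z}_\gamma^{(p)}(\check f\otimes\eta)(k+j)=(\mathbf{z}_{p^n}^{(p)})_{n\ge 1}$ is a norm-compatible system in $\varprojlim_n H^1(\Q(\zeta_{p^n}),V(\check f\otimes\eta)(k+j))$, each layer being the \'etale realization of the Eisenstein--Kato classes over $\Q(\zeta_{p^n})$, whereas the operator $\int_\Gamma 1\cdot$ is exactly the specialization at the trivial character, i.e. the projection onto the class over $\Q$. Since $\mathscr{Z}(\check f\otimes\eta,j,\gamma)$ is a motivic cohomology class over $\Q$ --- the Dirichlet character $\eta$ having been absorbed into the twisted motive $M(\check f\otimes\eta)$ --- its \'etale realization is precisely that trivial-character class. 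The twist by $\eta$ is handled by comparing its two incarnations: as a Dirichlet character it twists the coefficient system, and under the fixed isomorphism $\overline{\Q}_p\cong\C$ it is the arithmetic twist entering $\check f\otimes\eta$; one must check that the Gauss-sum and level normalizations built into the Eisenstein symbols agree with those carried by Kato's ${}_{c,d}\mathbf{z}_{p^n}^{(p)}$.

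The hard part will be exactly this normalization bookkeeping. One must verify that the \'etale realization of Beilinson's motivic Eisenstein symbol coincides on the nose --- including factorials, Bernoulli-type weights, Gauss sums and the Tate twist $k+j$ --- with the \'etale Eisenstein class entering Kato's zeta elements, and that the auxiliary factors $(c^2-c^{k-r+1}\sigma_c)$ and $(d^2-d^{r+1}\sigma_d)$ used in \cite[Thm. 12.5]{Kato} to kill the $(c,d)$-dependence disappear after projecting to the $\check f\otimes\eta$-eigenspace. This is the content of \cite[Prop. 9.1.1]{Gealy2}, whose proof amounts to a line-by-line identification of the two packagings of one and the same Beilinson element; I would follow Gealy's computation rather than reprove the compatibility of the motivic and \'etale Eisenstein symbols from scratch, a compatibility that itself rests on \cite{Beilinson2} together with the syntomic-\'etale comparison of \cite{NN}.
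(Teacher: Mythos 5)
Votre démarche coïncide en substance avec celle du texte : comme le papier ne redémontre pas le résultat mais le ramène à \cite[Prop. 9.1.1]{Gealy2} (énoncée pour $\gamma = \delta(\check{f}, r, \mathrm{id})$), puis étend au cas $\xi$ quelconque par compatibilité des réalisations avec les correspondances algébriques et au cas général par linéarité, vos réductions par fonctorialité du régulateur étale et par linéarité, suivies du renvoi à Gealy pour l'identification des normalisations, reproduisent exactement le même argument. La proposition est donc correcte et suit essentiellement la même approche que le papier.
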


\begin{remarque} \leavevmode
\begin{itemize}
\item La proposition 9.1.1 de \cite{Gealy2} montre le résultat pour $\gamma = \delta(\check{f}, r, \mathrm{id})$. Le cas $\gamma = \delta(\check{f}, r, \xi)$ quelconque s'en déduit des compatibilités des réalisations par des correspondances algébriques et de la définition des éléments zêta, et le cas d'un élément $\gamma$ quelconque suit par linéarité.
\item Remarquons que, par construction, $\mathbf{z}_\gamma^{(p)}(\check{f} \otimes \eta)(k + j) = \mathbf{z}_\gamma^{(p)}(\check{f})(k) \otimes e_\eta \otimes e_{j} $ et la proposition ci-dessus s'exprime donc aussi comme \[ r_{\text{ét}}(\mathscr{Z}(\check{f} \otimes \eta, j, \gamma)) = \int_\Gamma \eta \chi^{j} \cdot \mathbf{z}_\gamma^{(p)}(\check{f})(k). \] 
\end{itemize}
\end{remarque}

\subsubsection{Interpolation aux entiers négatifs}

Notons \[ D(f) = \D_{\rm rig}(V_L(f) \mid_{\mathscr{G}_\qp}),  \;\;\; D(\check{f})  = \D_{\rm rig}(V_L(\check{f}) \mid_{\mathscr{G}_\qp}) \in \Phi\Gamma^{\text{ét}}(\Robba) \] les $(\varphi, \Gamma)$-modules associés aux formes $f$ et $\check{f}$. Rappelons que l'on a posé $D = D(\check{f})(k - 1)$, qui est de Rham à poids de Hodge-Tate $0$ et $k - 1$, et notons \[ \mathbf{z}_{\rm Kato} = \mathrm{Exp}^*(\mathbf{z}_\gamma^{(p)}(\check{f})(k - 1)) \in \Drig(V(\check{f})(k - 1)))^{\psi = 1} = D^{\psi = 1}. \]

\begin{lemme} \label{eaea2} Soit $j \geq 0$. On a \[ \Lambda_{D, \mathbf{z}_{\rm Kato}}(\eta \chi^{-j - 1}) = p^{-nj} \; G(\eta)^{-1} \cdot \iota_p(\Lambda_\infty(f, \eta^{-1}, -j)). \]
\end{lemme}

\begin{proof}

%Par construction du système d'Euler, on sait que \[ \mathbf{z}_\gamma^{(p)}(\check{f} \otimes \eta)(k -j) =  \mathbf{z}_\gamma^{(p)}(\check{f})(k - 1) \otimes e_\eta \otimes e_{j + 1} = \mu_{\mathbf{z}_{\rm Kato}} \otimes e_\eta \otimes e_{j + 1}. \] 
En utilisant le théorème \ref{Gealy2} et la remarque qui le suit, on obtient
\[ \exp^{-1} (r_{\text{ét}} ( \mathscr{Z}(\check{f} \otimes \eta, j, \gamma))) = \exp^{-1}(\int_\Gamma \eta \chi^{j + 1} \cdot \mu_{\mathbf{z}_{\rm Kato}}). \]

%&=& \exp^{-1}(\int_\Gamma 1 \cdot \mu_{\mathbf{z}_\gamma^{(p)}(\check{f})(k + j)}) \\
%&=& \exp^{-1}(\int_\Gamma 1 \cdot \mu_{\mathbf{z}_{\rm Kato} \otimes e_\eta \otimes e_{j + 1}}) \\
%&=& \exp^{-1}(\int_\Gamma \eta \chi^{j + 1} \cdot \mu_{\mathbf{z}_{\rm Kato}}),
%\end{eqnarray*}
d'où, par la compatibilité entre le régulateur $p$-adique et le régulateur étale, on en déduit
\begin{eqnarray*}
\iota_p(\Lambda_\infty(f, \eta^{-j}, -j)) &=& \Gamma^*(-j) \; G(\eta) \cdot r_p ( \mathscr{Z}(\check{f} \otimes \eta, j, \gamma) ) \otimes \mathbf{e}^{\rm dR, \vee}_{\eta, j + 1} \\
&=& \Gamma^*(-j) \; G(\eta) \cdot \exp^{-1}(\int_\Gamma \eta \chi^{j + 1} \cdot \mathbf{z}_{\rm Kato}) \otimes \mathbf{e}^{\rm dR, \vee}_{\eta, j + 1}
\end{eqnarray*}

Par ailleurs, le théorème \ref{propfin} affirme que \[ \Lambda_{D, \mathbf{z}_{\rm Kato}}(\eta \chi^{-j - 1}) = p^{-nj} \; \Gamma^*(-j) \cdot \exp^{-1}(\int_\Gamma \eta \chi^{j + 1} \cdot \mu_{\mathbf{z}_{\rm Kato}}) \otimes \mathbf{e}^{\rm dR, \vee}_{\eta, j + 1}, \] d'où le résultat.
\end{proof}

\subsubsection{La bande critique}

Si $0 \leq j \leq k -1$, on pose \[ \iota_p(\Lambda_\infty(f, \eta^{-1}, j + 1)) = \Gamma^*(j + 1) \; G(\eta) \cdot \exp^*(\int_\Gamma \eta \chi^{-j} \cdot \mu_{\mathbf{z}_{\rm Kato}}) \otimes \mathbf{e}^{\rm dR, \vee}_{\eta, -j}. \] On sait, d'après \cite[Thm. 12.5]{Kato}, que les images de ces valeurs par l'application de périodes sont reliées aux valeurs spéciales de la fonction $L$ de $f$. Le lemme suivant est immédiat

\begin{lemme} Soit $0 \leq j \leq k-1$. Alors \[ \Lambda_{D, \mathbf{z}_{\rm Kato}}(\eta \chi^j) = p^{n(j + 1)} \; G(\eta)^{-1} \cdot \iota_p( \Lambda_\infty(f, \eta^{-1}, j + 1)). \]
\end{lemme}

\subsection{Équation fonctionnelle et valeurs aux entiers positifs}

Pour l'interprétation $p$-adique des valeurs $L(f, \eta, j)$, $j \geq k$, on fera appel à l'équation fonctionnelle de la fonction $L$ locale, qui s'avéra fortement ressemblante à l'équation fonctionnelle complexe.

\subsubsection{L'équation fonctionnelle complexe}

Notons $\mathbf{A}_\Q$ le groupe des adèles de $\Q$. Soit $\eta \colon \zpe \to L^\times$ un caractère d'ordre fini. On regarde $\eta$ comme un caractère de Dirichlet (via $L^\times \subseteq \overline{\Q}_p^\times \cong \C^\times$) ainsi que comme un caractère de Hecke de la façon usuelle \footnote{Si $\eta \colon \zpe \to L^\times$ est de conducteur $p^n$, il est vu comme un caractère des idèles en utilisant la décomposition $\mathbf{A}_\Q = \Q^\times \times \R^{> 0} \times \widehat{\Z}^\times$. Le caractère de $\qpe$ induit par $\eta$ est $\eta$ (avec $\eta(p) = 1$) et, si $\ell \neq p$, celui de $\Q_\ell^\times$ est l'unique caractère non-ramifié caractère prenant la valeur $\eta^{-1}(\overline{\ell})$ en $\ell$, où $\overline{\ell} \in \zpe$ est n'importe quel relèvement de la classe de $\ell$ modulo $p^n$.}. La forme $f \otimes \eta$ est supercuspidale et on note \[ \pi(f \otimes \eta) = {\bigotimes}_\ell' \pi_\ell(f \otimes \eta) \] la représentation automorphe de $\mathrm{GL}_2(\mathbf{A}_\Q)$ associée à $f \otimes \eta$. On a $\pi(f \otimes \eta) = \pi(f) \otimes \eta = {\bigotimes}'_\ell \pi_\ell(f) \otimes \eta$. Notons $\epsilon(\pi(f) \otimes \eta, s)$ le facteur epsilon global de la représentation $\pi(f \otimes \eta)$ défini par \footnote{Le produit étant fini car $\pi_\ell(f \otimes \eta)$ est non ramifiée en presque toute place} \[ \epsilon(\pi(f)\otimes \eta, s) = \epsilon(\pi_\infty(f) \otimes \eta, s) \cdot \prod_\ell \epsilon(\pi_\ell(f) \otimes \eta, s), \] où $\epsilon( \pi_\ell(f) \otimes \eta, s)$ est le facteur epsilon de la représentation $\pi_\ell(f) \otimes \eta$ de $\mathrm{GL}_2(\Q_l)$, comme décrit dans \S \ref{replisses}, et $\epsilon(\pi_\infty(f) \otimes \eta, s) = i^k$.

La fonction $L$ complexe satisfait l'équation fonctionnelle \footnote{Le décalage en $\frac{k-1}{2}$ provient du fait que les facteurs locaux des représentations de $\mathrm{GL}_2$ sont normalisés de sorte que le centre de symétrie de l'équation fonctionnelle des fonctions $L$ soit situé en $1/2$, tandis que celui des fonctions $L$ automorphes l'est en $\frac{k}{2}$.} \[ \frac{\Gamma(s)}{(2 \pi)^s} \cdot L(f, \eta^{-1}, s) = \epsilon( \pi(f) \otimes \eta^{-1}, s - \frac{k-1}{2}) \cdot \frac{\Gamma(k - s)}{(2 \pi)^{k - s}} \cdot L(\check{f}, \eta, k - s). \] Si $j \geq k$ est un entier, on peut écrire l'équation fonctionnelle sous la forme \[ \frac{\Gamma(j)}{(2 i \pi)^j} \cdot L(f, \eta^{-1}, j) = i^k (-1)^{-j} \epsilon( \pi(f) \otimes \eta^{-1} \otimes |\cdot|^{j - \frac{k-1}{2}}) \cdot \frac{\Gamma(k - j)}{(2 i \pi)^{k - j}} \cdot L(\check{f}, \eta, k - j), \] ou bien \[ \Lambda_\infty(f, \eta^{-1}, j) = i^k (-1)^{-j} \epsilon( \pi(f) \otimes \eta^{-1} \otimes |\cdot|^{j - \frac{k-1}{2}}) \cdot \Lambda_\infty(\check{f}, \eta, k - j).\] En décomposant le facteur epsilon, on peut réécrire l'équation fonctionnelle sous la forme suivante:
\[ \Lambda_\infty(f, \eta^{-1}, j) = (-1)^{k-j} \epsilon(\pi_p(f) \otimes \eta^{-1} \otimes |\cdot|^{j - \frac{k-1}{2}}) \cdot \prod_{\ell \mid N' }\epsilon( \pi_\ell(f) \otimes \eta^{-1} \otimes |\cdot|^{j - \frac{k-1}{2}}) \cdot \Lambda_\infty(\check{f}, \eta, k - j).\] Remarquons pour finir que, si l'on écrit $N = N' p^r$, alors, pour $p \neq \ell \mid N$, on a l'égalité
\[ \epsilon( \pi_\ell(f) \otimes \eta^{-1} \otimes |\cdot|^{j - \frac{k-1}{2}}) = \epsilon(\pi_\ell(f)) \, \eta(\ell)^{c(\pi_\ell(f))} \, \ell^{-c(\pi_\ell(f))(j - \frac{k-1}{2})}, \] 
et, en utilisant le fait que le conducteur de $\pi_\ell(f)$ est $\ell^{v_\ell(N)}$, on en déduit
\[ \prod_{\ell \mid N'} \epsilon( \pi_\ell(f) \otimes \eta^{-1} \otimes |\cdot|^{j - \frac{k-1}{2}}) = \eta(N') (N')^{\frac{k-1}{2} - j} \, \prod_{\ell \mid N'} \epsilon(\pi_\ell(f)). \]

%D'après le théorème de Gealy, on dispose aussi des éléments \[ \mathscr{Z}(f \otimes \eta^{-1}, j - k, r, \xi)  \in H^1(M(f \otimes \eta^{-1})(k)(j - k)) \] tels que \[ r_\infty (\mathscr{Z}(f, j - k, r, \xi)) = L^{(N), *}(\check{f}, \eta^{-1}, k - j) \cdot \delta^*, \] ce qui suggère d'utiliser, via l'équation fonctionnelle complexe, ces éléments pour définir les plongements $p$-adiques des valeurs $L(f, \eta^{-1}, j)$, quand $j \geq k$.
%
%Notons que $r_p(\mathscr{Z}_{\rm Kato}(f, j - k, r, \xi)) \in \DdR(M_p(f \otimes \eta^{-1})(j)) = \DdR(\check{D}(\eta^{-1} \chi^j)) = \DdR(D) \otimes \mathbf{e}^{\rm dR}_{\eta^{-1}, j, \omega_D^{-1}}.$ Remarquons aussi que les facteurs locaux apparaissant dans l'équation fonctionnelle peuvent, en rajoutant suffisamment de racines de l'unité au corps $L$, être vus dans $L$. On pose alors, pour $j \geq k$,
%\begin{eqnarray*}
%\iota_p( \Lambda_\infty(f, \eta^{-1}, j)) &=& (-1)^{k - j} \epsilon( \pi(f) \otimes \eta \otimes |\cdot|^{j - \frac{k- 1}{2}}) \cdot \Omega \cdot \iota_p(\Lambda_\infty(\check{f}, \eta^{-1}, k - j)) \otimes \mathbf{e}^{\rm dR, \vee}_{\omega_D^{-1}} \\
%&=& C \cdot r_p( \mathscr{Z}(f \otimes \eta^{-1}, j - k, r^*, \xi^*)) \otimes \mathbf{e}^{\rm dR, \vee}_{\eta^{-1}, j, \omega_D^{-1}}
%\end{eqnarray*}
%où \[ C = (-1)^{k - j} \epsilon( \pi(f) \otimes \eta \otimes |\cdot|^{j - \frac{k- 1}{2}}) \cdot G(\eta^{-1}) \cdot \Omega \cdot \Gamma^*(k - j).  \]

\subsubsection{L'équation fonctionnelle du système d'Euler de Kato, d'après Nakamura}

Écrivons $N = N'p^r$, $(N',N)=1$. Si $\gamma \in V(\check{f})$, notons $\check{\gamma} \in V(f)$ l'élément dual à $\gamma \otimes e_k$ sous l'accouplement parfait $V(\check{f})(k) \times V(f) \to L$ donné par la dualité de Poincaré.

On note
\[ \mathbf{z}_{\rm Kato} = \mathrm{Exp}^*(\mathbf{z}_\gamma^{(p)}(\check{f})(k-1)) \in D(\check{f})(k-1)^{\psi = 1} = D^{\psi = 1}, \] 
\[ \check{\mathbf{z}}_{\rm Kato} = \mathrm{Exp}^*(\mathbf{z}_{\check{\gamma}}^{(p)}(f)(k-1)) \in D(f)(k-1)^{\psi = 1} = \check{D}(k-2)^{\psi = 1}, \] 
\[ \mathbf{z}_{\rm Kato}^* = w_D(\mathbf{z}_{\rm Kato}) \in \check{D}^{\psi = 1} \] les tordus des systèmes d'Euler associés aux formes $\check{f}$ et $f$ et l'image par l'involution de $\mathbf{z}_{\rm Kato}$ (cf. \S \ref{eqfonctknot}), respectivement.

\begin{proposition} [{\cite[Conj. 4.4; Thm. 4.6; Prop. 4.7]{Nakamura2}}] \label{eqfonctNakamura} Notons $[N'] = \prod_{\ell \mid N'} [\sigma_\ell]^{v_\ell(N')} \in \Lambda$. Alors 
\[ \mathbf{z}_{\rm Kato}^* = - \prod_{\ell \mid N'} \epsilon(\pi_\ell(\check{f}) \otimes |\cdot|^{\frac{k - 1}{2}})^{-1} \cdot ([N'] \cdot (\check{\mathbf{z}}_{\rm Kato} \otimes e_{2-k})). \]
\end{proposition}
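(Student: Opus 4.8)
Voici la statement à démontrer :
$$ \mathbf{z}_{\rm Kato}^* = - \prod_{\ell \mid N'} \epsilon(\pi_\ell(\check{f}) \otimes |\cdot|^{\frac{k - 1}{2}})^{-1} \cdot ([N'] \cdot (\check{\mathbf{z}}_{\rm Kato} \otimes e_{2-k})). $$

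The plan is to deduce this Iwasawa-theoretic functional equation from the local functional equation of Theorem~\ref{eqfonct1b} together with Kato's explicit reciprocity law (\cite[Thm. 12.5]{Kato}) and the complex functional equation recalled just above. Both $\mathbf{z}_{\rm Kato}^*$ and $[N'] \cdot (\check{\mathbf{z}}_{\rm Kato} \otimes e_{2-k})$ lie in $\check{D}^{\psi = 1} \cong H^1_{\rm Iw}(\qp, \check{D})$, so, the isomorphism $\mathrm{Exp}^*$ being injective and the characters $\eta \chi^j$ being Zariski dense in $\mathfrak{X}$, it suffices to check that the two sides agree after applying $\exp^*$ to their specializations $\int_\Gamma \eta \chi^{-j} \cdot (-)$ for all finite-order $\eta$ of conductor $p^n$, $n \gg 0$, and all $j \geq 1$, a range where $\exp^*$ determines the underlying class. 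This is the same rigidity mechanism already used in the proof of the analytic functional equation (Theorem~\ref{eqfonct3}), of which the present statement is the refinement to the level of Iwasawa cohomology.

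First I would treat the right-hand side. By Proposition~\ref{Gealy2} and Kato's reciprocity, combined with the interpolation formula of Theorem~\ref{propfin}, the dual exponential of $\check{\mathbf{z}}_{\rm Kato} \otimes e_{2-k}$ at $\eta \chi^{-j}$ computes, up to the elementary factors $\Gamma^*$, a power of $p^n$ and $G(\eta)$, the special value $\Lambda_\infty(\check{f}, \eta, k - j)$. The twist by $[N'] = \prod_{\ell \mid N'} [\sigma_\ell]^{v_\ell(N')}$ acts on each specialization through $f_\eta$, reproducing precisely the prime-to-$p$ Euler-factor and $\eta$-dependent contributions that distinguish the partial $L$-function $L^{(N')}$ from the full one.

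Then I would treat the left-hand side. Applying $\exp^*$ to $\mathbf{z}_{\rm Kato}^* = w_D(\mathbf{z}_{\rm Kato})$ and invoking Theorem~\ref{eqfonct1b} rewrites it in terms of $\exp^{-1}$ of $\mathbf{z}_{\rm Kato}$, producing the local constant $C(D, \eta, j)$, which carries the period $\Omega$, the $\Gamma^*$-ratio, and the $p$-adic epsilon factor $\epsilon(\pi_p(\check{f}) \otimes \eta \otimes |\cdot|^{j})$. Via Theorem~\ref{propfin} and Kato's reciprocity once more (cf. Lemma~\ref{eaea2}), this in turn computes $\Lambda_\infty(f, \eta^{-1}, j)$. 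Comparing the two computations is then exactly the content of the complex functional equation displayed above, decomposed into its local factors as in \cite{BH}: the archimedean factor, the $\Gamma$-ratios, and the $p$-adic epsilon factor supplied by that functional equation cancel against the ``analytic'' factors $\Omega$, $\Gamma^*$ and $\epsilon(\pi_p(\check f) \otimes \cdots)$ occurring in $C(D,\eta,j)$ and against the $\exp^*$-normalizations on the two sides, leaving exactly the prime-to-$p$ constant $-\prod_{\ell \mid N'} \epsilon(\pi_\ell(\check{f}) \otimes |\cdot|^{\frac{k-1}{2}})^{-1}$ of the statement.

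The hard part will be two-fold. First, one must upgrade the comparison from the critical specializations, where Kato directly produces $L$-values, to an identity at the level of $\check{D}^{\psi = 1}$ itself: this is where the full strength of the local $\epsilon$-isomorphism underlying Theorem~\ref{eqfonct1b} is needed, rather than merely its specialized values, and where the Zariski-density argument must be applied so as to control the non-interpolated directions. Second, the precise bookkeeping of the prime-to-$p$ epsilon factors and of the $[N']$-twist --- reconciling the automorphic normalization, whose centre of symmetry sits at $k/2$, with the Galois-theoretic one --- is delicate, and is exactly the computation carried out in \cite[Conj. 4.4, Thm. 4.6, Prop. 4.7]{Nakamura2}, from which the proposition is quoted.
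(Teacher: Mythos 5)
You have attempted to prove a statement that the paper does not prove at all: Proposition \ref{eqfonctNakamura} is imported wholesale from Nakamura (\cite[Conj. 4.4; Thm. 4.6; Prop. 4.7]{Nakamura2}), and the remark immediately following it records that Nakamura's proof, in the de Rham non-trianguline case, rests on the \emph{local-global compatibility of the $p$-adic Langlands correspondence} together with his Prop.~3.14 (the local functional equation, i.e.\ théorème \ref{eqfonct1b}). That global, automorphic ingredient --- which identifies the purely local involution $w_D$ applied to Kato's element with the Kato element attached to the conjugate form, via the symmetry of completed cohomology --- is entirely absent from your argument, and the deferral in your last paragraph does not repair this, because what you defer to Nakamura is the ``bookkeeping'', whereas what is actually missing is the structural input that makes an identity of Iwasawa classes (rather than of finitely many specializations) provable.

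Concretely, your specialization-and-density scheme breaks down. The only place where specializations of Kato's elements are related to \emph{complex} $L$-values is the critical strip (\cite[Thm.~12.5]{Kato}), i.e.\ finitely many powers of $\chi$; but the set of characters $\eta\chi^j$ with $\eta$ of finite order and $j$ confined to a finite set is \emph{not} Zariski dense in $\mathfrak{X}$ (on each component, $z\mapsto \prod_{j}\log_p\bigl(z\exp(-qj)\bigr)$ is a nonzero rigid function vanishing at all such points, since $\log_p$ kills roots of unity), so no rigidity argument can close up from critical values alone. Outside the critical strip, the quantities entering lemme \ref{eaea2} are, by définition \ref{defplong}, syntomic regulators of Gealy's motivic classes: they are $p$-adic objects, and the complex functional equation asserts nothing about them; an identity between these $p$-adic avatars for $f$ and for $\check f$ is precisely théorème \ref{eqfonct4}, which the paper \emph{deduces from} the present proposition, so invoking such an identity here is circular. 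Finally, your rigidity step is internally inconsistent: $\exp^*$ has nontrivial kernel ($H^1_g$ of the twist) exactly in the critical range where the $L$-value interpretation exists, and it becomes injective only roughly for $j\geq k$ with $\eta$ ramified --- exactly where no $L$-value interpretation is available. So the two hypotheses your reduction needs (injectivity of $\exp^*$ and comparability via complex $L$-values) never hold simultaneously.
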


\begin{remarque} \leavevmode
\begin{itemize}
\item \cite[Conj. 4.4]{Nakamura2} est énoncé en termes de facteurs epsilon des représentations de Weil-Deligne. Sa démonstration (dans le cas de Rham non triangulin) est basée sur la compatibilité locale-globale dans la correspondance de Langlands $p$-adique et \cite[Prop. 3.14]{Nakamura2}. Cette dernière proposition peut être énoncée naturellement (cf. thm. \ref{eqfonct1b}) en termes de facteurs locaux des représentations de $\mathrm{GL}_2(\qp)$. La preuve de la proposition ne ferait donc pas usage de la compatibilité locale-globale et elle resterait donc purement locale.

\item Il faut faire un peu d'attention car les normalisations des facteurs locaux dans ce travail ne coïncident pas avec celles de \cite{Nakamura2}. Comme on l'a remarqué, dans le texte présent, les facteurs locaux des représentations lisses sont normalisés de sorte que l'équation fonctionnelle de la fonction $L$ soit centrée en $s = 1/2$, tandis que, dans \cite{Nakamura2}, elle est centrée en $s = \frac{k}{2}$. La différence entre les facteurs locaux est donc un twist par $| \cdot |^{\frac{k - 1}{2}}$.

%\item Le résultat ci-dessus devrait, dans le cas triangulin, pouvoir être déduit de l'équation fonctionnelle de la fonction $L$ $p$-adique de la forme modulaire (cf. \cite{MTT}, section 17, cor. 2). La construction d'un système d'Euler de Kato universel permettrait, par des arguments de prolongement analytique, d'en déduire la proposition à partir du cas triangulin.
\end{itemize}
\end{remarque}

\subsubsection{L'équation fonctionnelle de la fonction $L$ $p$-adique} L'équation fonctionnelle du système d'Euler de Kato et l'équation fonctionnelle \ref{eqfonct3} nous permettent d'interpréter les valeurs aux entiers positifs de la fonction $\Lambda_{D, \mathbf{z}_{\rm Kato}}$.

\begin{theorem} \label{eqfonct4}
Soit $j > 0$ un entier. Alors
\[ \Lambda_{D, \mathbf{z}_{\rm Kato}}(\eta \chi^{j}) =  C(f, \eta, j) \cdot \Lambda_{\check{D}(k-2), \check{\mathbf{z}}_{\rm Kato}}(\eta^{-1} \chi^{-j+k-2}) \otimes \mathbf{e}^{\rm dR, \vee}_{k-1, \omega_D^{-1}}, \]
où
\[ C(f, \eta, j) = \Omega \, p^n \, \epsilon(\eta \otimes | \cdot |^{-j + \frac{k - 1}{2}})^2 \epsilon(\pi_p(\check{f}) \otimes \eta \otimes | \cdot |^{-j + k - 1})^{-1} \cdot \prod_{\ell \mid N'} \epsilon(\pi_\ell(\check{f}) \otimes \eta^{-1} \otimes |\cdot|^{-j + \frac{k - 1}{2}})^{-1}. \]
\end{theorem}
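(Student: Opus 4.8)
The plan is to deduce this functional equation for the $p$-adic $L$-function of $f$ by combining the two functional equations already established: the purely local one for the local $L$-function $\Lambda_{D,z}$ (théorème \ref{eqfonct3}) and Nakamura's functional equation for Kato's Euler system (proposition \ref{eqfonctNakamura}). The element $\mathbf{z}_{\rm Kato}^* = w_D(\mathbf{z}_{\rm Kato}) \in \check{D}^{\psi = 1}$ is the bridge between the two: being produced by the local involution $w_D$, it is exactly the object governed by théorème \ref{eqfonct3}, while proposition \ref{eqfonctNakamura} identifies it, up to an explicit constant and the Iwasawa operator $[N']$, with $\check{\mathbf{z}}_{\rm Kato}\otimes e_{2-k}$, the twisted Euler system attached to the dual form $f$.

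First I would apply théorème \ref{eqfonct3} to the module $D = D(\check{f})(k-1)$, which has Hodge-Tate weights $0$ and $k-1$ (so the integer ``$k$'' of that theorem is $k-1$ here), taking $z = \mathbf{z}_{\rm Kato}$, whence $\check z = w_D(\mathbf{z}_{\rm Kato}) = \mathbf{z}_{\rm Kato}^*$. After the substitution $\eta\mapsto\eta^{-1}$, $j\mapsto -j+k-2$ — which turns the character $\eta^{-1}\chi^{-j+k-2}$ appearing on the right of \ref{eqfonct3} into $\eta\chi^j$ — this yields an identity expressing $\Lambda_{D,\mathbf{z}_{\rm Kato}}(\eta\chi^j)$ as an explicit constant times $\Lambda_{\check{D}(k-2),\mathbf{z}_{\rm Kato}^*(k-2)}(\eta^{-1}\chi^{-j+k-2})\otimes\mathbf{e}^{\rm dR,\vee}_{k-2,\omega_D^\vee}$. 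The constant at this stage carries the $\Omega^{-1}$, a power of $p$, the $\mathrm{GL}_1$ factor $\epsilon(\eta\otimes|\cdot|^{-j})^{\pm 2}$, and the local factor $\epsilon(\pi\otimes\eta\otimes|\cdot|^{-j})^{\mp 1}$ at $p$, where $\pi$ is the smooth representation attached to $D$, i.e. $\pi_p(\check f)$ up to the twist by $|\cdot|^{k-1}$ coming from the Tate shift.

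Next I would insert proposition \ref{eqfonctNakamura} to replace $\mathbf{z}_{\rm Kato}^*$ by $-\prod_{\ell\mid N'}\epsilon(\pi_\ell(\check f)\otimes|\cdot|^{\frac{k-1}{2}})^{-1}\cdot\big([N']\cdot(\check{\mathbf{z}}_{\rm Kato}\otimes e_{2-k})\big)$. The twist $\otimes e_{2-k}$ compensates exactly the $(k-2)$-shift, since $e_{2-k}\otimes e_{k-2}=e_0$, so $\mathbf{z}_{\rm Kato}^*(k-2)$ becomes $-\prod_{\ell\mid N'}\epsilon(\pi_\ell(\check f)\otimes|\cdot|^{\frac{k-1}{2}})^{-1}\cdot[N']\cdot\check{\mathbf{z}}_{\rm Kato}$, turning $\Lambda_{\check{D}(k-2),\mathbf{z}_{\rm Kato}^*(k-2)}$ into $\Lambda_{\check{D}(k-2),\check{\mathbf{z}}_{\rm Kato}}$ up to constants. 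Here I use that $\mathrm{Exp}^*$ and the construction $z\mapsto\Lambda_{D,z}$ are $\Lambda_\infty$-linear, so $[N']=\prod_{\ell\mid N'}[\sigma_\ell]^{v_\ell(N')}$ passes through the specialization at $\eta^{-1}\chi^{-j+k-2}$ (which corresponds to integrating $\eta^{-1}\chi^{j-k+2}\cdot\mu$) and contributes $\prod_{\ell\mid N'}(\eta(\ell)\,\ell^{-j+k-2})^{v_\ell(N')}$, using $\chi(\sigma_\ell)=\ell$ and the $\iota$-twist in the definition of Iwasawa cohomology. Interpreting $\eta$ as a Hecke character as in \S\ref{repgal} and inserting the explicit shape of the local factors recalled in \S\ref{replisses} and in the complex functional equation above, this value combines with the Nakamura constant into precisely the prime-to-$p$ product $\prod_{\ell\mid N'}\epsilon(\pi_\ell(\check f)\otimes\eta^{-1}\otimes|\cdot|^{-j+\frac{k-1}{2}})^{-1}$ of $C(f,\eta,j)$.

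Finally I would collect all constants and check they assemble into $C(f,\eta,j)$: the GL$_1$ and $p$-adic local factors from step one must be recentered to the balanced normalization $|\cdot|^{\bullet+\frac{k-1}{2}}$ (using the functional equation $\epsilon(\eta,s)\epsilon(\eta^{-1},1-s)=\eta(-1)$ of \S\ref{replisses}), which redistributes the powers of $p$ and accounts for the shift from $\epsilon(\pi\otimes\eta\otimes|\cdot|^{-j})$ to $\epsilon(\pi_p(\check f)\otimes\eta\otimes|\cdot|^{-j+k-1})$ as well as the appearance of $\Omega\,p^{n}$ and $\epsilon(\eta\otimes|\cdot|^{-j+\frac{k-1}{2}})^{2}$. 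Since this is established for the Zariski-dense family of characters $\eta\chi^j$, $j>0$, in the common domain, and both sides are rigid analytic, the identity extends to all of $\mathfrak{U}_f$. I expect the main obstacle to be neither conceptual nor analytic but the bookkeeping: tracking the cascade of de Rham basis twists $\mathbf{e}^{\rm dR,\vee}_{\bullet}$ together with the $\Omega^{\pm 1}$ and $t$-power normalizations through both functional equations, and above all converting the specialized action of $[N']$ into the correct product of local epsilon factors at the bad primes $\ell\mid N'$, where the Hecke-character normalization of $\eta$ must be handled with care.
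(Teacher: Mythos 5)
Your strategy is exactly the one the paper follows: apply th\'eor\`eme \ref{eqfonct3} with $k-1$, $\eta^{-1}$, $-j+k-2$ in place of $k$, $\eta$, $j$, then insert Nakamura's equation (prop. \ref{eqfonctNakamura}) to pass from $\mathbf{z}_{\rm Kato}^*$ to $\check{\mathbf{z}}_{\rm Kato}$, and finally recombine the constants via the recentering identities for the epsilon factors. Steps one and three are fine, but there is a genuine error in the middle step, precisely at the point you yourself flagged as delicate. You assert that $\mathbf{z}_{\rm Kato}^*(k-2)$ \emph{becomes} $-\prod_{\ell\mid N'}\epsilon(\pi_\ell(\check f)\otimes|\cdot|^{\frac{k-1}{2}})^{-1}\cdot[N']\cdot\check{\mathbf{z}}_{\rm Kato}$, i.e.\ you commute $[N']$ past the twist $\otimes\, e_{k-2}$. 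This is not allowed: the twisting isomorphism $\mu\mapsto\mu\otimes e_{k-2}$ on Iwasawa cohomology is only $\Lambda_\infty$-\emph{semilinear}, via the automorphism $[\gamma]\mapsto\chi^{k-2}(\gamma)^{-1}[\gamma]$; equivalently, on $\psi=1$ elements one has $(\sigma_a z)\otimes e_{k-2}=a^{-(k-2)}\,\sigma_a(z\otimes e_{k-2})$. Commuting correctly gives $\mathbf{z}_{\rm Kato}^*(k-2)=-(N')^{2-k}\prod_{\ell\mid N'}\epsilon(\pi_\ell(\check f)\otimes|\cdot|^{\frac{k-1}{2}})^{-1}\cdot[N']\cdot\check{\mathbf{z}}_{\rm Kato}$, and you have dropped the factor $(N')^{2-k}$.

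This loss is fatal for the constant. With your commutation, the specialized $[N']$-contribution at $\eta^{-1}\chi^{-j+k-2}$ is, as you write, $\eta(N')(N')^{-j+k-2}$, whereas the correct total contribution is $\eta(N')(N')^{-j}$; this is what the paper obtains by instead untwisting the specialization first, $\int_\Gamma\eta^{-1}\chi^{j-k+2}\cdot\mu_{\mathbf{z}_{\rm Kato}^*(k-2)}=\int_\Gamma\eta^{-1}\chi^{j}\cdot\mu_{\mathbf{z}_{\rm Kato}^*}$, and only then inserting Nakamura's identity, so that the $[\sigma_\ell]$-action is evaluated against $\eta^{-1}\chi^{j}$. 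Since the epsilon identity that actually holds is
\[ \prod_{\ell \mid N'} \epsilon(\pi_\ell(\check{f}) \otimes |\cdot|^{\frac{k - 1}{2}}) \, \eta^{-1}(N') (N')^j = \prod_{\ell \mid N'} \epsilon(\pi_\ell(\check{f}) \otimes \eta^{-1} \otimes |\cdot|^{-j + \frac{k - 1}{2}}), \]
with exponent $j$ and not $j-k+2$, your claimed recombination into the prime-to-$p$ part of $C(f,\eta,j)$ fails by the factor $(N')^{k-2}$, and the theorem's constant is not recovered. The repair is short --- either carry the semilinearity factor $(N')^{2-k}$ through your computation, or adopt the paper's order of operations --- after which your argument coincides with the paper's proof.
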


\begin{proof}
En appliquant le théorème \ref{eqfonct3} (avec $k - 1$ au lieu de $k$, $\eta^{-1}$ au lieu de $\eta$ et $-j + k - 2$ au lieu de $j$), on obtient
\begin{equation} \label{tratra1} \Lambda_{D, \mathbf{z}_{\rm Kato}}(\eta \chi^j) = C_1 \cdot \Lambda_{\check{D}(k-2), \mathbf{z}_{\rm Kato}^*(k - 2)}(\eta^{-1} \chi^{-j+k-2}) \otimes \mathbf{e}^{\rm dR, \vee}_{k-2, \omega_D^\vee},
\end{equation}
où \[ C_1 = - \Omega \, p^{-n(k - 2)} \epsilon(\eta \otimes | \cdot |^{-j})^{2} \epsilon(\pi \otimes \eta \otimes | \cdot |^{-j})^{-1}. \] Observons que
\[ p^{-n(k - 2)} \epsilon(\eta \otimes | \cdot |^{-j})^{2} = p^n (p^{-n (\frac{k - 1}{2})} \epsilon(\eta \otimes | \cdot |^{-j}))^2 = p^n \epsilon(\eta \otimes | \cdot |^{-l + \frac{k - 1}{2}})^2. \]
Comme $\pi = \pi(D) = \pi_p(\check{f}) \otimes | \cdot |^{k - 1}$, on en déduit
\[ C_1 = - \Omega \, p^n \, \epsilon(\eta \otimes | \cdot |^{-l + \frac{k - 1}{2}})^2 \epsilon(\pi_p(\check{f}) \otimes \eta \otimes | \cdot |^{-j + k - 1})^{-1}. \]

%Si le conducteur de $\eta$ est assez grand, alors le conducteur de $\pi \otimes \eta$ est $2n$, d'où \[ C_1 = - \Omega \, \big(\epsilon(\eta)^{-2} \epsilon(\pi \otimes \eta \otimes | \cdot |^{\frac{k - 2}{2} - 1 }) \big)^{-1}. \]

D'après le théorème \ref{propfin}, on a
\[ \Lambda_{\check{D}(k-2), \mathbf{z}_{\rm Kato}^*(k - 2)}(\eta^{-1} \chi^{-j+k-2}) = \Gamma^*(-j + k - 1) p^{n(-j + k - 1)} \cdot \log(\int_\Gamma \eta^{-1} \chi^{j - k + 2} \cdot \mu_{\mathbf{z}_{\rm Kato}^*(k - 2)}) \otimes \mathbf{e}^{\rm dR, \vee}_{\eta^{-1}, j - k + 2}.  \] On a
\begin{eqnarray*}
\int_\Gamma \eta^{-1} \chi^{j - k + 2} \cdot \mu_{\mathbf{z}_{\rm Kato}^*(k - 2)} &=& \int_\Gamma \eta^{-1} \chi^j \cdot \mu_{\mathbf{z}_{\rm Kato}^*} \\
&=&  - \prod_{\ell \mid N'} \epsilon(\pi_\ell(\check{f}) \otimes |\cdot|^{\frac{k - 1}{2}})^{-1} \cdot \int_\Gamma \eta^{-1} \chi^j \cdot \mu_{[N']( \check{\mathbf{z}}_{\rm Kato}(2 - k))} \\
&=& - \prod_{\ell \mid N'} \epsilon(\pi_\ell(\check{f}) \otimes |\cdot|^{\frac{k - 1}{2}})^{-1} \, \eta(N') (N')^{-j} \cdot \int_\Gamma \eta^{-1} \chi^{j-k+2} \cdot \mu_{\check{\mathbf{z}}_{\rm Kato}},
\end{eqnarray*}
où on a utilisé la proposition \ref{eqfonctNakamura} dans la deuxième égalité, et la définition de l'action de $\Lambda = \zp[[\zpe]]$ sur $D^{\psi = 1}$ dans la troisième. En utilisant l'égalité
\[ \prod_{\ell \mid N'} \epsilon(\pi_\ell(\check{f}) \otimes |\cdot|^{\frac{k - 1}{2}}) \, \eta^{-1}(N') (N')^j = \prod_{\ell \mid N'} \epsilon(\pi_\ell(\check{f}) \otimes \eta^{-1} \otimes |\cdot|^{-j + \frac{k - 1}{2}}), \] on en déduit
\begin{equation} \label{tratra2}
\Lambda_{\check{D}(k-2), \mathbf{z}_{\rm Kato}^*(k - 2)}(\eta^{-1} \chi^{-j+k-2}) = - \prod_{\ell \mid N'} \epsilon(\pi_\ell(\check{f}) \otimes \eta^{-1} \otimes |\cdot|^{-j + \frac{k - 1}{2}})^{-1} \cdot \Lambda_{\check{D}(k-2), \check{\mathbf{z}}_{\rm Kato}}(\eta^{-1} \chi^{-j+k-2}). 
\end{equation}

En rassemblant les formules \eqref{tratra1} et \eqref{tratra2}, on déduit le résultat.
\end{proof}

\begin{remarque}
Notons que, pour $j > k - 2$, les valeurs du côté droite de la formule du théorème s'interprètent en termes des valeurs spéciales de la fonction $L$ complexe de $\check{f}$. En utilisant l'équation fonctionnelle complexe on peut traduire ceci et donner une formule d'interpolation de la fonction $L$ $p$-adique en termes de valeurs spéciales complexes en tout entier $j \in \Z$.
\end{remarque}

\newpage
%\nocite{*}
\bibliographystyle{acm}
\bibliography{bibliography}

\def\Dbar{\leavevmode\lower.6ex\hbox to 0pt{\hskip-.23ex \accent"16\hss}D}
  \def\cfac#1{\ifmmode\setbox7\hbox{$\accent"5E#1$}\else
  \setbox7\hbox{\accent"5E#1}\penalty 10000\relax\fi\raise 1\ht7
  \hbox{\lower1.15ex\hbox to 1\wd7{\hss\accent"13\hss}}\penalty 10000
  \hskip-1\wd7\penalty 10000\box7}
  \def\cftil#1{\ifmmode\setbox7\hbox{$\accent"5E#1$}\else
  \setbox7\hbox{\accent"5E#1}\penalty 10000\relax\fi\raise 1\ht7
  \hbox{\lower1.15ex\hbox to 1\wd7{\hss\accent"7E\hss}}\penalty 10000
  \hskip-1\wd7\penalty 10000\box7}
\begin{thebibliography}{10}

\bibitem{AmiceVelu}
{\sc Amice, Y. et~V{\'e}lu, J.}
\newblock Distributions {$p$}-adiques associ\'ees aux s\'eries de {H}ecke.
\newblock {\em Ast\'erisque}, 24--25 (1975), 119--131.

\bibitem{Beilinson2}
{\sc Be{\u\i}linson, A.~A.}
\newblock Higher regulators of modular curves.
\newblock In {\em Applications of algebraic {$K$}-theory to algebraic geometry
  and number theory,}, vol.~55 of {\em Contemp. Math.} pp.~1--34.

\bibitem{Bellaiche}
{\sc Bella\"\i~che, J.}
\newblock Critical {$p$}-adic {$L$}-functions.
\newblock {\em Invent. Math. 189}, 1 (2012), 1--60.

\bibitem{Berger02}
{\sc Berger, L.}
\newblock Repr\'esentations {$p$}-adiques et \'equations diff\'erentielles.
\newblock {\em Invent. Math. 148\/} (2002), 219--284.

\bibitem{Berger03}
{\sc Berger, L.}
\newblock Bloch and {K}ato's exponential map: three explicit formulas.
\newblock {\em Doc. Math. Extra Vol.\/} (2003), 99--129.

\bibitem{Berger08}
{\sc Berger, L.}
\newblock \'{E}quations diff\'erentielles {$p$}-adiques et {$(\phi,N)$}-modules
  filtr\'es.
\newblock {\em Ast\'erisque}, 319 (2008), 13--38.

\bibitem{BergerBreuil}
{\sc Berger, L., and Breuil, C.}
\newblock Sur quelques repr\'esentations potentiellement cristallines de {${\rm
  GL}_2(\bold Q_p)$}.
\newblock {\em Ast\'erisque}, 330 (2010), 155--211.

\bibitem{Besser}
{\sc Besser, A.}
\newblock Syntomic regulators and {$p$}-adic integration. {I}. {R}igid syntomic
  regulators.
\newblock In {\em Proceedings of the {C}onference on {$p$}-adic {A}spects of
  the {T}heory of {A}utomorphic {R}epresentations\/} (2000), vol.~120,
  pp.~291--334.

\bibitem{BH}
{\sc Bushnell, C. J. et~Henniart, G.}
\newblock {\em The local {L}anglands conjecture for {$\rm GL(2)$}}.
\newblock Springer-Verlag, 2006.

\bibitem{CC}
{\sc Cherbonnier, F. et~Colmez, P.}
\newblock Repr\'esentations {$p$}-adiques surconvergentes.
\newblock {\em Invent. Math. 133\/} (1998), 485--571.

\bibitem{ColmezIw2}
{\sc Cherbonnier, F. et~Colmez, P.}
\newblock Th\'eorie d'{I}wasawa des repr\'esentations {$p$}-adiques d'un corps
  local.
\newblock {\em J. Amer. Math. Soc. 12\/} (1999), 241--268.

\bibitem{ColmezDosp}
{\sc Colmez, P. et~Dospinescu, G.}
\newblock Compl\'et\'es universels de repr\'esentations de {${\rm GL}_2(\bold
  Q_p)$}.
\newblock {\em Algebra and Number Theory 8\/} (2014), 1447--1519.

\bibitem{ColmezIw1}
{\sc Colmez, P.}
\newblock Th\'eorie d'{I}wasawa des repr\'esentations de de {R}ham d'un corps
  local.
\newblock {\em Ann. of Math. 148\/} (1998), 485--571.

\bibitem{ColmezBourbaki}
{\sc Colmez, P.}
\newblock La conjecture de {B}irch et {S}winnerton-{D}yer {$p$}-adique.
\newblock {\em Ast\'erisque}, 294 (2004), 251--319.

\bibitem{ColmezEV}
{\sc Colmez, P.}
\newblock Espaces vectoriels de dimension finie et repr\'esentations de de
  {R}ham.
\newblock {\em Ast\'erisque}, 319 (2008), 117--186.

\bibitem{ColmezPhiGamma}
{\sc Colmez, P.}
\newblock Repr\'esentations de {${\rm GL}_2(\bold Q_p)$} et
  {$(\varphi,\Gamma)$}-modules.
\newblock {\em Ast\'erisque}, 330 (2010), 281--509.

\bibitem{CN}
{\sc Colmez, P. et~Niziol, W.}
\newblock Syntomic complexes and p-adic nearby cycles.
\newblock {\em preprint\/} (2015).

\bibitem{Delbourgo}
{\sc Delbourgo, D.}
\newblock On the {$p$}-adic {B}irch, {S}winnerton-{D}yer conjecture for
  non-semistable reduction.
\newblock {\em J. Number Theory 95\/} (2002), 38--71.

\bibitem{Sch-Den}
{\sc Deninger, C. et~Scholl, A.~J.}
\newblock The {B}e\u\i linson conjectures.
\newblock In {\em {$L$}-functions and arithmetic}, vol.~153 of {\em London
  Math. Soc. Lecture Note Ser.} Cambridge Univ. Press, Cambridge, 1991,
  pp.~173--209.

\bibitem{Fontaine90}
{\sc Fontaine, {\relax J.-M}.}
\newblock Repr\'esentations {$p$}-adiques des corps locaux. {I}.
\newblock In {\em The {G}rothendieck {F}estschrift, {V}ol.\ {II}}, vol.~87 of
  {\em Progr. Math.} 1990, pp.~249--309.

\bibitem{Gealy}
{\sc Gealy, M.~T.}
\newblock Special values of $p$-adic $l$ functions associated to modular forms.
\newblock {\em Non publi\'e\/} (2003).

\bibitem{Gealy2}
{\sc Gealy, M.~T.}
\newblock {\em On the {T}amagawa number conjecture for motives attached to
  modular forms}.
\newblock ProQuest LLC, 2006.
\newblock Th\`ese (Ph. D).

\bibitem{Gros}
{\sc Gros, M.}
\newblock R\'egulateurs syntomiques et valeurs de fonctions {$L$}
  {$p$}-adiques. {I}.
\newblock {\em Invent. Math. 99}, 2 (1990), 293--320.

\bibitem{Herr}
{\sc Herr, L.}
\newblock Une approche nouvelle de la dualit\'e locale de {T}ate.
\newblock {\em Math. Ann. 320}, 2 (2001), 307--337.

\bibitem{Kato}
{\sc Kato, K.}
\newblock {$p$}-adic {H}odge theory and values of zeta functions of modular
  forms.
\newblock {\em Ast\'erisque}, 295 (2004), 117--290.

\bibitem{Kedlaya}
{\sc Kedlaya, K.}
\newblock A {$p$}-adic monodromy theorem.
\newblock {\em Ann. of Math. 160\/} (2004), 93--184.

\bibitem{KPX}
{\sc Kedlaya, {\relax K. S., Pottharst, J.} et~Xiao, L.}
\newblock Cohomology of arithmetic families of {$(\varphi,\Gamma)$}-modules.
\newblock {\em J. Amer. Math. Soc. 27\/} (2014), 1043--1115.

\bibitem{Liu}
{\sc Liu, R.}
\newblock Cohomology and duality for {$(\phi,\Gamma)$}-modules over the {R}obba
  ring.
\newblock {\em Int. Math. Res.}, 3 (2008).

\bibitem{L-W}
{\sc Loeffler, D., and Weinstein, J.}
\newblock On the computation of local components of a newform.
\newblock {\em Math. Comp. 81\/} (2012), 1179--1200.

\bibitem{Manin}
{\sc Manin, J.~I.}
\newblock Periods of cusp forms, and {$p$}-adic {H}ecke series.
\newblock {\em Mat. Sb. 92\/} (1973), 378--401.

\bibitem{MTT}
{\sc Mazur, {\relax B., Tate, J.} et~Teitelbaum, J.}
\newblock On {$p$}-adic analogues of the conjectures of {B}irch and
  {S}winnerton-{D}yer.
\newblock {\em Invent. Math. 84\/} (1986), 1--48.

\bibitem{Nakamura}
{\sc Nakamura, K.}
\newblock Iwasawa theory of de {R}ham {$(\varphi,\Gamma)$}-modules over the
  {R}obba ring.
\newblock {\em J. Inst. Math. Jussieu 13\/} (2014), 65--118.

\bibitem{Nakamura2}
{\sc Nakamura, K.}
\newblock Local {$\epsilon$}-isomorphisms for rank two {$p$}-adic
  representations of {${\rm Gal}(\overline{ \bold Q}_p / \bold \Q_p)$} and a
  functional equation of {K}ato's {E}uler system.
\newblock {\em Cambridge Journal of Mathematics 5\/} (2017), 65--118.

\bibitem{NN}
{\sc Nekovar, J. et~Niziol, W.}
\newblock Syntomic cohomology and regulators for varieties over p-adic fields.
\newblock {\em Algebra and Number Theory 10}, 8 (2016), 1695--1790.

\bibitem{Niziol}
{\sc Niziol, W.}
\newblock On the image of {$p$}-adic regulators.
\newblock {\em Invent. Math. 127\/} (1997), 375--400.

\bibitem{PerrinRiou1}
{\sc Perrin-Riou, B.}
\newblock Th\'eorie d'{I}wasawa des repr\'esentations {$p$}-adiques sur un
  corps local.
\newblock {\em Invent. Math. 115}, 1 (1994), 81--161.

\bibitem{PerrinRiou2}
{\sc Perrin-Riou, B.}
\newblock Fonctions {$L$} {$p$}-adiques des repr\'esentations {$p$}-adiques.
\newblock {\em Ast\'erisque}, 229 (1995), 198.

\bibitem{P-S}
{\sc Pollack, R., and Stevens, G.}
\newblock Critical slope {$p$}-adic {$L$}-functions.
\newblock {\em J. Lond. Math. Soc.) 87}, 2 (2013), 428--452.

\bibitem{Pottharst}
{\sc Pottharst, J.}
\newblock Cyclotomic {I}wasawa theory of motives.
\newblock {\em Preprint, disponible \`a
  \url{https://vbrt.org/writings/cyc.pdf}\/} (2012).

\bibitem{epsilonKato}
{\sc Rodrigues~Jacinto, J.}
\newblock La conjecture {$\epsilon$} locale de {K}ato en dimension $2$.
\newblock {\em Mathematische Annalen (\`a para\^itre)\/} (2018).

\bibitem{Schmidt}
{\sc Schmidt, R.}
\newblock Some remarks on local newforms for {$\rm GL(2)$}.
\newblock {\em J. Ramanujan Math. Soc. 17}, 2 (2002), 115--147.

\bibitem{S-T}
{\sc Schneider, P. et~Teitelbaum, J.}
\newblock Algebras of {$p$}-adic distributions and admissible representations.
\newblock {\em Invent. Math. 153\/} (2003), 145--196.

\bibitem{Scholl}
{\sc Scholl, A.~J.}
\newblock Motives for modular forms.
\newblock {\em Invent. Math. 100}, 2 (1990), 419--430.

\bibitem{Visik}
{\sc Vi{\v{s}}ik, M.~M.}
\newblock Nonarchimedean measures associated with {D}irichlet series.
\newblock {\em Mat. Sb. 99\/} (1976), 248--260.

\end{thebibliography}

\end{document}